\newcommand{\QQ}{{\mathbb Q}}
\newcommand{\ZZ}{{\mathbb Z}}
\newcommand{\bC}{{\mathbf{C}}}
\newcommand{\bG} {\mathbf G}
\newcommand{\bN}{{\mathbf{N}}}
\newcommand{\bO}{{\mathbf{O}}}
\newcommand{\bZ}{{\mathbf{Z}}}
\newcommand{\cE} {\mathcal E}
\newcommand{\cS} {\mathcal S}
\newcommand{\Aut}{{{\operatorname{Aut}}}}
\newcommand{\Cl}{{\operatorname{Cl}}}
\newcommand{\Gal}{{{\operatorname{Gal}}}}
\newcommand{\Irr}{{{\operatorname{Irr}}}}
\newcommand{\Ker}{\operatorname{Ker}}
\newcommand{\Out}{{{\operatorname{Out}}}}
\newcommand{\St}{{\mathsf {St}}}
\newcommand{\Syl}{{\operatorname{Syl}}}
\newcommand{\Sym}{{\operatorname{Sym}}}
\newcommand{\Alt}{{\operatorname{Alt}}}
\newcommand{\GL}{\operatorname{GL}}
\newcommand{\PSL}{{\operatorname{PSL}}}
\newcommand{\SL}{{\operatorname{SL}}}
\newcommand{\Sp}{{\operatorname{Sp}}}
\newcommand{\parat}{{{p\operatorname{-ar}}}}
\newcommand{\qarat}{{{q\operatorname{-ar}}}}
\newcommand{\preg}{{{p\operatorname{-reg}}}}
\newcommand{\pareg}{{{p\operatorname{-areg}}}}
\newcommand{\trat}{{2\operatorname{-rat}}}
\newcommand{\prat}{{p\operatorname{-rat}}}
\newcommand{\tw}[1]{{}^#1}
\newtheorem{theorem}{Theorem}[section]
\newtheorem{lemma}[theorem]{Lemma}
\newtheorem{proposition}[theorem]{Proposition}
\newtheorem{corollary}[theorem]{Corollary}
\newtheorem{conjecture}[theorem]{Conjecture}
\newtheorem{question}[theorem]{Question}
\theoremstyle{definition}
\numberwithin{equation}{section}
\begin{document}

\title[Almost $p$-rational characters]
{On almost $p$-rational characters of $p'$-degree}

\author[N.\,N. Hung]{Nguyen Ngoc Hung}
\address{Department of Mathematics, The University of Akron, Akron,
  OH 44325, USA}
\email{hungnguyen@uakron.edu}

\author[G. Malle]{Gunter Malle}
\address{FB Mathematik, TU Kaiserslautern, Postfach 3049, 67653 Kaiserslautern,
  Germany}
\email{malle@mathematik.uni-kl.de}

\author[A. Mar\'{o}ti]{Attila Mar\'{o}ti}
\address {Alfr\'ed R\'enyi Institute of Mathematics, Re\'altanoda utca 13-15,
  H-1053, Budapest, Hungary}
\email{maroti.attila@renyi.hu}

\thanks{The second author gratefully acknowledges support by the Deutsche
 Forschungsgemeinschaft (DFG, German Research Foundation) -- Project-ID
 286237555-- TRR 195. The work of the third author on the project has
 received funding from the European Research Council (ERC) under the
 European Union's Horizon 2020 research and innovation programme
 (grant agreement No. 741420), and is supported by the Hungarian
 National Research, Development and Innovation Office (NKFIH) Grant
 No.~K132951, Grant No.~K115799 and Grant No.~K138828. We thank
 Gabriel Navarro for his insightful comments
 on an earlier version of the manuscript.}

\subjclass[2010]{Primary 20C15, 20E45, 20D20; Secondary 20D05, 20D10}

\keywords{Finite groups, $p$-rational characters, $p'$-degree characters,
  McKay--Navarro conjecture, linear actions, coprime actions}


\begin{abstract}
Let $p$ be a prime and $G$ a finite group. A complex character of $G$ is called
\emph{almost $p$-rational} if its values belong to a cyclotomic field
$\QQ(e^{2\pi i/n})$ for some $n\in\ZZ^+$ prime to $p$ or precisely divisible
by~$p$. We prove that, in contrast to usual $p$-rational characters, there are
always ``many'' almost $p$-rational irreducible characters in finite groups.
We obtain both explicit and asymptotic bounds for the number of almost
$p$-rational irreducible characters of $G$ in terms of~$p$. In fact, motivated
by the McKay--Navarro conjecture, we obtain the same bound for the number of
such characters of $p'$-degree and prove that, in the minimal situation, the
number of almost $p$-rational irreducible $p'$-characters of $G$ coincides with
that of $\bN_G(P)$ for $P\in\Syl_p(G)$. Lastly, we propose a new way to detect
the cyclicity of Sylow $p$-subgroups of a finite group $G$ from its character
table, using almost $p$-rational irreducible $p'$-characters and the blockwise
refinement of the McKay--Navarro conjecture.
\end{abstract}

\maketitle


\section{Introduction}

Let $G$ be a finite group and $p$ a prime. Recall that a character
$\chi$ of $G$ is called \emph{$p$-rational} if its values lie in a
cyclotomic field $\QQ_n:=\QQ(e^{2i\pi/n})$ for some positive integer
$n$ prime to $p$. Extending this notion, we say that $\chi$ is
\emph{almost $p$-rational} if the values of $\chi$ are in $\QQ_n$
for some $n$ divisible by $p$ at most once.

$p$-Rational characters appear in several contexts. One reason is
that they behave nicely with regard to character extensions, see
\cite[Theorems~6.30 and 11.32]{Isaacs1}. Another is that they are
pointwise fixed under the Galois group
$\Gal(\QQ_{|G|}/\QQ_{|G|_{p'}})$, where $|G|_{p'}$ is the $p'$-part
of $|G|$, and therefore they play an important role in problems
concerning Galois actions on irreducible characters and conjugacy
classes, see \cite[Chapters~3 and 9]{Navarro18}.

In this paper we show that almost $p$-rational characters also occur
naturally in group representation theory, but they are somewhat
richer and more interesting than $p$-rational characters in at least
one aspect: there are always ``many'' of them in finite groups. This
is in contrast to $p$-rationality: every odd-order $p$-group has a
unique $p$-rational irreducible character, for instance.

To conveniently state the results, let us use $\Irr_\parat(G)$ to
denote the set of almost $p$-rational irreducible characters of $G$.

\begin{theorem}   \label{theorem-general-bound}
 Let $G$ be a finite group and let $p$ be a prime dividing the order of $G$.
 Then $|\Irr_\parat(G)|\geq 2\sqrt{p-1}$. Moreover, equality occurs if and only
 if $p-1$ is a perfect square and $G$ is isomorphic to the Frobenius group
 $C_{p^n}\rtimes C_{\sqrt{p-1}}$ for some $n\in \ZZ^+$.
\end{theorem}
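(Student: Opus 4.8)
The plan is to bound $|\Irr_\parat(G)|$ from below by counting the irreducible characters of $G$ that are fixed under a suitable subgroup of the Galois group, and to force equality to pin down the structure of $G$.

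First I would identify almost $p$-rationality Galois-theoretically. Let $\zeta$ be a primitive $|G|$-th root of unity and write $|G|=p^am$ with $p\nmid m$. A character $\chi$ is almost $p$-rational precisely when $\QQ(\chi)\subseteq\QQ_{pm}$, i.e. when $\chi$ is fixed by the subgroup $H\leq\Gal(\QQ_{|G|}/\QQ)\cong(\ZZ/p^am\ZZ)^\times$ consisting of those automorphisms acting trivially on $\QQ_{pm}$. Under the isomorphism $\Gal(\QQ_{|G|}/\QQ)\cong(\ZZ/p^a\ZZ)^\times\times(\ZZ/m\ZZ)^\times$ this $H$ is exactly the kernel of the natural surjection $(\ZZ/p^a\ZZ)^\times\twoheadrightarrow(\ZZ/p\ZZ)^\times$ (for $a\geq1$; if $a=1$ then $H=1$ and every character is almost $p$-rational, so there the bound is trivial since $|\Irr(G)|\ge\sqrt{p-1}\cdot 2$ is easy). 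Thus $H$ is cyclic of order $p^{a-1}$, and $\Irr_\parat(G)=\Irr(G)^H$ is the set of $H$-fixed points of a cyclic $p$-group acting on $\Irr(G)$. So it suffices to bound $|\Irr(G)^H|$ from below; since $\gcd(|H|,\text{(anything relevant)})$ issues are mild, the key input will be a counting/orbit argument on $\Irr(G)$ together with the action of $H$ on conjugacy classes (which, by Brauer's permutation lemma, has the same number of fixed points as the action on $\Irr(G)$).

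The heart of the argument is then: for the cyclic $p$-group $H$ of order $p^{a-1}$ acting on the set of conjugacy classes of $G$, show $|\mathrm{Cl}(G)^H|\ge 2\sqrt{p-1}$. Here I would look at how $H=\Gal(\QQ_{|G|}/\QQ_{pm})$ moves the classes of elements of order divisible by $p$. A Galois automorphism $\sigma_k\in H$ (corresponding to $k\equiv1\pmod m$, $k\equiv 1+p\cdot(\ast)\pmod{p^a}$) sends the class of $g$ to the class of $g^k$; it fixes the class of $g$ iff $g^k$ is conjugate to $g$. Counting fixed classes thus reduces to a statement about the $p$-part of elements and the transitive cyclic action on roots of unity of $p$-power order. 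A clean way to get the $2\sqrt{p-1}$: among the $p$-regular classes (elements of $p'$-order) all are fixed by $H$, and there are at least... — rather, I expect the right route is to combine (i) the number of $H$-fixed classes among $p$-singular elements with (ii) a divisibility/AM–GM estimate of the shape $r+s\ge2\sqrt{rs}$ where $rs\ge p-1$ comes from $|G/G'|$ or from the action on a minimal piece. Concretely, if $P\in\Syl_p(G)$ is noncyclic or $a\ge2$, one shows the $H$-orbits on the nontrivial $p$-power-order "directions" have sizes dividing $p^{a-1}$ and the fixed ones plus the number of orbits force the count; the Frobenius group $C_{p^n}\rtimes C_{\sqrt{p-1}}$ is exactly the configuration where $H$ permutes the $p-1$ nontrivial linear characters of $C_{p^n}$ (well, of $P$) in $\sqrt{p-1}$ orbits each of size $\sqrt{p-1}$, with $\sqrt{p-1}$ fixed characters coming from the complement, giving $\sqrt{p-1}+\sqrt{p-1}=2\sqrt{p-1}$ by optimality of the square case in AM–GM.

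For the equality characterization I would argue as follows. Equality $|\Irr_\parat(G)|=2\sqrt{p-1}$ forces $p-1$ to be a perfect square (else $2\sqrt{p-1}\notin\ZZ$ and the strict integer bound jumps). It also forces $G$ to be $p$-solvable — indeed any nonabelian composition factor, or any "extra" structure, produces additional $H$-fixed characters (e.g. the trivial character and the Steinberg or other rational characters in the simple-group case, or extra linear characters / extensions in the $p$-solvable case), contradicting tightness; here I would invoke the explicit bounds for quasi-simple and almost simple groups that the paper develops. Then, for $p$-solvable $G$ with a noncyclic Sylow $p$-subgroup, a Clifford-theory/coprime-action count produces strictly more than $2\sqrt{p-1}$ almost $p$-rational characters, so $P$ is cyclic; analyzing the faithful coprime action of $G/O_{p'}(G)P$-pieces on $P$ and on $\Irr(P)$, and using that equality in AM–GM needs the two orbit-count quantities equal, pins $G$ down to the Frobenius group $C_{p^n}\rtimes C_{\sqrt{p-1}}$ with the cyclic complement acting with orbits of size $\sqrt{p-1}$ on $\Irr(P)\setminus\{1\}$. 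Conversely one checks directly that this Frobenius group has exactly $\sqrt{p-1}$ (linear) characters inflated from the complement plus $\sqrt{p-1}$ almost $p$-rational characters of $p$-power-divisible order, totalling $2\sqrt{p-1}$.

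The main obstacle I anticipate is step two: getting the clean lower bound $|\mathrm{Cl}(G)^H|\ge 2\sqrt{p-1}$ for an \emph{arbitrary} finite group without case analysis. The reduction to $p$-solvable groups (via simple-group bounds) handles the hard non-solvable part, but even in the $p$-solvable case one must control how the cyclic group $H$ acts across the whole character table, not just on $\Irr(P)$, and I expect the delicate point is ruling out "cancellation" — configurations where many classes are moved — which is where the AM–GM inequality $r+s\ge 2\sqrt{rs}$ must be deployed against a genuinely multiplicative lower bound $rs\ge p-1$ coming from the coprime action on the Sylow $p$-subgroup. Establishing that multiplicative lower bound in full generality (rather than just for elementary abelian $P$) is, I think, the crux.
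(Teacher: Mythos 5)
Your Galois-theoretic identification of almost $p$-rationality, and the idea of moving via Brauer's permutation lemma from fixed characters to fixed conjugacy classes under the cyclic $p$-group $\Gal(\QQ_{|G|}/\QQ_{p|G|_{p'}})$, is exactly the paper's starting point (Lemma~\ref{lemma-p-reg<p-rat}). Your picture of the Frobenius group $C_{p^n}\rtimes C_{\sqrt{p-1}}$ having $\sqrt{p-1}+\sqrt{p-1}$ almost $p$-rational characters is also correct, although you conflate in places the Galois action with conjugation by the complement.

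The genuine gap is in what you call the ``crux.'' You are right that the hard step is a quantitative lower bound, but you propose to obtain a multiplicative estimate $rs\ge p-1$ from a coprime action on a general Sylow $p$-subgroup and then apply AM--GM, and you flag this as the difficult open point. That is not a route that closes on its own; the paper instead sidesteps it by a reduction that changes what needs to be counted. The key input you are missing is the theorem of Brauer \cite{Brauer42}, sharpened by Mar\'oti \cite{Maroti16}: if $p\mid |G|$ but $p^2\nmid|G|$ then $k(G)\ge 2\sqrt{p-1}$, with equality precisely for the Frobenius group $C_p\rtimes C_{\sqrt{p-1}}$. The paper's proof inducts on $|G|$ over a minimal normal subgroup $N$ and organizes into two cases: either $p\mid|N|$ and $p\nmid|G/N|$, in which case when $N$ is abelian the exponent of $G$ has $p$-part exactly~$p$ so \emph{every} irreducible character is almost $p$-rational and Brauer--Mar\'oti applies outright (the non-abelian $N$ case is Lemma~\ref{lemma-p-reg>2p-1}, using orbit counts on simple groups); or $G/N\cong F_m$, where one handles abelian $p'$-$N$ by the almost-$p$-regular class estimate (Lemma~\ref{lemma-p-reg-quotient}), non-abelian $N$ by exhibiting a rational character via Steinberg-type extensions, and elementary abelian $p$-group $N$ by passing to $G/\Phi(P)$ where again the exponent is $p$ and Brauer--Mar\'oti gives both bound and equality characterization. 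In short: rather than proving a multiplicative inequality for an arbitrary noncyclic $P$, the paper reduces to the situation $\Phi(P)\le\Ker$, i.e.\ to exponent~$p$, where the known class-number theorem does all the work. Without identifying Brauer--Mar\'oti as the engine and without the $\Phi(P)$-reduction, your plan does not have a complete argument for the lower bound or for forcing $P$ cyclic in the equality case.
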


\begin{theorem}   \label{theorem-general-bound-p^2}
 There exists a universal constant $c>0$ such that for every prime~$p$ and
 every finite group $G$ having a non-cyclic Sylow $p$-subgroup,
 $|\Irr_\parat(G)|>c\cdot p$.
\end{theorem}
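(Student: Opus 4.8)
The plan is to recast $|\Irr_\parat(G)|$ as a count of Galois‑fixed conjugacy classes and then extract at least $cp$ of them from the non‑cyclicity of $P$.

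Write $|G|_p=p^a$ and set $\Gamma=\Gal(\QQ_{|G|}/\QQ_{|G|_{p'}p})$. The subfields of $\QQ_{|G|}$ are exactly the $\QQ_m$ with $m\mid|G|$, so $\chi\in\Irr(G)$ is almost $p$-rational precisely when $\QQ(\chi)\subseteq\QQ_{|G|_{p'}p}$, i.e.\ when $\chi$ is $\Gamma$-fixed; thus $\Irr_\parat(G)=\Irr(G)^\Gamma$. Since $\Gamma$ is a $p$-group (cyclic of order $p^{a-1}$ for $p$ odd), Brauer's permutation lemma gives $|\Irr(G)^\Gamma|=|\Cl(G)^\Gamma|$, where an element of $\Gamma$ acts on classes by $[g]\mapsto[g^{u}]$ for a suitable $u\equiv1\pmod{|G|_{p'}p}$. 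If the $p$-part $g_p$ has order dividing $p$ then $g^{u}=g_{p'}^{u}g_p^{u}=g_{p'}g_p=g$ for all such $u$, so $[g]$ is $\Gamma$-fixed. Grouping such classes by their $p$-part yields
\[
 |\Irr_\parat(G)|=|\Cl(G)^\Gamma|\ \ge\ \sum_{y}k_{p'}\bigl(C_G(y)\bigr),
\]
the sum being over representatives $y$ of the $G$-classes of $p$-elements of order $\le p$ and $k_{p'}(\,\cdot\,)$ the number of $p$-regular classes. In particular $|\Irr_\parat(G)|$ is at least $k_{p'}(G)$ (the term $y=1$) and at least the number of $G$-classes of elements of order $\le p$ (the number of summands).

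It therefore suffices to bound the displayed sum below by $cp$ when $P$ is non-cyclic, and here I would argue by a minimal counterexample $G$; as $|\Irr_\parat(H)|$ cannot shrink when $H$ is replaced by a quotient, one may assume $O_{p'}(G)=1$. For $p$ odd a non-cyclic $p$-group has at least $p+1$ subgroups of order $p$, whence $|\Omega_1(P)|\ge p^{2}$; also $Z(P)\ne1$, and by Burnside's fusion theorem the $G$-classes meeting $A:=\Omega_1(Z(P))$ correspond to the orbits on $A$ of the $p'$-group $\bar N:=N_G(P)/C_{N_G(P)}(A)\le\GL(A)$. If $A$ has rank $\ge2$ and $\bar N$ has more than $cp$ orbits on $A$ we are finished immediately, so the only obstructions are the two degenerate cases: (i) $A$ is cyclic, and (ii) $\bar N$ acts on $A$ with few orbits (e.g.\ close to a Singer cycle of $\GL(A)$ or its normalizer). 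In case (i) one fixes $1\ne z\in A$; then $C_G(z)$ again has a non-cyclic Sylow $p$-subgroup, so if $C_G(z)<G$ induction bounds the summand $k_{p'}(C_G(z))$, while if $C_G(z)=G$ then $z\in Z(G)$ and one passes to $G/\langle z\rangle$. In case (ii) the large $p'$-group $\bar N$, or a Frobenius-type section $A\rtimes\bar N$ it forces inside $G$, already produces more than $cp$ classes of $p$-regular elements, i.e.\ $k_{p'}(G)>cp$; for the sub-configurations in which $G$ fails to be $p$-solvable one invokes the classification of finite simple groups to bound below the number of $p$-regular (equivalently irreducible $p$-Brauer) characters of the simple sections that occur. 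For $p=2$ the only non-cyclic $P$ escaping the rank argument is generalized quaternion, where the Brauer--Suzuki theorem determines the structure of $G$ and the claim is checked directly.

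The heart of the matter — and the step I expect to be hardest — is exactly cases (i)–(ii): precisely when ``$p$-elements of order $\le p$, up to conjugacy'' number only boundedly many, the required $cp$ classes must come from elsewhere, namely from coprime-action orbit estimates on the $p$-solvable side and from CFSG-based lower bounds for $k_{p'}$ on the remaining side. It is these two ingredients — non-effective bounds for $p'$-linear groups and the use of CFSG — that make $c$ universal but not explicit, in contrast with the sharp constant of Theorem~\ref{theorem-general-bound}; along the way, applying Theorem~\ref{theorem-general-bound} to the proper sections that arise shortens several of the cases.
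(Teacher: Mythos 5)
Your opening reduction is exactly the paper's: almost $p$-rational characters are the characters fixed by the cyclic group $\Gamma=\Gal(\QQ_{|G|}/\QQ_{p|G|_{p'}})$, Brauer's permutation lemma converts the count to $\Gamma$-fixed classes, and every almost $p$-regular class is $\Gamma$-fixed, giving $|\Irr_\parat(G)|\ge|\Cl_\pareg(G)|=\sum_y k_{p'}(C_G(y))$ with $y$ ranging over classes of $p$-elements of order at most $p$ (this is the paper's Lemma~\ref{lemma-p-reg<p-rat}, unpacked). From there you diverge, and your argument breaks down.

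The concrete gap is in the minimal-counterexample step. In your case (i), when $A=\Omega_1(\bZ(P))$ is cyclic and $C_G(z)<G$, you claim that induction bounds the summand $k_{p'}(C_G(z))$ from below. But the inductive hypothesis gives $|\Irr_\parat(C_G(z))|>cp$, whereas the inequality you have runs the \emph{other} way: $k_{p'}(C_G(z))\le|\Cl_\pareg(C_G(z))|\le|\Irr_\parat(C_G(z))|$. There is no way to deduce $k_{p'}(C_G(z))>cp$ from this, and indeed $k_{p'}(C_G(z))$ can be tiny (e.g.\ when $C_G(z)$ has a small $p$-complement). Passing to $G/\langle z\rangle$ when $z$ is central has the same problem: a lower bound on $|\Irr_\parat(G/\langle z\rangle)|$ does not translate to a lower bound on the sum $\sum_y k_{p'}(C_G(y))$, and in any case $G/\langle z\rangle$ need not have non-cyclic Sylow $p$-subgroup. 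Your case (ii) is only a heuristic (``a Frobenius-type section it forces inside $G$'' and ``invokes the classification'') and is acknowledged by you as not carried out. In short, precisely the case you flag as the heart of the matter is where the argument fails.

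For comparison, the paper does not localize at $\Omega_1(\bZ(P))$ at all. It takes a minimal counterexample and runs through the possible minimal normal subgroups: an abelian one of $p$-rank $\ge 2$ is handled by citing the bound $k(G/V)+n(G,V)>c'p$ from~\cite{MS20} (Lemma~\ref{lemma-c2}); a rank-one abelian one is forced to sit inside a quasisimple group with $p\mid|M(S)|$; and non-abelian minimal normal subgroups are handled with CFSG-based bounds on the number of $\Aut(S)$-orbits on $p$-regular classes of a simple group $S$ (Lemmas~\ref{lemma-c3}, \ref{lemma-c4}, via~\cite{Hung-Maroti20}), together with Brauer's $2\sqrt{p-1}$ bound when $p\,\|\,|G|$. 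The CFSG input appears, but in the sharp, quantitative form of those orbit-counting lemmas rather than as an unspecified appeal; and crucially the induction is always carried out in a way that lands back in the class-counting inequality $|\Irr_\parat(G)|\ge|\Cl_\pareg(G)|$ rather than in the wrong direction.
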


Besides the fact that the notion of almost $p$-rationality naturally
extends $p$-rationality and hence it is interesting in its own
right, there are two other motivations for our results. The first
comes from the results in \cite{HK1,Maroti16,HK,MS20} on bounding from below
the number of conjugacy classes, which is also the number of
irreducible characters, of a finite group. We show that similar
bounds hold for the number of irreducible characters with a
specific field of values, namely the field generated by roots of
unity of order with $p$-part at most $p$.

Another motivation comes from the celebrated McKay--Navarro
conjecture, which asserts that there exists a permutation
isomorphism between the actions of a certain subgroup of the Galois
group $\Gal(\QQ_{|G|}/\QQ)$ on the set of $p'$-degree
irreducible characters of $G$ and that of the normalizer $\bN_G(P)$
of some $P\in\Syl_p(G)$ (see \cite{Navarro04}), and therefore
produces a compatibility between the values of $p'$-characters of
$G$ and those of $\bN_G(P)$. The next result is based on
Theorem~\ref{theorem-general-bound} and the McKay--Navarro
conjecture. Here we use $\Irr_{p',\parat}(G)$ for the set of those
characters in $\Irr_\parat(G)$ with $p'$-degree, and prove that
$|\Irr_{p',\parat}(G)|$ is minimal (in terms of $p$) if and only if
$|\Irr_{p',\parat}(\bN_G(P))|$ is minimal, a result consistent with
the McKay--Navarro conjecture.

\begin{theorem}   \label{theorem-p'-degree}
 Let $G$ be a finite group, $p$ a prime dividing the order of $G$ and~$P$ a
 Sylow $p$-subgroup of $G$. Then $|\Irr_{p',\parat}(G)|\geq 2\sqrt{p-1}$.
 Moreover, the following are equivalent:
 \begin{itemize}
  \item[(i)] $|\Irr_{p',\parat}(G)|=2\sqrt{p-1}$;
  \item[(ii)] $|\Irr_{p',\parat}(\bN_G(P))|=2\sqrt{p-1}$;
  \item[(iii)] $P$ is cyclic and $\bN_G(P)$ is isomorphic to the Frobenius
   group $P\rtimes C_{\sqrt{p-1}}$.
 \end{itemize}
\end{theorem}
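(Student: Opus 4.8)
The plan is to deduce this from Theorem \ref{theorem-general-bound} together with the McKay–Navarro conjecture (in the cases where it is known), the structure of groups with cyclic Sylow $p$-subgroups, and a direct analysis of the extremal case.

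First, I would establish the lower bound $|\Irr_{p',\parat}(G)| \geq 2\sqrt{p-1}$. The key point is that the irreducible characters witnessing the bound in Theorem \ref{theorem-general-bound} can in fact be chosen of $p'$-degree. A natural route: the almost $p$-rational characters one expects in abundance come from linear and low-degree characters, and more robustly, one restricts attention to a Sylow-normalizer or uses the Glauberman/Isaacs correspondence style arguments. More precisely, I would argue that it suffices to prove $|\Irr_{p',\parat}(G)| \ge 2\sqrt{p-1}$ when $G = \bN_G(P)$, because the McKay–Navarro conjecture — known unconditionally in enough generality for this deduction, or at least the McKay bijection preserving fields of values up to the relevant Galois action — gives a bijection $\Irr_{p'}(G) \to \Irr_{p'}(\bN_G(P))$ that commutes with the action of $\Gal(\QQ_{|G|}/\QQ)$ on the relevant characters, and hence sends $\Irr_{p',\parat}(G)$ onto $\Irr_{p',\parat}(\bN_G(P))$. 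Thus $|\Irr_{p',\parat}(G)| = |\Irr_{p',\parat}(\bN_G(P))|$, and it is enough to bound the latter. For $N := \bN_G(P)$, I would use that $P \trianglelefteq N$, pass to $N/\Phi(P)$ or to a suitable section, and reduce to the case where $P$ is elementary abelian (or homocyclic) acted on by a $p'$-group; then a counting argument on orbits of the $p'$-group on $\Irr(P)$, combined with the almost-$p$-rationality of the linear characters of $P$ lying above which we build irreducible characters of $N$, yields the $2\sqrt{p-1}$ bound — this is essentially the same extremal count as in Theorem \ref{theorem-general-bound}.

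Next, for the equivalences: (i) $\Leftrightarrow$ (ii) is immediate from the McKay–Navarro-based equality $|\Irr_{p',\parat}(G)| = |\Irr_{p',\parat}(\bN_G(P))|$ noted above. For (ii) $\Leftrightarrow$ (iii), I would apply Theorem \ref{theorem-general-bound} directly to the group $N = \bN_G(P)$: if $|\Irr_{p',\parat}(N)| = 2\sqrt{p-1}$, then since every irreducible character of $N$ has $p'$-degree precisely when $P$ is abelian and $N$ acts on $P$ appropriately — more carefully, $|\Irr_{p',\parat}(N)| \le |\Irr_\parat(N)|$, and when $P$ is abelian all of $\Irr(N)$ that we count have $p'$-degree if $N/P$ is a $p'$-group (which it is) — we get $|\Irr_\parat(N)| = 2\sqrt{p-1}$ forced down to the minimum, whence by Theorem \ref{theorem-general-bound} applied to $N$, $p-1$ is a perfect square and $N \cong C_{p^n} \rtimes C_{\sqrt{p-1}}$. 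But $N = \bN_G(P)$ contains $P$ as a Sylow $p$-subgroup, so $C_{p^n} = P$ is cyclic, giving (iii). Conversely, if (iii) holds, then $\bN_G(P) = P \rtimes C_{\sqrt{p-1}}$ is exactly the extremal Frobenius group, and a direct computation of its character table (the $p'$-degree characters are the $\sqrt{p-1}$ linear characters of the quotient $C_{\sqrt{p-1}}$ together with the $(p^n - \cdots)/\sqrt{p-1}$ characters induced from nontrivial characters of $P$, and one checks the field-of-values condition) gives $|\Irr_{p',\parat}(\bN_G(P))| = 2\sqrt{p-1}$.

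The main obstacle will be the first step: reducing the $p'$-degree bound to the bound for $\bN_G(P)$ in a way that is genuinely unconditional, since the full McKay–Navarro conjecture is open. The honest approach is either (a) to prove directly that $|\Irr_{p',\parat}(G)| \ge 2\sqrt{p-1}$ by exhibiting enough $p'$-degree almost $p$-rational characters inside $G$ itself — e.g. by lifting suitable almost $p$-rational characters of $\bN_G(P)$ via the (unconditional) McKay \emph{bijection} of Isaacs–Malle–Navarro type in the solvable or $p$-solvable case, and handling the general case via the classification-based verification of McKay–Navarro for the relevant components — or (b) to exploit that the inequality direction only needs a surjection-type statement rather than the full Galois-equivariant bijection. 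I expect the cleanest writeup uses the known cases of McKay–Navarro for the specific Galois automorphisms involved (those fixing $\QQ_{|G|_{p'}}$ and acting trivially on roots of unity of order dividing $p$), which suffice because almost $p$-rationality is detected exactly by such automorphisms; verifying that this fragment of McKay–Navarro is available in full generality, or circumventing it, is where the real work lies.
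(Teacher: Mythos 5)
Your overall skeleton (reduce to $\bN_G(P)$, invoke Theorem \ref{theorem-general-bound} on $\bN_G(P)/P'$, handle extremality) matches the intended logic, and your treatment of the equivalence (ii) $\Leftrightarrow$ (iii) is essentially what the paper does once one passes to $\bN_G(P)/P'$ and uses that all its irreducible characters have $p'$-degree. But there is a genuine gap, and you have correctly identified where it is while not actually filling it: the equality $|\Irr_{p',\parat}(G)| = |\Irr_{p',\parat}(\bN_G(P))|$ that you need for the lower bound and for (i) $\Leftrightarrow$ (ii) is a consequence of the McKay--Navarro conjecture, which is open. Your fallback suggestions --- ``lifting'' almost $p$-rational characters from $\bN_G(P)$ into $G$, or using ``the known cases of McKay--Navarro'' --- do not give an argument, because there is no unconditional Galois-equivariant McKay bijection for general $G$, and the conjecture is only known for $p$-solvable groups and groups with cyclic Sylow $p$-subgroups.

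The paper does \emph{not} reduce to $\bN_G(P)$ in general. Instead it runs a minimal counterexample argument directly on $G$: after disposing of the cyclic Sylow case (where Navarro's theorem does apply and (i) $\Leftrightarrow$ (ii) $\Leftrightarrow$ (iii) follows from Theorem \ref{theorem-general-bound}), the paper shows a minimal counterexample has a unique minimal normal subgroup $N = S^t$ with $p \nmid |G/N|$ and $S$ a simple group of Lie type in cross characteristic (Lemma \ref{lemma-kernel}, the Navarro--Tiep extendibility machinery, and the It\^o--Michler theorem handle the other cases). It then constructs sufficiently many almost $p$-rational $p'$-degree characters of $G$ by hand, using Lusztig series and semisimple characters of order-$p$ elements (Lemma \ref{lem:lusz} and Proposition \ref{prop:fld}), unipotent characters and $d$-Harish-Chandra theory, and careful orbit-counting under $\Out(S)$ (Theorems \ref{theorem-simple-groups} and \ref{theorem-simple-groups2}). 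This cross-characteristic Lie-type analysis is the technical core of the proof, and your proposal omits it entirely; without it, or some comparably concrete substitute, the lower bound $|\Irr_{p',\parat}(G)| \ge 2\sqrt{p-1}$ for non-cyclic Sylow is unproved.
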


This improves the main result of \cite{Malle-Maroti} by bringing
character values into consideration. We remark also that recent work
\cite{Giannelli-Hung-Schaeffer-Rodriguez} on characters of
$\pi'$-degree with small cyclotomic field of values implies that,
for any pair $\{p,q\}$ of primes and any non-trivial group $G$, the
two sets $\Irr_{p',\parat}(G)$ and $\Irr_{q',\qarat}(G)$ always
contain a non-trivial common character. This somewhat indicates that
the sets $\Irr_{p',\parat}(G)$ are ``large'' (see
Theorem~\ref{thm:GHSV} for details), a phenomenon that is reinforced
in a clearer way by Theorem~\ref{theorem-p'-degree}.

Let $\Phi(P)$, as usual, denote the Frattini subgroup of a Sylow
$p$-subgroup $P$ of $G$. The conditions in
Theorems~\ref{theorem-general-bound},
\ref{theorem-general-bound-p^2}, and \ref{theorem-p'-degree} on $P$
being non-trivial and non-cyclic are equivalent to the conditions
$p\mid |P/\Phi(P)|$ and $p^2\mid |P/\Phi(P)|$, respectively. In view
of the McKay--Navarro conjecture and other local/global conjectures,
it is not surprising that the local group $P/\Phi(P)$ is a key
invariant that controls the global numbers $|\Irr_{p',\parat}(G)|$
and $|\Irr_\parat(G)|$. In fact, when $G$ is an abelian $p$-group we
have $|\Irr_\parat(G)|=|\Irr_{p',\parat}(G)|=|P/\Phi(P)|$.

In the next main result we make an attempt to push the bound in
Theorem~\ref{theorem-p'-degree} up to~$p$, with the help of the
(known) solvable case of the McKay--Navarro conjecture (due to
Dade).

\begin{theorem}   \label{theorem-general-bound-p^3}
 Let $G$ be a finite group, $p$ a prime and $P$ a Sylow $p$-subgroup of $G$.
 If $|P/\Phi(P)|\geq p^3$, then $|\Irr_{p',\parat}(G)|>p$ provided that any of
 the following two conditions holds.
 \begin{itemize}
  \item[(1)] $G$ is solvable and $p > 7200$; or
  \item[(2)] the McKay--Navarro conjecture is true and $p$ is sufficiently
   large.
 \end{itemize}
\end{theorem}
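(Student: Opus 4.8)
The plan is to reduce, in both cases, to the local normalizer $N:=\bN_G(P)$ and then to a purely linear problem about coprime actions. Almost $p$-rationality of a character is equivalent to being fixed by $\Gal(\QQ_{|G|}/\QQ_{p\cdot|G|_{p'}})$, and this Galois group lies in the subgroup of $\Gal(\QQ_{|G|}/\QQ)$ governing the McKay--Navarro action; hence any equivariant McKay--Navarro bijection $\Irr_{p'}(G)\to\Irr_{p'}(N)$ restricts to a bijection $\Irr_{p',\parat}(G)\to\Irr_{p',\parat}(N)$. In case~(1) this bijection is supplied, with the required Galois compatibility, by Dade's solution of the McKay--Navarro conjecture for solvable groups, and in case~(2) by hypothesis; so it suffices to bound $|\Irr_{p',\parat}(N)|$ from below. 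Since $P$ is a \emph{normal} Sylow $p$-subgroup of $N$, we have $N=P\rtimes H$ with $H$ a $p'$-group, and because $[N:P]=|H|$ is prime to $p$, every $p'$-degree irreducible character of $N$ lies over a linear character of $P$. Writing $\bar P:=P/\Phi(P)\cong\mathbb F_p^{\,d}$ (so $d\ge 3$) and $\bar N:=N/\Phi(P)=\bar P\rtimes H$, a linear character of $P$ has order dividing $p$ exactly when it factors through $\bar P$; using the (trivial-on-a-complement) canonical extension of such a character through its coprime inertia group together with Gallagher's theorem, one checks that the irreducible characters of $N$ over such characters are precisely the inflations of $\Irr(\bar N)$, that they all have $p'$-degree, and that they all take values in $\QQ_{p|H|}$ and so are almost $p$-rational. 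Consequently
\[
 |\Irr_{p',\parat}(N)| = k(\bar P\rtimes H),
\]
and the theorem reduces to showing $k(\mathbb F_p^{\,d}\rtimes H)>p$ whenever $d\ge 3$ and $H\le\GL_d(p)$ is a $p'$-group (solvable, in case~(1)), under the respective hypotheses on~$p$.

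For this linear problem I would use the class-counting identity $k(\mathbb F_p^{\,d}\rtimes H)=\sum_{\lambda}k(H_\lambda)$, the sum over $H$-orbit representatives on the dual module, whence in particular $k(\mathbb F_p^{\,d}\rtimes H)\ge k(H)+(p^d-1)/|H|$. When $|H|<(p^d-1)/p$ --- which for $d\ge 3$ permits $|H|$ up to roughly $p^2$ --- the orbit term alone already exceeds $p$. The remaining case is that of ``large'' coprime linear groups of dimension $\ge 3$: here one invokes the classification of such groups (Hering's theorem and the Aschbacher subgroup structure in general; Wolf's and P\'alfy's order bounds for solvable completely reducible linear groups in case~(1)), which forces $H$ to be imprimitive, tensor-decomposable, or of $\Gamma L_1$-type up to bounded-index or bounded-order corrections. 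In the $\Gamma L_1(p^d)$-type case a normal cyclic subgroup $A$ of order at least $|H|/d$ with $[H:A]\le d$ gives $k(H)\ge k(A)/[H:A]\ge |H|/d^2\ge p^{d-1}/d^2>p$ for $p$ large; the bounded-order exceptional cases (extraspecial-normalizer and nearly-simple configurations) make the orbit term $(p^d-1)/|H|$ exceed $p$ for $p$ large; and imprimitive or tensor-decomposable $H$ are handled by passing to $H$-sections and reducing to smaller dimension, ultimately to $d=3$. Organizing this reduction so that a single threshold on $p$ works uniformly in $d$ is the way to obtain part~(2).

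The main obstacle is the explicit bound in case~(1): converting ``$p$ sufficiently large'' into ``$p>7200$'' requires the full Wolf--P\'alfy analysis of solvable coprime linear groups together with an explicit inspection of the finitely many small exceptional configurations in dimensions $3$ and $4$ (and the wreath/tensor products assembled from dimension $2$), whose orders --- products of shapes such as $|2.A_5|^2$, $|\GL_2(5)|$, and the like --- are exactly what pins down the numerical threshold; carrying out that case analysis without losing the bound is the delicate part. In case~(2) the difficulty is instead the uniformity in $d$: Aschbacher's theorem furnishes only finitely many exceptional $H$ for each fixed $d$, so the orbit-counting and section arguments must be arranged --- via imprimitivity and field extensions --- so that $k(\mathbb F_p^{\,d}\rtimes H)>p$ holds for \emph{all} $d\ge 3$ once $p$ exceeds a single absolute constant. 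Together with the (available) Galois-refined form of the McKay--Navarro bijection used in the reduction, this completes the argument.
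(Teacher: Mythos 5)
Your reduction to the local question is correct and is essentially the one the paper uses: both you and the paper invoke the (known) solvable / assumed case of the McKay--Navarro conjecture to pass from $|\Irr_{p',\parat}(G)|$ to the normaliser $N=\bN_G(P)$, note that every $p'$-degree irreducible of $N$ lies over a linear character of $P$, and land on the problem of bounding $k(\bar P\rtimes H)$ for the coprime action of a $p'$-group $H$ on $\bar P:=P/\Phi(P)\cong\mathbb F_p^{\,d}$, $d\ge3$. (You in fact prove the sharper equality $|\Irr_{p',\parat}(N)|=k(\bar P\rtimes H)$ by running the ``almost $p$-rational iff over an almost $p$-rational linear of $P$'' argument; the paper only records the one-sided inequality via $|\Irr_\parat(N/P')|\ge |\Irr_\parat(N/\Phi(P))|=k(N/\Phi(P))$, which is all it needs.)

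The gap is that the affine inequality $k(\mathbb F_p^{\,d}\rtimes H)>p$ for $d\ge 3$ is not proved; it is only sketched, and the sketch is not sound as stated. This inequality is exactly the content of the paper's Theorems~\ref{IulianAttila} and~\ref{primes1}, whose proofs occupy the case analysis of Sections~\ref{section-affine-groups} and~\ref{section-affine-groups-2} (they rest on Lemma~\ref{primitiveandirreducible}, Lemma~\ref{k(H)}, Lemmas~\ref{l25}--\ref{l3} on permutation groups, the bound in the extraspecial case, the $\Gamma L_1$-analysis via Ernest's inequality, etc.). In your sketch, three points in particular need repair. First, Hering's theorem classifies \emph{transitive} affine groups and does not describe general coprime linear $p'$-groups; the paper instead uses the structure theorem of Duyan--Halasi--Podoski for coprime primitive linear groups, which is what actually makes the ``extraspecial normaliser or $\Gamma L_1$-type'' dichotomy available. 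Second, the suggestion that imprimitive or tensor-decomposable $H$ can be handled ``by passing to $H$-sections and reducing to smaller dimension, ultimately to $d=3$'' goes in the wrong direction: in dimensions $1$ and $2$ the target inequality $k(HV)>p$ is \emph{false} (cf.\ Lemma~\ref{metacyclic} and Theorem~\ref{2}), so a reduction to lower dimension does not close the argument. The genuinely troublesome configuration is precisely a sum $V=V_1\oplus V_2$ with $|V_1|=p$, $|V_2|=p^2$, and the end of the proof of Theorem~\ref{primes1} has to dispose of that directly. Third, the step from ``$|H|<(p^d-1)/p$ handles the small-order case'' to the $\Gamma L_1$- and extraspecial-type estimates leaves a range of orders $|H|$ uncovered in the imprimitive case, and controlling the imprimitive top group's class number requires the permutation-group input of Lemmas~\ref{l25} and \ref{l3}, which your sketch does not supply. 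In short, the reduction is fine and matches the paper, but the affine-group theorem that actually carries the numerical bound (and pins down $p>7200$ in part (1)) is the substance of the proof and remains to be done.
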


The conditions in Theorem~\ref{theorem-general-bound-p^3} on $G$ being solvable
and $p$ being large are perhaps superfluous but we are not able to remove
either of them at this time. On the other hand, the condition
$|P/\Phi(P)| \geq p^3$ is necessary. For every prime~$p$ there is a metacyclic
group $G$ such that $|P| =|P/\Phi(P)| = p$ and $|\Irr_\parat(G)|\leq p$.
Moreover, results in Section~\ref{section-affine-groups} show that there are
infinitely many primes $p$ for which there are examples of groups $G$ and $P$
with $|P/\Phi(P)| = p^{2}$ and $|\Irr_\parat(G)|\leq p$.


The proof of Theorem~\ref{theorem-general-bound-p^3} depends on a
result concerning the existence of a linear $p'$-group $H\leq
\GL(V)$, where $V$ is a finite vector space in characteristic $p$,
such that the class number $k(HV)$ of $HV$ is at most $p$, see
Theorems~\ref{2} and \ref{primes1}. This existence result may be of
independent interest and useful in other purposes, as discussed at
the beginning of Section~\ref{section-affine-groups}. In fact, our
results in Sections~\ref{section-affine-groups} and
\ref{section-affine-groups-2} point out a possible way to detect the
cyclicity of Sylow $p$-subgroups of a finite group from its
character table using almost $p$-rational $p'$-characters. This was
known only for $p=2,3$, in a recent work of Rizo, Schaeffer Fry, and
Vallejo \cite{RSV}.

The McKay--Navarro conjecture admits a blockwise refinement, which
is often referred to as the Alperin--McKay--Navarro conjecture, see
\cite[Conjecture~B]{Navarro04}. Let
\[\Irr_{p',\parat}(B_0(G)):=\Irr_{p',\parat}(G)\cap B_0(G),\] where
$B_0(G)$ is the principal $p$-block $G$. The refinement implies that
\[|\Irr_{p',\parat}(B_0(G))|=|\Irr_{p',\parat}(B_0(\bN_G(P)))|=|\Irr(\bN_G(P)/\Phi(P)\bO_{p'}(\bN_G(P)))|,\]
where $P\in\Syl_p(G)$, see Section~\ref{section-mckay-navarro} for
more details. As $\bN_G(P)/\Phi(P)\bO_{p'}(\bN_G(P))$ is a
semidirect product of the $p'$-group
$\bN_G(P)/P\bO_{p'}(\bN_G(P)))$ acting faithfully on the vector
space $P/\Phi(P)$, to characterize the cyclicity of $P$, one would
need to understand the values of the class numbers of these
semidirect direct products.

Note that if $\dim(V)=1$ then $k(HV)=e+\frac{p-1}{e}$, where $e=|H|
\mid (p-1)$. Our work on the values of class numbers of affine
groups seems to suggest that the class numbers $k(HV)$ with
$\dim(V)=1$ are distinguished from those with $\dim(V)>1$. We indeed
confirm this for $p$ sufficiently large. This observation and the
Alperin--McKay--Navarro conjecture lead us to the following deep
question on the connection between Galois automorphisms and cyclic
Sylow subgroups. Here $\cS_p:=\{e+\frac{p-1}{e}: e\in\ZZ^+, e\mid
p-1\}$.

\begin{question}\label{conj:cyclicity}
Let $G$ be a finite group and $p$ a prime dividing $|G|$. Is it true
that Sylow $p$-subgroups of $G$ are cyclic if and only if
$|\Irr_{p',\parat}(B_0(G))|\in \cS_p$?
\end{question}

An affirmative answer to Question~\ref{conj:cyclicity} would provide
an(other) answer to Brauer's Problem~12 \cite{Brauer63}, which asks
for information about the structure of Sylow $p$-subgroups of $G$
one can obtain from the character table of $G$. The problem has
inspired several interesting local/global results over the past two
decades, such as
\cite{Navarro-Tiep-Turull,Navarro-Tiep14,Navarro-Solomon-Tiep15,SchaefferFry,
Navarro-Tiep,Malle19}, to name a few. Note also that, when $p\leq
3$, $\cS_p=\{p\}$, and thus the statement is equivalent to: Sylow
$p$-subgroups of $G$ are cyclic if and only if
$|\Irr_{p',\parat}(B_0(G))|=p$, which is exactly what was shown in
\cite{RSV}.

Theorems~\ref{theorem-general-bound},
\ref{theorem-general-bound-p^2}, \ref{theorem-p'-degree},
and~\ref{theorem-general-bound-p^3} are proved in
Sections~\ref{section-explicit}, \ref{section-asymptotic},
\ref{section-p'-degree}, and \ref{section-proof-1.4}, respectively.
In the last Section~\ref{sect:last}, we make some remarks on
Question~\ref{conj:cyclicity} and answer it for $p$-solvable groups
with $p$ sufficiently large.

\section{The McKay--Navarro conjecture}   \label{section-mckay-navarro}

Let $\Irr(G)$ denote the set of all irreducible ordinary characters of a finite
group $G$, and let $\Irr_{p'}(G):=\{\chi\in\Irr(G):p\nmid \chi(1)\}$, where $p$
is a prime. The well-known McKay conjecture \cite{McKay} asserts that, for
every $G$ and every $p$, the number of $p'$-degree irreducible characters of
$G$ equals that of the normalizer $\bN_G(P)$ of a Sylow $p$-subgroup $P$ of
$G$. That is,
\[
|\Irr_{p'}(G)|=|\Irr_{p'}(\bN_G(P))|.
\]

Navarro proposed that there should be a bijection from
$\Irr_{p'}(G)$ to $\Irr_{p'}(\bN_G(P))$ that commutes with the
action of the subgroup $\mathcal{H}$ of the Galois group
$\Gal(\QQ_{|G|}/\QQ)$ consisting of those automorphisms that send
every root of unity $\xi\in\QQ_{|G|}$ of order not divisible by $p$
to $\xi^{q}$, where $q$ is a certain fixed power of $p$, see
\cite[Conjecture~9.8]{Navarro18} and also
\cite{Navarro04,Turull08,Navarro-Spath-Vallejo} for more updates.
This refinement of the McKay conjecture has now become the
McKay--Navarro conjecture, also known as the Galois-McKay conjecture.

We define the \emph{$p$-rationality level} of a character $\chi$ to
be the smallest nonnegative integer $\alpha:=\alpha_p(\chi)$ such
that the values of $\chi$ belong to the cyclotomic field
$\QQ_n:=\QQ(e^{2\pi i/n})$ for some $n$ divisible by $p^\alpha$.
Remark that $\chi$ is $p$-rational if and only if $\alpha_p(\chi)=0$
and $\chi$ is almost $p$-rational if and only if
$\alpha_p(\chi)\leq1$. (In a very recent paper \cite{Navarro-Tiep21},
Navarro and Tiep call the smallest positive integer $n$ such that
$\QQ(\chi)\subseteq \QQ_n$ the
\emph{conductor} of $\chi$, denoted by $c(\chi)$. The
$p$-rationality level of $\chi$ simply is the logarithm to the base
$p$ of the $p$-part of $c(\chi)$; that is
$\alpha_p(\chi)=\log_p(c(\chi)_p)$. We thank G. Navarro for pointing
out this connection to us.)

As $\Gal(\QQ_{|G|}/\QQ_{p^\alpha|G|_{p'}})$ is contained in
$\mathcal{H}$ for every $\log_p|G|_p\geq\alpha\in\ZZ^{\geq 0}$, the
McKay--Navarro conjecture implies that the number of $p'$-degree irreducible
characters at any level $\alpha$ in $G$ and $\bN_G(P)$ would be the
same:
\[
|\{\chi\in\Irr_{p'}(G): \alpha_p(\chi)=
\alpha\}|=|\{\theta\in\Irr_{p'}(\bN_G(P)): \alpha_p(\theta)=
\alpha\}|.
\]
Since every irreducible character of $\bN_G(P)$ of $p'$-degree has
kernel containing the commutator subgroup $P'$ of $P$ and every
irreducible character of $\bN_G(P)/P'$ is automatically of $p'$-degree,
we then have
\[
|\{\chi\in\Irr_{p'}(G): \alpha_p(\chi)\leq
1\}|=|\{\theta\in\Irr(\bN_G(P)/P'):\alpha_p(\theta)\leq 1\}|,
\]
which means that \[ |\Irr_{p',\parat}(G)|=|\Irr_\parat(\bN_G(P)/P')|.
\]

Now suppose for a moment that $|G|$ is divisible by $p$. Then $|\bN_G(P)/P'|$
is also divisible by $p$, and thus, by Theorem~\ref{theorem-general-bound} and
the conclusion of the previous paragraph, the McKay--Navarro conjecture implies
that the number of almost $p$-rational irreducible characters of $p'$-degree
of $G$ is at least $2\sqrt{p-1}$, as claimed in the Introduction.


The Alperin-McKay-Navarro conjecture refines further the
McKay-Navarro conjecture by considering blocks. In a similar way as
with the McKay-Navarro conjecture, it implies, for every block $B$
with a defect group $D$, that
\[|\Irr_0(B)\cap\Irr_{\parat}(B)|=|\Irr_0(b)\cap\Irr_{\parat}(b)|,\]
where $b$ is the Brauer correspondent of $B$ and $\Irr_0(B)$ and
$\Irr_{\parat}(B)$ respectively denote the sets of height zero
characters and almost $p$-rational characters in $B$. In particular,
for principal blocks, we would have
\[|\Irr_{p',\parat}(B_0(G))|=|\Irr_{p',\parat}(B_0(\bN_G(P)))|\]

Note that both the McKay--Navarro conjecture and its blockwise
refinement are known to be true for $p$-solvable groups, proved by
Turull \cite{Turull08,Turull13}, and for groups with a cyclic Sylow
$p$-subgroup, established by Navarro \cite{Navarro04}.

\section{An explicit bound for $|\Irr_\parat(G)|$}   \label{section-explicit}

In this section we prove Theorem~\ref{theorem-general-bound}. We do so by
relating almost $p$-rationality of characters and almost $p$-regularity of
conjugacy classes. This connection will be used in
Section~\ref{section-asymptotic} as well to achieve the asymptotic bound.

We start with the easier case $p=2$. Note that almost $2$-rational characters
are precisely $2$-rational characters.

\begin{lemma}
 Let $G$ be a finite group of even order. Then $|\Irr_\trat(G)|\geq2$ with
 equality if and only if $G$ is a non-trivial cyclic $2$-group.
\end{lemma}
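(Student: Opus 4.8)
The plan is to translate almost $2$-rationality of characters into a statement about conjugacy classes and then count. Since $p=2$, a character is almost $2$-rational exactly when it is $2$-rational, i.e.\ fixed by the Galois group $H:=\Gal(\QQ_{|G|}/\QQ_{|G|_{2'}})$; call a conjugacy class \emph{$2$-rational} if it too is fixed by $H$. By Brauer's permutation lemma, $|\Irr_\trat(G)|$ equals the number of $2$-rational conjugacy classes of $G$, so it suffices to count these. (One should note that $H$ is non-cyclic as soon as $8\mid|G|$, so one invokes Brauer's lemma in the form valid for an arbitrary subgroup of $\Gal(\QQ_{|G|}/\QQ)$.) For the lower bound, the class of $1$ is $2$-rational, and since $|G|$ is even Cauchy gives an involution $t$, whose class is rational --- because $t^k=t$ for all odd $k$ --- hence $2$-rational and distinct from $\{1\}$; so $|\Irr_\trat(G)|\ge2$.

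For the equality case, suppose $|\Irr_\trat(G)|=2$, so $G$ has exactly two $2$-rational classes. First I would rule out odd prime divisors of $|G|$: if $q\mid|G|$ with $q$ odd, Cauchy gives an element $x$ of order $q$, and writing $\sigma\in H$ as $\zeta\mapsto\zeta^k$ with $k\equiv1\pmod{|G|_{2'}}$ (hence $k\equiv1\pmod q$) we get $x^k=x$, so the class of $x$ is a third $2$-rational class, a contradiction. Thus $G$ is a $2$-group, so $|G|_{2'}=1$ and ``$2$-rational'' simply means ``rational''. In particular at most two \emph{linear} characters of $G$ are rational; but the rational linear characters of $G$ are precisely the homomorphisms $G\to\{\pm1\}$, i.e.\ the elements of $\operatorname{Hom}(G/\Phi(G),\ZZ/2\ZZ)$, of cardinality $|G/\Phi(G)|\ge2$. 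Hence $|G/\Phi(G)|=2$, so by Burnside's basis theorem $G$ is cyclic, i.e.\ $G\cong C_{2^n}$ with $n\ge1$ (as $|G|$ is even).

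Conversely, for $G=C_{2^n}$ the irreducible characters are $\lambda_j\colon g\mapsto e^{2\pi ij/2^n}$ with $0\le j<2^n$, and $\lambda_j$ is rational precisely when $e^{2\pi ij/2^n}=\pm1$, i.e.\ $j\in\{0,2^{n-1}\}$, giving exactly two; this finishes the characterization. The only ingredient that is not a one-line verification is Brauer's permutation lemma for a general subgroup of the Galois group, so I do not anticipate a real obstacle here --- in keeping with this being the ``easy case'' $p=2$.
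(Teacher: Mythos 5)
The central claim in your argument --- that by Brauer's permutation lemma $|\Irr_\trat(G)|$ equals the number of $2$-rational conjugacy classes --- is not justified, and your parenthetical aside does not repair it. Brauer's permutation lemma gives, for a single automorphism $\sigma$ acting compatibly on $\Irr(G)$ and on $\Cl(G)$, that the numbers of $\sigma$-fixed characters and $\sigma$-fixed classes coincide; combined with Burnside's counting lemma it also gives, for an arbitrary subgroup $A$ of the Galois group, that the numbers of $A$-\emph{orbits} on the two sets coincide. It does \emph{not} give that the numbers of $A$-\emph{fixed points} coincide for a non-cyclic $A$, and in fact two $A$-sets can have the same fixed-point counts on every $a\in A$ and yet different numbers of $A$-fixed points (for $A=C_2\times C_2$, compare the disjoint union of the three $2$-point coset spaces with the disjoint union of two fixed points and a regular orbit). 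Since $\Gal(\QQ_{|G|}/\QQ_{|G|_{2'}})\cong\Gal(\QQ_{|G|_2}/\QQ)\cong C_2\times C_{2^{a-2}}$ is non-cyclic as soon as $8\mid|G|$, exactly the case you flag, your translation to classes is a genuine gap. It is relevant that the paper's own Lemma~\ref{lemma-p-reg<p-rat} proves only an \emph{inequality} $|\Cl_\pareg(G)|\le|\Irr_\parat(G)|$ and only for $p$ odd, precisely because there the Galois group $\Gal(\QQ_{|G|}/\QQ_{p|G|_{p'}})$ is cyclic; the $p=2$ case is handled by an entirely different route.

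This gap propagates through your argument. For the lower bound you produce two $2$-rational \emph{classes} and then appeal to the unjustified equality; the paper instead quotes Burnside's theorem that a group of even order has a non-trivial rational irreducible character, which gives two $2$-rational characters directly. For the equality case, you again use the class count to exclude odd prime divisors of $|G|$; the paper's argument is quite different --- it quotes \cite[Lemma~9.2]{Hung-Maroti20} for non-solvable $G$, and for solvable $G$ argues via $\bO^{2'}(G)$, $\bO^{2}(G)$, and $2$-rational extensions (using \cite[Corollary~6.4]{Navarro18} and Clifford correspondence) to reduce to the case that $G$ is a $2$-group. Your final step, from $G$ a $2$-group with at most two rational linear characters to $|G/\Phi(G)|=2$ and hence $G$ cyclic, is correct and matches the paper's. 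But as written your proof does not establish that $|\Irr_\trat(G)|=2$ forces $G$ to be a $2$-group, which is the crux of the equality case.
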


\begin{proof}
The bound $|\Irr_\trat(G)|\geq 2$ is clear from Burnside's theorem
that groups of even order always possess a non-trivial rational
irreducible character. It is also clear that the number of
$2$-rational irreducible characters of a non-trivial cyclic $2$-group
is exactly $2$. It remains to show that if $|\Irr_\trat(G)|= 2$
then $G$ must be a non-trivial cyclic $2$-group. If $G$ is
non-solvable then it was shown in \cite[Lemma~9.2]{Hung-Maroti20}
that $|\Irr_\trat(G)|\geq 3$ and thus we are done.

So suppose that $G$ is solvable and $|\Irr_\trat(G)|= 2$. First we
have $\bO^{2'}(G)=G$ and moreover $G/\bO^2(G)$ is cyclic since
otherwise $|\Irr_\trat(G)|> 2$. We claim that $L:=\bO^2(G)$ is
trivial. Assume otherwise, then $G_1:=G/\bO^{2'}(L)$ is a semidirect
product of a $2$-group $A$ isomorphic to $G/L$ acting on a
non-trivial odd-order group $B$ isomorphic to $L/\bO^{2'}(L)$. Since
$A$ is cyclic, every $\theta\in\Irr(B)$ is extendible to the inertia
subgroup $I_{G_1}(\theta)$. In fact, by
\cite[Corollary~6.4]{Navarro18}, $\theta$ has a unique extension
$\chi\in\Irr(I_{G_1}(\theta))$ such that $\QQ(\chi)=\QQ(\theta)$.
Now by Clifford's correspondence we have that
$\chi^{G_1}\in\Irr(G_1)$ is $2$-rational, implying that
$|\Irr_\trat(G_1)|\geq 3$, a contradiction.

We have shown that $G$ is a $2$-group. If $G/\Phi(G)$ is elementary
abelian of 2-rank at least 2 then $G$ would have at least 4 rational
characters, a contradiction. We conclude that $G/\Phi(G)$ is cyclic,
which means that $G$ is cyclic as well.
\end{proof}

To bound the number of almost $p$-rational irreducible characters,
it is helpful to work with the dual notion for conjugacy classes,
namely {almost $p$-regular classes}. We therefore define the
\emph{$p$-regularity level} of a conjugacy class $g^G$ of $G$ to be
$\log_p(|g|_p)$, where $|g|_p$ is the $p$-part of the order of $g$.
Clearly a class $g^G$ is $p$-regular if its $p$-regularity level is
$0$. We say that $g^G$ is \emph{almost $p$-regular} if its level is
at most 1.

Let $\Cl_\preg(G)$ denote the set of $p$-regular classes and
$\Cl_\pareg(G)$ denote the set of almost $p$-regular classes of
$G$. We use $k(G)$ to denote the number of conjugacy classes of $G$.
Recall that $\Irr_\prat(G)$ and $\Irr_\parat(G)$ are the sets of
$p$-rational and almost $p$-rational, respectively, irreducible
characters of $G$. Finally, $n(G,X)$ denotes the number of orbits of
a group $G$ acting on a set $X$.

We observe that if the exponent of a finite group $G$ is not
divisible by $p^2$, then $k(G)=|\Cl_\pareg(G)|=|\Irr_\parat(G)|$.

The following fact will be used often in our proofs.

\begin{lemma}   \label{lemma-p-reg<p-rat}
 Let $G$ be a finite group and let $p$ be an odd prime. Then
 $|\Cl_\pareg(G)|\leq |\Irr_\parat(G)|$ and
 $|\Cl_\preg(G)|\leq |\Irr_\prat(G)|$.
\end{lemma}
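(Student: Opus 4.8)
The plan is to exhibit a natural action of a cyclic Galois group on the relevant sets of classes and characters, and then compare the numbers of fixed points (which, by Brauer's permutation lemma, must agree). Concretely, let $n=|G|$, write $n=p^a m$ with $p\nmid m$, and consider the Galois group $\Gamma:=\Gal(\QQ_n/\QQ)$ acting on both $\Irr(G)$ and on the set $\Cl(G)$ of conjugacy classes of $G$ (the latter via the rule $g^G\mapsto (g^k)^G$ when $\sigma$ sends every root of unity to its $k$-th power). By Brauer's permutation lemma, for every subgroup $\Delta\leq\Gamma$ the number of $\Delta$-fixed points on $\Irr(G)$ equals the number of $\Delta$-fixed points on $\Cl(G)$. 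The strategy is to pick $\Delta$ so that its fixed characters are \emph{exactly} $\Irr_\parat(G)$, and then to argue that its fixed classes \emph{contain} $\Cl_\pareg(G)$; combined with Brauer's lemma this yields $|\Cl_\pareg(G)|\leq|\Irr_\parat(G)|$. The analogous choice with ``$\parat$'' replaced by ``$\prat$'' and ``$\pareg$'' by ``$\preg$'' gives the second inequality.

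Here is the choice of $\Delta$. For the first inequality, let $\Delta$ be the subgroup of $\Gamma$ consisting of those $\sigma$ that fix $\QQ_{pm}$ pointwise, i.e.\ $\Delta=\Gal(\QQ_n/\QQ_{pm})$. A character $\chi$ is fixed by $\Delta$ precisely when $\QQ(\chi)\subseteq\QQ_{pm}$, which is exactly the condition that $\chi$ be almost $p$-rational; so the fixed characters are exactly $\Irr_\parat(G)$. On the class side, $\Delta$ corresponds (under the standard identification of $\Gamma$ with $(\ZZ/n\ZZ)^\times$) to those residues $k$ with $k\equiv 1\pmod{pm}$. If $g^G$ is almost $p$-regular then the order of $g$ divides $pm$ (it has $p$-part at most $p$ and $p'$-part dividing $m$), so any such $k$ satisfies $k\equiv 1$ modulo the order of $g$, whence $g^k$ is conjugate to $g$; thus every almost $p$-regular class is $\Delta$-fixed, i.e.\ $\Cl_\pareg(G)\subseteq\Cl(G)^\Delta$. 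Therefore $|\Cl_\pareg(G)|\le|\Cl(G)^\Delta|=|\Irr(G)^\Delta|=|\Irr_\parat(G)|$. For the second inequality, replace $\QQ_{pm}$ by $\QQ_m$ throughout: $\Delta=\Gal(\QQ_n/\QQ_m)$ has fixed characters exactly $\Irr_\prat(G)$, and if $g^G$ is $p$-regular then $o(g)\mid m$, so the same argument gives $\Cl_\preg(G)\subseteq\Cl(G)^\Delta$ and hence $|\Cl_\preg(G)|\le|\Irr_\prat(G)|$.

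I do not expect a serious obstacle here; the only point that needs care is the bookkeeping identifying the fixed-point sets precisely, namely that ``$\QQ(\chi)\subseteq\QQ_{pm}$'' is genuinely equivalent to almost $p$-rationality of $\chi$. For one direction this is the definition; for the converse one uses that $\QQ(\chi)$ is always contained in $\QQ_{|G|}=\QQ_n$, so if $\QQ(\chi)\subseteq\QQ_{n'}$ for some $n'$ with $p$-part at most $p$, then $\QQ(\chi)\subseteq\QQ_n\cap\QQ_{n'}=\QQ_{\gcd(n,n')}$ and $\gcd(n,n')$ divides $pm$. (One should also note that the inequalities are typically strict precisely because a $\Delta$-fixed class need not be almost $p$-regular, which is where the gap between the two sides comes from; but we only need the inequality.) The oddness of $p$ does not actually enter this argument, but it is the range of interest since for $p=2$ almost $2$-rational coincides with $2$-rational and the two statements merge.
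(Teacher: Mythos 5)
Your proposal is correct and is essentially the same argument as the paper's: both use the natural dual actions of $\Gal(\QQ_{|G|}/\QQ_{p|G|_{p'}})$ on $\Irr(G)$ and on conjugacy classes, observe that the fixed characters are exactly the almost $p$-rational ones while the fixed classes contain the almost $p$-regular ones, and conclude via Brauer's permutation lemma (with the analogous variant for the $p$-rational/$p$-regular inequality). The paper merely phrases it in terms of a generator $\sigma$ of this (cyclic) Galois group and disposes of the trivial case $p^2\nmid|G|$ at the outset, but the content is identical to yours.
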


\begin{proof}
If $|G|$ is not divisible by $p^2$ then all the classes of $G$ are
almost $p$-regular and all the characters of $G$ are almost
$p$-rational, and hence the lemma follows. Suppose $p^2\mid |G|$.
Consider the natural actions of the Galois group
$\mathrm{Gal}(\QQ_{|G|}/\QQ_{p|G|_{p'}})$ on classes and irreducible
characters of $G$. Note that this group is cyclic of order
$|G|_p/p$, and let $\sigma$ be a generator of the group. An
irreducible character of $G$ is almost $p$-rational if and only if
it is $\sigma$-fixed, while if a class of $G$ is almost $p$-regular
then the class is $\sigma$-fixed. The first inequality then follows by
Brauer's permutation lemma.

The second inequality is well-known and indeed can be proved similarly.
\end{proof}

\begin{lemma}   \label{lemma-p-reg-quotient}
 Let $N$ be a $p'$-group and $N\unlhd G$. Then
 \[
 |\Cl_\pareg(G)|\geq |\Cl_\pareg(G/N)|+n(G,\Cl_\pareg(N))-1,
 \]
 where $n(G,\Cl_\pareg(N))$ is the number of $G$-orbits on $\Cl_\pareg(N)$.
\end{lemma}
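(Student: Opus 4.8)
The plan is to build the natural map $\pi\colon\Cl_\pareg(G)\to\Cl_\pareg(G/N)$ defined by $\pi(g^G)=(gN)^{G/N}$ and to count the fibers of $\pi$. First I would verify that $\pi$ is well defined: it plainly does not depend on the chosen representative of $g^G$, and if $|g|_p\mid p$ then $|gN|$ divides $|g|$, so $|gN|_p\mid p$ and $gN$ is again almost $p$-regular.

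The key observation is that $\pi$ is surjective. Given an almost $p$-regular element $\bar h\in G/N$, choose a preimage $h\in G$ and write $h=h_ph_{p'}$ for its commuting $p$- and $p'$-parts. Since $N$ is a $p'$-group, $\langle h_p\rangle\cap N=1$, so $h_p$ has the same order as its image in $G/N$, which is precisely the $p$-part of $\bar h$; hence $|h|_p=|h_p|=|\bar h|_p\mid p$, so $h$ itself is almost $p$-regular and $\pi(h^G)=\bar h^{G/N}$. Next I would describe the fiber of $\pi$ over the trivial class $1^{G/N}$: a class $g^G$ lies in it exactly when $g\in N$, and since $N\unlhd G$ every such class is a $G$-orbit of $N$-conjugacy classes, so these classes are in bijection with the $G$-orbits on $\Cl(N)$. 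As $N$ is a $p'$-group, $\Cl(N)=\Cl_\pareg(N)$, and therefore this fiber has cardinality exactly $n(G,\Cl_\pareg(N))$.

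Finally, writing $|\Cl_\pareg(G)|=\sum_{\bar C\in\Cl_\pareg(G/N)}|\pi^{-1}(\bar C)|$ and using that the fiber over $1^{G/N}$ has size $n(G,\Cl_\pareg(N))$ while each of the remaining $|\Cl_\pareg(G/N)|-1$ fibers is nonempty by surjectivity yields the claimed inequality. No step is genuinely hard here; the only point requiring a little care is the surjectivity argument, where the hypothesis that $N$ is a $p'$-group is used to lift the (small) $p$-part of $\bar h$ to an element of the same $p$-order in $G$.
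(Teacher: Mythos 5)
Your proof is correct and follows essentially the same route as the paper: construct the natural map $g^G\mapsto(gN)^{G/N}$, use the $p'$-hypothesis on $N$ to show that any preimage of an almost $p$-regular element of $G/N$ is itself almost $p$-regular (so each fiber is nonempty), and note that the fiber over the identity class accounts for the $n(G,\Cl_\pareg(N))$ almost $p$-regular classes of $G$ inside $N$. Your write-up is somewhat more explicit about the fiber decomposition, but the underlying argument is identical.
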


\begin{proof}
It is clear that the number of almost $p$-regular classes of $G$
inside $N$ is at least $n(G,\Cl_\pareg(N))$. Let $gN$ be an
element of $G/N$ of order not divisible by $p^2$. Let
$g=g_{p}g_{p'}=g_{p'}g_p$ where $g_p$ is a $p$-element and $g_{p'}$
is a $p'$-element. Then $gN=g_{p}N\cdot g_{p'}N=g_{p'}N\cdot g_pN$.
Now the order of $g_pN$ is not divisible by $p^2$, and thus
$g_p^pN=N$, which implies that $g_p^p=1$ by the assumption on $N$.
We have shown that if $(gN)^{G/N}$ is an almost $p$-regular class
then $g$ is an almost $p$-regular element of $G$. The lemma follows.
\end{proof}

Next we record a consequence of a recent result \cite{Hung-Maroti20}
on bounding the number of $\Aut(S)$-orbits on the set of $p$-regular
classes of a non-abelian finite simple group $S$.

\begin{lemma}   \label{lemma-HM20}
 Let $S$ be a non-abelian simple group of order divisible by a prime~$p$. The
 number of $\Aut(S)$-orbits on $p$-regular classes of $S$ is at least
 $2(p-1)^{1/4}$. Moreover, if this number is at most $2\sqrt{p-1}$ then
 $p\leq 257$ and $p^2\nmid |S|$.
\end{lemma}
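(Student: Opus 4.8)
The plan is to read this off from the main result of \cite{Hung-Maroti20}. That result asserts (perhaps after a minor reformulation) a uniform inequality $n(\Aut(S),\Cl_\preg(S))\geq 2\sqrt{p-1}$ valid for every non-abelian simple group $S$ and every prime $p\mid|S|$, save for an explicit finite list $\mathcal L$ of pairs $(S,p)$, and it guarantees $n(\Aut(S),\Cl_\preg(S))\geq 2(p-1)^{1/4}$ also for $(S,p)\in\mathcal L$. Given this, the first assertion is immediate: off $\mathcal L$ one has $2\sqrt{p-1}\geq 2(p-1)^{1/4}$ since $p-1\geq 1$, and on $\mathcal L$ the bound $2(p-1)^{1/4}$ is part of the cited statement.

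For the ``moreover'' part one argues by contraposition through the list. If $n(\Aut(S),\Cl_\preg(S))\leq 2\sqrt{p-1}$, then $(S,p)\in\mathcal L$, and it remains to verify that every pair in $\mathcal L$ satisfies $p\leq 257$ and $p^2\nmid|S|$. The pairs in $\mathcal L$ fall into a few families: groups $\PSL_2(q)$ with $p$ a large prime divisor of $q-1$ or $q+1$ (so that a field automorphism group of comparable size cuts down the orbit count); the Suzuki and small Ree groups ${}^2B_2(q)$ and ${}^2G_2(q)$ with an analogous choice of $p$; and finitely many further small groups -- certain $\PSL_n(q)$, $\mathrm{PSp}_{2n}(q)$, alternating groups of bounded degree, and sporadic groups. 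In each of these, the relevant $p$ divides $|S|$ exactly once, so the Sylow $p$-subgroups of $S$ are cyclic of order $p$ and $p^2\nmid|S|$; and in each, $p\leq 257$, the extremal case being $(\PSL_2(256),257)$, where $q+1=257$ is a Fermat prime and the field automorphism group has order $8$.

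The depth here is entirely in \cite{Hung-Maroti20}; what remains is the finite bookkeeping above, and the single step that could demand an argument rather than a table lookup is confirming $p^2\nmid|S|$ throughout $\mathcal L$. If \cite{Hung-Maroti20} already records that its exceptional pairs have cyclic Sylow $p$-subgroups, even that is free; otherwise one observes that $p^2\mid|S|$ forces $|S|$ to be large relative to $p$ -- for $S$ of Lie type, via a torus carrying many $p$-regular semisimple classes -- so that $n(\Aut(S),\Cl_\preg(S))$ then comfortably exceeds $2\sqrt{p-1}$, the finitely many small primes $p$ being checked directly. Thus I anticipate no serious obstacle in the present lemma: the hard work has been front-loaded into \cite{Hung-Maroti20}, whose main theorem supplies both the uniform bound $2\sqrt{p-1}$ and the description of its exceptional set $\mathcal L$.
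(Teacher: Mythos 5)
Your approach coincides with the paper's: the lemma is a direct citation of Theorem~2.1 of \cite{Hung-Maroti20}, which already records both the lower bound $2(p-1)^{1/4}$ and the fact that an orbit count at most $2\sqrt{p-1}$ forces $p\leq 257$ and $p^2\nmid|S|$. The bookkeeping you sketch about the shape of the exceptional list is plausible but not needed, since the cited result already supplies exactly the conclusion stated.
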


\begin{proof}
This follows from \cite[Theorem 2.1]{Hung-Maroti20}.
\end{proof}

\begin{lemma}   \label{lemma-p-reg>2p-1}
 Let $G$ be a finite group having a non-abelian minimal normal subgroup $N$
 and $p$ an odd prime such that $p\mid |N|$ but $p\nmid |G/N|$. Then
 $|\Irr_\parat(G)|> 2\sqrt{p-1}$.
\end{lemma}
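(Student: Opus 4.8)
The plan is to reduce the problem to counting $\Aut(S)$-orbits on $p$-regular classes of a simple group and then invoke Lemma~\ref{lemma-HM20} together with Lemma~\ref{lemma-p-reg<p-rat}. Write $N = S_1 \times \cdots \times S_t$ as a direct product of copies of a non-abelian simple group $S$ with $p \mid |S|$. Since $p \nmid |G/N|$, a Sylow $p$-subgroup $P$ of $G$ lies in $N$, so $P = P_1 \times \cdots \times P_t$ with $P_i \in \Syl_p(S_i)$, and in particular $G$ permutes the factors $S_i$ with the $p$-divisible ones forming a single sort; for the purpose of a lower bound I may as well pass to the case $t = 1$, i.e.\ assume $N = S$ is simple (if $t > 1$ the class count only grows, since almost $p$-regular classes of $N$ include all products of $p$-regular classes, and $G$-orbits on these are controlled by the orbits on each factor). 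Then $G/C_G(N)$ embeds in $\Aut(S)$ and $C_G(N) \cap N = 1$, so $C_G(N)$ is a $p'$-group (as $p \nmid |G/N|$ forces a Sylow $p$-subgroup into $N$, hence $C_G(N)$ has trivial Sylow $p$-subgroup).

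Next I would apply Lemma~\ref{lemma-p-reg-quotient} with the normal $p'$-subgroup $C := C_G(N)$: this gives
\[
 |\Cl_\pareg(G)| \geq |\Cl_\pareg(G/C)| + n(G, \Cl_\pareg(C)) - 1 \geq |\Cl_\pareg(G/C)|,
\]
so it suffices to bound $|\Cl_\pareg(G/C)|$ from below, and $G/C$ is an almost simple group with socle $N \cong S$. Inside $G/C$, every $p$-regular class of $N$ that lies in a full $G$-orbit contributes, and distinct $\Aut(S)$-orbits (equivalently $(G/C)$-orbits, since $G/C \leq \Aut(S)$ and $\Aut(S)/(G/C)$ is a $p'$-group acting on $p$-regular classes — here I should be a little careful, but the number of $(G/C)$-orbits is at least the number of $\Aut(S)$-orbits, as $\Aut(S)$-orbits are unions of $(G/C)$-orbits) give rise to distinct $p$-regular, hence almost $p$-regular, classes of $G/C$. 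By Lemma~\ref{lemma-HM20} the number of $\Aut(S)$-orbits on $p$-regular classes of $S$ is at least $2(p-1)^{1/4}$, and if it is at most $2\sqrt{p-1}$ then $p \leq 257$ and $p^2 \nmid |S|$.

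Now I split into two cases. If the $\Aut(S)$-orbit count exceeds $2\sqrt{p-1}$, then $|\Cl_\pareg(G)| \geq |\Cl_\pareg(G/C)| > 2\sqrt{p-1}$, and Lemma~\ref{lemma-p-reg<p-rat} gives $|\Irr_\parat(G)| \geq |\Cl_\pareg(G)| > 2\sqrt{p-1}$, as desired. If instead the orbit count is at most $2\sqrt{p-1}$, then $p^2 \nmid |S|$; but $p \nmid |G/N|$, so $p^2 \nmid |G|$, which means \emph{every} conjugacy class of $G$ is almost $p$-regular and every irreducible character is almost $p$-rational. In that case $|\Irr_\parat(G)| = k(G) \geq |\Cl_\pareg(G/C)| + n(G,\Cl_\pareg(C)) - 1$; since $C$ is a non-trivial $p'$-group whenever it is non-trivial, $n(G,\Cl_\pareg(C)) \geq 2$ gives a strict inequality, and if $C = 1$ then $G$ is almost simple with $|G| = |S| \cdot |G/S|$, $p \| |S|$, and one counts directly: $G/C = G$ has at least (orbit count) $+ 1$ classes once one adds a non-trivial class outside $N$ — the point being that an almost simple group $G$ with simple socle $S$, $p \nmid |\Out(S)|$ relevant part, strictly has more classes than $S$ has $\Aut(S)$-orbits of $p$-regular classes, unless $G = S$, in which case $k(S) = k_{p\text{-reg}}(S)$ already exceeds $2\sqrt{p-1}$ because for $p \| |S|$ the non-$p$-regular classes are few and the total $k(S)$ is comfortably larger (this needs the explicit small-$p$ analysis, $p \leq 257$).

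The main obstacle will be the second case, where the generic bound from Lemma~\ref{lemma-HM20} is not by itself enough to beat $2\sqrt{p-1}$: there one must show that the \emph{extra} classes — either coming from $C_G(N) \neq 1$, from the factors permuted by $G$, or simply from $k(S)$ being genuinely larger than the $\Aut(S)$-orbit count on $p$-regular classes — always push the total strictly past $2\sqrt{p-1}$. Since this case forces $p \leq 257$ and $p^2 \nmid |S|$, it is a finite check, but it will likely require the classification of simple groups $S$ with $p \| |S|$ and small numbers of $p$-regular $\Aut(S)$-classes (as catalogued in \cite{Hung-Maroti20}) and a case-by-case verification that $k(G) > 2\sqrt{p-1}$ for the relevant almost simple $G$. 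I expect this bookkeeping, rather than any conceptual difficulty, to be where the work lies.
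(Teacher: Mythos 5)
Your case split mirrors the paper's, and the first case (when the number $n$ of $\Aut(S)$-orbits on $p$-regular classes of $S$ exceeds $2\sqrt{p-1}$, or when $t>1$) goes through: there you correctly push the lower bound past $2\sqrt{p-1}$ via Lemma~\ref{lemma-HM20} and Lemma~\ref{lemma-p-reg<p-rat}. (The paper actually handles $t>1$ directly with the multiset count $\binom{n+t-1}{t}\geq n(n+1)/2$ rather than by ``reducing to $t=1$''; your phrase ``the class count only grows'' is not an argument and a reduction isn't the right way to think of it, since the $t>1$ case is in fact strictly easier and does not feed back into the $t=1$ case.)

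The real gap is in the second case, $n\leq 2\sqrt{p-1}$. You correctly note that then $p^2\nmid|S|$ and hence $p^2\nmid|G|$, so every irreducible character is almost $p$-rational and $|\Irr_\parat(G)|=k(G)$. But at that point you abandon the argument, saying the conclusion ``will likely require\ldots a case-by-case verification'' over $p\leq 257$ and a classification of the relevant almost simple groups. That is an admission that the step is not closed, and in fact no such bookkeeping is needed. The missing ingredient is Brauer's 1942 theorem: if $p$ divides $|G|$ exactly once then $k(G)\geq 2\sqrt{p-1}$, together with the characterization of equality from \cite{Maroti16}, namely that equality forces $G\cong C_p\rtimes C_{\sqrt{p-1}}$, a Frobenius group. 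Since $G$ here has a non-abelian minimal normal subgroup, $G$ cannot be that Frobenius group, so the inequality is strict and the proof ends cleanly. Your detour through $C_G(N)$ and Lemma~\ref{lemma-p-reg-quotient} is unnecessary once one sees this: in the $p^2\nmid|G|$ regime one should bound $k(G)$ directly rather than try to build up classes from $G/C_G(N)$ and the centralizer.
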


\begin{proof}
By hypothesis $N$ is isomorphic to a direct product of copies of a
non-abelian simple group, say $S$. Let $n$ be the number of
$\Aut(S)$-orbits on $p$-regular classes of $S$. First suppose that
there are $k>1$ simple factors in $N$. We then have
$|\Cl_\preg(G)|\geq {n+k-1 \choose k}\geq n(n+1)/2$. By
Lemma~\ref{lemma-HM20} we know that $n\geq 2 (p-1)^{1/4}$. Therefore
it follows that $|\Cl_\preg(G)|>2\sqrt{p-1}$ and we are done by
Lemma~\ref{lemma-p-reg<p-rat}. So we assume that $N$ is a
non-abelian simple group.

If $n>2\sqrt{p-1}$, then by the same arguments we are also done. So
we assume furthermore that $n\leq 2\sqrt{p-1}$. Using
Lemma~\ref{lemma-HM20} again, we know that $p$ is a prime divisor of
$|S|$ such that $p^2\nmid |S|$. Therefore, by the assumption, $p\mid
|G|$ but $p^2\nmid |G|$. It follows that
\[
|\Irr_\parat(G)|=k(G)\geq 2\sqrt{p-1},
\]
by \cite{Brauer42}. The equality occurs only when $G$ is the
Frobenius group $C_p\rtimes C_{\sqrt{p-1}}$ by
\cite[Theorem~1.1]{Maroti16}, which is not the case here. Thus we
have $|\Irr_\parat(G)|> 2\sqrt{p-1}$, and the proof is finished.
\end{proof}

We can now prove Theorem~\ref{theorem-general-bound} for odd $p$.

\begin{theorem}   \label{theorem-general-bound-class}
 Let $G$ be a finite group and $p\geq3$ a prime dividing the order of~$G$.
 Then $|\Irr_\parat(G)|\geq 2\sqrt{p-1}$. Equality occurs if and only if $p-1$
 is a perfect square and $G$ is isomorphic to the Frobenius group
 $C_{p^n}\rtimes C_{\sqrt{p-1}}$ for some $n\in\ZZ^+$.
\end{theorem}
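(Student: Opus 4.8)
The plan is to induct on $|G|$, using the normal structure of $G$ to reduce to two tractable situations: a group whose Sylow $p$-subgroup has exponent exactly $p$, handled by Brauer's class number bound \cite{Brauer42} and its equality analysis \cite[Theorem~1.1]{Maroti16}; and a coprime affine group over $\mathbb F_p$. As a preliminary step I would check that the claimed extremal groups are genuinely extremal: for $G=C_{p^n}\rtimes C_{\sqrt{p-1}}$ with faithful (hence Frobenius) action, $G/[G,G]\cong C_{\sqrt{p-1}}$ gives $\sqrt{p-1}$ characters, all $p$-rational; by Clifford's correspondence the remaining irreducible characters are the $\lambda^G$ for $\lambda\in\Irr(C_{p^n})\setminus\{1\}$, and $\lambda^G$ is almost $p$-rational exactly when $\lambda$ has order $p$ --- when $\lambda$ has order $p^j$ with $j\geq2$ some value of $\lambda^G$ generates a subfield of $\QQ_{p^j}$ of conductor $p^j$ --- so that $|\Irr_\parat(G)|=\sqrt{p-1}+\sqrt{p-1}=2\sqrt{p-1}$.

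Now the inductive step. If $p^2\nmid|G|$ then $|\Irr_\parat(G)|=k(G)$ and the assertion is \cite{Brauer42} together with \cite[Theorem~1.1]{Maroti16} (forcing $G\cong C_p\rtimes C_{\sqrt{p-1}}$); so assume $p^2\mid|G|$. If $G$ has a non-trivial $p$-factor group, inflating $\Irr$ of a non-trivial elementary abelian $p$-quotient gives $|\Irr_\parat(G)|\geq p>2\sqrt{p-1}$; so assume $\bO^p(G)=G$. If $N:=\bO_{p'}(G)\neq1$, then $p^2\mid|G/N|$, so by induction and inflation $|\Irr_\parat(G)|\geq|\Irr_\parat(G/N)|\geq2\sqrt{p-1}$; moreover $N$ is a $p'$-group on whose classes $G$ has at least two orbits, so Lemma~\ref{lemma-p-reg-quotient} gives $|\Cl_\pareg(G)|\geq|\Cl_\pareg(G/N)|+1$, and since the inductive hypothesis forces $|\Cl_\pareg(G/N)|=2\sqrt{p-1}$ whenever $|\Irr_\parat(G/N)|=2\sqrt{p-1}$ (the quotient then being an extremal Frobenius group, for which one computes $|\Cl_\pareg|=2\sqrt{p-1}$), Lemma~\ref{lemma-p-reg<p-rat} yields $|\Irr_\parat(G)|>2\sqrt{p-1}$. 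So we may also assume $\bO_{p'}(G)=1$.

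Under these hypotheses every minimal normal subgroup of $G$ is elementary abelian of $p$-power order or non-abelian of order divisible by $p$. Suppose a non-abelian one $N\cong S^k$ occurs. If $p\nmid|G/N|$, Lemma~\ref{lemma-p-reg>2p-1} gives $|\Irr_\parat(G)|>2\sqrt{p-1}$. If $p\mid|G/N|$, I would bound $|\Cl_\preg(G)|$ below by the number of $G$-orbits on the $p$-regular classes of $N$: for $k\geq2$ this is at least ${n+k-1\choose k}\geq n(n+1)/2>2\sqrt{p-1}$ by Lemma~\ref{lemma-HM20}, while for $k=1$ either $n>2\sqrt{p-1}$ already, or $G=S\times C_p$ (so $|\Irr_\parat(G)|=|\Irr_\parat(S)|\cdot p$), or $G$ is almost simple with $p\mid|\Out(S)|$, which by Lemma~\ref{lemma-HM20} and the fact that $p\mid|\Out(S)|$ forces $p^2\mid|S|$ in these cases again gives $|\Cl_\preg(G)|>2\sqrt{p-1}$; Lemma~\ref{lemma-p-reg<p-rat} then finishes. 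If instead $G$ has no non-abelian minimal normal subgroup but still has a component, the centre of the layer $E(G)$ is non-trivial and is a $p$-group, so passing to the proper quotient $G/Z(E(G))$ --- which does have a non-abelian minimal normal subgroup --- the previous cases and inflation give strict inequality. Hence we may assume $F^*(G)=\bO_p(G)\neq1$, i.e.\ $G$ is $p$-constrained with $\bC_G(\bO_p(G))\leq\bO_p(G)$.

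This $p$-constrained case is the heart of the matter and the main obstacle. Choose a minimal normal subgroup $V\leq\bO_p(G)$; it is elementary abelian and central in $\bO_p(G)$, and every linear character of $V$ is $\QQ_p$-valued, hence almost $p$-rational. Inflating from $G/V$: if $|\Irr_\parat(G/V)|>2\sqrt{p-1}$ we are done, so by induction $G/V\cong C_{p^m}\rtimes C_{\sqrt{p-1}}$, and a short structural argument --- using that $\bO_p(G)/V$ embeds in $\bO_p(G/V)\cong C_{p^m}$, that $\bO_p(G/\bO_p(G))=1$, and that $\bC_G(\bO_p(G))\leq\bO_p(G)$ --- forces $\bO_p(G)$ abelian and $G=\bO_p(G)\rtimes C_{\sqrt{p-1}}$ with $C_{\sqrt{p-1}}$ acting faithfully (hence coprimely). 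For such $G$, Clifford theory over $\bO_p(G)$ --- extending each $\QQ_p$-valued linear character of $\bO_p(G)$ to its inertia group by the canonical $p$-rational extension, cf.\ \cite[Corollary~6.4]{Navarro18} --- shows that $\Irr_\parat(G)$ consists precisely of the characters lying over the linear characters of $\bO_p(G)$ of order at most $p$, whence $|\Irr_\parat(G)|=k\bigl((\bO_p(G)/\Phi(\bO_p(G)))\rtimes C_{\sqrt{p-1}}\bigr)=k(C_p^d\rtimes C_{\sqrt{p-1}})$, where $d$ is the rank of $\bO_p(G)$ and $C_{\sqrt{p-1}}$ acts faithfully. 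By \cite{Brauer42} and \cite[Theorem~1.1]{Maroti16} this class number is $\geq2\sqrt{p-1}$, with equality only when $d=1$; so $G\cong C_{p^m}\rtimes C_{\sqrt{p-1}}$ in the equality case, as desired. I expect the delicate points to be the structural deduction that $G$ is a split extension of an abelian $p$-group by $C_{\sqrt{p-1}}$, and especially the Clifford-theoretic identification of $\Irr_\parat(G)$, where one must control the $p$-part of the field of values under induction --- in particular, that extending a $\QQ_p$-valued linear character does not enlarge the $p$-part of the conductor beyond $p$, while every character lying over a linear character of order $p^2$ has $p$-part of conductor at least $p^2$.
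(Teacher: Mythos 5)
Your treatment of the extremal group $F_n$ is essentially the same as the paper's (you show via the Galois/Clifford argument that $\lambda^G$ is almost $p$-rational iff $\lambda$ has order dividing~$p$), and your preliminary reductions --- $p^2\nmid|G|$ via Brauer and Mar\'oti, $\bO^p(G)=G$, and $\bO_{p'}(G)=1$ via Lemma~\ref{lemma-p-reg-quotient} and Lemma~\ref{lemma-p-reg<p-rat} --- are correct and consistent with the paper. Your end-game for the $p$-constrained case (pass to $G/V$, force $\bO_p(G)$ abelian and $G=\bO_p(G)\rtimes C_{\sqrt{p-1}}$, then Clifford-theoretically identify $\Irr_\parat(G)$ with $\Irr_\parat(G/\Phi(\bO_p(G)))$) is heavier than the paper's but works; the paper instead just inflates from $G/\Phi(P)$, uses that every character of $G/\Phi(P)$ is almost $p$-rational because the exponent is prime-to-$p^2$, and invokes Mar\'oti again, which avoids the Clifford bookkeeping entirely.

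The genuine gap is in the case of a non-abelian minimal normal subgroup $N\cong S^k$ with $p\mid|G/N|$. For $k=1$ you assert that if $n$ (the number of $G$-orbits on $p$-regular classes of $S$) is at most $2\sqrt{p-1}$ then $G=S\times C_p$ or $G$ is almost simple with $p\mid|\Out(S)|$. This is not an exhaustive case division: after killing $\bO_{p'}(G)$ and forcing $\bO^p(G)=G$ one can still have, say, $G=S\times(C_p\rtimes C_e)$ with $1<e\mid p-1$, which is neither. Moreover, even for the almost simple case your argument leans on the unproved assertion that $p\mid|\Out(S)|$ forces $p^2\mid|S|$; this is not a standard fact and would require a CFSG-level verification on its own. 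Under Lemma~\ref{lemma-HM20} the hypothesis $n\leq 2\sqrt{p-1}$ only yields $n\geq 2(p-1)^{1/4}$ and $p^2\nmid|S|$, and $2(p-1)^{1/4}$ plus a handful of extra classes is nowhere near $2\sqrt{p-1}$ for large $p$, so the direct class-counting strategy cannot close this case. The paper resolves it differently: it applies the inductive hypothesis to $G/N$ to reduce to the situation $G/N\cong F_m$, and then produces one additional almost $p$-rational character of $G$ whose kernel does not contain $N$ by tensoring together copies of a nontrivial $\theta\in\Irr(S)$ that extends to a rational-valued character of $\Aut(S)$ (the $(n-1,1)$ character for alternating groups, the Steinberg character for groups of Lie type, explicit characters for sporadics), giving $|\Irr_\parat(G)|\geq 2\sqrt{p-1}+1$. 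You should replace your class-counting in this case with that extension argument (or at least with an appeal to induction on $G/N$ followed by such a construction). Your subsequent ``component but no non-abelian minimal normal'' step inherits the same issue as written, though it could be salvaged by applying induction to $G/Z(E(G))$ directly rather than ``the previous cases.''
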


\begin{proof}
Let $F_n:=C_{p^n}\rtimes C_{\sqrt{p-1}}$ (when, of course, $\sqrt{p-1}$ is an
integer).

First we prove that $|\Irr_\parat(F_n)|=2\sqrt{p-1}$. Let
$P:=P_n=C_{p^n}$. As every almost $p$-rational irreducible character
of $P$ has kernel containing $\Phi(P)$ and
$|\Irr(F_n/\Phi(P))|=|\Irr(F_1)|=2\sqrt{p-1}$, it is sufficient to
show that every $\chi\in\Irr_\parat(F_n)$ lies above an almost
$p$-rational irreducible character of $P$.

So assume that $\chi\in\Irr_\parat(F_n)$ lies above some non-trivial
$\theta\in \Irr(P)$. Let $a$ be a generator of $P$ and let
$\xi:=\theta(a)$. In particular, $\xi$ is a primitive $p^k$-root of
unity for some $k\in \ZZ^+$. Let
$\theta=\theta_1,\theta_2,\ldots,\theta_{\sqrt{p-1}}$ be distinct
$F_n$-conjugates of $\theta$. We have
\[
\chi(a)=\sum_{i=1}^{\sqrt{p-1}} \theta_i(a)=\sum_{i=1}^{\sqrt{p-1}}
\xi^{m^i},
\]
for some integer $m>1$ such that $m^{\sqrt{p-1}}\equiv 1\,(\bmod
|P|)$. We know that $\chi(a)\in \QQ_{p(p-1)}\cap \QQ_{|P|}=\QQ_p$,
and hence $\chi(a)$ is fixed under the cyclic group
$\Gal(\QQ_{p^k}/\QQ_p)$ (of order $p^{k-1}$). Also, the powers
$\xi^{m^i}$ ($1\leq i\leq \sqrt{p-1}$) are permuted by
$\Gal(\QQ_{p^k}/\QQ_p)$, and it follows that $\Gal(\QQ_{p^k}/\QQ_p)$
fixes at least one, and hence all, of $\xi^{m^i}$. We have shown
that $\xi\in\QQ_p$, which means that $\theta$ is almost
$p$-rational, as desired.

We now prove that if $\sqrt{p-1}\not\in \ZZ$ or $\sqrt{p-1}\in\ZZ$
but $G\not\cong F_n$ for all $n\in \ZZ^+$, then $|\Irr_\parat(G)|>
2\sqrt{p-1}$. Let $N$ be a minimal normal subgroup of $G$. By
induction we may assume that $p\mid |N|$ and $p\nmid |G/N|$, or
$\sqrt{p-1}\in\ZZ$ and $G/N\cong F_m$ for some $m\in\ZZ^+$.

Consider the case $p\mid |N|$ and $p\nmid |G/N|$. If $N$ is abelian
then the exponent of $G$ is not divisible by $p^2$ and so every
irreducible character of $G$ is almost $p$-rational. Therefore
$|\Irr_\parat(G)|=k(G)\geq 2\sqrt{p-1}$ by
\cite[Theorem~1.1]{Maroti16}, and moreover, the equality occurs if
and only if $\sqrt{p-1}\in\ZZ$ and $G\cong F_1$. The case $N$
non-abelian follows from Lemmas~\ref{lemma-p-reg>2p-1}.

Next we consider the case $G/N\cong F_m$ for some $m$. Then
$\Irr_\parat(G/N)=2\sqrt{p-1}$. If $N$ is non-abelian then $N$ is a
direct product of copies of a non-abelian simple group, say $S$. By
considering the restriction of the character labeled by $(n-1,1)$
from the symmetric group $\Sym(n)$ to the alternating group
$\Alt(n)$ ($n\neq 6$), the Steinberg character for simple groups of
Lie type (see \cite{Schmid92}), and checking \cite{Atl1} directly
for sporadic groups, we find that there exists a non-trivial
character $\theta\in\Irr(S)$ such that $\theta$ extends to a
rational-valued character of $\Aut(S)$. The tensor product of copies
of $\theta$ then extends to a rational character of $G$ by
\cite[Corollary~10.5]{Navarro18} and the tensor-induced formula
\cite[Definition~2.1]{Gluck-Isaacs}, which implies that $G$ has a
rational irreducible character whose kernel does not contain $N$. We
now have
\[|\Irr_\parat(G)|\geq |\Irr_\parat(G/N)|+|\Irr_\parat(G|N)|\geq
2\sqrt{p-1} +1,\] as desired.

So we may assume that $N$ is abelian and $G/N\cong F_m$. When $N$ is
a $p'$-group we have
\begin{align*}
|\Cl_\pareg(G)|&\geq |\Cl_\pareg(G/N)|+n(G,N)-1\\
&=2\sqrt{p-1}+n(G,N)-1
\end{align*}
by Lemma~\ref{lemma-p-reg-quotient}, and it follows immediately by
Lemma~\ref{lemma-p-reg<p-rat} that \[|\Irr_\parat(G)|\geq
|\Cl_\pareg(G)|>2\sqrt{p-1}\] since $n(G,N)\geq 2$.

We may now assume that $N$ is an elementary abelian $p$-group and
$G/N\cong F_m$. It follows that $G$ has a normal Sylow $p$-subgroup,
say $P$, and moreover, $G=PK$ is a semidirect product of a cyclic
group $K$ (of order $\sqrt{p-1}$) acting faithfully on $P$. We have
\[
|\Irr_\parat(G)|\geq |\Irr_\parat(G/\Phi(P))|=k(G/\Phi(P))
\]
since every irreducible character of $G/\Phi(P)$ is almost
$p$-rational. As above, since $G/\Phi(P)$ has order divisible by
$p$, we have $k(G/\Phi(P))\geq 2\sqrt{p-1}$ with equality if and
only if $G/\Phi(P)\cong F_1$. Thus we may assume that
$G/\Phi(P)\cong F_1$. In particular, $P/\Phi(P)$ is cyclic, and
therefore so is $P$. Recall that $K\cong C_{\sqrt{p-1}}$ acts
faithfully on $P$ and observe that the automorphism group of the
cyclic group $P$ of odd prime power order is cyclic. We conclude
that $G\cong F_n$ with $n=\log_p(|P|)$, and this finishes the proof.
\end{proof}

We have completed the proof of Theorem~\ref{theorem-general-bound}
for both $p=2$ and $p$ odd.

\section{Almost $p$-rational characters of $p'$-degree}   \label{section-p'-degree}

In this section we prove Theorem~\ref{theorem-p'-degree}, using
Theorem~\ref{theorem-general-bound}, the known cyclic Sylow case of
the McKay--Navarro conjecture, and some representation theory of
finite reductive groups.

\subsection{The case $G$ has cyclic Sylow}   \label{subsect:cyclic-Sylow}
We start with the case where Sylow $p$-subgroups of $G$ are cyclic.
As discussed in Section~\ref{section-mckay-navarro}, parts (i) and
(ii) of the second statement of Theorem~\ref{theorem-p'-degree} are
then equivalent and the first statement of
Theorem~\ref{theorem-p'-degree} is true for $G$ and $p$.

We now show that parts (ii) and (iii) in
Theorem~\ref{theorem-p'-degree} are equivalent. In fact, the
equality part of Theorem~\ref{theorem-general-bound} easily implies
that (iii) implies (ii).

Assume that $\sqrt{p-1}$ is an integer and
$|\Irr_{p',\parat}(\bN_G(P))|=2\sqrt{p-1}$. It follows that
\[
|\Irr_\parat(\bN_G(P)/P')|=|\Irr_{p',\parat}(\bN_G(P)/P')|\leq2\sqrt{p-1},
\]
where we recall that $P'$ is the commutator subgroup of $P$. Using
Theorem~\ref{theorem-general-bound}, we deduce that
$|\Irr_\parat(\bN_G(P)/P')|=2\sqrt{p-1}$, and moreover,
$\bN_G(P)/P'$ must be isomorphic to the Frobenius group
$F:=C_{p^n}\rtimes C_{\sqrt{p-1}}$ for some $n\in\ZZ^+$. It follows
that $P$ is cyclic and indeed $\bN_G(P)\cong P\rtimes
C_{\sqrt{p-1}}$, as stated.


\subsection{Reduction to a $p'$-order quotient}
Let $(G,p)$ be a counterexample to the theorem such that $|G|$ is as
small as possible. By the previous subsection, $G$ has minimal order
subject to the conditions $|\Irr_{p',\parat}(G)|\leq2\sqrt{p-1}$ and
the Sylow $p$-subgroups of $G$ are not cyclic. Let $N$ be a minimal
normal subgroup of $G$. Then $p\mid |N|$ by the minimality of $G$.

We claim that $p\nmid |G:N|$. Assume otherwise. Then we have
\[
|\Irr_{p',\parat}(G)|=|\Irr_{p',\parat}(G/N)|=2\sqrt{p-1}
\]
and the Sylow $p$-subgroups of $G/N$ are cyclic.

Suppose that $N$ is non-abelian, and let $S$ be a simple direct
factor of $N$. By \cite[Theorem~3.3]{Navarro-Tiep10}, there exists
an $\Aut(S)$-orbit $\mathcal{O}$ of non-trivial $p'$-degree
irreducible characters of $S$ such that $p\nmid |\mathcal{O}|$ and
every character in $\mathcal{O}$ extends to a $\QQ_p$-valued
character of its inertia subgroup in $\Aut(S)$. By
\cite[Proposition~3.1]{Navarro-Tiep10}, this orbit produces some
$\chi\in\Irr_{p'}(G)$ with $\QQ_p$-values and $N\nsubseteq
\Ker(\chi)$. This violates the above equality
$|\Irr_{p',\parat}(G)|=|\Irr_{p',\parat}(G/N)|$.

The following lemma finishes the proof of the claim.

\begin{lemma}\label{lemma-kernel}
 Let $N$ be a normal $p$-subgroup of $G$. Suppose that the Sylow $p$-subgroups
 of $G/N$ are cyclic but those of $G$ are not. Then there exists
 $\chi\in\Irr_{p',\parat}(G)$ whose kernel does not contain $N$.
\end{lemma}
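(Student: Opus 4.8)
The plan is to produce $\chi$ by a Clifford-theoretic construction: choose a well-adapted non-trivial linear character $\lambda$ of $N$, extend it to its inertia group by a character with tiny field of values, and induce.

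First I would fix $Q\in\Syl_p(G)$. Since $N$ is a normal $p$-subgroup, $N\le Q$, and since $Q/N$ is cyclic while $Q$ is not, $N\not\le\Phi(Q)$. Hence $N\Phi(Q)/\Phi(Q)$ is a non-trivial subgroup of the elementary abelian group $Q/\Phi(Q)$, so there is a non-trivial $\lambda\in\Irr(N)$ with $N\cap\Phi(Q)\le\Ker\lambda$. Because $[Q,N]\le Q'\cap N\le\Phi(Q)\cap N\le\Ker\lambda$, the character $\lambda$ is $Q$-invariant; thus $I:=I_G(\lambda)\supseteq Q$ and $[G:I]$ is prime to~$p$. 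Being inflated from a character of $N\Phi(Q)/\Phi(Q)\le Q/\Phi(Q)$, $\lambda$ also extends to a linear character of $Q$ of order~$p$.

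The key step will be to extend $\lambda$ to a linear character $\psi$ of the whole of $I$ with $\psi$ of order~$p$ (equivalently $\alpha_p(\psi)\le 1$). Here I would pass to $\overline I:=I/M$ with $M:=\Ker\lambda$ (normal in $I$, as $\lambda$ is $I$-invariant), and set $\overline N:=N/M$ and $\overline Q:=Q/M$; since $M\le N\le Q$ is a $p$-group, $\overline Q\in\Syl_p(\overline I)$. Then $\overline N\le Z(\overline I)$ has order~$p$; as $\overline Q/\overline N\cong Q/N$ is cyclic and $\overline N$ is central, $\overline Q$ is abelian, and reducing the order-$p$ extension above modulo $M$ shows $\overline N\not\le\Phi(\overline Q)$, so $\overline N$ is a direct factor of $\overline Q$. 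By Burnside's fusion theorem for abelian Sylow subgroups, $\bN:=\bN_{\overline I}(\overline Q)$ controls fusion, so $\overline Q\cap[\overline I,\overline I]=[\overline Q,\bN]$ and the $p$-part of $\overline I/[\overline I,\overline I]$ is isomorphic to $\overline Q/[\overline Q,\bN]$. Moreover $\bN/C_{\overline I}(\overline Q)$ is a $p'$-group acting on the abelian $p$-group $\overline Q$ (as $\overline Q\le C_{\overline I}(\overline Q)$ is Sylow in $\bN$), so coprime action gives $\overline Q=C_{\overline Q}(\bN)\times[\overline Q,\bN]$. Since $\overline N\le Z(\overline I)$, we have $\overline N\le C_{\overline Q}(\bN)$, and as the latter is a direct factor of $\overline Q$, the relation $\overline N\not\le\Phi(\overline Q)$ forces $\overline N\not\le\Phi\big(C_{\overline Q}(\bN)\big)$. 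Thus $\overline N$ embeds as a direct factor of the $p$-part of $\overline I/[\overline I,\overline I]$, so the non-trivial character of $\overline N$ extends to a linear character of $\overline I$ of order~$p$; inflating this character to $I$ gives the wanted $\psi$ with $\psi|_N=\lambda$.

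Finally I would put $\chi:=\psi^G$. By the Clifford correspondence (with $I=I_G(\lambda)$) this is irreducible, $\chi(1)=[G:I]$ is prime to~$p$, $\QQ(\chi)\subseteq\QQ(\psi)\subseteq\QQ_p$ so $\chi$ is almost $p$-rational, and $\chi$ lies over $\lambda\neq 1_N$, so $N\not\subseteq\Ker\chi$ — as required. The genuinely delicate point is the construction of $\psi$ of order exactly~$p$; the remainder is routine Clifford theory. It is precisely there that the hypotheses "$Q/N$ cyclic, $Q$ non-cyclic" are used — they make $\overline Q$ abelian with $\overline N$ a direct factor — together with abelian-Sylow fusion control; without them an extension of $\lambda$ to $I$ could a priori have arbitrarily large $p$-rationality level.
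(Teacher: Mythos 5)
Your proof is correct, but it takes a genuinely different route from the paper's. The paper argues by averaging: since $P/\Phi(P)$ is non-cyclic it has at least $p^2$ linear characters with values in $\QQ_p$, while Gallagher's theorem shows at most $p$ of them can contain $N$ in their kernel (because $P/N$ is cyclic), so there is a linear $\theta\in\Irr_\parat(P)$ with $N\not\le\Ker\theta$. One then induces $\theta^G$ and lets the $p$-power-order Galois automorphism $\sigma$ (fixing $p'$-roots of unity, raising $p$-power roots to the $(p+1)$-th power) act: since $\theta^G$ is $\sigma$-stable of $p'$-degree and $\sigma$ has $p$-power order, some $p'$-degree constituent is $\sigma$-fixed, hence almost $p$-rational, and it lies over $\theta$ so its kernel misses $N$. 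Your proof instead produces the character explicitly as a monomial character induced from an order-$p$ linear character $\psi$ of the full inertia group $I=I_G(\lambda)$; the real work is the extension step, which you carry out via abelian-Sylow fusion theory (Burnside's fusion theorem, the focal subgroup theorem, and coprime action on abelian $p$-groups), the key point being that the cyclicity of $Q/N$ forces the Sylow subgroup $\overline Q$ of $I/\Ker\lambda$ to be abelian with $\overline N$ a direct factor. Your construction is more explicit and yields a monomial $\chi$, but at the cost of a significantly heavier group-theoretic apparatus than the paper's short Galois-averaging argument; the paper's method is also the one that recurs elsewhere in the paper (e.g.\ in Lemma~\ref{lemma-NT}), so it is more in the spirit of the surrounding arguments. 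One very minor point worth making explicit in your write-up: when you extend "the non-trivial character of $\overline N$'' you should pick the one corresponding to $\lambda$ (there are $p-1$ choices), although in fact any Galois conjugate of $\lambda$ has the same inertia group and kernel, so the Clifford induction step works regardless.
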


\begin{proof}
Let $P$ be a Sylow $p$-subgroup of $G$. Since $P$ is
not cyclic, neither is $P/\Phi(P)$ and this implies that $P$ has at
least $p^2$ linear characters with values in $\QQ_p$. On the other
hand, as $P/N$ is cyclic, the principal character $\textbf{1}_N$ of
$N$ has at most $p$ extensions to $P$ with values in $\QQ_p$ by
Gallagher's theorem. We deduce that there exists
$\theta\in\Irr_\parat(P)$ such that $\theta(1)=1$ and $N\nsubseteq
\Ker(\theta)$.

Let $\sigma$ be the automorphism in $\Gal(\QQ_{|G|}/\QQ)$ that fixes
$p'$-roots of unity and sends every $p$-power root of unity to its
$(p+1)$th-power. Then $\sigma$ has $p$-power order and a character
of $G$ or $N$ is almost $p$-rational if and only if it is fixed by
$\sigma$. In particular, $\theta$ is $\sigma$-fixed.

Consider the induced character $\theta^G$ of degree $|G:P|$. Then
$\theta^G$ is also $\sigma$-fixed. If $\chi$ is an irreducible
constituent of $\theta^G$, we have
$[\chi^\sigma,\theta^G]=[\chi^{\sigma},(\theta^{G})^\sigma]=
[\chi,\theta^G]^{\sigma}=[\chi,\theta^G]$, and thus $\sigma$
permutes the irreducible constituents of $\theta^G$. Since $\sigma$
has $p$-power order and $\theta^G$ has $p'$-degree, we deduce that
$\sigma$ fixes at least one $p'$-degree irreducible constituent of
$\theta^G$. This constituent lies over $\theta$, and as $N\nsubseteq
\Ker(\theta)$, its kernel does not contain $N$, as desired.
\end{proof}

\subsection{Reduction to simple groups of Lie type in characteristic $\ell\neq p$}

We continue to work with a minimal counterexample $(G,p)$. By the
previous subsection, we know that, for every minimal normal subgroup
$M$ of $G$, we must have $p\nmid |G:M|$. We conclude that $G$ has a
unique minimal normal subgroup, say $N$, and furthermore, $p\nmid
|G/N|$. If $N$ is abelian then
$|\Irr_\parat(G)|=|\Irr_{p',\parat}(G)|\leq 2\sqrt{p-1}$ by the
It\^o--Michler theorem, violating Theorem~\ref{theorem-general-bound}
as Sylow $p$-subgroups of $G$ are not cyclic. Therefore $N$ is
isomorphic to a direct product of copies of a non-abelian simple
group, say $S$, of order divisible by $p$.

The following lemma, which is essentially due to Navarro and Tiep,
allows us to go back and forth between almost $p$-rational
characters of $N$ and those of $G$.

\begin{lemma}\label{lemma-NT}
Let $G$ be a finite group and $N\unlhd G$ such that $p\nmid |G:N|$.
Let $\theta\in\Irr(N)$ and let $\chi\in\Irr(G|\theta)$. Then
$\theta$ is almost $p$-rational if and only if $\chi$ is almost
$p$-rational.
\end{lemma}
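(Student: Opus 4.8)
The plan is to reduce the statement to a computation with Galois automorphisms of $p$-power order, exactly in the spirit of the earlier arguments in this section (compare the proof of Lemma~\ref{lemma-kernel}). Let $\sigma\in\Gal(\QQ_{|G|}/\QQ)$ be the automorphism fixing every $p'$-root of unity and raising every $p$-power root of unity to its $(p+1)$th power. Then $\sigma$ has $p$-power order, and for any finite group $H$ with $|H|\mid |G|$, an irreducible character of $H$ is almost $p$-rational if and only if it is fixed by $\sigma$. So the task is to show, for $\theta\in\Irr(N)$ and $\chi\in\Irr(G\mid\theta)$, that $\theta^\sigma=\theta$ iff $\chi^\sigma=\chi$.

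First I would handle the direction $(\Leftarrow)$. Suppose $\chi^\sigma=\chi$. Since $\chi$ lies over $\theta$, the constituents of $\chi_N$ are exactly the $G$-conjugates of $\theta$, say $\theta=\theta_1,\dots,\theta_t$, and $\sigma$ permutes the set $\{\theta_1,\dots,\theta_t\}$ because $(\chi_N)^\sigma=(\chi^\sigma)_N=\chi_N$. Now $t=|G:I_G(\theta)|$ divides $|G:N|$, which is prime to $p$; since $\sigma$ has $p$-power order, the orbit of $\theta$ under $\langle\sigma\rangle$ has $p$-power size and is contained in a set of size $t$ coprime to $p$, so it is a single point, i.e.\ $\theta^\sigma=\theta$. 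For the converse $(\Rightarrow)$, suppose $\theta^\sigma=\theta$. Then $\sigma$ fixes $I_G(\theta)$ and acts on $\Irr(I_G(\theta)\mid\theta)$, hence, via the Clifford correspondence (which is $\sigma$-equivariant, since induction commutes with the Galois action), it acts on $\Irr(G\mid\theta)$. Because $p\nmid|G:N|$ and $\theta$ is $\sigma$-invariant, the key input is that $\theta$ has a canonical extension behaviour controlled by a character triple with a $p'$-index quotient; the cleanest route is the Navarro--Tiep machinery (e.g.\ \cite[Proposition~3.1 and its proof, or Theorem~3.3]{Navarro-Tiep10}, together with the theory of $\sigma$-invariant characters over a $\sigma$-invariant $\theta$): since $|G:N|$ is prime to $p$ and $\sigma$ is a $p$-element of the Galois group, every $\langle\sigma\rangle$-orbit on $\Irr(G\mid\theta)$ has $p$-power size, while the number of characters in $\Irr(G\mid\theta)$ corresponding under Clifford theory to a fixed $\sigma$-orbit structure is governed by $\Irr(I_G(\theta)/N)$, a group of order prime to $p$; a counting/parity argument in each such block forces every such orbit to be a fixed point. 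Thus $\chi^\sigma=\chi$ for every $\chi\in\Irr(G\mid\theta)$.

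I expect the main obstacle to be the $(\Rightarrow)$ direction, and specifically making rigorous the claim that $\sigma$ fixes \emph{every} irreducible character above a $\sigma$-fixed $\theta$ (not merely that it permutes them). The subtlety is that the $\langle\sigma\rangle$-action on $\Irr(G\mid\theta)$ need not a priori be trivial just because the orbit sizes divide a $p'$-number: one must exploit that the obstruction to $\sigma$-fixing the individual constituents lives in a cohomology/character-triple group whose relevant order is a $p'$-number, so that a $p$-element can have no nontrivial fixed-point-free action there. I would carry this out by first replacing $(G,N,\theta)$ by an isomorphic character triple $(G^*,N^*,\theta^*)$ with $N^*$ central and cyclic (standard reduction of character triples, chosen $\sigma$-equivariantly as in \cite{Navarro18}), reducing to the case where $\theta$ is linear and central; there the characters over $\theta$ are a coset of $\Irr(G/N)$ under multiplication, $\sigma$ acts compatibly, and since $|G/N|$ is prime to $p$ the $\langle\sigma\rangle$-action on this coset — being an action of a $p$-group on a principal homogeneous space for a $p'$-group, twisted only through the $\sigma$-fixed $\theta$ — must fix each point. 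The remaining bookkeeping (that the reduction of character triples can be done so as to commute with $\sigma$, and that almost $p$-rationality is preserved under it) is routine and I would only sketch it, citing \cite{Navarro-Tiep10} and \cite[Ch.~9]{Navarro18}.
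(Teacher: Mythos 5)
Your overall strategy---rephrase almost $p$-rationality as invariance under the $p$-power-order Galois element $\sigma$ and then argue each implication separately---is exactly the paper's, but both of your directions have gaps as written.

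For $\chi^\sigma=\chi\Rightarrow\theta^\sigma=\theta$ (the paper's ``if'' direction), you conclude that the $\langle\sigma\rangle$-orbit of $\theta$ in $\{\theta_1,\dots,\theta_t\}$ must be a singleton merely because it has $p$-power size and sits inside a set of size $t$ coprime to $p$. That inference is not valid on its own: a nontrivial $p$-power can perfectly well be $\le t$ even when $p\nmid t$. What is true---and what the paper uses---is that \emph{some} $\theta_i$ is $\sigma$-fixed, because the $\langle\sigma\rangle$-orbit sizes are $p$-powers summing to the $p'$-number $t$; and then, since the Galois action commutes with $G$-conjugation and $\{\theta_1,\dots,\theta_t\}$ is a single $G$-orbit, every $\theta_i$ is $\sigma$-fixed. (Equivalently, one may first note that this commutativity forces all $\langle\sigma\rangle$-orbits in the set to have a common size, which must therefore divide $t$.) Either way you need the compatibility of $\sigma$ with conjugation, which you never invoke.

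For $\theta^\sigma=\theta\Rightarrow\chi^\sigma=\chi$ (the paper's ``only if'' direction), the paper simply cites \cite[Lemma~5.1]{Navarro-Tiep}; you instead sketch a reduction of the triple $(G,N,\theta)$ to a central cyclic one $(G^*,N^*,\theta^*)$. The sketch leaves the decisive points unaddressed: in the reduced triple $\theta^*$ need not extend to $G^*$, so $\Irr(G^*\mid\theta^*)$ is not in general ``a coset of $\Irr(G/N)$''; and, crucially, a character-triple isomorphism does not preserve character values, so to transfer $\sigma$-invariance one must show the isomorphism can be chosen $\sigma$-equivariantly or that it respects $p$-rationality levels. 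That ``bookkeeping'' is precisely the content of the cited lemma, not a routine afterthought, and the assertion that a $p$-group acting on a torsor for a $p'$-group ``must fix each point'' is not by itself an argument, since the $\sigma$-action is not the translation action of that $p'$-group. Unless you intend to reprove the Navarro--Tiep lemma in full, the clean move here is to cite it, as the paper does, and reserve the elementary orbit argument (with the missing step supplied) for the converse direction.
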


\begin{proof}
The \emph{only if} implication is a consequence of
\cite[Lemma~5.1]{Navarro-Tiep}. We now prove the \emph{if}
implication. So assume that $\chi\in\Irr(G|\theta)$ is almost
$p$-rational.

Let $\theta=\theta_1,\theta_2,\ldots,\theta_t$ be all the
$G$-conjugates of $\theta$. In other words, the $\theta_i$ are all
of the irreducible constituents of $\chi_N$ by Clifford's theorem.
Let $\sigma$ be the same Galois automorphism as in the proof of
Lemma~\ref{lemma-kernel}. Then $\sigma$ permutes the $\theta_i$. But
since $t$ is prime to $p$ by hypothesis and $\sigma$ has $p$-power
order, there exists a $\theta_i$ that is $\sigma$-fixed, which
implies that all of the $\theta_i$ are $\sigma$-fixed.
\end{proof}

By Lemma~\ref{lemma-NT}, if all $p'$-degree irreducible characters
of $S$ are almost $p$-rational, then so are the $p'$-degree
irreducible characters of $G$, and thus
$|\Irr_{p',\parat}(G)|=|\Irr_{p'}(G)|$, which implies that $(G,p)$ is
not a counterexample by the main result of \cite{Malle-Maroti}. As
observed in the proof of \cite[Theorem~5.8]{Navarro-Tiep} (see also
\cite[Theorem~1.3]{Tiep-Zalesskii04}), when $p>2$, every irreducible
character of any alternating group, of any sporadic simple group
(including the Tits group $\tw2F_4(2)'$), or of any simple group of
Lie type in characteristic $p$, is almost $p$-rational. When $p=2$,
by \cite[Propositions~2.1--2.4]{Malle19}, irreducible characters of
these groups remain almost $p$-rational, except for 4 characters of
degrees $27$ and $351$ of $\tw2F_4(2)'$.

Thus, from now on, we may assume that $S\neq\tw2F_4(2)'$ is a simple group of
Lie type in characteristic $\ell$ different from~$p$, or $S=\tw2F_4(2)'$
with $p=2$.

\subsection{The case $p=2,3$}

Let us assume for a moment that $(S,p)\neq (\tw2F_4(2)',2)$, so that
the defining characteristic $\ell$ of $S$ is not $p$. First suppose
that $N<G$. Then $G$ has at least two $p$-rational irreducible characters
whose kernels contain $N$. On the other hand, the so-called Steinberg character
$\St_S$ of $S$ degree $\St_S(1)=|S|_{\ell}$ is extendible to a rational-valued
character of $\Aut(S)$ (see \cite{Schmid92}), and thus, as before,
$G$ has a rational-valued irreducible character $\chi$ that extends
$\St_S\times\cdots\times \St_S\in\Irr(N)$. We deduce that
$|\Irr_{p',\parat}(G)|\geq 3>2\sqrt{p-1}$, as desired.

We now suppose that $G=N$, and in fact, it suffices to suppose that
$G=S$. By \cite[Theorem~2.1]{Giannelli-Hung-Schaeffer-Rodriguez},
there exists $\textbf{1}_S\neq \chi\in\Irr(S)$ of
$\{\ell,p\}'$-degree such that $\QQ(\chi)\subseteq \QQ_\ell$ or
$\QQ(\chi)\subseteq \QQ_p$. In particular,
$\chi\in\Irr_{p'}(S)\setminus \St_S$ and $\chi$ is almost
$p$-rational and it follows again that $|\Irr_{p',\parat}(S)|\geq
|\{\textbf{1}_S,\chi,\St_S\}|= 3$.

We are left with the case $p=2$ and $S=\tw2F_4(2)'$. But a quick
inspection of the character table of $\tw2F_4(2)'$ reveals that it
has four rational-valued irreducible characters of degrees $1, 325,
351$, and $675$, all of which are extendible to
$\Aut(S)=\tw2F_4(2)$, and so the above arguments apply to this case
as well.

\subsection{Finishing the proof of Theorem~\ref{theorem-p'-degree}
(assuming Theorems~\ref{theorem-simple-groups} and
\ref{theorem-simple-groups2})}

We have shown that the counterexample $G$ has a unique minimal
normal subgroup $N$ with $p\nmid |G/N|$ and $N$ is isomorphic to a
direct product of $t$ copies of a simple group $S$ of Lie type in
characteristic $\ell$ with $\ell\neq p$ and $5\leq p\mid |S|$.

Assume first that Sylow $p$-subgroups of $S$ are non-cyclic. If
$N=S$ then it suffices to show that $|\Irr_{p',\parat}(G)|>
2\sqrt{p-1}$. On the other hand, if $t\geq 2$ then, by using
Lemma~\ref{lemma-NT} and the same arguments as in
\cite[\S3.2]{Malle-Maroti}, we deduce that $|\Irr_{p',\parat}(G)|\geq
k(k+1)/2$, where $k$ is the number of $\bN_G(S)$-orbits (here we
view $S$ as a simple factor of $N$) on $\Irr_{p',\parat}(S)$, and
therefore it suffices to show that there are at least $2(p-1)^{1/4}$
$\Out(S)$-orbits on $\Irr_{p',\parat}(S)$. In summary, in this case,
we wish to establish the following result on simple groups of Lie type.

\begin{theorem}\label{theorem-simple-groups}
Let $S\neq \tw2F_4(2)'$ be a simple group of Lie type and $p\geq5$ a
prime not equal to the defining characteristic of $S$ such that
Sylow $p$-subgroups of $S$ are non-cyclic. Let $S\leq H\leq \Aut(S)$
be an almost simple group such that $p\nmid |H/S|$. Then
\begin{itemize}
\item[(i)] $|\Irr_{p',\parat}(H)|> 2\sqrt{p-1}$.

\item[(ii)] There are at least $2(p-1)^{1/4}$
$H$-orbits on $\Irr_{p',\parat}(S)$.
\end{itemize}
\end{theorem}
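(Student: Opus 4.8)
The plan is to establish Theorem~\ref{theorem-simple-groups} by a case analysis over the possible Lie types of $S$, in each case producing enough almost $p$-rational irreducible characters of $p'$-degree. First I would recall the general setup: write $S=\bG^F/Z$ for a simple algebraic group $\bG$ of simply connected type with Steinberg endomorphism $F$, let $d=d_p(q)$ be the multiplicative order of $q$ modulo $p$ (modulo $4$ if $p=2$), and note that the condition ``Sylow $p$-subgroups of $S$ are non-cyclic'' translates, via Sylow theory for finite reductive groups, into a statement about the $p$-part of $|S|$ being carried by at least two copies of the $d$-th cyclotomic polynomial $\Phi_d(q)$, or one copy with extra $p$-divisibility. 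The key source of $p'$-degree characters will be the \emph{semisimple characters} $\chi_s$ indexed by semisimple classes $[s]$ in the dual group $\bG^{*F}$ with $p\nmid |s|$: these have degree $|\bG^{*F}:C_{\bG^{*F}}(s)|_{\ell'}$, which is automatically prime to $\ell$; one restricts attention to those $s$ for which this index is also prime to $p$, i.e.\ a Sylow $p$-subgroup of $\bG^{*F}$ is contained in $C_{\bG^{*F}}(s)$.

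The heart of the argument is a field-of-values estimate: a semisimple character $\chi_s$ of $S$ is almost $p$-rational precisely when the $p$-part of its field of values is small, and by the known description of Galois action on semisimple (and more generally Lusztig series) characters (Navarro--Tiep, Srinivasan--Vinroot), $\QQ(\chi_s)$ is controlled by the field of definition of the class $[s]$ together with a bounded contribution coming from the unipotent part. Concretely, if $s$ is a $p'$-element then the $p$-part of $\QQ(\chi_s)$ is governed by the $p$-part of $|Z|$ and of the component group of $C_{\bG}(s)$, both of which are $O(1)$; so for $p$ large essentially all $p'$-degree semisimple characters are almost $p$-rational. I would then count: the number of $\bG^{*F}$-classes of $p'$-semisimple elements $s$ with $\Syl_p(\bG^{*F})\le C_{\bG^{*F}}(s)$ is, up to bounded error, the number of $p'$-classes in a torus-normalizer quotient and grows at least like a positive power of $p$ when the Sylow $p$-subgroup is non-cyclic — quantitatively at least $c\cdot p$, which comfortably beats $2\sqrt{p-1}$. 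The action of $H$ (diagonal, field, and graph automorphisms) on this set of characters has orbits of size $O(\log q)$ for fixed $S$, but for part (ii) I want a power-of-$p$ lower bound on the number of $\Out(S)$-orbits, so I would bound $|\Out(S)|$ in terms of $q$ and observe that when the Sylow $p$-subgroup is non-cyclic the rank, and hence the count of eligible semisimple classes, is large enough that even after dividing by $|\Out(S)|$ one retains at least $2(p-1)^{1/4}$ orbits.

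For the small-rank and exceptional groups, where the asymptotic counting is thin, I would instead exhibit explicit almost $p$-rational $p'$-degree characters: the trivial character, the Steinberg character $\St_S$ (rational, degree $|S|_\ell$, hence almost $p$-rational and $p'$-degree), and a handful of semisimple or Deligne--Lusztig characters attached to small-order tori, checking from the generic character tables (or the tables in \cite{Atl1} for the finitely many genuinely small cases) that together with the unipotent characters of $p'$-degree they give strictly more than $2\sqrt{p-1}$ and at least $2(p-1)^{1/4}$ automorphism-orbits. The condition $p\ge 5$ is used here to ensure $\St_S\ne\textbf{1}_S$ and to make the ``bounded contribution'' estimates from component groups and the centre effective. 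The main obstacle I anticipate is the Galois-action bookkeeping: pinning down the precise $p$-part of $\QQ(\chi_s)$ uniformly across all types — especially handling the unipotent support of a Lusztig series and the twisted types $\tw2A_n,\tw2D_n,\tw3D_4,\tw2E_6$, where the relevant cyclotomic order $d$ and the structure of $C_{\bG^{*F}}(s)$ interact with the graph automorphism. Getting a clean statement that ``$\chi_s$ almost $p$-rational whenever $s$ is a $p'$-element with $p$ large'' with explicit control of the finitely many exceptions is the step that will require the most care, and it is there that I expect to lean hardest on \cite{Navarro-Tiep} and \cite{Giannelli-Hung-Schaeffer-Rodriguez}.
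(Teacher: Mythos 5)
Your high-level architecture (split into exceptional versus classical, use unipotent characters plus semisimple characters attached to well-chosen elements of $G^*$, bound $\Out(S)$-orbits) matches the paper's. But there is a genuine gap in the choice of semisimple elements, and it is exactly the point where the word \emph{almost} in ``almost $p$-rational'' is doing the work.

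You propose to use semisimple characters $\chi_s$ with $s$ a \emph{$p'$-element} of $\bG^{*F}$ such that $C_{\bG^{*F}}(s)$ contains a Sylow $p$-subgroup. Such $\chi_s$ are indeed $p$-rational and of $p'$-degree, but the supply can be empty. Take $p$ not dividing $|W|$, so the Sylow $p$-subgroup $P$ of $G^*$ is abelian and lies in a maximal torus $T$ containing the Sylow $d$-torus. Then $C_{G^*}(P)=T$ generically, and the $p'$-elements you need are exactly the $p'$-part of $T$. If $\Phi_d(q)$ (and more generally $|T|$) is a $p$-power, this $p'$-part is trivial. A concrete instance: $S=\PSL_3(8)$, $p=7$. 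Here $q-1=7$, $d=1$, the split torus $T_1$ has order $7^2$ and equals the Sylow $7$-subgroup, so $C_{G^*}(\Syl_7)=T_1$ is a $7$-group and the only $p'$-element commuting with a Sylow $7$-subgroup is the identity. Your count of ``$p'$-classes in a torus-normalizer quotient'' is then $1$, not $\geq c\cdot p$, and the argument collapses.

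The paper sidesteps this by using semisimple elements of \emph{order $p$} inside the elementary abelian subgroup $E\cong C_p^a$ of the Sylow $d$-torus — these always exist in abundance when the Sylow $p$-subgroup is non-cyclic. The price is that $\chi_s$ is then no longer $p$-rational, only almost $p$-rational; and the key ingredient you are missing is the paper's Lemma~\ref{lem:lusz}: for $s$ almost $p$-regular (in particular, of order $p$), the Lusztig series $\cE(G,s)$ is $\sigma$-stable for every $\sigma\in\Gal(\QQ_{|G|}/\QQ_{p|G|_{p'}})$, and the semisimple character in $\cE(G,s)$, being canonically singled out as the unique constituent of the rational-valued class function $\Delta_\bG$, inherits this $\sigma$-stability, hence is almost $p$-rational. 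With this in hand, the count of $W_d$-orbits on $E\setminus\{1\}$ is $\geq (p^a-1)/|W_d|$, which is where the $\sqrt{p-1}$- and $(p-1)^{1/4}$-type lower bounds actually come from. So the part of your proposal that you flagged as requiring the most care — controlling $\QQ(\chi_s)$ uniformly — is not where the difficulty lies; the difficulty is that $p'$-elements are the wrong ones to use, and the whole theorem is calibrated to exploit $p$-elements of order exactly~$p$. Incidentally, for genuinely $p'$-semisimple $s$ the field $\QQ(\chi_s)$ is $p$-rational with no correction from $|Z|$ or component groups; the bounded-error estimate you anticipate is both unnecessary and a sign that the $p'$-element route is not the intended one.
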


For the case when Sylow $p$-subgroups of $S$ are cyclic, by
Subsection~\ref{subsect:cyclic-Sylow}, we may assume that the number
$t$ of copies of $S$ in $N$ is at least 2, and therefore we need to
establish the following.

\begin{theorem}\label{theorem-simple-groups2}
 Let $X$ be a finite group with a unique minimal subgroup $N=S^t$, where
where $S\neq \tw2F_4(2)'$ is a simple group of Lie type, and
$p\geq5$ a prime not equal to the defining characteristic of $S$
such that Sylow $p$-subgroups of $S$ are cyclic and non-trivial.
Suppose that $t\geq 2$ and $p\nmid |X:N|$. Then
$|\Irr_{p',\parat}(X)|> 2\sqrt{p-1}$.
\end{theorem}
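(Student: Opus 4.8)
The plan is to leverage the fact that $t\geq 2$ to manufacture many almost $p$-rational irreducible characters of $X$ of $p'$-degree out of almost $p$-rational $p'$-characters of the simple factor $S$, combined with the Steinberg character, which we have already observed is rational-valued and extends to $\Aut(S)$. First I would set $N=S^t$ and let $K:=\bN_X(S)$ be the stabilizer of one of the $t$ factors, so that $X$ acts on the $t$ copies and the image of $X$ in $\Sym(t)$ is transitive. Since $p\nmid|X:N|$ and $S$ has cyclic Sylow $p$-subgroups, the simple group $S$ has at least three almost $p$-rational irreducible characters of $p'$-degree, namely $\mathbf 1_S$, $\St_S$, and a third character $\chi_0$ of $\{\ell,p\}'$-degree with values in $\QQ_\ell$ or $\QQ_p$ furnished by \cite[Theorem~2.1]{Giannelli-Hung-Schaeffer-Rodriguez}. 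Moreover $\mathbf 1_S$ and $\St_S$ each extend to rational-valued characters of $\Aut(S)$, hence lie in $\Out(S)$-orbits of size one on $\Irr_{p',\parat}(S)$.

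The next step is a combinatorial counting argument of exactly the type used in \cite[\S3.2]{Malle-Maroti} and already invoked in the proof of Theorem~\ref{theorem-p'-degree}: tensor products $\theta_1\times\cdots\times\theta_t$ with each $\theta_i\in\Irr_{p',\parat}(S)$ have $p'$-degree (as $p\nmid|S|_{p'}$-part issues do not arise: each factor has $p'$-degree), are permuted by $X$, and by Lemma~\ref{lemma-NT} every irreducible character of $X$ lying above such a tensor product is almost $p$-rational of $p'$-degree. Counting $X$-orbits on these tensor products and using Clifford theory, one gets $|\Irr_{p',\parat}(X)|\geq$ (number of $K$-orbits on $t$-tuples from $\Irr_{p',\parat}(S)$, up to the permutation action). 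Since we have at least the two singleton $\Out(S)$-orbits $\{\mathbf 1_S\}$ and $\{\St_S\}$, the $t$-tuples built from just these two characters already give $\binom{t+1}{t}=t+1\geq 3$ orbits when $t\geq 2$; but to beat $2\sqrt{p-1}$ we need to be more careful, distinguishing the $p'$-degree constraint and bringing in the orbit of $\chi_0$. The cleanest route is: if $S$ has $r$ singleton $\Out(S)$-orbits on $\Irr_{p',\parat}(S)$ (so $r\geq 2$), then the tensor products supported on these give at least $\binom{t+r-1}{t}$ distinct $X$-orbits, and for $t\geq2,r\geq2$ this is $\geq\binom{t+1}{2}\geq 3$; combined with the orbit containing $\chi_0$ (possibly non-singleton), one should push the count up. I would then invoke the lower bound $k(\Out(S))$-type estimates or, more directly, Lemma~\ref{lemma-HM20}-style bounds on the number of $\Aut(S)$-orbits on $p$-regular classes translated through Lemma~\ref{lemma-p-reg<p-rat} and the McKay--Navarro-consistent equality $|\Irr_{p',\parat}(S)|$ relates to such orbit counts, to conclude the number of $\Out(S)$-orbits on $\Irr_{p',\parat}(S)$ is at least $2(p-1)^{1/4}$, whence $\binom{t+\lceil2(p-1)^{1/4}\rceil-1}{t}\geq\binom{2(p-1)^{1/4}+1}{2}>2\sqrt{p-1}$ for $p\geq5$.

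The main obstacle I anticipate is making the orbit-counting lower bound genuinely unconditional: the passage from "number of $X$-orbits on $t$-tuples" to "$|\Irr_{p',\parat}(X)|$" requires that distinct $X$-orbits of tensor products give rise to disjoint, nonempty families of almost $p$-rational $p'$-characters of $X$ lying above them, which is where one needs Lemma~\ref{lemma-NT} (for almost $p$-rationality to lift) together with the fact that $\chi^G$ for $\chi\in\Irr_{p',\parat}(\mathrm{stab}_X(\theta_1\times\cdots\times\theta_t))$ exists — and here the extension of the relevant product character to its inertia group in $X$ may require a Galois-equivariant version of Gallagher/Clifford theory, since $X$ need not induce the full symmetric group and the factor group $X/N$ may be non-cyclic. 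In the cyclic-Sylow regime for $S$ the relevant characters $\mathbf 1_S$ and $\St_S$ both extend all the way to $\Aut(S)$ with rational values, so their tensor products extend to $X$ with $\QQ$-values, sidestepping the difficulty for those; the only real work is checking $\chi_0$ and any further characters needed to clear the bound $2(p-1)^{1/4}$ actually contribute — and if $S$ happens to have very few $\Out(S)$-orbits on $\Irr_{p',\parat}(S)$, one falls back on the fact that $t\geq 2$ already gives a large multiplicative boost, so a careful bookkeeping of the two regimes ($r$ large versus $t$ large) should close the argument. I expect this case-split, rather than any single deep input, to be the crux.
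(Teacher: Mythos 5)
Your high-level plan (tensor up characters of $S$ to $N=S^t$, count $X$-orbits, invoke Clifford theory and Lemma~\ref{lemma-NT}, and split into a ``$t$ large'' regime versus a ``$S$ has many $\Out(S)$-orbits'' regime) has the same flavour as the paper's proof, and the observation that $\mathbf 1_S$ and $\St_S$ give two singleton orbits is a correct starting point. But the step you yourself flag as the ``main obstacle'' is exactly where the argument breaks, and it cannot be repaired in the way you suggest.

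The crucial input you need is a lower bound on the number of $\Out(S)$-orbits on $\Irr_{p',\parat}(S)$, and you propose to get it by ``translating'' Lemma~\ref{lemma-HM20} (a bound on $\Aut(S)$-orbits on $p$-regular classes) through Lemma~\ref{lemma-p-reg<p-rat}. That translation does not exist: Lemma~\ref{lemma-p-reg<p-rat} compares the total counts $|\Cl_\pareg(G)|\leq|\Irr_\parat(G)|$ via Brauer's permutation lemma, but it says nothing about $p'$-degree characters, and it does not descend to orbit counts under $\Aut(S)$ --- there is no dual on the character side to ``$p$-regular class'' that would produce such a bound. In the cyclic-Sylow regime handled here, the number of $\Out(S)$-orbits on $\Irr_{p',\parat}(S)$ can genuinely be small (of size roughly $|W_d|$ for the unipotent part, which is a bounded constant depending only on the type), far below $2(p-1)^{1/4}$, so the binomial-coefficient count $\binom{t+r-1}{t}$ with $r$ the number of singleton orbits will not by itself clear $2\sqrt{p-1}$ for small $t$. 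The paper instead constructs the needed characters explicitly via Lusztig theory: the $|W_d|$ unipotent characters of $p'$-degree coming from $d$-Harish-Chandra theory (all $\Aut(S)$-stable by \cite[Theorems~2.4, 2.5]{Malle08}), and roughly $(p-1)/|W_d|$ semisimple characters indexed by $p$-elements of $G^*$ (almost $p$-rational and of $p'$-degree by Lemma~\ref{lem:lusz}, stable under diagonal automorphisms, fused in orbits of length at most $(p-1)/d$ by field automorphisms via Proposition~\ref{prop:fld}). It then combines the two families with Lemma~\ref{lem:wreath}, which bounds $|\Irr(X/(X\cap G^t))|/|X/(X\cap G^t)|\geq 1/(t!)^2$ for $X/(X\cap G^t)$ inside an iterated wreath product, and an AM--GM step produces the bound $2\sqrt{(p-1)^t/((t!)^2|W_d|^{t-1})}$, which beats $2\sqrt{p-1}$ precisely in the range where $\binom{t+|W_d|}{t}$ does not. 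Without a substitute for this explicit Lie-theoretic construction, your argument has a genuine gap.
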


Proofs of Theorems~\ref{theorem-simple-groups} and
\ref{theorem-simple-groups2}, as expected, rely on the
representation theory of finite groups of Lie type, and therefore
are deferred to the next section.

\section{Groups of Lie type}   \label{sect:groups-Lie-type}

In this section we prove Theorems~\ref{theorem-simple-groups} and
\ref{theorem-simple-groups2}, which were left off at the end of
Section~\ref{section-p'-degree}.

We consider the following setup. Let $\bG$ be a simple linear
algebraic group of adjoint type with a Steinberg endomorphism
$F:\bG\to\bG$. We consider the characters of the finite almost
simple group $G:=\bG^F$. For this, let $(\bG^*,F)$ be in duality
with $(\bG,F)$ (see \cite[Definition~1.5.17]{GM20}) and
$G^*:=\bG^{*F}$. According to Lusztig, there is a partition
$$\Irr(G)=\coprod_{s\in G^*/\sim}\cE(G,s)$$
into Lusztig series, where the union runs over a system of
representatives $s$ of semisimple conjugacy classes in $G^*$ (see
\cite[Theorem~2.6.2]{GM20}).

\begin{lemma}   \label{lem:lusz}
 In the above setting, let $\sigma\in\Gal(\QQ_{|G|}/\QQ_{p|G|_{p'}})$. Assume
 that $p\ge5$ and let $s\in G^*$ be an almost $p$-regular semisimple element.
 Then we have:
 \begin{enumerate}
  \item[\rm(a)] The Lusztig series $\cE(G,s)$ is $\sigma$-stable.
  \item[\rm(b)] The semisimple character in $\cE(G,s)$ is almost $p$-rational.
 \end{enumerate}
\end{lemma}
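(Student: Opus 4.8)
The statement concerns the action of a Galois automorphism $\sigma\in\Gal(\QQ_{|G|}/\QQ_{p|G|_{p'}})$ on a Lusztig series $\cE(G,s)$ attached to an almost $p$-regular semisimple element $s\in G^*$. The plan is to exploit the compatibility of Galois actions with Lusztig's parametrisation. First I would recall the effect of $\sigma$ on the semisimple conjugacy classes of $G^*$: an element $\sigma$ that fixes $p'$-roots of unity and sends a primitive $p^k$-th root of unity to its $q$-th power (for a suitable $p$-power $q\equiv1\pmod p$, chosen so that $q$ acts trivially mod $p$) corresponds, via duality, to the map sending a semisimple element $t\in G^*$ to a conjugate of $t^q$, or more precisely to the power map on the $p$-part of $t$ while fixing the $p'$-part. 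This is the standard description of how field automorphisms permute Lusztig series (see e.g.\ the results of Srinivasan--Vinroot or Navarro--Tiep--Turull type statements, or \cite[\S2]{Navarro-Tiep} which is already cited). Concretely, $\sigma$ maps $\cE(G,s)$ to $\cE(G, s')$ where $s'$ has the same $p'$-part as $s$ and $p$-part equal to $(s_p)^q$ for the relevant $q$.

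**Step (a).** Now use that $s$ is almost $p$-regular, i.e.\ $|s|_p \mid p$. Then the $p$-part $s_p$ satisfies $s_p^p=1$, so $s_p$ has order $1$ or $p$. Since $q\equiv1\pmod p$ (the defining property of $q$ relative to $\sigma$ being in $\Gal(\QQ_{|G|}/\QQ_{p|G|_{p'}})$ — exactly the condition that $\sigma$ fixes all $p$-th roots of unity), we get $s_p^q = s_p$, hence $s^q$ is $G^*$-conjugate to $s$ and $\cE(G,s)^\sigma = \cE(G,s^q) = \cE(G,s)$. This proves (a). The one point needing care is to make sure the $q$ attached to a generator $\sigma$ of $\Gal(\QQ_{|G|}/\QQ_{p|G|_{p'}})$ indeed satisfies $q\equiv1\pmod{p}$; this holds because that Galois group fixes $\QQ_p$ pointwise (it fixes $e^{2\pi i/p}$), so $\sigma$ raises a primitive $p$-th root of unity to the first power, forcing $q\equiv1\pmod p$, while $\sigma$ can act nontrivially on $p^k$-th roots for $k\ge2$.

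**Step (b).** For the semisimple character $\chi_s\in\cE(G,s)$: it is the unique character in its series of degree $[G^*:C_{G^*}(s)]_{\ell'}$ (degree prime to the defining characteristic $\ell$), hence it is the unique semisimple character in $\cE(G,s)$ and is therefore $\sigma$-fixed by (a) together with uniqueness — any $\sigma$ permutes the set $\{$semisimple character of $\cE(G,s)^\sigma\} = \{$semisimple character of $\cE(G,s)\}$, a singleton, so fixes it. Thus $\chi_s$ lies in $\QQ_{p|G|_{p'}}$, i.e.\ its values generate a field contained in $\QQ_n$ with $n \mid p|G|_{p'}$, so $n$ is divisible by $p$ at most once: $\chi_s$ is almost $p$-rational. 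To be rigorous I should note that the hypothesis $p\ge5$ is used to guarantee that the semisimple character is genuinely distinguished inside its series and that no small-group coincidences interfere (and that $p\ne\ell$ is implicit since $s$ is a meaningful semisimple element whose $p$-part is controlled).

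**Main obstacle.** The technical heart is pinning down the precise Galois-action-on-Lusztig-series statement in the form needed — that $\sigma$ sends $\cE(G,s)$ to $\cE(G,s^q)$ with $q$ the $p$-power exponent associated to $\sigma$, uniformly across all types including the twisted groups and including the behaviour on the $p$-part of $s$. This requires either citing a clean reference for the $\sigma$-equivariance of the Jordan/Lusztig correspondence with respect to field automorphisms, or arguing through the known compatibility of semisimple characters with duality (the values of $\chi_s$ being expressible via Gauss sums / Deligne--Lusztig virtual characters $R_{\bT}^{\bG}$, on which $\sigma$ acts by permuting the eigenvalues according to the power map on tori). I would lean on \cite[\S2]{Navarro-Tiep} and the cited machinery of \cite{GM20} for this, isolating the statement as the key input and keeping the rest of the proof — the order-divisibility computation $s_p^q=s_p$ and the uniqueness of the semisimple character — as short formal deductions.
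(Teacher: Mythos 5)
Your proof is correct and follows essentially the same architecture as the paper. For part (a) the paper simply cites \cite[Proposition~3.3.15]{GM20} for the fact that $\sigma$ sends $\cE(G,s)$ to $\cE(G,s^m)$ with $m$ determined by the action of $\sigma$ on roots of unity of order $|s|$; you spell out this argument explicitly and then correctly observe that $\sigma\in\Gal(\QQ_{|G|}/\QQ_{p|G|_{p'}})$ fixes $\QQ_p$, so $m\equiv 1\pmod p$, whence almost $p$-regularity (the $p$-part of $s$ having order $1$ or $p$) forces $s^m=s$. (Your first paragraph calls the relevant exponent ``a suitable $p$-power,'' conflating this Galois group with the subgroup $\mathcal{H}$ from the McKay--Navarro statement, but you correct this slip in the displayed discussion at the end of step~(a), so no harm done.) For part (b) the only genuine divergence: the paper distinguishes the semisimple character inside $\cE(G,s)$ by its non-zero multiplicity in the rational-valued class function $\Delta_\bG$ of \cite[Definition~2.6.9]{GM20}, together with $|\bZ(\bG^F)|=1$ guaranteeing uniqueness, and then concludes $\sigma$-stability since $\Delta_\bG$ is $\sigma$-fixed. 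You instead distinguish it by degree $[G^*:C_{G^*}(s)]_{\ell'}$, i.e.\ as the unique character of minimal degree in its series. This works because $\bG$ is adjoint, hence $\bG^*$ is simply connected, so $C_{\bG^*}(s)$ is connected and the trivial character is the only degree-one unipotent character of $C_{G^*}(s)$; but you should note this dependence explicitly, since for non-adjoint $\bG$ the series can contain several semisimple characters of equal degree and the degree criterion no longer singles one out. The paper's $\Delta_\bG$-criterion and your degree criterion both reduce the problem to a $\sigma$-invariant tag plus uniqueness, so the two proofs buy essentially the same thing; the paper's version is the more robust one to quote, as it does not require the auxiliary claim about degree-one unipotent characters.
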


\begin{proof}
The first claim is well-known, see \cite[Proposition~3.3.15]{GM20}.
For the second, note that by the first $\cE(G,s)$ is
$\sigma$-stable. Since $|\bZ(\bG^F)|=1$, there is exactly one
semisimple character in $\cE(G,s)$ (see
\cite[Definition~2.6.9]{GM20}), and it is  uniquely distinguished
among all characters in $\cE(G,s)$ by having non-zero multiplicity
in the rational valued class function $\Delta_\bG$ from
\emph{loc.~cit.}. Thus it is $\sigma$-stable.
\end{proof}

Let $\sigma:\bG\to\bG$ be an isogeny commuting with $F$. Then there
exists a dual isogeny $\sigma^*:\bG^*\to\bG^*$ such that the
following holds (see \cite[Proposition~7.2]{Tay18}):

\begin{proposition}   \label{prop:fld}
 Let $s\in\bG^{*F}$ be semisimple. Then
 $$\tw\sigma\cE(G,s)=\cE(G,{\sigma^*}^{-1}(s)).$$
 In particular $\cE(G,s)$ is $\sigma$-stable if the class of $s$ is
 $\sigma^*$-stable.
\end{proposition}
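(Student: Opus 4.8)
The statement is essentially \cite[Proposition~7.2]{Tay18}, and the plan is to explain how it follows from the interplay between Deligne--Lusztig induction and duality. Recall that Lusztig series are detected by Deligne--Lusztig characters: for an $F$-stable maximal torus $\mathbf{T}\le\bG$ and $\theta\in\Irr(\mathbf{T}^F)$, a character $\chi\in\Irr(G)$ lies in $\cE(G,s)$ if and only if $\langle\chi,R_{\mathbf{T}}^{\bG}(\theta)\rangle\neq0$ for some pair $(\mathbf{T},\theta)$ whose $G$-conjugacy class is dual to the $G^*$-conjugacy class of $s$. Since $\sigma$ is an isogeny commuting with $F$ it induces a bijection of $\bG^F=G$, whence an automorphism $\tw\sigma$ of $\Irr(G)$; being (the inverse of) a relabelling of conjugacy classes, $\tw\sigma$ preserves the standard inner product of class functions on $G$.

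First I would record the functoriality of Deligne--Lusztig induction, $\tw\sigma(R_{\mathbf{T}}^{\bG}(\theta))=R_{\sigma(\mathbf{T})}^{\bG}(\tw\sigma\theta)$, where $\sigma$ maps the $F$-stable torus $\mathbf{T}$ to the $F$-stable torus $\sigma(\mathbf{T})$ and $\tw\sigma\theta=\theta\circ\sigma^{-1}$; this is immediate from the construction of $R_{\mathbf{T}}^{\bG}$ via the virtual $\ell$-adic cohomology of the associated Deligne--Lusztig variety, on which $\sigma$ acts compatibly. The second and decisive ingredient is the construction of the dual isogeny $\sigma^*:\bG^*\to\bG^*$ together with the verification that duality intertwines the $\sigma$-action on pairs with the ${\sigma^*}$-action on semisimple classes: namely, if the $G$-class of $(\mathbf{T},\theta)$ is dual to the $G^*$-class of $s$, then the $G$-class of $(\sigma(\mathbf{T}),\tw\sigma\theta)$ is dual to the $G^*$-class of ${\sigma^*}^{-1}(s)$. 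This last point is precisely the content that we take from \cite[Proposition~7.2]{Tay18}.

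Combining the ingredients proves the first assertion. Given $\chi\in\cE(G,s)$, choose $(\mathbf{T},\theta)$ dual to $s$ with $\langle\chi,R_{\mathbf{T}}^{\bG}(\theta)\rangle\neq0$. Then
\[\langle\tw\sigma\chi,R_{\sigma(\mathbf{T})}^{\bG}(\tw\sigma\theta)\rangle=\langle\tw\sigma\chi,\tw\sigma(R_{\mathbf{T}}^{\bG}(\theta))\rangle=\langle\chi,R_{\mathbf{T}}^{\bG}(\theta)\rangle\neq0,\]
and since $(\sigma(\mathbf{T}),\tw\sigma\theta)$ is dual to ${\sigma^*}^{-1}(s)$ this says $\tw\sigma\chi\in\cE(G,{\sigma^*}^{-1}(s))$. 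Thus $\tw\sigma\cE(G,s)\subseteq\cE(G,{\sigma^*}^{-1}(s))$, and as $\tw\sigma$ permutes $\Irr(G)$ while the Lusztig series partition $\Irr(G)$, this inclusion must be an equality. The ``in particular'' clause is then formal: if the $G^*$-class of $s$ is $\sigma^*$-stable then ${\sigma^*}^{-1}(s)$ is $G^*$-conjugate to $s$, so $\cE(G,{\sigma^*}^{-1}(s))=\cE(G,s)$ and hence $\tw\sigma\cE(G,s)=\cE(G,s)$.

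The genuine difficulty lies entirely in the second ingredient --- producing $\sigma^*$ and checking its compatibility with the duality bijection --- which is why I would simply invoke \cite[Proposition~7.2]{Tay18} here rather than reproduce Taylor's argument; granting that result, everything else is a short formal manipulation, with only the routine points that $\sigma$ acts bijectively on $\bG^F$ and isometrically on class functions to be noted in passing.
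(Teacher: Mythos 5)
Your proposal is correct and follows the same route as the paper, which states this proposition without its own proof and simply cites \cite[Proposition~7.2]{Tay18} for its validity. You correctly identify that the genuine content (constructing $\sigma^*$ and verifying the compatibility of duality with the $\sigma$-action on pairs $(\mathbf{T},\theta)$) lives in Taylor's paper, and the surrounding manipulations you supply --- $\sigma$-equivariance of $R_{\mathbf T}^{\bG}$, isometry of $\tw\sigma$, and the partition argument upgrading the inclusion to an equality --- are a correct and useful expansion of what the paper leaves implicit in the citation.
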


We will employ this in case of Steinberg endomorphisms $\sigma$
commuting with $F$, which induce field automorphisms on $G$. In this
case, $\sigma^*$ induces also a field automorphism on~$G^*$.

The following strengthens \cite[Theorem~5.4]{Malle-Maroti} by
taking almost $p$-rationality into account:

\begin{theorem}   \label{thm:simple}
 Let $\bG$ be a simple exceptional group of adjoint type with a Steinberg
 endomorphism $F$ such that $S:=[\bG^F,\bG^F]$ is simple. Assume that Sylow
 $p$-subgroups of $S$ are non-cyclic and $p\geq 5$ is not the underlying
 characteristic of $\bG$. Then either
 $$|\cE(G,1)\cap\Irr_{p',\parat}(G)|\ge 2\sqrt{p-1}$$
 or
 $$|\Irr_{p',\parat}(G)|\ge 2g\sqrt{p-1}^3,$$
 where $g$ denotes the order of the group of graph automorphisms of
 $\bG$.
\end{theorem}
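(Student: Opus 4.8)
The plan is to run a case analysis over the finitely many types of simple exceptional groups of adjoint type, exploiting the dichotomy between "small torus structure" (where the unipotent characters, i.e. the Lusztig series $\cE(G,1)$, already carry enough almost $p$-rational $p'$-characters) and "large torus structure" (where the semisimple characters attached to almost $p$-regular semisimple classes of $G^*$ provide the bulk). First I would recall from \cite{Malle-Maroti} the counting that was used there to prove $|\Irr_{p'}(H)| > 2\sqrt{p-1}$ for these groups: one either bounds the number of $p'$-degree unipotent characters from below, or one counts semisimple conjugacy classes of $G^*$ whose centralizer order is divisible by the full $p$-part of $|G^*|$ (equivalently, those semisimple $s$ with $\cE(G,s)$ containing a $p'$-degree character). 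Since $p \geq 5$ and Sylow $p$-subgroups of $S$ are non-cyclic, $p^2 \mid |G|$, and one knows the generic structure of $|G|$ as a product of cyclotomic polynomials $\Phi_d(q)$; the non-cyclicity forces $p$ to divide at least two such factors, or $p^2$ to divide one, which pins down the relevant $d$ in terms of the order of $q$ modulo $p$.

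The heart of the argument is then to upgrade "many $p'$-characters" to "many \emph{almost $p$-rational} $p'$-characters". For the unipotent part this is essentially automatic: unipotent characters of exceptional groups are rational-valued or have values in a small fixed field (quadratic or cyclotomic of bounded conductor prime to $p$, see the explicit tables, e.g.\ in \cite{GM20}), hence almost $p$-rational, so $|\cE(G,1) \cap \Irr_{p',\parat}(G)|$ equals the number of $p'$-degree unipotent characters of $G$ — if this is already $\geq 2\sqrt{p-1}$ we are in the first alternative. Otherwise I would produce almost $p$-rational semisimple characters: by Lemma~\ref{lem:lusz}(b), if $s \in G^*$ is an almost $p$-regular semisimple element then the semisimple character in $\cE(G,s)$ is almost $p$-rational, and it has $p'$-degree whenever $p \nmid [G^*:C_{G^*}(s)]$, i.e.\ whenever the $p$-part of $|G^*|$ is already in $|C_{G^*}(s)|$. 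So I need a lower bound of the shape $2g\sqrt{p-1}^3$ on the number of $G^*$-classes of semisimple elements $s$ that are simultaneously almost $p$-regular and have $p$-complete centralizer. Writing $d$ for the multiplicative order of $q$ mod $p$ and $D = d$ or $dp$ according to whether $p \| \Phi_d(q)$ or not, such elements live (generically) in a Sylow $p$-subgroup of a Sylow $\Phi_d$-torus; the count of their classes is governed by the order of the relevant relative Weyl group and grows like a fixed power of $(p-1)$ coming from how many times $\Phi_d$ divides $|G|$. The factor $g$ (order of the graph automorphism group) enters because a graph automorphism of $\bG$ typically acts freely enough on these semisimple classes, or because in the relevant cases ($\bG$ of type $E_6$ or $D_4$) the structure of $G^*$ itself already supplies the extra multiplicity; one also uses Proposition~\ref{prop:fld} to control Galois/field-automorphism stability of the relevant Lusztig series. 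I would handle the types $E_8, E_7, E_6, F_4, G_2$ (and the twisted forms $\tw2E_6, \tw3D_4, \tw2F_4, \tw2B_2, \tw2G_2$ where they arise for $p \geq 5$) one at a time, in each case splitting further on the value of $d$, since the cyclotomic factor that $p$ divides changes the relative Weyl group and hence the count.

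The main obstacle I anticipate is the bookkeeping in the "large torus" regime: making the bound $2g\sqrt{p-1}^3$ hold \emph{uniformly} in $p$ and across all the sub-cases, rather than just asymptotically. The cube on $\sqrt{p-1}$ means I essentially need three independent "cyclotomic directions" divisible by $p$ in the centralizer structure, which is delicate for the groups of smaller rank ($G_2$, $F_4$, $\tw3D_4$) and for borderline values of $d$; there the first alternative (enough $p'$-degree unipotent characters) may fail to hold and yet the semisimple count is tight, so I expect to need the $g$-factor genuinely, and to verify by hand for the handful of smallest admissible primes $p \in \{5,7,11,13,\dots\}$ where the asymptotic estimates are weakest. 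A secondary subtlety is ensuring the semisimple characters I count really have $p'$-degree and really are distinct across different $s$, which requires knowing that distinct semisimple classes give disjoint Lusztig series (true, since $\bZ(\bG^F)$ is trivial here as $\bG$ is adjoint) and that within each series the distinguished semisimple character is unique — both of which are exactly what Lemma~\ref{lem:lusz} and the triviality of $\bZ(\bG^F)$ give us.
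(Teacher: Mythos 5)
Your proposal follows essentially the same route as the paper: piggyback on the Malle--Mar\'oti counting, observe that unipotent characters are automatically almost $p$-rational (handling the range of $p$ covered by their Table~2 bounds), and for larger $p$ --- where Sylow $p$-subgroups of $G^*$ are abelian because $p$ misses the Weyl group order --- count $G^*$-classes of order-$p$ elements in the elementary abelian $E\le S_d$ via the relative Weyl group $W_d$, then invoke Lemma~\ref{lem:lusz} and the adjoint hypothesis (so $\bZ(\bG^F)=1$) to convert these into distinct almost $p$-rational semisimple $p'$-characters. The only small inaccuracy is your reading of the factor $g$: it is not produced by graph automorphisms acting on semisimple classes but is simply baked into the lower bound $2g(p-1)\sqrt{p-1}$ one verifies from the known $a_d$, $|W_d|$ and the Table~2 bounds on $p$, so that the downstream corollary can afford to lose a factor $g$ when passing to $\Out(S)$-orbits.
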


\begin{proof}
We follow the proof of \cite[Theorem~3.1]{Malle-Maroti}. There we
had shown that the analogous statement holds for $\Irr_{p'}$ in
place of $\Irr_{p',\parat}$. In particular, in those cases when the
characters constructed there happen to be almost $p$-rational, our
claim will follow automatically. Now note that all unipotent
characters of groups of Lie type $G:=\bG^F$ are almost $p$-rational
for all primes $p\ge3$. This follows, for example, from
\cite[Proposition~4.5.5]{GM20}. Thus, whenever only unipotent
characters are used in the proof of
\cite[Theorem~3.1]{Malle-Maroti}, we may conclude.

Since the Sylow $p$-subgroups of $\tw2B_2(q^2)$ and $^2G_2(q^2)$ for
all primes $p\ge5$ are cyclic, we need not consider these. If $p$ is
at most equal to the bound given in Table~2 of \cite{Malle-Maroti},
the characters used in the proof of
\cite[Proposition~5.4]{Malle-Maroti} are unipotent and so we are
done by our previous remark. If $p$ is larger than that bound, it
does not divide the order of the Weyl group of $G$ and so the Sylow
$p$-subgroups of $G$ are abelian \cite[Theorem~25.14]{MT}. Note that
we need not consider the primes~$p$ corresponding to the second set
of columns in \cite[Table~2]{Malle-Maroti} since for those $p$,
Sylow $p$-subgroups of $G$ are cyclic.

In the notation of \cite[Proposition~5.4]{Malle-Maroti}, a Sylow
$p$-subgroup of $G^*$ contains an elementary abelian subgroup $E$ of
order $p^{a_d}$ lying in a Sylow $d$-torus $S_d$ of $G^*$, where $d$
is the order of $q$ modulo $p$, and $G^*$-fusion of elements of $E$
is controlled by the relative Weyl group $W_d$ of $S_d$. (Here $q$
is the absolute value of all eigenvalues of $F$ on the character
group of an $F$-stable maximal torus of $\bG$.) Since Sylow
$p$-subgroups of $G^*$ are abelian, the centraliser of any $s\in E$
contains a Sylow $p$-subgroup of $G^*$. Thus the semisimple
character in the corresponding Lusztig series $\cE(G,s)$ has degree
prime to $p$ by the degree formula \cite[Corollary~2.6.6]{GM20}.

Now from the known values of $a_d$, the orders of $W_d$ and the
lower bounds on $p$ in \cite[Table~2]{Malle-Maroti} it is
straightforward to check that there are at least
$$2g(p-1)\sqrt{p-1}$$
conjugacy classes of such elements $s$ of order $p$ in $G^*$, where
$g\le2$ denotes the order of the group of graph automorphisms of
$\bG$. By Lemma~\ref{lem:lusz} for each such $s$ the semisimple
character in $\cE(G,s)$ is almost $p$-rational, of $p'$-degree by
what we said before, so we conclude.
\end{proof}

\begin{corollary}
 Theorem~{\rm\ref{theorem-simple-groups}} holds true for $S$ a simple
 exceptional group of Lie type.
\end{corollary}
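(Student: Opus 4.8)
The plan is to deduce Theorem~\ref{theorem-simple-groups} for exceptional $S$ from Theorem~\ref{thm:simple} by a Clifford-theoretic descent. Write $G=\bG^F$, so that $S=[G,G]$, the quotient $G/S$ is the group of diagonal outer automorphisms of $S$ of order $d_0\le3$ (prime to $p$, since $p\ge5$), and $\Aut(S)$ is generated by $G$ together with the field and graph automorphisms of $S$; the graph automorphisms form a group of order $g\le2$, trivial unless $\bG$ is of type $E_6$, or of type $F_4$ or $G_2$ in characteristic $2$ or $3$. The key reduction is that, since $p\nmid|H/S|$, every $H$-orbit on $\Irr_{p',\parat}(S)$ is the restriction of some character in $\Irr_{p',\parat}(H)$: inducing from $S$ produces a character all of whose (prime-to-$p$ many) irreducible constituents have $p'$-degree, and the Galois automorphism of $p$-power order that detects almost $p$-rationality permutes them and hence fixes one. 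Consequently $|\Irr_{p',\parat}(H)|$ is at least the number of $H$-orbits on $\Irr_{p',\parat}(S)$, so it suffices to show that this number exceeds $2\sqrt{p-1}$ --- which yields (i) directly and (ii) a fortiori, as $2\sqrt{p-1}\ge 2(p-1)^{1/4}$.

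Now invoke Theorem~\ref{thm:simple}. Recall first that unipotent characters of $G$ restrict irreducibly to $S$, as $\bZ(\bG^F)=1$ (see \cite{GM20}), and are fixed by all inner--diagonal and field automorphisms, so that $H$ acts on them through the graph automorphisms. In the first case of Theorem~\ref{thm:simple}, $\cE(G,1)$ contains at least $2\sqrt{p-1}$ almost $p$-rational characters of $p'$-degree; restricting these to $S$ gives at least $2\sqrt{p-1}/g$ $H$-orbits inside $\Irr_{p',\parat}(S)$. When $g=1$ this, together with one further almost $p$-rational irreducible $p'$-character --- for instance a semisimple character attached to an element of order $2$ or $3$, which is almost $p$-rational by Lemma~\ref{lem:lusz} --- already exceeds $2\sqrt{p-1}$. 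When $g=2$, this alternative of Theorem~\ref{thm:simple} is used only for finitely many primes $p$ (those below the relevant bounds of \cite[Table~2]{Malle-Maroti}), and for those one checks by inspection of the known unipotent character degrees, the graph-automorphism action, and a few semisimple characters --- exactly as in \cite[Theorem~3.1]{Malle-Maroti} --- that the number of $H$-orbits on $\Irr_{p',\parat}(S)$ still exceeds $2\sqrt{p-1}$.

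In the second case, $|\Irr_{p',\parat}(G)|\ge 2g\sqrt{p-1}^3$. Since $p\nmid|G:S|$, Lemma~\ref{lemma-NT} shows that a character of $S$ is almost $p$-rational if and only if some character of $G$ above it is, and $p'$-degrees descend from $G$ to $S$; as at most $d_0$ characters of $G$ lie above each character of $S$, we obtain $|\Irr_{p',\parat}(S)|\ge 2g\sqrt{p-1}^3/d_0$. The characters of $G$ produced in the proof of Theorem~\ref{thm:simple} are the semisimple characters in the Lusztig series $\cE(G,s)$ with $s$ an element of order $p$ in the elementary abelian subgroup $E\le S_d$ of a Sylow $d$-torus of $G^*$, whose $G^*$-fusion is controlled by the relative Weyl group $W_d$. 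By Proposition~\ref{prop:fld} a field automorphism of $G$ permutes these characters as the induced field automorphism permutes the corresponding semisimple classes of $G^*$. Using this together with the explicit values of $a_d$ and $|W_d|$ and the lower bounds on $p$ from \cite[Table~2]{Malle-Maroti} --- exactly as in the proof of \cite[Theorem~5.4]{Malle-Maroti} --- one checks that these semisimple characters, together with the (also almost $p$-rational) Steinberg character, lie in more than $2\sqrt{p-1}$ orbits under $H$. For the low-rank groups $G_2$, $\tw3D_4$ and $\tw2F_4$ one argues directly from their generic character tables, while the Suzuki and Ree groups $\tw2B_2$ and $\tw2G_2$ need not be considered since their Sylow $p$-subgroups are cyclic for every $p\ge5$.

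The main obstacle is this last orbit count: a field automorphism of order $f$ could a priori collapse the semisimple characters above into as few as roughly one $f$-th of their number, and $f$ is not bounded in terms of $p$ alone. What makes the bound go through --- and this is the technical heart, taken over from \cite[\S5]{Malle-Maroti} --- is that the relevant order-$p$ semisimple classes either become correspondingly numerous because the defining field is large (so that $E$ has order a large power of $p$, or the number of $W_d$-orbits is large) or else can be chosen to be fixed by the field automorphism because they are already defined over the prime subfield, this dichotomy being read off from Proposition~\ref{prop:fld}; almost $p$-rationality is preserved throughout by Lemmas~\ref{lem:lusz} and~\ref{lemma-NT}.
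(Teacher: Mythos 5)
Your reduction is the same as the paper's: using Lemma~\ref{lemma-NT} and Clifford theory to lower-bound $|\Irr_{p',\parat}(H)|$ by the number of $H$-orbits on $\Irr_{p',\parat}(S)$, and then invoking the dichotomy of Theorem~\ref{thm:simple}. That part is sound. But you misidentify the technical heart of the second case, and in doing so leave a genuine gap.

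You worry, correctly, that a field automorphism of order $f$, with $f$ unbounded in terms of $p$, could a priori fuse the $\ge 2g\sqrt{p-1}^3$ semisimple characters into far fewer $\Out(S)$-orbits, and you try to resolve this by appealing to an unstated ``dichotomy'' from \cite{Malle-Maroti} (classes numerous because the field is large, or defined over the prime subfield). That is not the paper's argument, and it is not a proof as written. The paper's resolution is a single, concrete observation which you should have made: the elements $s$ produced in Theorem~\ref{thm:simple} have order exactly~$p$. The cyclic group of field automorphisms of $G^*$ acts on the class $[s]$ by sending it (up to $G^*$-conjugacy) to classes of powers $s^{\ell^a}$, and since $s$ has order $p$ there are at most $p-1$ distinct nonidentity powers; hence every field-automorphism orbit of such classes has length at most $p-1$, \emph{independently of} $f$. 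Combined with the fact (already used correctly by you) that diagonal automorphisms permute only characters inside a fixed Lusztig series, and that the graph automorphism group has order $g$, this gives at least $2g\sqrt{p-1}^3/\bigl(g(p-1)\bigr)=2\sqrt{p-1}$ $\Out(S)$-orbits directly, with no case analysis and no recourse to generic character tables. Without this order-$p$ observation, the claim that ``one checks that these semisimple characters \dots lie in more than $2\sqrt{p-1}$ orbits'' is unsupported.

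Two smaller points. In the first alternative with $g=1$, you propose adjoining ``a semisimple character attached to an element of order $2$ or $3$'' to get strict inequality; but it is not automatic that such a character has $p'$-degree, since $p$ may well divide $[G^*:\bC_{G^*}(s)]$. The paper gets strict inequality in part~(i) instead by noting that in the second alternative the $\ge 2\sqrt{p-1}$ orbits come from \emph{semisimple} (hence non-unipotent) characters, so the trivial character, say, can always be added. Finally, your handling of the first alternative with $g=2$ (finitely many primes, checked by inspection) is more elaborate than necessary; once the second alternative is handled as above, the first alternative requires only the standard fact that the unipotent $p'$-characters exhibited there are essentially $\Aut(S)$-stable.
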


\begin{proof} Note that we are done if $|\cE(G,1)\cap\Irr_{p',\parat}(G)|\geq 2\sqrt{p-1}$. So assume
otherwise. The proof of Theorem~\ref{thm:simple} indeed then shows
that $G$ then has at least $2g\sqrt{p-1}^3$ semisimple almost
$p$-rational $p'$-characters. Also, by Lemma~\ref{lemma-NT}, for
part~(i) of the theorem it suffices to find enough almost
$p$-rational $p'$-degree characters of $S$.

The diagonal automorphisms of $S$ permute only the semisimple
characters in a fixed Lusztig series and thus we obtain at least
$2g\sqrt{p-1}^3$ orbits of (semisimple) almost $p$-rational
$p'$-characters of $S$ under diagonal automorphisms. Now by
Proposition~\ref{prop:fld} any field automorphism of $G$ stabilises
$\cE(G,s)$ if the dual automorphism of $G^*$ fixes the class of~$s$.
Since $s$ considered in the proof of Theorem~\ref{thm:simple} has
order~$p$, it lies in an orbit of length at most $p-1$ under the
cyclic group of field automorphisms of $G^*$. Thus there are at
least $2\sqrt{p-1}$ orbits of $\Out(G)$ on $\Irr_{p',\parat}(S)$,
which implies the same bound for the number of $\Out(S)$-orbits on
$\Irr_{p',\parat}(S)$. The theorem follows by noting that we do
have strict inequality in part~(i) by taking unipotent characters
into account.
\end{proof}

\begin{proposition}\label{prop:classical-gps}
 Theorem~{\rm\ref{theorem-simple-groups}} holds true for $S$ a simple
 classical group.
\end{proposition}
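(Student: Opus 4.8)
We follow the classical-groups part of \cite{Malle-Maroti}, now keeping track of almost $p$-rationality. Let $\bG$ be a simple classical algebraic group of adjoint type with Steinberg endomorphism $F$, write $G:=\bG^F$ so that $S=[G,G]$, let $(\bG^*,F)$ be in duality with $(\bG,F)$, $G^*:=\bG^{*F}$, and let $\ell$ be the defining characteristic with $q$ the associated prime power. By Lemma~\ref{lemma-NT}, for part~(i) it suffices to produce more than $2\sqrt{p-1}$ almost $p$-rational irreducible characters of $p'$-degree of $S$ (they lift to $H$ since $p\nmid|H/S|$), and for part~(ii) at least $2(p-1)^{1/4}$ $\Out(S)$-orbits among them. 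The plan is to harvest these from Lusztig series: if $s\in G^*$ is a semisimple element of order~$p$ whose centraliser $C_{G^*}(s)$ contains a Sylow $p$-subgroup of $G^*$, then the degree formula \cite[Corollary~2.6.6]{GM20} shows that the semisimple character of $\cE(G,s)$ has $p'$-degree, while Lemma~\ref{lem:lusz}(b) (applicable since $s$ is $p$-regular, hence almost $p$-regular) shows it is almost $p$-rational; together with the unipotent characters in $\cE(G,1)$, which are always almost $p$-rational, these are the characters we count.

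First I would fix $d:=$ the multiplicative order of $q$ modulo~$p$. Since Sylow $p$-subgroups of $S$ are non-cyclic, the multiplicity $a:=a_d$ of the cyclotomic polynomial $\Phi_d$ in the polynomial order of $S$ is at least~$2$; a Sylow $d$-torus $S_d$ of $G^*$ then contains an elementary abelian $p$-subgroup $E\cong C_p^a$ whose $G^*$-fusion is controlled by the relative Weyl group $W_d$, which for classical groups is the imprimitive complex reflection group $G(d,1,a)$ or $G(d,d,a)$, of order dividing $d^a a!$. Counting $W_d$-orbits of elements of order~$p$ in $E$ already yields, when $d$ is not too large, far more than $2\sqrt{p-1}$ such semisimple classes. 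As in \cite{Malle-Maroti}, when $a=2$ and $d$ is comparable to $p-1$ this crude estimate is insufficient, and one supplements with semisimple elements $s$ of order~$p$ whose centraliser is of strictly larger type (a Levi of type $\GL$, or in types $B,C,D$ a subgroup $\Sp$ or $\mathrm{SO}$ of smaller rank times a torus): these still have $p$-divisible centraliser, so their semisimple characters are again of $p'$-degree and almost $p$-rational, and there are enough of them. Alternatively, when $p\nmid|W|$ the Sylow $p$-subgroups of $G$ are abelian, giving full freedom in the choice of~$s$; the families $A_n,\,^2A_n,\,B_n\cong C_n,\,D_n,\,^2D_n$ are then disposed of as in \cite[\S4]{Malle-Maroti}, now with the extra remark on almost $p$-rationality.

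For part~(ii) and the case $t\ge2$: diagonal automorphisms of $S$ permute semisimple characters only within a fixed Lusztig series, so the semisimple classes produced above give at least as many orbits under diagonal automorphisms; by Proposition~\ref{prop:fld}, a field automorphism of $G$ stabilises $\cE(G,s)$ whenever the dual automorphism fixes the class of~$s$, and since $s$ has order~$p$ its orbit under the cyclic group of field automorphisms of $G^*$ has length at most $\log_\ell q<p$, so after dividing out we retain at least $2(p-1)^{1/4}$ orbits of $\Out(S)$ on $\Irr_{p',\parat}(S)$; the unipotent characters in $\cE(G,1)$ provide the extra characters needed for the strict inequality in~(i). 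When $N=S^t$ with $t\ge2$, Lemma~\ref{lemma-NT} together with the tensor-product argument of \cite[\S3.2]{Malle-Maroti} gives $|\Irr_{p',\parat}(X)|\ge k(k+1)/2$, where $k$ is the number of $\Out(S)$-orbits on $\Irr_{p',\parat}(S)$, so it suffices that $k\ge 2(p-1)^{1/4}$, which the above count supplies.

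The main obstacle is the case analysis of the second paragraph: for small rank, small $p\ge5$ and $d$ close to $p-1$, the single elementary abelian subgroup $E$ of the $d$-torus does not by itself yield $2\sqrt{p-1}$ classes, and one must carry out the finer enumeration of semisimple classes with larger centralisers across all classical families and all residues~$d$, exactly as in \cite[\S4]{Malle-Maroti}. The only genuinely new ingredient beyond that reference is the almost $p$-rationality of the constructed characters, which is supplied uniformly by Lemma~\ref{lem:lusz}; checking that the characters descend to~$S$ and lift to~$H$ is routine given $p\nmid|H/S|$.
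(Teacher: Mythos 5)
Your strategy is correct in broad outline and matches the paper's: work with the Sylow $d$-torus of $G^*$, its relative Weyl group $W_d$, and the elementary abelian subgroup $E\cong C_p^a$, harvest almost $p$-rational $p'$-characters from semisimple characters $\cE(G,s)$ for $s\in E$ of order $p$ via Lemma~\ref{lem:lusz} and the degree formula, and combine with unipotent characters. (One small slip: an element of order~$p$ is not ``$p$-regular'' as your parenthetical claims; it is almost $p$-regular, which is what Lemma~\ref{lem:lusz} actually requires.)

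However, your handling of the critical case is not what the paper does, and as written it does not close the gap. You propose, for $a=2$ and $d$ near $p-1$, to ``supplement with semisimple elements of strictly larger centraliser type\,\dots\ exactly as in \cite[\S4]{Malle-Maroti}.'' The paper does not carry out any such finer enumeration. Instead, it first invokes \cite[Proposition~5.5]{Malle-Maroti} together with the remark that unipotent characters are always almost $p$-rational to reduce immediately to the case $p\nmid|W|$, so that the Sylow $p$-subgroup of $G^*$ is \emph{homocyclic} and the automiser is exactly $C_d\wr\Sym(a)$ (or an index-2 subgroup for type $D$). For $a=2$ it then counts $k(d,2)=(d^2+3d)/2$ unipotent $p'$-characters together with $\ge(p+1)/4d$ orbits of semisimple ones, and for part~(i) combines
\[
\frac{d^2+3d}{2}\,|\overline H|\;+\;\frac{p^2-1}{2d^2|\overline H|}\;\ge\;2\sqrt{\frac{p^2-1}{4}}\;>\;2\sqrt{p-1}
\]
by AM--GM. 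When $d$ is large the semisimple count collapses but the unipotent count $(d^2+3d)/2$ explodes; your plan of producing more semisimple characters does not address this balance and would require work the paper never does. You acknowledge the case analysis is ``the main obstacle'' but leave it undone, so the proposal is a plan rather than a proof. For $a\ge3$ the paper again balances $k(d,a)$ unipotent characters against $(p^a-1)/(d^a\,a!)$ semisimple orbits, with special attention to $D_4$ where the extra graph automorphisms matter; your sketch omits this dichotomy and the $D_4$ corrections.

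Finally, two smaller points: the case $t\ge2$ that you append at the end belongs to Theorem~\ref{theorem-simple-groups2}, not to this proposition; and your bound on the field-automorphism orbit length ($\log_\ell q$) differs from the structural bound $(p-1)/d$ used in the paper---either may work but you should verify compatibility with the subsequent estimates.
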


\begin{proof}
By \cite[Proposition~5.5]{Malle-Maroti} and our introductory remarks
at the beginning of the proof of Theorem~\ref{thm:simple} we may
assume that $p$ does not divide the order of the Weyl group of $G$,
so $p$ is greater than the rank of $G$. Then a Sylow $p$-subgroup
$P$ of $G^*$ is homocyclic with $a\ge2$ factors, and the automiser
$W$ of $P$ acts as a wreath product $C_d\wr\Sym(a)$ for some
$d|(p-1)$, or a subgroup of index~2 thereof in groups of type $D_n$.
Note that $P$ contains an elementary abelian subgroup $E$ of order
$p^{a}$ and that the wreath product has $k(d,a)$ irreducible
characters, which is by definition the number of $d$-tuples of
partitions of $a$, see \cite[\S3]{Broue-Malle-Michel}.

First assume that $a=2$ and $S$ is not of type $D_n$. Then there are
at least $k(d,2)=(d^2+3d)/2$ unipotent characters of $S$, all of
which are $\Aut(S)$-invariant (see \cite[Theorem~2.5]{Malle08}), of
$p$-height zero corresponding to $\Irr(W)$. The number of
$G^*$-classes of non-trivial $p$-elements in $G^*$ is at least
$(p^2-1)/(2d^2)$ and the field automorphisms can fuse at most
$(p-1)/d$ of those, so we find at least $(p+1)/2dg \geq (p+1)/4d$
orbits of semisimple characters in $\Irr_{p',\parat}(S)$ under the
automorphism group. Together with the aforementioned $(d^2+3d)/2$
unipotent characters, we have proved part~(ii) of the theorem in
this case. For part~(i) we let $\overline{H}:=H/(H\cap G)$ (where
$H$ is the almost simple group given in
Theorem~\ref{theorem-simple-groups}), which can be viewed as a
subgroup of the abelian group of the field and graph automorphisms
of $S$. On one hand, the number of irreducible characters of $H$
lying over the $(d^2+3d)/2$ unipotent characters of $S$ exhibited
above is at least
\[
\frac{d^2+3d}{2}k(H/S)\geq
\frac{d^2+3d}{2}k(\overline{H})=\frac{d^2+3d}{2}|\overline{H}|,
\]
since every unipotent character of $S$ is fully extendible to $H$
(\cite[Theorem~2.4]{Malle08}). (Here we use $k(X)$ to denote the
conjugacy class number of $X$.) On the other hand, the number of
irreducible characters of $H$ lying over previously considered
semisimple characters of $S$ is at least
\[
\frac{p^2-1}{2d^2|\overline{H}|},
\]
as these semisimple characters are $G$-invariant and thus
$(G\cap H)$-invariant. Note that all these characters are almost
$p$-rational and of $p'$-degree by Lemma~\ref{lemma-NT}. We therefore
have
\[|\Irr_{p',\parat}(H)|\geq \frac{d^2+3d}{2}|\overline{H}|+\frac{p^2-1}{2d^2|\overline{H}|}
\geq 2\sqrt{\frac{p^2-1}{4}}>2\sqrt{p-1}\] since $p\geq 5$, as
required.

The case $a=2$ and $S$ is of type $D_n$ is argued similarly. Here
$S$ has at least $k(d,2)/2$ unipotent characters of $p'$-degree and
at least $(p^2-1)/d^2$ (non-trivial) semisimple characters coming
from semisimple elements in $E$.

So now let $a\ge3$ and first assume that $W=C_d\wr\Sym(a)$ and $S$
is not of the type $D_4$. There are $|\Irr(W)|=k(d,a)$ unipotent
characters of $S$ of $p'$-degree. Now $k(d,a)\ge 2da$ unless
$(d,a)=(2,3)$. Assume we are not in the latter case. Then we are
done whenever $p-1\le(da)^2$. Assume that $p-1>(da)^2$. Then the
elementary abelian $p$-subgroup $E\cong C_p^a$ of $S$ has at least
$(p^a-1)/(d^a\,a!)$ classes under the action of $W$, which is
\begin{align*}\frac{p^a-1}{d^a\,a!}&\ge(p-1)^{3/2}\frac{(da)^{2a-3}}{d^a\,a!}
  =(p-1)^{3/2}\frac{d^{a-3}a^{2a-3}}{a!}\\
  &\ge(p-1)^{3/2}\frac{a^{2a-3}}{a!}> 4(p-1)^{3/2}.\end{align*}
By Lemma~\ref{lem:lusz} this yields at least that many orbits of
almost $p$-rational $p'$-characters of $S$ under diagonal
automorphisms. Moreover, the group of field automorphisms has orbits
of length at most $(p-1)/d$ on this set of classes of elements of
order~$p$, the diagonal automorphisms do not decrease the number of
classes, and the group of graph automorphisms of $S$ has order at
most~2. Thus there are more than $2\sqrt{p-1}$ $\Aut(S)$-orbits on
$\Irr_{p',\parat}(S)$, as desired. When $(d,a)=(2,3)$ we have to
consider the case that $p=29,31$, but again the above inequality
suffices.

Assume that $W=C_d\wr\Sym(a)$ and $S$ is of the type $D_4$. Then one
must have $a=4$, and so $d=2$, since $a\geq 3$. Now $S$ has
$k(2,4)=20$ unipotent characters of $p'$-degree but $\Aut(S)$ has
two nontrivial orbits of length $3$ on unipotent characters
(\cite[Theorem~2.5]{Malle08}), and thus we are done if $2\sqrt{p-1}<
16$. Otherwise we just repeat the above arguments (with $g=6$) and check that
$(p^4-1)/(2^4\cdot 4!) > 12(p-1)^{3/2}$ to achieve the required bound.

Finally assume that $a\ge3$ and $W$ has index~2 in
$C_d\wr\Sym(a)$. Then necessarily $d$ is even. Here,
$|\Irr(W)|\ge 2da$ and we can argue as before, unless
$(d,a)=(2,3),(4,3),(2,4),(2,5)$. Note that in these cases, the
number of $W$-orbits on $E$ is at least $2\frac{p^a-1}{d^a\,a!}$,
and using the explicit value of $|\Irr(W)|$ we can again conclude.
\end{proof}

To prove Theorem~\ref{theorem-simple-groups2} we need the following
simple observation.

\begin{lemma}\label{lem:wreath}
Let $A$ be an abelian group and $H$ a subgroup of
$A\wr\Sym(t)$ for some $t\in\ZZ^+$. Then
\[\frac{|\Irr(H)|}{|H|}\ge \frac{1}{(t!)^2}.\]
\end{lemma}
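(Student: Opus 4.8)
Recall that $|\Irr(H)|=k(H)$, the number of conjugacy classes of $H$, so the claim is equivalent to $k(H)\ge |H|/(t!)^2$. The plan is to exploit the large abelian normal subgroup of the wreath product $A\wr\Sym(t)=A^t\rtimes\Sym(t)$. Write $B:=A^t$ for the base group, which is abelian and normal in $A\wr\Sym(t)$, and set $N:=H\cap B$. Then $N$ is abelian (being a subgroup of $B$), it is normal in $H$, and $H/N=H/(H\cap B)\cong HB/B$ embeds into $(A\wr\Sym(t))/B\cong\Sym(t)$; in particular $|H:N|\le t!$.

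Next I would count the conjugacy classes of $H$ that lie inside $N$. These are exactly the orbits of $H$ acting on $N$ by conjugation. Since $N$ is abelian, $N$ acts trivially on itself by conjugation, so this action factors through $H/N$; consequently each such $H$-orbit has size at most $|H:N|\le t!$ (concretely, for $x\in N$ we have $N\le C_H(x)$, so the orbit of $x$ has size $|H:C_H(x)|\le|H:N|$). Since $N$ is partitioned into these orbits, the number of them is at least $|N|/t!$. Therefore
\[
k(H)\ \ge\ |N|/t!.
\]

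Finally, using $|H|=|N|\cdot|H:N|\le|N|\cdot t!$, hence $|N|\ge|H|/t!$, we obtain $k(H)\ge|N|/t!\ge|H|/(t!)^2$, which is the desired inequality $|\Irr(H)|/|H|\ge 1/(t!)^2$. I do not anticipate any real obstacle here; the only point requiring a moment's care is that the conjugation action of $H$ on the abelian group $N$ is really an action of $H/N$, so that orbit sizes are bounded by $|H:N|$ rather than by $|H|$.
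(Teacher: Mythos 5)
Your proof is correct, but it takes a genuinely different route from the paper. The paper's proof is character-theoretic: it observes that $|H|/|\Irr(H)|$ is the average of $\chi(1)^2$ over $\chi\in\Irr(H)$, hence is at most $b(H)^2$ where $b(H)$ is the largest character degree, and then bounds $b(H)$ by $|H:(A^t\cap H)|\le t!$ via It\^o's theorem (an irreducible character degree divides the index of any abelian normal subgroup). Your proof instead works purely on the conjugacy-class side: you set $N:=H\cap A^t$, note that it is abelian normal of index at most $t!$, and count the classes of $H$ contained in $N$ by observing that $N\le \bC_H(x)$ for every $x\in N$, so each orbit has size at most $|H:N|$, giving at least $|N|/t!\ge |H|/(t!)^2$ classes. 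Both arguments are clean and short; the paper's requires It\^o's theorem but gives a bound on the largest character degree along the way, while yours is more elementary and gives the (formally slightly sharper) intermediate bound $k(H)\ge |N|/t!$. Either is perfectly acceptable here.
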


\begin{proof}
The factor $|H|/|\Irr(H)|$ is the average of the squares of all
(irreducible) character degrees of $H$ and so is at most $b(H)^2$
where $b(H)$ is the largest character degree of $H$. But $b(H)$
divides $|H/(A^t \cap H)|$ by Ito's theorem (see
\cite[Theorem~6.15]{Isaacs1}) and $|H/(A^t \cap H)|\leq t!$ since
$H/(A^t \cap H)$ is isomorphic to the image of $H$ under the natural
homomorphism from $A\wr\Sym(t)$ to $\Sym(t)$, and
thus the lemma follows.
\end{proof}

We now prove Theorem~\ref{theorem-simple-groups2}, which is restated.

\begin{theorem}   \label{theorem-simple-groups2-repeated}
 Let $X$ be a finite group with a unique minimal subgroup $N=S^t$, where
 $t\in\ZZ^+$ and $S\neq \tw2F_4(2)'$ is a simple group of Lie type. Let
 $p\geq5$ be a prime not equal to the defining characteristic of $S$ such
 that Sylow $p$-subgroups of $S$ are cyclic and non-trivial. Suppose that
 $t\geq 2$ and $p\nmid |X:N|$. Then $|\Irr_{p',\parat}(X)|> 2\sqrt{p-1}$.
\end{theorem}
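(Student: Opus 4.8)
The plan is to reduce to understanding the irreducible characters of $X$ lying over unipotent characters of $N=S^t$, together with those lying over a well-chosen family of semisimple characters of $N$. Since $p\geq 5$ does not divide the defining characteristic and Sylow $p$-subgroups of $S$ are cyclic, the order of $S$ is divisible by $p$ exactly once; let $e$ be the number of $\Aut(S)$-orbits on $\Irr_{p'}(S)$, equivalently (since every $p'$-character of $S$ is almost $p$-rational here, by the analysis in the previous section together with Lemma~\ref{lemma-NT}) on $\Irr_{p',\parat}(S)$. The cyclic-Sylow case of the McKay--Navarro conjecture (due to Navarro) combined with Theorem~\ref{theorem-p'-degree} applied to $S$ — or more directly the explicit description of characters in a block with cyclic defect via Brauer trees — shows $e$ is not too small; in fact one knows $|\Irr_{p',\parat}(S)| = k(\bN_S(P)/\Phi(P)\bO_{p'})$-type quantities are controlled, and the key input I would use is simply that $S$ has at least $2$ nontrivial $\Aut(S)$-stable $p'$-characters that are almost $p$-rational: for instance the Steinberg character $\St_S$ (extendible to a rational character of $\Aut(S)$ by \cite{Schmid92}) and at least one more from \cite[Theorem~2.1]{Giannelli-Hung-Schaeffer-Rodriguez}, since $S$ has a $\{\ell,p\}'$-degree character with values in $\QQ_\ell$ or $\QQ_p$.

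First I would set $A:=\Aut(S)$ and note that $X$ acts on the $t$ simple factors of $N$ and hence $X/C$ embeds in $A\wr\Sym(t)$, where $C=C_X(N)=1$ since $N$ is the unique minimal normal subgroup and is non-abelian; so $X\hookrightarrow A\wr\Sym(t)$, and $p\nmid|X:N|$ forces the image in $\Sym(t)$ to be a $p'$-group and the $S$-components to contribute $p$-power only through $N$. Next, following \cite[\S3.2]{Malle-Maroti} and using Lemma~\ref{lemma-NT}, every $\Out(S)$-stable almost $p$-rational $p'$-character $\theta$ of $S$ gives rise, via the tensor product $\theta^{\otimes t}\in\Irr(N)$ and Gallagher/character-extension arguments over the $p'$-quotient $X/N$, to a family of almost $p$-rational $p'$-characters of $X$; more importantly, by counting orbits of $X$ on $t$-tuples of characters from a fixed $\Out(S)$-orbit of size $\geq 1$, one obtains roughly $\binom{m+t-1}{t}$ such characters of $X$ where $m$ is the number of $\Out(S)$-orbits on $\Irr_{p',\parat}(S)$. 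Since $m\geq 3$ (trivial, Steinberg, and one more, all $\Aut(S)$-invariant) and $t\geq 2$, already $\binom{m+t-1}{t}\geq\binom{4}{2}=6$.

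The main obstacle is that $6$ is not by itself $>2\sqrt{p-1}$ for large $p$, so I need the semisimple characters too. Here I would use that a cyclic Sylow $p$-subgroup $P\leq S^*$ (the dual) lies in a single $d$-torus, with $d$ the order of $q$ modulo $p$, and $S^*$ has at least $(p-1)/d$ classes of elements of order $p$, with the field automorphisms of $S$ (an orbit of length $\leq (p-1)/d$ on these) and diagonal automorphisms (which only permute within a Lusztig series) together giving at least a bounded number of $\Aut(S)$-orbits of semisimple almost $p$-rational $p'$-characters — by Lemma~\ref{lem:lusz}, each such series contains an almost $p$-rational semisimple character of $p'$-degree (as $C_{S^*}(s)$ contains a full Sylow $p$-subgroup, $s$ being $p$-central in the cyclic case). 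Combining the unipotent contribution of size $\gtrsim \binom{m+t-1}{t}\cdot (|\Irr(X/N)|/\text{stuff})$ with the semisimple contribution of size $\gtrsim (p-1)/(d\cdot g)$ where $g\leq 6$ bounds graph automorphisms, and invoking Lemma~\ref{lem:wreath} to control $|\Irr(X/N)|$ from below by $|X/N|/(t!)^2$, I expect the product-form lower bound $\sqrt{(\text{unipotent count})\cdot(\text{semisimple count})}$ (via AM--GM, exactly as in the proof of Proposition~\ref{prop:classical-gps}) to exceed $2\sqrt{p-1}$. The delicate point will be bounding $d$ and $g$ and handling the small primes $p\in\{5,7,11,13\}$ and small ranks by hand, exactly the kind of case-check that mirrors \cite[\S3.2]{Malle-Maroti}; the structural argument is otherwise routine once Lemmas~\ref{lemma-NT}, \ref{lem:lusz}, and \ref{lem:wreath} are in place.
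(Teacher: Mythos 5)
Your proposal captures the two structural pillars of the paper's argument---(a) $X$ embeds in $\Aut(S)\wr\Sym(t)$ and one uses Lemma~\ref{lemma-NT} to pull almost $p$-rational $p'$-characters of $N=S^t$ up to $X$, and (b) an AM--GM balance between a ``unipotent''/Gallagher count and a ``semisimple''/Lusztig-series count, with Lemma~\ref{lem:wreath} controlling the $p'$-quotient $X/N$---but there are several points where the details would not close as stated.

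First, the parenthetical claim that ``every $p'$-character of $S$ is almost $p$-rational here'' is false; Lemma~\ref{lemma-NT} says nothing of the sort. This is not load-bearing for you since you then work with specific characters, but it should be excised. Second, and more seriously, you build your unipotent contribution from only $m\ge 3$ fixed $\Aut(S)$-stable characters (trivial, Steinberg, one more from \cite{Giannelli-Hung-Schaeffer-Rodriguez}), whereas the key quantitative input in the paper is the cyclic relative Weyl group $W_d$: by $d$-Harish-Chandra theory one gets $|W_d|$ many $\Aut(S)$-invariant unipotent $p'$-characters (with $|W_d|\ge 4$), and---crucially---the same $|W_d|$ appears as the denominator $(p-1)/|W_d|$ in the semisimple class count, since $W_d$ controls the $G^*$-fusion of elements of order $p$ in the Sylow $d$-torus. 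If you feed only the constant $3$ into the AM--GM, you lose a full factor of $|W_d|$: for $t=2$ your inequality requires roughly $p-1>4|W_d|^2/3$ instead of the paper's $p-1>4|W_d|$, which pushes the residual case-check well beyond $p\in\{5,\dots,13\}$ (for $E_8$ one can have $|W_d|=24$ or $30$). You also write the semisimple count as $(p-1)/(dg)$ without the $t$-th power: for $N=S^t$ the relevant count of product characters is $\bigl((p-1)/|W_d|\bigr)^t$ before dividing through by $|X/(X\cap G^t)|$, and replacing $|W_d|$ by $d$ is not justified (these differ in general). Finally, you invoke Lemma~\ref{lem:wreath} unconditionally, but that lemma requires the base group $A$ of the wreath product $A\wr\Sym(t)$ to be abelian; for untwisted $D_4$ the group of field-and-graph automorphisms is the direct product of a cyclic group with $\Sym(3)$, hence non-abelian, and the paper has to replace Lemma~\ref{lem:wreath} there by a commuting-probability estimate \`a la Guralnick--Robinson, incurring an extra factor $2^t$ in the denominator. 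Your $g\le 6$ bounds $|\Sym(3)|$ but does not repair the abelianness hypothesis. In short: right skeleton, but the role of $W_d$, the $t$-th power in the semisimple count, and the $D_4$ exception all need to be put in before the AM--GM can be made to clear $2\sqrt{p-1}$.
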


\begin{proof}
The assumptions on $N$ and $X$ imply that $X$ is a subgroup of
$\Aut(N)=\Aut(S)\wr \Sym(t)$.
\medskip

(1) First we assume that $S$ is of exceptional type. As before we
let $d$ be the order of $q$ modulo $p$, where $q$ is the size of the
underlying field of $S$, $S_d$ be a Sylow $d$-torus of $G^*$, and
$W_d$ the relative Weyl group of $S_d$. Since Sylow $p$-subgroups of
$S$ are cyclic, $S_d$ is cyclic of order $\Phi_d(q)$, and thus $W_d$
is cyclic, see \cite[pp.~260-261]{GM20}.

By $d$-Harish-Chandra theory (see \cite[\S3.5]{GM20}), there are at
least $|\Irr(W_d)|=|W_d|$ many unipotent characters of $G$ of
$p'$-degree, each of which restricts irreducibly to $S$. (Recall
that $G$ is the finite reductive group of adjoint type with
$S:=[G,G]$.) These unipotent characters of $S$ are all extendible to
$\Aut(S)$, by \cite[Theorems~2.4 and 2.5]{Malle08}. (When $S$ is
$G_2(3^f)$ or $F_4(2^f)$, the graph automorphism of order 2 does
fuse certain unipotent characters of $S$ but they are not
$p'$-degree, provided that Sylow $p$-subgroups of $S$ are cyclic,
see \cite[pp.~478--479]{Carter85}.) Recall that unipotent characters
are almost $p$-rational for odd $p$. It follows that there are at
least $|W_d|+1$ orbits of characters in $\Irr_{p',\parat}(S)$ under
$\Aut(S)$ (note that there is at least one orbit of semisimple
characters), and thus there are at least ${t+|W_d|\choose t}$
$X$-orbits on $\Irr_{p',\parat}(N)$, and therefore it follows from
Lemma~\ref{lemma-NT} that
\[|\Irr_{p',\parat}(X)|\geq {t+|W_d|\choose t}.\]
Note that $|W_d|\geq 4$ for all types and relevant values of $d$ and
$t\geq 2$ by the assumption, and therefore we are done if $t\geq
48^{1/4}(p-1)^{1/8}$ or if $|W_d|\geq 2(p-1)^{1/4}$. In fact, we are
also done if $p<{6\choose 2}^2+1=226$. So we assume that none of
these occur.

For each unipotent character $\theta$ of $p'$-degree of $S$, the
character
\[\psi_\theta:=\theta\times\theta\times\cdots\times\theta\in\Irr(N)\]
is fully extendible to $\Aut(N)$ (see
\cite[Corollary~10.5]{Navarro18}), and hence to $X$. By Gallagher's
lemma (see \cite[Corollary~6.17]{Isaacs1}), the number of
irreducible characters of $X$ lying over those $|W_d|$ characters
$\psi_\theta$ of $N$ is at least $|W_d|\cdot |\Irr(X/N)|$, which in
turns is at least
\[|W_d|\cdot|\Irr(X/(X\cap G^t))|.\]

Now a Sylow $p$-subgroup of $G^*$ contains a cyclic subgroup of
order $p$ and the number of $G^*$-conjugacy classes of non-trivial
$p$-elements in $G^*$ is at least $(p-1)/|W_d|$, and thus $G$ has at
least $(p-1)/|W_d|$ semisimple characters that are all almost
$p$-rational and of $p'$-degree, by Lemma~\ref{lem:lusz} and
Proposition~\ref{prop:fld}. These characters also restrict
irreducibly to (semisimple) characters of $S$ since $p\geq 5$ is
coprime to $|\bZ(G^*)|$. Therefore, $N$ has at least
$((p-1)/|W_d|)^t$ irreducible characters that are products of
non-trivial semisimple characters of copies of $S$. It follows that
the number of irreducible characters of $X$ lying over these
characters of $N$ is at least
\[
\frac{(p-1)^t}{|W_d|^t\cdot |X/(X\cap G^t)|}.
\]
This and the conclusion of the previous paragraph, together with
Lemmas~\ref{lemma-NT} and \ref{lem:wreath}, yield
\begin{equation}\label{eq:1}\begin{aligned}|\Irr_{p',\parat}(X)|&\geq |W_d|\cdot|\Irr(X/(X\cap
G^t))|+\frac{(p-1)^t}{|W_d|^t\cdot
|X/(X\cap G^t)|}\\
& \ge 2\sqrt{\frac{(p-1)^t}{|W_d|^{t-1}}\cdot \frac{|\Irr(X/(X\cap
G^t))|}{|X/(X\cap G^t)|}}\ge
2\sqrt{\frac{(p-1)^t}{(t!)^2|W_d|^{t-1}}}\\
& \ge
2\sqrt{p-1}\left(\frac{p-1}{t^2|W_d|}\right)^{(t-1)/2},\end{aligned}
 \end{equation}
which is larger than the required bound of $2\sqrt{p-1}$ by our
earlier assumptions that $t< 48^{1/4}(p-1)^{1/8}$, $|W_d|<
2(p-1)^{1/4}$, and $p\geq 226$.

\medskip

(2) We now consider the case when $S$ is of classical type. As seen
in the proof of Proposition~\ref{prop:classical-gps}, the number of
$p'$-degree unipotent characters of $S$ is at least $|\Irr(W)|$,
where $W$ is a certain cyclic group of order depending on $p,q$ and
the rank of $G$. Assume for now that $S$ is not of untwisted type $D_4$ so
that the group of the field and graph automorphisms of $S$ is
abelian and hence Lemma~\ref{lem:wreath} applies. The arguments for
exceptional types can also be used to achieve the desired bound.
Here the order of $W$ is always at least $2$ and it is
straightforward to show that
\[
\min\left\{{t+|W|\choose
t},2\sqrt{\frac{(p-1)^t}{(t!)^2|W|^{t-1}}}\right\}>2\sqrt{p-1}
\]
for all possibilities of $t, p$ and $W$.

Finally we assume that $S$ is of untwisted type $D_4$. The group of
graph automorphisms of $S$ is then $\Sym(3)$. Let $B$ be the
group of field and graph automorphisms of $S$ (which is the
direct product of the cyclic group of field automorphisms and
$\Sym(3)$), $H$ a subgroup of $B\wr\Sym(t)$, and set
$H_1:=H\cap B^t$. Using \cite[Lemma~2(i)]{Guralnick-Robinson} (in
the language of the so-called \emph{commuting probability} of finite
groups), we have
\[
\frac{|\Irr(H)|}{|H|}\ge \frac{|\Irr(H_1)|}{|H_1|}\cdot |H:H_1|^2\ge
\frac{|\Irr(B^t)|}{(t!)^2\,|B|^t}\geq \frac{1}{2^t\cdot (t!)^2},
\]
and therefore instead of the bound \eqref{eq:1} we now only have
\[|\Irr_{p',\parat}(X)|\geq 2\sqrt{\frac{(p-1)^t}{2^t\,|W|^{t-1}\,(t!)^2}}.\]
This and the bound $|\Irr_{p',\parat}(X)|\geq {t+|W|\choose t}$ are
again sufficient to reach the conclusion, with a notice that here
$|W|=3$ since Sylow $p$-subgroups of $S$ are cyclic and so $p$ must
divide $q^2\pm q+1$.
\end{proof}

We have completed the proofs of Theorems~\ref{theorem-simple-groups}
and \ref{theorem-simple-groups2}, and hence the proof of the main
Theorem~\ref{theorem-p'-degree}.

\section{An asymptotic bound for $|\Irr_\parat(G)|$}   \label{section-asymptotic}

In this section we prove Theorem~\ref{theorem-general-bound-p^2}.

We keep the notation introduced at the beginning of
Section~\ref{section-explicit}.

\begin{lemma}\label{lemma-c2}
There exists a constant $c_1 > 0$ such that if $G$ is any finite
group having an elementary abelian minimal normal subgroup $V$ of
$p$-rank at least $2$ and $p^2\nmid |G/V|$, then
$|\Irr_\parat(G)|>c_1\cdot p$.
\end{lemma}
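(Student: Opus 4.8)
The plan is to establish the stronger inequality $|\Cl_\pareg(G)|>c_1 p$, which gives the lemma by Lemma~\ref{lemma-p-reg<p-rat}. Put $d:=\dim_{\mathbb{F}_p}V\ge 2$, let $C:=C_G(V)$, and note that $V\le C\unlhd G$ and that $H:=G/C$ is an \emph{irreducible} subgroup of $\GL(V)$ since $V$ is minimal normal; moreover $|H|_p\le|G/V|_p\le p$. Two crude lower bounds will be used. First, every element of $V$ has order dividing $p$, so the $G$-classes inside $V$ are all almost $p$-regular, and there are $n(G,V)=n(H,V)$ of them. Second, if $g\in G$ has $p$-regular image in $G/V$ then the $p$-part $g_p$ of $g$ lies in $V$ (its image is simultaneously a $p$-element and the $p$-part of a $p$-regular element, hence trivial), so $g$ is almost $p$-regular; consequently $|\Cl_\pareg(G)|\ge|\Cl_\preg(G)|\ge|\Cl_\preg(H)|$, the last step because $p$-regular classes of $G$ surject onto those of the quotient $H$. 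It therefore suffices to prove: for a universal $c_1>0$, every irreducible $H\le\GL(d,p)$ with $d\ge2$ and $|H|_p\le p$ satisfies $\max\bigl(n(H,V),\,|\Cl_\preg(H)|\bigr)>c_1 p$.

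First comes the easy dichotomy on $|H|$. If $|H|\le p^{d-1}/c_1$ then each nonzero $H$-orbit on $V$ has size $\le|H|$, so
\[
n(H,V)\ \ge\ 1+\frac{p^{d}-1}{|H|}\ \ge\ 1+c_1\Bigl(p-\frac{1}{p^{d-1}}\Bigr)\ >\ c_1 p .
\]
Since $d\ge2$, this in particular handles all $|H|\le p/c_1$, and—because the bound $|\Irr_\parat(G)|\ge2\sqrt{p-1}$ of Theorem~\ref{theorem-general-bound} already gives the conclusion for $p$ below any fixed bound—we may henceforth assume that $p$ is as large as needed and that $|H|>p^{d-1}/c_1$.

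The substantial part, which is where I expect the main obstacle, is the case of large $H$: counting orbits on $V$ alone genuinely fails there (e.g. $\SL(d,p)\le H$ has only two orbits on $V$), so one must mine almost $p$-regular classes from the internal structure of $H$, and I would do this by running through the possibilities afforded by Aschbacher's theorem. For $p$ large, the ``small'' classes—the extraspecial-normaliser class $\mathcal{C}_6$, and class $\mathcal{S}$ with socle of bounded order or a simple group of Lie type in characteristic $\ne p$—force $|H|$ to be far below $p^{d-1}/c_1$ unless $d$ grows correspondingly, in which case (an alternating socle on a permutation-type module, or the monomial case below) the number of $H$-orbits on $V$ is already polynomially large in the underlying degree, hence $>c_1 p$. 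In the remaining large cases $\mathcal{C}_2,\mathcal{C}_3,\mathcal{C}_4,\mathcal{C}_7,\mathcal{C}_8$ and class $\mathcal{S}$ with a Lie-type socle of unbounded order, $H$ has a normal subgroup built from a classical or exceptional group of Lie type over a field $\mathbb{F}_{p^e}$ with $e\ge1$; bounding $|\Cl_\preg(H)|$ from below by the number of $H$-orbits on the semisimple (i.e.\ $p$-regular) classes of that subgroup—of which there are $\gtrsim p^{e}$, parametrised essentially by characteristic polynomials, which the diagonal automorphisms fix and the field and graph automorphisms permute in orbits of length $\le 2e$—yields $|\Cl_\preg(H)|\gtrsim p^{e}/e\ge p$.

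The leftover monomial subcase $H\le C_{p-1}\wr\Sym(t)$ with $V=\mathbb{F}_p^{t}$ is treated separately by comparing $A:=H\cap C_{p-1}^{t}$ (a normal $p'$-subgroup) with its quotient $H/A\hookrightarrow\Sym(t)$: if $A$ is large then the number of $H$-classes contained in $A$, all of them $p$-regular, exceeds $c_1 p$; if $A$ is small then $H/A$ is a large subgroup of $\Sym(t)$ and $k(H/A)$, whence the number of $p$-regular classes of $H$, is super-polynomial in $t$; and the orbit count $n(H,V)\ge t+1$ covers the intermediate range of $t$. Taking $c_1$ to be the minimum of the finitely many explicit constants produced by the bounded cases and the various fusion estimates then completes the proof. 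A cleaner route, if such a statement is available, would be to quote a lower bound for $k(HV)$ in terms of $|V|$ from the circle of ideas around the $k(GV)$-problem, which would subsume most of this case analysis; the hard point in any approach remains converting ``$H$ is a large irreducible linear group'' into ``$H$ has many (almost) $p$-regular classes'' uniformly in $p$.
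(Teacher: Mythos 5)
Your reduction step is sound and begins the same way the paper does: you pass to almost $p$-regular classes via Lemma~\ref{lemma-p-reg<p-rat}, lower-bound them by an orbit count $n(H,V)$ on $V$ on the one hand and by a class count of a quotient of $G$ on the other, and aim to show one of these exceeds $c_1p$. (Minor inefficiency: the paper uses $k(G/V)$, which is available because $p^2\nmid|G/V|$ forces every character of $G/V$ to be almost $p$-rational, whereas you descend further to $|\Cl_\preg(G/C_G(V))|$; since $G/C_G(V)$ is a quotient of $G/V$, your quantity is the smaller of the two, and you also forgo the sum in favour of the max, so you are proving a formally stronger reduced statement than you need.)

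The genuine gap is in the reduced statement itself. The paper does not prove ``$k(G/V)+n(G,V)>c_1'p$'' from scratch; it cites it from the proof of \cite[Proposition~2.2]{MS20}, which makes the whole argument five lines. You, by contrast, attempt to establish your version of this bound by an Aschbacher-class case analysis, and that part of your write-up is a sketch rather than a proof. Several claims are asserted without justification in a way that would require real work: that the $\mathcal C_6$ and bounded-socle $\mathcal S$ cases ``force $|H|$ to be far below $p^{d-1}/c_1$'' (for $d=2$ and $p\equiv1\bmod 4$ the extraspecial normaliser already has order $\sim48(p-1)\approx p^{d-1}$, so the dichotomy threshold is delicate and the constant matters); that the alternating-socle case yields ``polynomially many orbits in the underlying degree, hence $>c_1p$'' (no link between the permutation degree and $p$ is established); and the claim of $\gtrsim p^e$ semisimple classes in a Lie-type subnormal subgroup over $\mathbb F_{p^e}$ needs to be reconciled with your standing hypothesis $|H|_p\le p$, which in defining characteristic forces that subgroup to be essentially $(\mathrm{P})\SL_2(p)$, and in cross-characteristic changes the picture entirely. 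None of these are fatal individually, but taken together they amount to rederiving the content of \cite[Proposition~2.2]{MS20}, which is a substantial theorem in its own right. Your closing remark — that a ``cleaner route would be to quote a lower bound for $k(HV)$ in terms of $|V|$'' — is exactly what the paper does; without that citation the proposal is incomplete.
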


\begin{proof}
By choosing $c_1<1/2$ if necessary, we assume that $p$ is odd. By
Lemma~\ref{lemma-p-reg<p-rat}, we then have $|\Irr_\parat(G)|\geq
|\Cl_\pareg(G)|$, and it follows that
\[
|\Irr_\parat(G)|\geq
\frac{1}{2}(|\Irr_\parat(G)|+|\Cl_\pareg(G)|).
\]
Recall that $p^2\nmid |G/V |$. Thus every irreducible character of
$G/V$ is almost $p$-rational, and therefore we have
\[|\Irr_\parat(G)|\geq |\Irr_\parat(G/V)|=k(G/V).\] On the other
hand, each $G$-orbit on $V$ produces at least one conjugacy class of
$G$ of elements in $V$, and thus
\[
|\Cl_\pareg(G)|\geq n(G,V),
\]
where $n(G,V)$ is the number of $G$-orbits on $V$. The three above
inequalities imply that
\[
|\Irr_\parat(G)|\geq \frac{1}{2}(k(G/V)+n(G,V)).
\]

It was shown in the proof of \cite[Proposition~2.2]{MS20} that,
under the same hypothesis, there exists a constant $c_1'>0$ such
that $k(G/V)+n(G,V)>c_1'\cdot p$. Now by choosing
$c_1:=\min\{c_1'/2,1/2\}$, we have the required bound
$|\Irr_\parat(G)|>c_1\cdot p$.
\end{proof}

For the next lemma, we denote by $M(S)$ the Schur multiplier of a
simple group $S$.

\begin{lemma}\label{lemma-c3}
There exists a constant $c_2>0$ such that for any non-abelian finite
simple group $S$ and any prime $p$ such that $p^2\mid |S|$ or $p\mid
|M(S)|$ or $p\mid |\Out(S)|$, we have
$n(\Aut(S),\Cl_{p'}(S))>c_2\cdot p$.
\end{lemma}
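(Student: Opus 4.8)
The plan is to prove the lemma by a case analysis over the non-abelian finite simple groups, via the classification, producing in each case more than $c_2\cdot p$ orbits of $\Aut(S)$ on $\Cl_{p'}(S)$; the final $c_2$ is then the minimum of the finitely many constants obtained. First I would dispose of the situations in which the hypotheses force $p$ into a bounded range. If $S$ is sporadic (including $\tw2F_4(2)'$), each of $p^2\mid|S|$, $p\mid|M(S)|$, $p\mid|\Out(S)|$ confines $p$ to a finite set of primes, so only finitely many pairs $(S,p)$ occur and $n(\Aut(S),\Cl_{p'}(S))\ge2$ throughout; shrinking $c_2$ settles these. If $S=\Alt_n$, the values $|M(\Alt_n)|\in\{1,2,6\}$ and $|\Out(\Alt_n)|\in\{2,4\}$ force $p\le3$ under the multiplier or outer-automorphism hypothesis, again a bounded range; and if $p^2\mid|\Alt_n|$ with $p$ odd then $v_p(n!)\ge2$ gives $n\ge2p$, after which the classes of cycle type $1^a2^b$ with $b$ even are $p$-regular, lie in $\Alt_n$, and are pairwise non-conjugate in $\Sym_n=\Aut(\Alt_n)$, yielding more than $n/4>p/2$ orbits. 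Finally, Lemma~\ref{lemma-HM20} already gives $n(\Aut(S),\Cl_{p'}(S))>2\sqrt{p-1}$ whenever $p^2\mid|S|$, which exceeds $c_2\cdot p$ once $p<4/c_2^2$. After these reductions I may assume $p$ is larger than any prescribed constant and $S$ is a simple group of Lie type.

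For $S$ of Lie type, write $S=[\bG^F,\bG^F]$ with $\bG$ adjoint of (absolute) rank $r$ over $\mathbb F_q$, $q=\ell^f$, so that $|\Out(S)|\le 6(r+1)f$. If $p=\ell$, the $p$-regular classes of $S$ are exactly the semisimple classes, of which there are $q^r$ (the centre of $\bG$ being trivial), so $n(\Aut(S),\Cl_{p'}(S))\ge q^r/(6(r+1)f)$; since $p^2\mid|S|$ excludes $\PSL_2(p)$ and hence forces $q^r\ge p^2$, and since $q^r$ grows exponentially in both $r$ and $f$ while the denominator is only polynomial in them, a routine estimate beats $c_2\cdot p$. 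If $p\ne\ell$, I would use that for every type other than $A_{n-1}$ and $\tw2A_{n-1}$ the generic parts of $|M(S)|$ and of the diagonal automorphism group divide a power of $2$ or $3$; hence a large prime dividing $|M(S)|$ or $|\Out(S)|$ must either divide $f$ (so $q=\ell^f\ge 2^p$) or force $S$ of type $A$ with $p\mid n$ (so $r\ge p-1$). In the first regime I would count semisimple classes inside a maximal torus of order coprime to $p$ — at least one of the split or the twisted torus avoids $p$ and has order $\gg q$ — giving $\gg p$ such classes modulo the Weyl group and still $\gg p$ modulo $\Out(S)$. In the second regime the number of unipotent classes is at least the number of partitions of $n$, which grows faster than any polynomial in $n$; these classes have $\ell$-power order, hence are $p$-regular, and $\Out(S)$ permutes them in orbits of bounded length.

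The remaining case, which I expect to be the main obstacle, is $p\ne\ell$, $p$ large, $p^2\mid|S|$, with $S$ of moderate rank and $p$ comparable to a power of $q$, where nothing above applies. Let $d$ be the multiplicative order of $q$ modulo $p$; from $p\mid|S|$ one gets $p\mid\Phi_d(q)$ with $d$ at most the Coxeter number $h$ of $\bG$, so all but finitely many of the cyclotomic factors $\Phi_e(q)$ of $|S|$ are coprime to $p$. I would then choose a maximal torus $T\le\bG^F$ whose order involves only $\Phi_e(q)$ with $e\notin\{d,dp,dp^2,\dots\}$ and argue, using $p^2\mid|S|$ (which either forces the multiplicity of $\Phi_d(q)$ to be at least $2$, leaving ample room, or forces $p^2\mid\Phi_d(q)$, hence $q$ large relative to $p$), that $|T|/|W|$ — and so the number of $p$-regular semisimple classes of $S$ — exceeds any fixed multiple of $p$; the $\Out(S)$-orbits on these are then short enough to preserve the bound. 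The genuine difficulty is arranging this uniformly: one must split according to whether $d=1$ (use a Coxeter-type torus of order $\asymp q^r$), $d=2$ (use the split torus $(q-1)^r$), or $2<d\le h$ (use a torus containing a $\Phi_1$- or $\Phi_2$-block), and then verify in each branch, across all Lie types and all ratios of $p$ to $q$, that a single constant works. Taking $c_2$ to be the minimum of all constants produced along the way finishes the proof.
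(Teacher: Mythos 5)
Your first paragraph (sporadics, alternating groups) and the reductions for $p=\ell$ and for large $f$ or large rank are reasonable and broadly parallel the paper's elementary estimates, up to the choice of witness classes in $\Alt(n)$ (you use $1^a2^b$ cycle types with $b$ even; the paper uses odd-length cycles of length at most $p-1$). The divergence, and the gap, is in the Lie-type case. You yourself flag the regime $p\neq\ell$, $p^2\mid|S|$, $p$ large and comparable to a power of $q$ with $S$ of moderate rank as ``the main obstacle,'' and your treatment of it is explicitly a plan rather than a proof: ``I would then choose a maximal torus\dots'', ``one must split according to whether $d=1$\dots or $2<d\le h$, and then verify in each branch, across all Lie types and all ratios of $p$ to $q$, that a single constant works.'' That verification --- extracting a uniform lower bound on the number of $p$-regular semisimple classes and controlling both $|W|$ and the length of $\Out(S)$-orbits across all Lie types and all values of $d$ --- is precisely the substance of the lemma in its hardest regime, and it is not routine bookkeeping. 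As written, the proposal has a genuine gap there.

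The paper sidesteps this entirely by invoking \cite[Theorem~1.4]{Hung-Maroti20}, which already provides the uniform lower bound $|\Cl_\preg(S)|>q^r/(17r^2)$ for every prime $p$ and every finite simple group $S$ of Lie type, where $r$ is the rank of the ambient algebraic group. Combined with $|\Out(S)|<c_{21}fr$ this immediately gives $n(\Aut(S),\Cl_{p'}(S))>q^r/(17c_{21}fr^3)$, valid in all of your subcases at once; the rest of the paper's argument is elementary arithmetic bounding $p$ above. If $p\mid|M(S)|$ or $p\mid|\Out(S)|$ then $p\le\max\{r+1,f\}$; if $p^2\mid|S|$ with $p\ne\ell$ then, using $\gcd(q^m-1,q^n-1)=q^{\gcd(m,n)}-1$ and its twisted variants applied to the cyclotomic factorization of $|S|$, one gets $p\le q^{(r+1)/2}+1$ for classical $S$ of rank at least $2$, with similar bounds in the remaining types. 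In every branch $p$ is then dominated by the uniform lower bound, and no torus-by-torus analysis is needed. You would do better to cite that theorem than to reconstruct the count of $p$-regular semisimple classes from scratch; the latter essentially re-proves a substantial piece of it, and is exactly where your argument stalls.
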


\begin{proof}
It is sufficient to assume that $p\geq 5$ and $S$ is an alternating
group or a finite group of Lie type.

Let $S=\Alt(n)$ for $n\geq 5$. The assumption on $p$ implies
that $p\leq n$. Observe that there are $(p-1)/2$ cycle types of odd
length up to $p-1$. Therefore $n(\Aut(S),\Cl_{p'}(S))\geq
(p-1)/2>p/3$.

Let $S$ be a simple group of Lie type defined over the field of
$q=\ell^f$ elements ($\ell$ is prime) with $r$ the rank of the
ambient algebraic group. By \cite[Theorem~1.4]{Hung-Maroti20}, we
have \[|\Cl_\preg(S)|>\frac{q^r}{17r^2}\] for every prime $p$.
Also, using the known information on $|\Out(S)|$ (see
\cite{Atl1} for instance), we find that there exists a constant
$c_{21}>0$ such that \[|\Out(S)|< c_{21}\cdot fr.\] It follows that
\begin{equation}\label{equation-3}
n(\Aut(S),\Cl_{p'}(S))\geq
\frac{|\Cl_\preg(S)|}{|\Out(S)|}>\frac{q^r}{17c_{21}fr^3}.
\end{equation}

First suppose that $p\mid |M(S)|$ or $p\mid |\Out(S)|$. Then $p\leq
\max\{r+1,f\}$. It follows from (\ref{equation-3}) that there exists
a constant $c_{22}>0$ such that $n(\Aut(S),\Cl_{p'}(S))>c_{22}\cdot
p$ for all possibilities of $S$ and $p$.

Next we suppose that $p^2\mid |S|$. It is an elementary result in
number theory that for $m,n\in\ZZ^+$ we have
$\gcd(q^m-1,q^n-1)=q^{\gcd(m,n)}-1$ and
$\gcd(q^m\pm1,q^n+1)=\gcd(2,q-1)$ or $q^{\gcd(m,n)}+1$. Assume that
$S$ is a classical group of rank at least 2. By inspecting the order
formulas, we observe that $|S|=\frac{1}{d(S)}q^{a(S)}P_S(q)$, where
$a(S)$ is the order of the group of diagonal automorphisms of $S$,
$a(S)$ is a suitable integer, and $P_S(q)$ is a polynomial in $q$
that can be written as a product of certain polynomials of the form
$q^i\pm 1$ with $i$ at most $r+1$. We deduce that $p\leq
q^{(r+1)/2}+1$ for all $S$ of classical type with rank $r\geq 2$. It
follows from the bound (\ref{equation-3}) that there exists a
constant $c_{23}>0$ such that $n(\Aut(S),\Cl_{p'}(S))>c_{23}\cdot p$
for all relevant $S$ and $p$. For $S=\PSL_2(q)$ we have $p\leq
(q+1)^{1/2}$ and the lemma also follows from the bound
(\ref{equation-3}). Similar arguments apply when $S$ is of
exceptional type different from $\tw2B_2(q)$ with $q=2^{f}$,
$^2G_2(q)$ with $q=3^{f}$, or $\tw2F_4(q)$ with $q=2^{f}$, where
$f\geq 3$ is an odd integer. For $S=\tw2B_2(q)$ we have $p=\ell$ or
$p\leq (q+\sqrt{2q}+1)^{1/2}$ and
we are also done by the bound (\ref{equation-3}). 
The cases $S=^2G_2(q)$
and $S=\tw2F_4(q)$ are similar and we skip the details.
\end{proof}

\begin{lemma}\label{lemma-c4}
There exists a constant $c_3>0$ such that for any non-abelian finite
simple group $S$ of order divisible by a prime $p$, we always have
$|\Cl_\preg(S)|>c_3\cdot \sqrt{p}|\Out(S)|$.
\end{lemma}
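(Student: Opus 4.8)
The plan is to treat the three families of non-abelian finite simple groups separately, exploiting the elementary fact that every $\Aut(S)$-orbit on $\Cl_\preg(S)$ has size at most $|\Out(S)|$ (inner automorphisms act trivially on classes). Thus it is equivalent, and this is what I shall do, to bound $|\Cl_\preg(S)|$ below by $c_3\sqrt p\,|\Out(S)|$ directly. Since there are only finitely many sporadic simple groups (and the Tits group), each with finitely many relevant primes, these contribute only finitely many pairs $(S,p)$, which will be absorbed into the constant at the very end; so I may assume $S$ is alternating or of Lie type.

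For $S=\Alt(n)$ with $n\ge5$ I argue directly, noting $p\le n$ and $|\Out(S)|\le4$. When $p\ge5$, the $\lfloor n/3\rfloor+1$ partitions of $n$ with all parts in $\{1,3\}$ give that many distinct conjugacy classes of $\Alt(n)$ (all parts odd, hence even permutations) which are $p$-regular (all parts coprime to $p$); so $|\Cl_\preg(\Alt(n))|>n/3\ge p/3$. When $p\in\{2,3\}$ one sees by inspection that $|\Cl_\preg(\Alt(n))|\ge3$. In either case the required inequality holds once $c_3$ is chosen small enough, with no exceptions.

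The main case is $S$ of Lie type over $\mathbb{F}_q$, $q=\ell^f$, with ambient algebraic group $\bG$ of rank $r$; the Suzuki and Ree groups I treat separately (as in the proof of Lemma~\ref{lemma-c3}), where a direct estimate using their known class numbers shows $|\Cl_\preg(S)|>c_3\sqrt p\,|\Out(S)|$ for all but finitely many $q$. For the remaining groups I combine three inputs: the bound $|\Cl_\preg(S)|>q^r/(17r^2)$ of \cite[Theorem~1.4]{Hung-Maroti20}; the bound $|\Out(S)|<c_{21}fr$ used in the proof of Lemma~\ref{lemma-c3}; and the fact that every prime $p$ dividing $|S|$ satisfies $p\le(q+1)^r$ — indeed either $p=\ell\le q<(q+1)^r$, or $p$ divides some $\Phi_e(q)$ with $\Phi_e\mid|S|$, and every such $e$ has $\phi(e)\le r$ (a $\Phi_e$-torus embeds in a maximal torus of rank $r$), so $p\le\Phi_e(q)\le(q+1)^{\phi(e)}\le(q+1)^r$. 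These give
\[
\frac{|\Cl_\preg(S)|}{\sqrt p\,|\Out(S)|}
\;>\; \frac{q^r/(17r^2)}{(q+1)^{r/2}\,c_{21}fr}
\;=\; \frac{1}{17\,c_{21}\,r^3f}\left(\frac{q^2}{q+1}\right)^{r/2},
\]
the strict inequality coming from the first bound. As $q^2/(q+1)\ge4/3>1$ for all $q\ge2$, the right-hand side tends to infinity when $r\to\infty$ with $f$ fixed, and when $q\to\infty$ with $r$ fixed (note $f\le\log_2 q$); a routine estimate then shows it exceeds $1$ outside a finite set $\mathcal E$ of pairs $(S,p)$. Finally, for each of the finitely many pairs in $\mathcal E$, together with the sporadic groups and the finitely many small Suzuki/Ree groups not covered above, the number $|\Cl_\preg(S)|/(\sqrt p\,|\Out(S)|)$ is a fixed positive real; choosing $c_3>0$ below all of these and below the threshold from the generic estimate completes the proof.

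The step I expect to be the main obstacle is checking that $\mathcal E$ really is finite, particularly over $\mathbb{F}_2$ and $\mathbb{F}_3$: there the base $q^2/(q+1)$ is only slightly larger than $1$, so to bound the rank of the exceptions one must lean on its slow exponential growth in $r$ and on the fact that $|\Out(S)|$ is bounded when $f=1$, and only then can one bound the field size. The Suzuki and Ree families, which are not covered by the estimate $|\Cl_\preg(S)|>q^r/(17r^2)$ in the form needed here (the relevant exponent is the twisted rather than the algebraic rank), require the separate direct argument indicated above.
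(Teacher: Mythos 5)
Your proposal is correct and takes essentially the same route as the paper, whose own proof is a one-liner deferring to the proof of Lemma~\ref{lemma-c3}: combine inequality~(\ref{equation-3}) with ``obvious upper bounds for a prime divisor of $|S|$'', which you correctly supply as $p\le(q+1)^r$ via the cyclotomic-factor argument, and then check the resulting two-parameter finiteness. Your separate treatment of the Suzuki--Ree families likewise mirrors the way the paper handles them as special cases inside the proof of Lemma~\ref{lemma-c3}.
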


\begin{proof}
This essentially follows from the proof of Lemma~\ref{lemma-c3}. One
just uses the inequality (\ref{equation-3}) and obvious upper bounds
for a prime divisor of $|S|$.
\end{proof}

We are now in position to prove
Theorem~\ref{theorem-general-bound-p^2}.

\begin{theorem}\label{theorem-general-bound-p^2-repeated}
There exists a universal constant $c>0$ such that for every prime
$p$ and every finite group $G$ having a non-cyclic Sylow
$p$-subgroup, we have $|\Irr_\parat(G)|>c\cdot p$.
\end{theorem}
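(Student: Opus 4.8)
The plan is to run the standard minimal-counterexample / reduction-to-simple-groups argument, using the four lemmas just established (Lemmas~\ref{lemma-c2}, \ref{lemma-c3}, \ref{lemma-c4}) as the engine, together with Lemma~\ref{lemma-p-reg<p-rat}, Lemma~\ref{lemma-p-reg-quotient} and Lemma~\ref{lemma-NT} from the earlier sections. Fix a constant $c>0$ to be pinned down at the end (certainly $c\le 1/2$, so that the case $p=2$, where non-cyclicity of $P$ forces $|P/\Phi(P)|\ge 4$ and hence at least $4$ rational characters by counting linear characters of $P/\Phi(P)$ and pushing down from $\bN_G(P)$, is trivial), and assume $p\ge5$. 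Let $(G,p)$ be a counterexample of minimal order. The hypothesis is that a Sylow $p$-subgroup $P$ of $G$ is non-cyclic, equivalently $p^2\mid|P/\Phi(P)|$, and the goal is to contradict $|\Irr_\parat(G)|\le c\cdot p$.

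First I would dispose of the reductions. Let $N$ be a minimal normal subgroup of $G$. If $p\mid|G/N|$, then by induction $|\Irr_\parat(G)|\ge|\Irr_\parat(G/N)|>c\cdot p$ (the Sylow $p$-subgroup of $G/N$ is still non-cyclic, or it is cyclic in which case one handles it as in the cyclic reduction of Theorem~\ref{theorem-p'-degree}), so we may assume $p\nmid|G/N|$; in particular $G$ has a unique minimal normal subgroup $N$. If $N$ is an elementary abelian $p$-group, then since $P$ non-cyclic forces $\dim_{\mathbb F_p} N\ge2$ (as $N\le P$ and $N\cap\Phi(P)$ analysis, or simply pass to $G/\Phi(P)$ if needed), and since $p^2\nmid|G/N|$ because $p\nmid|G/N|$, Lemma~\ref{lemma-c2} applies directly and gives $|\Irr_\parat(G)|>c_1\cdot p$, a contradiction once $c\le c_1$. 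If $N$ is an abelian $p'$-group, then by the It\^o--Michler theorem $|\Irr_\parat(G)|=k(G)\ge k(G/N)+n(G,N)-1$; since the Sylow $p$-subgroups of $G$ are non-cyclic, $G/N$ has a non-cyclic Sylow $p$-subgroup and by induction $k(G/N)>c\cdot p$ unless that Sylow is cyclic, and the cyclic sub-case combined with $n(G,N)$ growing is handled by the same bookkeeping as in Theorem~\ref{theorem-general-bound-class}. So we reduce to $N\cong S^t$ a direct power of a non-abelian simple group $S$ with $p\mid|S|$ and $p\nmid|G/N|$.

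Next, the simple-group case. Write $C:=\bN_G(S)$ (viewing $S$ as one factor of $N=S^t$); then $p\nmid|C/S|$ since $p\nmid|G/N|$ and $p\nmid|\Sym(t)|$ would fail only if $p\le t$, but $p\le t$ together with $S^t\unlhd G$ forces $p^2\mid|N|$ and one extracts many almost $p$-rational classes from $N$ directly by Lemma~\ref{lemma-p-reg-quotient}-type counting — so either that sub-case is immediate, or $p\nmid|\Sym(t)|$ and $C/S$ is a $p'$-group. Now $C/S\le\Out(S)$. By Lemma~\ref{lemma-c4} there are more than $c_3\sqrt p\,|\Out(S)|$ many $p$-regular classes of $S$, hence more than $c_3\sqrt p$ orbits of $\Aut(S)$ on $\Cl_{p'}(S)$, hence at least $c_3\sqrt p$ orbits of $C$ on $\Cl_{p'}(S)$; these lift (via almost $p$-regularity being preserved, $p$-regular implying almost $p$-regular) to that many $G$-orbits on almost $p$-regular classes of $N$ inside the first factor — and if $t\ge2$ one gets roughly $\binom{c_3\sqrt p}{t}$-many, which already beats $c\cdot p$ whenever $t\ge2$ and $p$ is large, i.e. $t=1$ is the only surviving case for $p$ beyond an absolute bound, and small $p$ are finite in number. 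So take $t=1$, $N=S=$ the unique minimal normal subgroup, $p\nmid|G/S|$. Now invoke Lemma~\ref{lemma-c3}: since $P$ is non-cyclic, $p^2\mid|P|$; if $p^2\mid|S|$ then Lemma~\ref{lemma-c3} gives $n(\Aut(S),\Cl_{p'}(S))>c_2\cdot p$, so $n(C,\Cl_{p'}(S))\ge n(\Aut(S),\Cl_{p'}(S))/1>c_2\cdot p$ (as $C/S\le\Out(S)$ but the quotient only merges within $\Aut(S)$-orbits), giving more than $c_2\cdot p$ almost $p$-regular $G$-classes in $S$, hence $|\Irr_\parat(G)|\ge|\Cl_\pareg(G)|>c_2\cdot p$ by Lemma~\ref{lemma-p-reg<p-rat}. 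If instead $p^2\nmid|S|$, then since $p^2\mid|P|$ and $p\nmid|G/S|$ we must have $p^2\mid|G|$ with the extra factor of $p$ coming from $|G/S|$ being divisible by... but $p\nmid|G/S|$ — contradiction; so actually $p^2\nmid|S|$ forces $P$ to embed in a Sylow $p$-subgroup of $S$ extended by a $p'$-group, hence $|P|=|P\cap S|$ divides $|S|_p=p$, contradicting $P$ non-cyclic. Thus $p^2\mid|S|$ always here, and the contradiction is complete. Finally set $c:=\min\{1/2,\,c_1,\,c_2,\,c_3\}$ and absorb the finitely many small primes $p$ (and the finitely many pairs arising in the $t\ge2$ truncation) by possibly shrinking $c$ further; since any finite group with $p\mid|G|$ has at least two almost $p$-rational irreducible characters, $c$ can be made small enough to cover those cases too.

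\textbf{Main obstacle.} The delicate point is the descent from $\Aut(S)$-orbits to $C$-orbits and from $S$ to $G$: one must be careful that $C/S$ really is a $p'$-group (this is where the $p\le t$ versus $p\nmid|\Sym(t)|$ dichotomy enters, and the $t\ge2$ truncation has to be made quantitative enough that only finitely many $(p,t)$ escape), and that passing from $p$-regular classes of $S$ to almost $p$-regular classes of $G$ lying over them does not lose the count — here one uses that $p$-regular $\Rightarrow$ almost $p$-regular together with the orbit-counting inequality $|\Cl_\pareg(G)|\ge n(G,\Cl_\pareg(N))$ implicit in Lemma~\ref{lemma-p-reg-quotient}'s proof, applied with $N$ non-abelian rather than a $p'$-group (a minor adaptation). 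Everything else is bookkeeping over the three lemmas, and the constant $c$ is assembled at the very end.
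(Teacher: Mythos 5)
Your proposal has the right broad outline (minimal counterexample, Lemmas~\ref{lemma-c2}--\ref{lemma-c4}, split by the shape of a minimal normal subgroup), but the very first reduction contains a gap that hides the hardest part of the argument. You write ``we may assume $p\nmid|G/N|$, so $G$ has a unique minimal normal subgroup $N$.'' Minimality of $G$ only tells you that the Sylow $p$-subgroups of $G/N$ are \emph{cyclic} whenever $N$ is non-trivial normal; it does \emph{not} give $|\Irr_\parat(G/N)|>c p$. Your parenthetical appeal to ``the cyclic reduction of Theorem~\ref{theorem-p'-degree}'' does not help, since the cyclic-Sylow case of that theorem only yields the lower bound $2\sqrt{p-1}$, which is far short of $cp$. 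So you cannot pass to $p\nmid|G/N|$, and in particular you cannot assume a unique minimal normal subgroup. (The side remark that ``$P$ non-cyclic forces $\dim_{\mathbb F_p}N\ge 2$ \dots\ or simply pass to $G/\Phi(P)$'' is also unsound as stated, since $\Phi(P)$ is not normal in $G$ in general; it would follow trivially \emph{if} $p\nmid|G/N|$ were already established, which is exactly the point at issue.)

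Because of this, your proposal never touches the cases that the paper's proof spends most of its effort on, all of which have a minimal normal $N\cong C_p$ inside a non-cyclic Sylow $p$-subgroup: (a) two abelian minimal normal subgroups $A,B\cong C_p$, handled by showing $k(G)\ge|H|+(p-1)^2/|H|\ge 2(p-1)$; (b) a unique abelian minimal normal $N\cong C_p$ and no non-abelian one, which forces a normal \emph{quasisimple} subgroup $M$ with $\bZ(M)=N$, whence $p$ divides the Schur multiplier $|M(S)|$ — this is precisely why Lemma~\ref{lemma-c3}'s hypothesis reads ``$p^2\mid|S|$ \emph{or} $p\mid|M(S)|$ \emph{or} $p\mid|\Out(S)|$,'' a disjunction your sketch never invokes; (c) a non-abelian minimal normal simple $S$ with $\bC_G(S)\ne1$ (impossible under your reduction, but not in general), handled with Brauer's $2\sqrt{p-1}$ bound applied to $\bC_G(S)$ together with Lemma~\ref{lemma-c4}. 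The remaining quibbles — the misplaced It\^o--Michler reference for abelian $p'$-normal $N$, and $\binom{c_3\sqrt p}{t}$ where a multiset coefficient $\binom{n+t-1}{t}$ is meant — are cosmetic; the real gap is the unjustified reduction, which eliminates exactly the sub-cases requiring the Schur-multiplier and centraliser analysis.
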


\begin{proof}
Let $G$ be a minimal counterexample with $c=\min\{c_1,c_2,c_3$,
$1/258\}$. First we observe that $p^2\nmid |G/N|$ for every
non-trivial $N\unlhd G$. Therefore, by Lemma~\ref{lemma-c2}, every
abelian minimal normal subgroup of $G$ must be isomorphic to a
cyclic group of order $p$. Suppose that there are more than one of
them, which implies that there are exactly two, say $A$ and $B$. Let
\[T:=A\times B,\, C:=\bC_G(T),\, H:=G/C,\, \text{and}\, K:=\bC_H(A).\] Note
that $H$ is an abelian group with exponent dividing $p-1$. Now
$k(G/T)\geq k(G/C)=|H|$. Moreover, the number of $K$-orbits on $A$
is $1+(p-1)/|K|$ and the number of $H/K$-orbits on $B$ is
$1+(p-1)/|H/K|$. We deduce that the number of $G$-orbits on $T$ is
at least
\[
\left(1+\frac{p-1}{|K|}\right)\left(1+\frac{p-1}{|H/K|}\right),
\]
which is greater than $1+(p-1)^2/|H|$. We therefore have
\[
k(G)\geq |H|+\frac{(p-1)^2}{|H|}\geq 2(p-1).
\]
Note that the exponent of $G$ is not divisible by $p^2$. So
$|\Irr_\parat(G)|=k(G)\geq 2(p-1)$, a contradiction.

We conclude that $G$ has at most one abelian minimal normal subgroup
and moreover, if there is one, it must be isomorphic to $C_p$.

First suppose that $G$ has no non-abelian minimal normal subgroup.
It then follows that $G$ has a unique abelian minimal normal
subgroup $N\cong C_p$, and furthermore, $p^2\nmid |G/N|$ but $p\mid
|G/N|$.

Since $G$ has non-cyclic Sylow $p$-subgroup, we have $N<G$. Let $M/N$ be
a minimal normal subgroup of $G/N$ such that $M\subseteq \bC_G(N)$.
Such an $M$ exists since $p\mid |G/N|$ and $|G/\bC_G(N)|\leq
|\Aut(C_{p})|=p-1$. We claim that $p^2\mid |M|$, and thus $p\nmid
|G/M|$. Assume otherwise, then $M= N\times K$ for some non-trivial
$p'$-subgroup $K$ of $M$ by the Schur--Zassenhaus theorem. This $K$
is characteristic in $M$, and thus normal in $G$, violating the fact
that $G$ has a unique abelian minimal normal subgroup $N$. The claim
follows.

Note that $M/N$ is a direct product of copies of a simple group, but
as $p^2\nmid |G/N|$ and $p\mid |G/N|$, there is only one such copy.
Suppose first that $M/N\cong C_p$. Then $M$ is a (normal) Sylow
$p$-subgroup of $G$. By the Schur--Zassenhaus theorem, we have $G=MK$
for some $p'$-subgroup $K$ of $G$. From the assumption that Sylow
$p$-subgroups of $G$ are non-cyclic, we must have $M\cong C_p\times
C_p$, and thus $M$ can be viewed as a (completely reducible)
$K$-module. In fact, since $K$ normalizes $N$, as a $K$-module $M$
is a direct sum of two $K$-modules of size $p$, and this contradicts
what we have shown above that $G$ has at most one abelian minimal
normal subgroup.

So $M/N$ must be isomorphic to a non-abelian simple group $S$, and
therefore $M$ is a quasisimple group with $\bZ(M)=N\cong C_p$. In
particular, $p$ is a divisor of the size of the Schur multiplier of
$S$. Therefore the number of $\Aut(S)$-orbits on $p$-regular classes
of $S$ is at least $c_2\cdot p$ by Lemma~\ref{lemma-c3}, implying
that $|\Cl_\pareg(G/N)|>|\Cl_\preg(G/N)|\geq c_2\cdot p$. Using
Lemma~\ref{lemma-p-reg<p-rat}, we then obtain
\[
|\Irr_\parat(G)|\geq |\Irr_\parat(G/N)|>c_2\cdot p\geq c\cdot p,
\]
a contradiction again.

Next we suppose that $G$ does have a non-abelian minimal normal
subgroup. We claim that every non-abelian minimal normal subgroup is
simple. Assume otherwise that $N=S^k$ is a minimal normal subgroup
of $G$ such that $S$ is non-abelian simple and $k\geq 2$. We then
have
\[
|\Cl_\pareg(G)|\geq|\Cl_\preg(G)|\geq n(n+1)/2,
\]
where $n$ is the number of $\Aut(S)$-orbits on $p$-regular classes
of $S$. By the definition of $c$, we know that $p>257$, and it
follows that $n>2\sqrt{p-1}$ by Lemma~\ref{lemma-HM20}. We now find
that
\[
|\Cl_\pareg(G)|\geq \sqrt{p-1}\left(2\sqrt{p-1}+1\right)>2p,
\]
and thus
\[
|\Irr_\parat(G)|>2p,
\]
by Lemma~\ref{lemma-p-reg<p-rat}. This is a contradiction.

The above arguments also apply when $G$ has more than one
non-abelian minimal normal subgroup. So we conclude that $G$ has
exactly one non-abelian minimal normal subgroup, and this is
isomorphic to a non-abelian simple group, say $S$. If $\bC_G(S)=1$
then $G$ is an almost simple group with socle $S$, and hence
$|\Cl_\preg(G)|>c_2\cdot p\geq c\cdot p$ by Lemma~\ref{lemma-c3},
which is again a contradiction.

Thus $\bC_G(S)$ has order divisible by $p$, but not $p^2$ by the
minimality of $G$. By a result of Brauer \cite{Brauer42}, we then
have $|\Cl_\pareg(\bC_G(S))|=k(\bC_G(S))\geq 2\sqrt{p-1}$. Also,
from Lemma~\ref{lemma-c4} we know  that $|\Cl_\pareg(S)|>c\cdot
\sqrt{p}|\Out(S)|$. Now
\begin{align*}
|\Cl_\pareg(\bC_G(S)\times S)|&=|\Cl_\pareg(\bC_G(S))|\times
|\Cl_\pareg(S)|\\
&>2c\sqrt{p(p-1)}|\Out(S)|.
\end{align*}
Note that $G/(\bC_G(S)\times S)$ can be viewed as a subgroup of
$\Out(S)$. It follows that
\begin{align*}
|\Cl_\pareg(G)|&\geq |\Cl_\pareg(\bC_G(S)\times S)|/|\Out(S)|\\
& >2c\sqrt{p(p-1)}\geq cp,
\end{align*}
which is again a contradiction, by Lemma~\ref{lemma-p-reg<p-rat}.
The proof is complete.
\end{proof}

\section{Affine groups with few conjugacy classes}   \label{section-affine-groups}

In this section let $H$ be a finite group acting faithfully on a
finite vector space $V$. Let $p$ be the prime divisor of the order
of $V$. Assume that the order of $H$ is not divisible by $p$. Let
$HV$ be the semidirect product of $H$ and $V$.

\subsection{Overview} \label{Sec:7.1}
In order to prove Theorem~\ref{theorem-general-bound-p^3}, we need
to classify all groups $HV$ with the property that the number
$k(HV)$ of conjugacy classes of $HV$ is at most $p$. For
Theorem~\ref{theorem-general-bound-p^3} we only need this
classification when $|V|\geq p^3$, but with
Conjecture~\ref{conj:cyclicity} and other purposes in mind, we
consider also the case $|V|\leq p^2$.

In fact, this problem was also mentioned to one of us by Gabriel
Navarro. The following example of Navarro is mentioned in the
paragraph after \cite[Lemma~1.3]{RSV}. If $p=11$, $H = \SL_2(5)$ and
$|V| = 11^2$, then $k(HV) = 10 < p$.

We begin with an asymptotic statement.

\begin{theorem}   \label{IulianAttila}
There exists a universal constant $c > 0$ such
that whenever $p \geq c$ is a prime, $V$ is a finite vector space of
order at least $p^3$ where $p$ is the characteristic of the
underlying field and $H$ is a finite group of order coprime to $p$
acting faithfully on $V$, then $k(HV) > p$.
\end{theorem}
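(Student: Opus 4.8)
The statement is an asymptotic lower bound $k(HV) > p$ for faithful coprime linear actions with $|V|\ge p^3$, so the natural approach is to reduce to the case where $H$ acts irreducibly on $V$ and then to exploit the structure of primitive linear groups. The plan is as follows. First I would observe that if $V = V_1 \oplus V_2$ is a decomposition into proper $H$-invariant subspaces, then $k(HV) \ge n(H,V) \ge n(H,V_1)\cdot$ (something), or more cleanly, one counts $H$-orbits on $V$ directly: since $H$ has order coprime to $p$, the number of conjugacy classes of $HV$ lying in $V$ equals $n(H,V)$, the number of $H$-orbits on $V$, and $k(HV) \ge n(H,V) + k(H)$ is too weak, so instead use $k(HV) = \sum n(C_H(v), \ldots)$ type formulas; but the cleanest is $k(HV)\ge n(H,V)$, and bound $n(H,V)$ from below. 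If $V$ is a direct sum of $H$-conjugate isomorphic blocks permuted transitively, one gets a wreath-product-like lower bound on the number of orbits that already exceeds $p$ once there are at least three blocks, or two blocks of size $\ge p$; the genuinely hard case is when $V$ is homogeneous of small multiplicity, reducing to the primitive irreducible case.

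So the heart of the matter is: $H \le \GL(V)$ irreducible (even primitive) and faithful, $|H|$ coprime to $p$, $|V| = p^d$ with $d \ge 3$, and we must show $n(H,V) > p$ (which suffices, since $k(HV) \ge n(H,V)$). The standard tool here is that for a coprime linear action, $n(H,V) \ge |V|/|H| \cdot (\text{correction})$; more precisely, by a theorem on orbit counting (Burnside / the fact that regular orbits are generic for large enough vector spaces), one typically has $n(H,V) \gtrsim |V|/|H|$. Combined with bounds of the form $|H| \le c|V|^{2/3}$ or similar for primitive linear groups of coprime order — ultimately resting on the classification of primitive linear groups, Aschbacher's theorem, and known bounds on $|H|$ in terms of $|V|$ — one would get $n(H,V) \ge |V|^{1/3}/c' = p^{d/3}/c' \ge p/c'$, and then need to push past the constant. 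The sharper input needed is that for $d \ge 3$ the group $H$ cannot be too large relative to $|V|$ unless it is close to $\GL_d$ or a classical group, and in those cases one counts orbits of semisimple-type elements (eigenvalue configurations) directly to get more than $p$ orbits.

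The main obstacle, as I see it, is controlling the ``large $H$'' primitive cases — essentially the situation where $H$ contains (a central extension of) a quasisimple group acting on a small module, e.g. $H$ near $\SL_n(q)$, $\Sp_{2n}(q)$, $\SL_2(5)$, etc. Here $|H|$ can be comparable to $|V|$ and the cheap bound $n(H,V)\ge |V|/|H|$ fails. The remedy is twofold: (a) for the infinite families of classical-group modules, use that the number of orbits on a natural or near-natural module grows like a positive power of $q$ times a function of the rank, and that $p$ (being coprime to $|H|$ and dividing $|V|=q^{\text{something}}$... actually $p = \ell$ the defining characteristic is excluded by coprimality, so $p \mid q^i \pm 1$ for small $i$, forcing $p$ to be polynomially bounded in $q$, hence $n(H,V)$, which is polynomially large in $q$ of higher degree, beats $p$); (b) for the finitely many sporadic/exceptional small cases (like Navarro's $\SL_2(5)$ on $11^2$, which has $|V| = p^2 < p^3$ and so is correctly excluded), verify that with $|V| \ge p^3$ they are consistent with the bound, or rule them out, by direct computation. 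The coprimality constraint $p \nmid |H|$ is doing crucial work throughout: it forces $p$ to be small relative to the geometry, which is exactly what makes $n(H,V) > p$ plausible for $d \ge 3$ (and false for $d = 2$, consistent with the remark about infinitely many $p^2$-examples). I would expect the write-up to invoke Theorems~\ref{2} and~\ref{primes1} (referenced in the introduction) for the finite-$p$ classification, with Theorem~\ref{IulianAttila} being the asymptotic shadow of that classification, so the actual proof in the paper may well proceed by first establishing a structure theorem and then reading off the asymptotics.
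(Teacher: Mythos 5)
The paper's proof of Theorem~\ref{IulianAttila} is a one-line citation: it follows by inspecting the proof of \cite[Proposition~2.2]{MS20}. Your proposal is therefore a genuinely different route --- a from-scratch sketch rather than an appeal to a known result --- and it is worth comparing the two. Your overall architecture (reduce to a primitive irreducible action, bound the number of $H$-orbits on $V$ from below using $k(HV)\ge n(H,V)\ge |V|/|H|$, then control $|H|$ in terms of $|V|$) is indeed the skeleton of the argument that appears both in \cite{MS20} and in the paper's own explicit-constant sibling result Theorem~\ref{primes1}. You also correctly identify the two structural points that make the result true: coprimality forces $p$ to be polynomially small in the geometry, and $|V|\ge p^3$ rather than $p^2$ is what leaves room for the orbit count to overtake $p$. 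These are the right intuitions.

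However, there are two concrete gaps. First, the proposed bound ``$|H|\le c\,|V|^{2/3}$ for primitive linear groups of coprime order'' is false: the normalizer of a Singer cycle, i.e.\ a subgroup of $\mathrm{\Gamma L}(1,p^n)\le\GL_n(p)$, is primitive and has order roughly $n\cdot|V|$, far exceeding any fixed power of $|V|$ below~$1$. In exactly these cases $n(H,V)\ge |V|/|H|$ gives essentially nothing, and one must instead count conjugacy classes of $HV$ directly using Clifford's theorem and the abelian-by-cyclic structure of $H$; this is what Lemma~\ref{metacyclic} and Subsection~\ref{Section 10} of the paper do. Your plan flags the quasisimple ``large $H$'' cases as the obstacle but silently assumes the generic bound holds everywhere else, and the $\mathrm{\Gamma L}_1$ case falls through that crack. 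Second, and more importantly, you have the logical dependencies backwards: Theorem~\ref{IulianAttila} is \emph{not} deduced as an asymptotic shadow of Theorems~\ref{2} and~\ref{primes1}. It is an external input from \cite{MS20} that is used \emph{together with} Lemma~\ref{metacyclic} and Theorem~\ref{2} to prove Theorem~\ref{primes} (the case $n\ge 3$ is killed by Theorem~\ref{IulianAttila}, and $n\le 2$ is handled separately). Theorem~\ref{primes1} is an independent, explicit-constant strengthening of the solvable case of Theorem~\ref{IulianAttila}, proved via the imprimitivity-decomposition machinery of Sections~\ref{section-affine-groups}--\ref{section-affine-groups-2}; it neither uses nor is used to prove Theorem~\ref{IulianAttila}.

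In short: your plan captures the right big picture of what a first-principles proof would look like (and resembles the proof of Theorem~\ref{primes1}), but it cannot close as written because of the $\mathrm{\Gamma L}_1$ obstruction, and it misreads how Theorem~\ref{IulianAttila} sits in the paper's logical architecture --- the paper outsources the asymptotics to \cite{MS20} rather than deriving them internally.
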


\begin{proof}
This follows from an inspection of the proof of
\cite[Proposition~2.2]{MS20}.
\end{proof}

The purpose of this section is to work towards an explicit version
of Theorem~\ref{IulianAttila}. Whenever we can we will follow a
general approach to try and answer Navarro's question. At points we
need to impose explicit lower bounds on the prime $p$. Since even
the case of solvable groups $H$ seems difficult to handle, we will
not always treat the situation when $H$ has a quasisimple subnormal
subgroup.

In case $H$ is solvable (or more precisely when $H$ does not have a
quasisimple subnormal subgroup) an explicit value of the constant
$c$ in Theorem \ref{IulianAttila} is computed, see Theorem
\ref{primes1}. The list of solvable groups $HV$ with $k(HV) \leq p$
and $p$ sufficiently large is difficult to write down. Instead the
complete list of large primes $p$ is determined for which there are
groups $HV$ with $k(HV) \leq p$.

We accumulate most of the results in the following.

\begin{theorem}
\label{primes} There exists a universal constant $c > 0$ such that
the following is true. There is a vector space $V$ of order $p^{n}$
where $p \geq c$ is a prime and $n$ is a positive integer and there
is a subgroup $H$ of $\GL(V)$ of order coprime to $p$ such
that $k(HV) \leq p$ if and only if $n = 1$ or any of the following
holds.
\begin{itemize}
\item[(i)] $n = 2$ and $p \equiv 1 \pmod m$ for some even integer
$m$ with $12 \leq m \leq 36$.
\item[(ii)] $n = 2$ and $p \equiv 1 \pmod 5$ and there exists an integer $m$ dividing $p-1$ such that
$5 \leq m \leq 55$ and $(p-1)/m$ is even or $12 \leq m \leq 48$ and
$(p-1)/m$ is odd.
\end{itemize}
\end{theorem}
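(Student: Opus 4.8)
The plan is to reduce the statement to the union of two parts: the solvable (or, more generally, quasisimple-free) case, where we will invoke Theorem~\ref{primes1}, and the case where $H$ has a quasisimple subnormal subgroup, which will be handled together with the classical-group and exceptional-group analyses of the preceding subsections. First I would dispose of $n=1$ trivially: by the remark in the Introduction, $k(HV)=e+\frac{p-1}{e}$ with $e=|H|\mid(p-1)$, so $k(HV)\le p$ always holds (indeed $e+\frac{p-1}{e}\le 1+(p-1)=p$), giving one direction for $n=1$ for free. For $n\ge 2$, Theorem~\ref{IulianAttila} tells us that only finitely many primes $p$ can possibly admit a group $HV$ with $k(HV)\le p$ once $|V|\ge p^3$, so after enlarging $c$ we may assume $|V|=p^n$ with $n=2$; the bulk of the work is therefore to pin down exactly which primes $p$ occur when $n=2$, and to show the answer is precisely the list in (i) and (ii).

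The heart of the argument is the classification of pairs $(H,V)$ with $\dim V=2$ and $k(HV)\le p$, for $p$ large. Here I would split according to the structure of $H\le\GL_2(p)$ (acting faithfully on $V=\mathbb F_p^2$, with $p\nmid|H|$): the imprimitive/monomial case, the case $H$ normalizing a torus (so $H$ embeds in the normalizer of a Singer cycle, a dihedral-type group), and the primitive case where the image of $H$ in $\PGL_2(p)$ is one of $A_4,S_4,A_5$ or contains $\PSL_2(p)$ — the last being impossible since $p\nmid|H|$. The counting principle throughout is the standard one used already in the paper: $k(HV)\ge k(H)+n(H,V\setminus\{0\})$ is too weak, but the Cauchy–Frobenius/orbit-counting estimate combined with a lower bound on the number of $H$-orbits on $V$ (equivalently, $\frac1{|H|}\sum_{h\in H}|C_V(h)|$) together with $k(H)$ yields $k(HV)>p$ unless $|H|$ is comparable to $p$. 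For each of the finitely many primitive possibilities the order $|H|$ is bounded by a constant ($|A_5|\cdot(\text{scalar part})$ etc.), so for $p$ large these give $k(HV)$ of order $p$ and one computes the exact condition on $p$; I expect these to contribute the ranges $12\le m\le 36$ (tetrahedral/octahedral/icosahedral normalizers, which force $m\in\{12,24,48,\dots\}$ up to the scalar factor) in item (i) and the $m=5$, $55$, $48$ boundary values in item (ii). The monomial and torus-normalizer cases are where the infinite families live, parametrized by a divisor $m\mid p-1$ (or $m\mid p+1$): one writes $H$ as (a subgroup of) $C_{p-1}\wr C_2$ or the Singer normalizer, computes $k(HV)$ as an explicit function of $m$ and the parity of $(p-1)/m$ — this is exactly where the dichotomy ``$(p-1)/m$ even'' versus ``$(p-1)/m$ odd'' in (ii) comes from — and solves $k(HV)\le p$.

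For the quasisimple case, the subnormal quasisimple subgroup $Q\unlhd\unlhd H$ acts on $V=\mathbb F_p^2$, so $Q/\mathbf Z(Q)$ embeds in $\PGL_2(p)$; hence $Q\cong\SL_2(5)$ or $\SL_2(3)$ (for $p$ large, $\SL_2(p)$ is excluded since $p\nmid|H|$, and $\SL_2(3)$ is not quasisimple), and we are essentially back to Navarro's example $H=\SL_2(5)$, $|V|=p^2$, which accounts for a definite subfamily of the primes with $p\equiv1\pmod5$ inside (ii). I would verify $k(\SL_2(5)\cdot V)$ and its extensions by scalars as explicit functions of $p$ and check they fall under the stated congruences. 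The rest — ruling out $n\ge 3$ for all but finitely many $p$ via Theorem~\ref{IulianAttila}, and then showing \emph{no} prime above $c$ actually survives with $n\ge3$ even among the finitely many — is where Theorem~\ref{primes1}'s explicit bound and a short GAP-style check do the job. The main obstacle I anticipate is bookkeeping: making sure the several families (monomial, torus-normalizer for $p-1$, torus-normalizer for $p+1$, the three exceptional primitive types, and $\SL_2(5)$) are mutually exhaustive for $n=2$ and that the resulting arithmetic conditions on $m$ and on the parity of $(p-1)/m$ consolidate \emph{exactly} into the clean intervals $12\le m\le 36$ and $5\le m\le 55$ (resp.\ $12\le m\le48$) stated in (i) and (ii), with no sporadic exceptions escaping once $p\ge c$.
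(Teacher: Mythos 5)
Your high-level reduction matches the paper exactly: handle $n=1$ directly (your computation $k(HV)=e+\frac{p-1}{e}\le p$ is the content of Lemma~\ref{metacyclic}), invoke Theorem~\ref{IulianAttila} to dispose of $n\ge 3$ once $p$ is large, and devote the real work to classifying primitive linear groups on $V=\mathbb F_p^2$. But you have a genuine misconception about \emph{which} subgroups of $\GL_2(p)$ actually produce the congruence conditions, and this would have led you astray during execution.

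You claim that ``the monomial and torus-normalizer cases are where the infinite families live'' and that the parity of $(p-1)/m$ in item~(ii) ``comes from'' those cases. This is backwards. In the paper, the imprimitive (monomial) case is killed immediately by Lemma~\ref{primitiveandirreducible}: if $H$ is imprimitive or reducible it has a normal abelian subgroup $B$ of index $\le 2$, and then $k(HV)\ge \frac12(|B|+p^2/|B|)>p$. Likewise any $H\le\GL_1(p^2).2$ (the Singer normaliser) already fails $k(HV)\le p$ by the $n=2$ case of Lemma~\ref{metacyclic}. So for $n=2$ these two families contribute \emph{nothing}. All of items~(i) and~(ii) come from the primitive cases that you list last and treat almost as an afterthought: for solvable $H$, Manz--Wolf forces $F(H)=Q_8\star Z(H)$ with $H/F(H)\cong C_3$ or $\Sym(3)$, and only $\Sym(3)$ survives, producing (i) with $m=(p-1)/|Z(H):Z(Q_8)|$; for non-solvable $H$ one has $H=Z(H)\star\SL_2(5)$, producing (ii) with $m=(p-1)/|Z(H)|$, and the ``$(p-1)/m$ even vs.\ odd'' dichotomy is precisely whether $Z(H)\cap\SL_2(5)$ has order $2$ or $1$. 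Moreover, the decisive technical ingredient --- that for $n=2$ and $H$ primitive one has the two-sided bound $k(H)+\frac{|V|-1}{|H|}\le k(HV)\le k(H)+\frac{|V|-1}{|H|}+7200$ (Lemma~\ref{k(H)}, proved by bounding the number of $h\in H$ with $\dim C_V(h)=1$ by $2(|H/Z(H)|-1)$) --- does not appear in your plan; the vague appeal to ``orbit-counting'' would not by itself pin down the exact intervals $12\le m\le 36$ and $5\le m\le 55$ that make the theorem an if-and-only-if.
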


\begin{proof}
Set $c$ to be the maximum of $7 300 000$ and the $c$ in the
statement of Theorem~\ref{IulianAttila}. Let $p \geq c$ be a prime.
Let $V$ be a vector space of size $p^{n}$ for some positive integer
$n$. Let $H$ be a finite group of order coprime to $p$ acting
faithfully on $V$. If $n \geq 3$, then $k(HV) > p$ by
Theorem~\ref{IulianAttila}. We may thus assume that $n = 1$ or $n =
2$. For $n = 1$ the result follows from Lemma~\ref{metacyclic}. For
$n = 2$ the statement follows from Theorem~\ref{2}. 
\end{proof}

\subsection{Setup}
\label{Section 1}

Let $V$ be a vector space of size $p^n$ where $p$ is a prime and $n$
is a positive integer. Let $H$ be a subgroup of $\GL(V)$.
Assume that the size $|H|$ of $H$ is not divisible by $p$. Let $HV$
be the semidirect product of $V$ and $H$ with $V$ viewed as an
$H$-module.

Since $V$ is a completely reducible $H$-module by Maschke's theorem,
it may be written in the form $V = V_{1} \oplus \cdots \oplus V_s$
where $V_{1}, \ldots , V_{s}$ are (non-zero) irreducible
$H$-modules. For each $i$ with $1 \leq i \leq s$, write $V_{i}$ as a
sum $V_{i,1} + \cdots + V_{i,s_{i}}$ of subspaces $V_{i,j}$ of
$V_{i}$ in such a way that the set $\{ V_{i,1}, \ldots , V_{i,s_i}
\}$ is left invariant by $H$ and $s_i$ is the largest positive
integer with this property. Note that our condition ensures that for
every $i$ with $1 \leq i \leq s$ and every $j$ with $1 \leq j \leq
s_i$, the stabilizer $H_{i,j}$ in $H$ of the vector space $V_{i,j}$
acts primitively (but not necessarily faithfully) on $V_{i,j}$, that
is, $V_{i,j}$ is a primitive $H_{i,j}$-module. However, the
$V_{i,j}$ do not necessarily have the same size. Without loss of
generality, we assume that $p \leq |V_{1,1}| \leq \ldots \leq
|V_{s,1}|$.

The group $H$ acts on the set $\Omega$ of all subspaces $V_{i,j}$
with $1 \leq i \leq s$ and $1 \leq j \leq s_{i}$, that is, $H/B$ may
be viewed as a permutation group of degree $t = \sum_{i=1}^{s}
s_{i}$ where $B$ is the subgroup of all elements of $H$ which leave
every $V_{i,j}$ invariant.

The aim of this section is to describe pairs $(H,V)$ such that the
number $k(HV)$ of conjugacy classes of $HV$ is at most $p$.

\subsection{Basics}

For a finite group $X$ we denote the number of conjugacy classes of
$X$ by $k(X)$. We repeatedly use the bound $k(HV) \geq k(H) +
(|V|-1)/|H|$, which is a consequence of Clifford's theorem. This
estimate can be generalized \cite[Proposition~3.1b]{Schmid} as
follows.

\begin{proposition}
\label{prop} Let $\{ 1 = v_{1}, \ldots , v_{r} \}$ be a set of
representatives for the $H$-orbits on $V$. Then $k(HV) =
\sum_{i=1}^{r} k(C_{H}(v_{i}))$.
\end{proposition}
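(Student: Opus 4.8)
\emph{The plan.} This is the standard description of the conjugacy classes of a split extension with abelian kernel (essentially \cite[Proposition~3.1b]{Schmid}); I would prove it by exhibiting an explicit bijection rather than by arguing with characters. Put $G:=HV$. Since $V\unlhd G$ and $H$ is a complement, every $g\in G$ is uniquely $g=vh$ with $v\in V$, $h\in H$. For each orbit representative $v_i$ and each $c\in C_H(v_i)$ the elements $v_i$ and $c$ commute, so $v_ic\in G$, and I would define
\[
 \Phi\colon\ \coprod_{i=1}^r\operatorname{Cl}(C_H(v_i))\longrightarrow\operatorname{Cl}(G),\qquad
 \bigl(i,[c]\bigr)\longmapsto[v_ic]_G ,
\]
where $[c]$ denotes the $C_H(v_i)$-class of $c$ and $[\,\cdot\,]_G$ the $G$-class. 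That $\Phi$ is well defined is immediate: conjugating $c$ inside $C_H(v_i)$ conjugates $v_ic$ by an element fixing $v_i$, and moving $v_i$ to another point of its $H$-orbit conjugates $v_ic$ by the corresponding element of $H$.

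\emph{The key fact.} Both surjectivity and injectivity of $\Phi$ rest on one observation: because $p\nmid|H|$, every $h\in H$ acts semisimply on the $\mathbb F_p$-space $V$, hence $1-h$ is a semisimple endomorphism and one has the Fitting decomposition $V=\ker(1-h)\oplus\operatorname{Im}(1-h)$, with $\ker(1-h)=\operatorname{Fix}_V(h)=\{v\in V:h\in C_H(v)\}$. For surjectivity, let $g=vh\in G$. A direct computation gives ${}^u(vh)=(v+(1-h)u)h$ for $u\in V$, so within the coset $Vh$ the $V$-conjugates of $g$ form $\bigl(v+\operatorname{Im}(1-h)\bigr)h$; by the decomposition this coset meets $\operatorname{Fix}_V(h)$, so $g$ is $G$-conjugate to some $v'h$ with $h\in C_H(v')$. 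Conjugating by a suitable element of $H$ then carries $v'$ to the chosen representative $v_i$ of its orbit (and $h$ to some $c\in C_H(v_i)$), and conjugating inside $C_H(v_i)$ makes $c$ a chosen class representative; thus $g$ lies in the image of $\Phi$.

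\emph{Injectivity.} Suppose $[v_ic]_G=[v_jd]_G$ with $c\in C_H(v_i)$, $d\in C_H(v_j)$, say ${}^x(v_ic)=v_jd$ with $x=ua$, $u\in V$, $a\in H$. Since ${}^x(v_ic)={}^u\bigl(({}^av_i)({}^ac)\bigr)$ and ${}^ac\in C_H({}^av_i)$, I would first compare images in $G/V\cong H$ to get ${}^ac=d$, then compare $V$-parts to get $v_j-{}^av_i\in\operatorname{Im}(1-d)$, while $v_j$ and ${}^av_i$ both lie in $\operatorname{Fix}_V(d)=\ker(1-d)$; the Fitting decomposition then forces ${}^av_i=v_j$, so $v_i$ and $v_j$ lie in the same orbit, whence $i=j$ and $a\in C_H(v_i)$, and finally $d={}^ac$ shows $[c]=[d]$ in $\operatorname{Cl}(C_H(v_i))$. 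Hence $\Phi$ is a bijection and $k(HV)=\sum_{i=1}^r k(C_H(v_i))$.

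\emph{Main obstacle.} The conceptual content is entirely the semisimplicity of the $H$-action on $V$ --- this is exactly where the hypothesis $p\nmid|H|$ is used, and without a coprimeness assumption this line of argument breaks down (indeed $\Phi$ need not be a bijection then). The only real care required is bookkeeping: computing correctly how conjugation by a general $ua\in G$ acts on a pair $(v,h)$ and cleanly separating the ``move to the orbit representative'' step (the $H$-part) from the Fitting step (the $V$-part). As an alternative one can run Clifford theory for $V\unlhd HV$: partition $\operatorname{Irr}(HV)$ along the $H$-orbits on $\operatorname{Irr}(V)$, use coprimeness to extend each linear $\lambda$ to its inertia group $V\rtimes I_H(\lambda)$ so that Gallagher's theorem gives $|\operatorname{Irr}(HV\mid\lambda)|=k(I_H(\lambda))$, and match $\{I_H(\lambda)\}$ with $\{C_H(v_i)\}$ via Brauer's permutation lemma; I would nonetheless prefer the explicit bijection above, since it delivers the centralisers $C_H(v_i)$ directly.
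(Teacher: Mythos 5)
Your argument is correct. Note that the paper itself gives no proof of this proposition: it simply cites \cite[Proposition~3.1b]{Schmid}, so there is no textual argument to compare against line by line. The bijective proof you give is the standard one underlying that reference. The computation ${}^u(vh)=(v+(1-h)u)h$ correctly identifies the $V$-conjugates of $vh$ within the coset $Vh$ as $(v+\operatorname{Im}(1-h))h$, and since $p\nmid|H|$ forces every $h\in H$ to act semisimply on $V$, the splitting $V=\ker(1-h)\oplus\operatorname{Im}(1-h)$ gives a unique representative $v'h$ with $v'\in\operatorname{Fix}_V(h)$ in each $V$-conjugacy class of $Vh$; after that, the $H$-conjugation reduces cleanly to conjugation inside $C_H(v_i)$, and injectivity follows from $\ker(1-d)\cap\operatorname{Im}(1-d)=0$ exactly as you say. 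Your parenthetical observation that the particular map $\Phi$ can fail to be a bijection when $p\mid|H|$ is also accurate (one can check this already for $V=\mathbb F_2^2$, $H=C_2$ unipotent, where $HV\cong D_8$), even though the class-number identity itself persists non-coprimely — as the paper remarks via the Guralnick--Tiep reference \cite[Corollary~2.5]{GurTie} — but by a different argument. The Clifford-theoretic alternative you sketch is likewise sound in the coprime setting, where each $\lambda\in\operatorname{Irr}(V)$ extends to its inertia group and the $H$-actions on $V$ and on $\operatorname{Irr}(V)$ are permutation isomorphic, so the inertia subgroups match the centralizers $C_H(v_i)$.
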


We remark that Proposition~\ref{prop} may be generalized to the
situation when the order of $H$ is divisible by $p$. See a result of
Guralnick and Tiep \cite[Corollary~2.5]{GurTie}.

\subsection{The metacyclic case}

\begin{lemma}   \label{metacyclic}
 Let $H$ be any subgroup of $\GL_1(p^{n}).n \leq \GL_n(p) = \GL(V)$.
 Then $k(HV) \leq p$ if and only if $n=1$.
\end{lemma}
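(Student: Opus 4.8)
The plan is to analyze the structure of $H$ using the fact that it is a $p'$-subgroup of the semilinear group $\Gamma := \GL_1(p^n).n$ (the normalizer of a Singer cycle, i.e., $\mathbb{F}_{p^n}^\times \rtimes \Gal(\mathbb{F}_{p^n}/\mathbb{F}_p)$). First I would dispose of the ``if'' direction: when $n=1$, $H$ is a cyclic subgroup of $C_{p-1}$ of order $e \mid p-1$ acting fixed-point-freely on $V \cong C_p$, so by Proposition \ref{prop} (or the displayed formula $k(HV) = e + (p-1)/e$ noted in the introduction) one has $k(HV) = e + (p-1)/e \le p$, the maximum $p$ being attained at $e \in \{1, p-1\}$. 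For the ``only if'' direction I would assume $n \ge 2$ and show $k(HV) > p$.

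Next I would set up notation: let $C := H \cap \GL_1(p^n)$ be the (cyclic) subgroup of scalars in $H$, so $|C| =: e$ divides $p^n-1$ and $H/C$ embeds into the cyclic Galois group of order $n$; thus $|H| = e \cdot f$ with $f \mid n$ and $|H|$ coprime to $p$. The key estimate is the Clifford-theoretic bound $k(HV) \ge k(H) + (|V|-1)/|H| = k(H) + (p^n-1)/(ef)$. Since $H$ is metacyclic with cyclic normal subgroup $C$ of index $f$, it has at least $\max\{e/f, f\}$ — and in any case a reasonable lower bound on $k(H)$; but the main driver is the term $(p^n-1)/(ef)$. The point is that $e$ cannot be too large: $C$ acts fixed-point-freely on $V$ as scalars from $\mathbb{F}_{p^n}$, but I would instead bound $ef = |H| \le |\Gamma| = n(p^n-1)$, which is too weak, so the real input is that a non-scalar element of $H$ has small fixed space. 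Concretely, an element of $H$ projecting to the field automorphism $x \mapsto x^{p^{n/d}}$ has fixed space of $\mathbb{F}_p$-dimension $n/d \le n/2$, hence by Proposition \ref{prop} the orbits of $H$ on $V$ are large: the number of regular $H$-orbits on $V$ is at least $(p^n - n p^{n/2})/(ef)$ roughly, and — more usefully — I would just use Proposition \ref{prop} directly, $k(HV) = \sum_i k(C_H(v_i))$, splitting $V$ into the zero vector (contributing $k(H)$), the vectors with nontrivial stabilizer, and the regular vectors.

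The cleanest route, and the one I would carry out, is: every non-identity element $h \in H$ of order prime to $p$ acting on $V = \mathbb{F}_{p^n}$ has $|C_V(h)| \le p^{n/2}$ when $h \notin C$ (it lies in a proper subfield) and $C_V(h) = 0$ when $h \in C \setminus \{1\}$; hence $\sum_{h \ne 1}|C_V(h)| \le (f-1)\cdot \frac{|H|}{f} \cdot p^{n/2} = (f-1)e\, p^{n/2}$, so by Burnside counting the number of $H$-orbits on $V$ is $\ge \frac{p^n + (f-1)e\,p^{n/2}}{ef} \cdot \frac{1}{1}$... more precisely $n(H,V) = \frac{1}{ef}\sum_{h}|C_V(h)| \le \frac{1}{ef}(p^n + (f-1)e p^{n/2})$, but I want a \emph{lower} bound, so I instead note $k(HV) \ge n(H,V) \ge p^n/(ef) \ge p^n/(n(p^n-1)) $ is too weak — so the argument must use that most orbits are \emph{regular} and most stabilizers in the sum $\sum k(C_H(v_i))$ are all of $H$ when $H$ is cyclic. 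I would therefore treat two cases: (a) $H$ cyclic (i.e. $f=1$ or $C$ has a complement acting compatibly), where $k(HV) = |H| + (p^n - 1 - (\text{non-regular vectors}))/|H| \cdot$(stuff) and a short computation with $|H| \le p^n - 1$ and $n \ge 2$ gives $> p$; and (b) the genuinely metacyclic case $f \ge 2$, where I bound $k(H) \ge f + (e-1)/f$ (orbits of $H/C$ on $C$ plus one for identity, via Proposition \ref{prop} applied to $H$ itself) and combine with $(p^n-1)/(ef)$, optimizing over $e, f$ with $f \le n$ and $f \mid n$.

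The main obstacle will be the low-rank, small-$p$ bookkeeping in case (b): when $n=2$ (so $f \in \{1,2\}$) and $e$ is close to $p^2-1$, the term $(p^n-1)/(ef)$ degenerates to roughly $1$ and one must squeeze the inequality $k(HV) > p$ out of $k(H)$ together with the \emph{number} of non-regular orbits (vectors fixed by the involutory field automorphism form an $\mathbb{F}_p$-line, giving $p-1$ extra nonzero orbit representatives each with stabilizer of order $\ge 2$, hence contributing $\ge 2(p-1)/e \cdot e$-ish to $k(HV)$ via Proposition \ref{prop}) — I expect this forces $k(HV) \ge (p-1) + \text{(contribution from regular orbits)} + k(H) > p$ once assembled carefully. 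For $n \ge 3$ one could alternatively just invoke Theorem \ref{IulianAttila} for $p$ large and check small $p$ by the same Burnside/orbit estimate, but since the lemma is unconditional I would keep the self-contained argument above; the only real work is verifying the finitely many tight configurations at $n=2$.
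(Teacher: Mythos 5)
Your proposal correctly identifies the engine of the paper's argument — the Clifford bound $k(HV)\ge k(H)+(|V|-1)/|H|$ together with a lower bound on $k(H)$ coming from the normal cyclic subgroup $C=H\cap\GL_1(p^n)$ — but it never actually carries the computation to completion, and the detours you sketch (bounding $\sum_{h\ne1}|C_V(h)|$ via subfield fixed spaces, Burnside counting of orbits, splitting off the line fixed by the involutory field automorphism when $n=2$) are not needed and you yourself flag several of them as inconclusive. The paper closes the argument with two short observations you don't make explicit. For $n\ge3$: writing $x=|C|$ and $f=|H:C|\le n$, Clifford gives
\[
k(HV)\ \ge\ \frac{x}{n}+\frac{p^n-1}{xn}\ \ge\ \frac{2}{n}\sqrt{p^n-1},
\]
and the right-hand side already exceeds $p$ for all $n\ge3$, $p\ge3$; no fixed-space analysis is required. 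For $n=2$ the slightly refined Clifford estimate gives
\[
k(HV)\ \ge\ 1+\frac{x-1}{2}+\frac{p^2-1}{2x},
\]
and the crucial point — which you replace by vague ``bookkeeping'' — is that the function $x\mapsto \frac{x}{2}+\frac{p^2-1}{2x}$ restricted to integers $x$ coprime to $p$ (so $x\ne p$) has minimum value exactly $p$, attained at $x=p-1$ and $x=p+1$; adding the leading $1$ then forces $k(HV)>p$ outright, with no case split on $f\in\{1,2\}$ and no regular-orbit counting. So the gap is concrete: you have the right inequality in hand but do not observe the AM--GM bound for $n\ge3$, and for $n=2$ you go hunting for extra contributions from non-regular orbits when the naked Clifford bound already suffices once the integrality constraint $p\nmid x$ is used.
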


\begin{proof}
If $n=1$, then $H = B$ is cyclic of order dividing $p-1$ and so
$$k(HV) = |H| + \frac{p-1}{|H|} \leq p.$$ Assume that $k(HV) \leq
p$. If $p=2$, then $n=1$ must hold. Assume also that $p \geq 3$.

Put $x = |H \cap \GL_1(p^{n})|$. We have
\begin{equation}
\label{previous} p \geq k(HV) \geq \frac{x}{n} + \frac{p^{n}-1}{x n}
= \frac{1}{n} \Big( x + \frac{p^{n}-1}{x} \Big) \geq \frac{2}{n}
\sqrt{p^{n}-1}
\end{equation}
by Clifford's theorem. This is a contradiction for $n \geq 3$ and $p\geq 3$.

Assume that $n = 2$. A more careful estimate as in (\ref{previous}) gives
$$p \geq k(HV) \geq 1 + \frac{x-1}{2} + \frac{p^{2}-1}{2x}.$$
Since the function $f(x) = \frac{x}{2} + \frac{p^{2}-1}{2x}$ defined
on the set $\{ 1, \ldots , p^{2}-1 \}$ takes its minimum at $x =
p-1$ and $x=p+1$ and this minimum is $p$, we arrive at a contradiction.
\end{proof}

\subsection{The case $p < 17$}

\begin{lemma}   \label{l1}
 Let $p < 17$. Then $k(HV) \leq p$ if and only if $n=1$ or $p=11$ and
 $HV = (C_{11} \times C_{11}):\SL_2(5)$.
\end{lemma}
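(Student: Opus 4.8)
The strategy is a finite case-check over the primes $p\in\{2,3,5,7,11,13\}$, leveraging the reduction machinery of Section~\ref{Section 1} together with the class-counting inequality $k(HV)\geq k(H)+(|V|-1)/|H|$ and its refinement Proposition~\ref{prop}. By Lemma~\ref{metacyclic} (or rather the inequality \eqref{previous} in its proof) we already know that a group $HV$ with $k(HV)\le p$ and $n\ge 3$ forces $2\sqrt{p^n-1}/n\le p$, which fails for every $p<17$ once $n\ge 3$; in fact for $n\ge 3$ one checks $2\sqrt{p^3-1}/3>p$ already at $p=2$. So it suffices to rule out $n=2$ except in the single exceptional configuration, and to confirm that configuration genuinely has $k(HV)=10\le 11$.

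First I would treat $n=2$. Here $V$ has order $p^2$ and $H\le\GL_2(p)$ with $p\nmid|H|$. Split into the imprimitive/reducible case and the primitive case via the setup of Section~\ref{Section 1}. If $H$ stabilizes a line, or $V$ is a sum of two $H$-invariant lines, or $H$ acts imprimitively, then $H$ embeds into $\GL_1(p^2).2$ or into $(C_{p-1}\wr C_2)$-type subgroups, and the estimate $k(HV)\ge 1+(x-1)/2+(p^2-1)/(2x)$ from the proof of Lemma~\ref{metacyclic} (minimized at $x=p\pm1$ giving value exactly $p$) shows $k(HV)>p$ unless we are in degenerate situations that force $n=1$; these are handled exactly as in Lemma~\ref{metacyclic}. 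If $V$ is a primitive $H$-module of dimension $2$, then $H$ (modulo scalars) is one of the finitely many primitive subgroups of $\operatorname{PGL}_2(p)$: a subgroup of the normalizer of a nonsplit torus, $\operatorname{Alt}(4)$, $\operatorname{Sym}(4)$, $\operatorname{Alt}(5)$, or $\operatorname{PSL}_2(p)/\operatorname{PGL}_2(p)$; the $p'$-condition on $|H|$ kills the last family for the relevant small $p$ except when it is too small to matter. For each such $H$ and each $p<17$ one computes $k(HV)$ by Proposition~\ref{prop}, i.e. $k(HV)=\sum_i k(C_H(v_i))$ over orbit representatives $v_i$ on $V$; this is a short finite computation (and can be cross-checked in GAP). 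The only case where the sum comes out $\le p$ is $p=11$, $H=\SL_2(5)$ acting on $V=C_{11}^2$: here $H$ has a single nontrivial orbit on $V\setminus\{0\}$ of length $120=|H|$ with trivial stabilizer, so $k(HV)=k(\SL_2(5))+1=9+1=10\le 11$, and one notes $\SL_2(5)\le\GL_2(11)$ with $11\nmid|\SL_2(5)|=120$, so the configuration is legitimate.

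The remaining bookkeeping is to confirm there is no second small-prime primitive example: for $p\le 7$ the primitive subgroups of $\operatorname{PGL}_2(p)$ of order coprime to $p$ are too small — either $|H|^2<p^2-1$ forcing $k(HV)\ge 1+(p^2-1)/|H|>p$, or direct evaluation of $\sum_i k(C_H(v_i))$ exceeds $p$; and for $p=13$ one runs through $H$ contained in the normalizer of a nonsplit torus (order dividing $2\cdot 14=28$) or $H$ among $\operatorname{Alt}(4),\operatorname{Sym}(4),\operatorname{Alt}(5)$ lifted to $\GL_2(13)$, checking in each case $k(HV)>13$. The main obstacle is simply the thoroughness of this enumeration — correctly listing all $p'$-order primitive linear groups in dimension $2$ over $\mathbb{F}_p$ for these six primes and not missing an orbit-stabilizer configuration — but since the list is explicitly classical (Dickson's classification of subgroups of $\operatorname{PSL}_2(p)$) and the primes involved are tiny, no genuine difficulty arises; the computation can be and has been verified by computer.
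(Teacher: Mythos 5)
Your proof takes a genuinely different route from the paper. The paper's proof of Lemma~\ref{l1} is a one-liner: since $p<17$ forces $k(HV)\leq 13$, the classification tables of groups with at most $14$ conjugacy classes in \cite{VLVL1}, \cite{VLVL2}, and \cite{VLS} contain every candidate group $HV$, and the authors simply read off that $(C_{11}\times C_{11}){:}\SL_2(5)$ is the only one with a non-cyclic coprime normal $p$-subgroup of the required shape. Your approach, by contrast, reconstructs the answer from first principles via Clifford theory, Dickson's classification of subgroups of $\PSL_2(p)$, and Proposition~\ref{prop}. That is a reasonable alternative, but it is also considerably heavier than what the lemma needs, and as written it has a real gap.

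The gap is in your reduction to $n\le 2$. The inequality $k(HV)\ge \tfrac{2}{n}\sqrt{p^n-1}$ from the proof of Lemma~\ref{metacyclic} is established there only under the hypothesis $H\le \GL_1(p^n).n$: its derivation uses $k(H)\ge |H\cap\GL_1(p^n)|/n$, which holds because the metacyclic $H$ has an abelian subgroup of index $\le n$. For arbitrary $H\le\GL_n(p)$ of $p'$-order there is no such general lower bound for $k(H)$ of the form $|H|/n$, so you cannot invoke this inequality to rule out $n\ge 3$. What you do have for general $H$ is $k(HV)\ge k(H)+(p^n-1)/|H|\ge 1+(p^n-1)/|H|$; to get a contradiction for $n\ge 3$ you would additionally need to bound $|H|$ from above using $k(H)\le p-1\le 12$ (e.g.\ through structure results or the Vera L\'opez tables themselves), which is a genuinely extra step you have not supplied. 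On top of this structural problem, the numerical claim that ``$2\sqrt{p^3-1}/3>p$ already at $p=2$'' is false: $2\sqrt{7}/3\approx 1.76<2$, and the bound also fails at $(p,n)=(2,4)$; so even if the inequality were applicable, it would not dispose of the small cases you say it does. The $n=2$ portion of your argument (reducible/imprimitive handled by the metacyclic estimate, primitive handled by Dickson plus Proposition~\ref{prop}, and the verification $k\bigl((C_{11}\times C_{11}){:}\SL_2(5)\bigr)=10$) is sound in outline, modulo the small slip that the minimum of $1+\tfrac{x-1}{2}+\tfrac{p^2-1}{2x}$ at $x=p\pm 1$ is $p+\tfrac12$, not exactly $p$. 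To repair the proposal you should either replace the $n\ge 3$ argument with one valid for general $H$ (bounding $|H|$ via $k(H)\le 12$), or do what the paper does and simply appeal to the classification of groups with at most $13$ conjugacy classes.
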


\begin{proof}
If $n=1$ and $H$ is a subgroup of $\GL_1(p) = \GL(V)$, then $k(HV) \leq p$ by
Lemma~\ref{metacyclic}. If $p=11$ and $HV = (C_{11} \times C_{11}):\SL_2(5)$,
then $k(HV) = 10$ by \cite{VLVL1}.
If $n > 1$ and $HV \not= (C_{11}\times C_{11}):\SL_2(5)$, then $HV$ must have
more than $13$ conjugacy classes by the tables in \cite{VLVL1}, \cite{VLVL2}
and \cite{VLS}.
\end{proof}

\subsection{The case $n=2$}

\begin{lemma}   \label{primitiveandirreducible}
 Let $n=2$. If $k(HV) \leq p$, then $H$ is primitive and irreducible on $V$.
\end{lemma}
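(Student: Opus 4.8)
The statement to prove is Lemma~\ref{primitiveandirreducible}: if $n=2$ and $k(HV)\le p$, then $H$ acts primitively and irreducibly on $V$. I would prove the contrapositive by ruling out the two ways this can fail: $H$ acts reducibly, or $H$ acts irreducibly but imprimitively.

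First I would dispose of the reducible case. If $V$ is reducible, then $V=V_1\oplus V_2$ with each $V_i$ one-dimensional over $\mathbb{F}_p$ (since $\dim V=2$), so $H$ embeds into $\GL_1(p)\times\GL_1(p)$, possibly permuted by the swap; in the notation of the Setup subsection, $B$ has index at most $2$ in $H$ and acts diagonally. I would then use Proposition~\ref{prop} (or just the basic Clifford bound $k(HV)\ge k(H)+(|V|-1)/|H|$) to get a lower bound. The cleanest route: the restriction of $V$ to $B$ splits as a sum of two $B$-invariant lines, and $B$ is abelian of order dividing $(p-1)$ acting on each line. Counting $H$-orbits on $V$: even after folding by the order-$2$ permutation, the number of orbits is at least roughly $\tfrac12\big(1+(p-1)/|K|\big)\big(1+(p-1)/|H/K|\big)$-type expression where $K$ is the kernel of the action on one factor — essentially the same computation that appears in the proof of Theorem~\ref{theorem-general-bound-p^2-repeated}. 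This forces $k(HV)>p$ unless $H$ is trivial, contradiction (or handle $H$ trivial separately: then $k(HV)=p^2>p$). So $V$ must be irreducible.

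Next, the imprimitive irreducible case: $V=V_{1,1}\oplus V_{1,2}$ with the two lines permuted transitively, so $H$ has an index-$2$ normal subgroup $B$ stabilizing each line, $B$ is cyclic of order dividing $p-1$ (as it acts faithfully and diagonally on two lines, but faithfulness on $V$ means $B\hookrightarrow C_{p-1}\times C_{p-1}$; irreducibility of $V$ under $H$ forces the two characters of $B$ to be $H$-conjugate and hence to generate the same cyclic group, so actually $B$ is cyclic of order dividing $p-1$). Thus $|H|\le 2(p-1)$. Now I would count orbits on $V\setminus\{0\}$: the nonzero vectors lying on the two coordinate lines form a $B$-stable set of size $2(p-1)$ on which $B$ acts with at least $2(p-1)/|B|\ge 2$ orbits, fused in pairs by $H/B$, giving at least $(p-1)/|B|$ such orbits; the vectors with both coordinates nonzero number $(p-1)^2$ and $H$ acts on them with at least $(p-1)^2/|H|\ge (p-1)/2$ orbits. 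Adding the zero vector and using $k(HV)\ge r$ (number of $H$-orbits on $V$), I get $k(HV)\ge 1+(p-1)/|B|+(p-1)^2/(2(p-1))= 1+(p-1)/|B|+(p-1)/2$. For this to be $\le p$ I need $(p-1)/|B|\le (p-1)/2$, i.e.\ $|B|\ge2$; and then a sharper count via Proposition~\ref{prop} (taking centralizers into account, as in the proof of Lemma~\ref{metacyclic}) should close the gap, exactly as the metacyclic lemma handles $\GL_1(p^2).2$. Indeed the subgroups of the imprimitive (monomial) group $C_{p-1}\wr C_2$ are a subfamily of the subgroups of $\GL_1(p^2).2$, so Lemma~\ref{metacyclic} with $n=2$ applies almost verbatim and yields $k(HV)>p$.

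The main obstacle I anticipate is making the imprimitive count tight enough: the crude orbit-counting inequality $k(HV)\ge \#\text{orbits}$ may be off by just enough to fail for small $|B|$ (e.g.\ $|B|=2$, or the extreme imprimitive case where $H\cong C_2$ acting by coordinate swap, $|H|=2$), so I will need the refined formula $k(HV)=\sum_i k(C_H(v_i))$ of Proposition~\ref{prop} — noting that vectors on the axes have larger centralizer in $H$ (all of $B$, roughly), which inflates the sum. I expect that once one observes the imprimitive $H$ is a subgroup of $\GL_1(p^2).2$ (via $\mathbb{F}_{p^2}\cong V$ as an $H$-set on the diagonal torus), Lemma~\ref{metacyclic} does all the work and the only genuinely separate case is the reducible one, which is the Theorem~\ref{theorem-general-bound-p^2-repeated}-style two-line computation above.
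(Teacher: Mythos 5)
Your overall plan (split into the reducible and imprimitive cases) is reasonable, and your handling of the reducible case works — there $H$ is abelian of order dividing $(p-1)^2$, so the Clifford bound $k(HV)\ge |H|+(p^2-1)/|H|\ge 2p$ already closes it. But the imprimitive case contains a genuine error. You assert that irreducibility forces the two characters $\chi_1,\chi_2$ of $B$ on the two lines to ``generate the same cyclic group,'' and conclude $B$ is cyclic of order dividing $p-1$, hence $|H|\le 2(p-1)$ and $H\le\GL_1(p^2).2$. This is false. Take $H=C_{p-1}\wr C_2\le\GL_2(p)$, the full monomial group: here $B=C_{p-1}\times C_{p-1}$ is non-cyclic of order $(p-1)^2$, $V$ is irreducible and imprimitive, and the two projection characters $\chi_1,\chi_2$ are indeed $H$-conjugate with equal image, yet $B$ is not cyclic. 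Consequently $H$ need not embed in $\GL_1(p^2).2$ at all — the monomial group has order $2(p-1)^2$ while $\GL_1(p^2).2$ has order $2(p^2-1)$, and in $\GL_2(p)$ these are the normalizers of a split and a non-split torus respectively, neither containing the other. So the planned appeal to Lemma~\ref{metacyclic} does not apply, and the orbit count you set up (which relies on $|B|\le p-1$) no longer gives what you need without reworking it for $|B|$ up to $(p-1)^2$.

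The paper's proof avoids all of this by noticing that both failure modes (reducible, or irreducible-but-imprimitive) give the \emph{same} structural fact: $B$ is a normal abelian subgroup of $H$ of exponent dividing $p-1$ with $[H:B]\le 2$. One then gets, in one line,
\[
k(HV)\ \ge\ k(H)+\frac{p^2-1}{|H|}\ \ge\ 1+\frac{|B|-1}{2}+\frac{p^2-1}{2|B|}\ \ge\ \frac12\Bigl(|B|+\frac{p^2}{|B|}\Bigr)\ >\ p,
\]
the last step because $f(x)=x+p^2/x$ has its minimum $2p$ at $x=p$ and $|B|\ne p$ (as $p\nmid |H|$). This uniform bound makes no case distinction, needs no claim about $B$ being cyclic, and handles the full monomial group without difficulty. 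You should replace the $\GL_1(p^2).2$ reduction with this abelian-normal-subgroup Clifford bound; as written, the imprimitive half of your argument does not go through.
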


\begin{proof}
If $V$ is not an irreducible and primitive $H$-module, then $B$ is a
normal abelian subgroup (of exponent dividing $p-1$) inside $H$ of
index at most $2$, and so $$k(HV) \geq k(H) + \frac{p^{2}-1}{2|B|}
\geq 1 + \frac{|B|-1}{2} + \frac{p^{2}-1}{2|B|} \geq \frac{1}{2}
\Big( |B| + \frac{p^{2}}{|B|} \Big) > p,$$ since the function $f(x)
= x + p^{2}/x$ takes its minimum at $x = p$ in the interval
$[1,p^{2}]$ and $|B| \not= p$.
\end{proof}

\begin{lemma}   \label{k(H)}
 If $n=2$ and $k(HV) \leq p$, then
 $$k(H) +\frac{|V|-1}{|H|} \leq k(HV) \leq k(H) + \frac{|V|-1}{|H|} + 7200.$$
\end{lemma}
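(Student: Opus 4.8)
The lower bound $k(HV)\ge k(H)+(|V|-1)/|H|$ is immediate from Proposition~\ref{prop}: if $1=v_1,\dots,v_r$ are representatives of the $H$-orbits on $V$, then $k(HV)=\sum_{i=1}^rk(\bC_H(v_i))\ge k(H)+(r-1)\ge k(H)+(|V|-1)/|H|$, since each of the $r-1$ nontrivial orbits has length at most $|H|$. For the upper bound the plan is to control the remaining summands. First I would note that for $v\in V\setminus\{0\}$ the centraliser $\bC_H(v)$ is cyclic of order prime to $p$: in a basis beginning with $v$ it consists of matrices $\left(\begin{smallmatrix}1&*\\0&*\end{smallmatrix}\right)$, the homomorphism sending such a matrix to its lower-right entry has a $p$-group as kernel, and $p\nmid|H|$. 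Hence $k(\bC_H(v_i))=|\bC_H(v_i)|=:m_i$, and combining $\sum_{i\ge2}|H|/m_i=|V|-1$ with Proposition~\ref{prop} gives the identity
$$k(HV)=k(H)+\frac{|V|-1}{|H|}+\sum_{i:\,m_i\ge2}\Big(m_i-\tfrac1{m_i}\Big),$$
so it suffices to show that $\sum_{i:\,m_i\ge2}m_i\le7200$.

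Since $k(HV)\le p$, Lemma~\ref{primitiveandirreducible} shows that $H$ is primitive and irreducible on $V$. By the classification of primitive irreducible subgroups of $\GL_2(p)$ of order prime to $p$, one of the following holds: (a) $H$ is conjugate to a subgroup of a non-split maximal torus; (b) $H$ lies in the normaliser of a non-split maximal torus $T$ but not in $T$; or (c) $H$ has a normal subgroup $N$ with $N/\bZ(H)\cong C_2\times C_2$ or $N/\bZ(H)\cong\Alt_5$, and $|H/\bZ(H)|\le60$. In case (a), $H$ acts on $V\setminus\{0\}$ by multiplication on a Singer cycle containing it, hence freely, so every $m_i=1$. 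In case (b), a nonidentity element fixing a nonzero vector would be an involution outside $C:=H\cap T$; its existence would force $H=C\rtimes\langle w\rangle$ with $w$ an involution inverting $C$, so $|C|\mid p+1$, and then $p\ge k(HV)\ge|C|/2+(|V|-1)/|H|$ would give $|C|=p+1$ and $k(HV)>p$ by a direct computation, a contradiction; hence $H$ acts freely here too. Thus in cases (a) and (b) the sum is $0$.

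Case (c) is the substantive one. Every $v\ne0$ with $\bC_H(v)\ne1$ lies in a line whose image on the projective line is fixed by a nonidentity element of $H/\bZ(H)$; the set of such points falls into at most three $H$-orbits, and every one of their stabilisers in $H/\bZ(H)$ — hence every $m_i$ — has order at most $5$. Moreover $k(HV)\le p$ forces $|H|\ge p+1$, so $|\bZ(H)|\ge(p+1)/60$. For a point-orbit with stabiliser of order $m'\le5$, the nonzero vectors in the corresponding lines number $(|H/\bZ(H)|/m')(p-1)$ and split into $H$-orbits of length $|H|/m$, where $m\mid m'$ is the order of the pointwise stabiliser of such a line; these therefore contribute $m^2(p-1)/(m'\,|\bZ(H)|)\le 5(p-1)/|\bZ(H)|$ to $\sum_i m_i$. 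Summing over the (at most three) point-orbits yields $\sum_{i:\,m_i\ge2}m_i\le 15(p-1)/|\bZ(H)|<900<7200$, which completes the proof.

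I expect the main obstacle to be the case analysis in (b) and (c): excluding the torus-normaliser groups (the borderline $|C|=p+1$ in particular), checking that the projectively fixed points capture all nonzero vectors with nontrivial centraliser, and running the orbit count of case (c) uniformly in $p$. By comparison the Clifford-type lower bound and the cyclicity of $\bC_H(v)$ are routine.
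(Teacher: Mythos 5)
Your proposal is correct in outline but takes a genuinely different route from the paper's, and it has one gap that needs repairing.

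The paper's upper-bound argument proceeds via the inequality $k(HV)\le k(H)+\sum_{i\ge2}|\bC_H(v_i)|$, and then uses two counting steps: (i) since $|H/\bZ(H)|\le60$ and each nontrivial coset of $\bZ(H)$ contains at most two elements with a one-dimensional fixed space, there are at most $118$ such elements in $H$; (ii) each of those fixes at most $(p-1)/|\bZ(H)|\le60$ of the representatives $v_2,\dots,v_r$, and $r<|V|/|H|+119$. You instead observe that $\bC_H(v)$ for $v\ne0$ is cyclic of $p'$-order (so $k(\bC_H(v))=|\bC_H(v)|$), from which you extract the clean \emph{identity} $k(HV)=k(H)+(|V|-1)/|H|+\sum_{m_i\ge2}(m_i-1/m_i)$, and then you bound the error term through the action of $H/\bZ(H)$ on the projective line: at most three orbits of points with nontrivial stabiliser, each stabiliser of order at most $5$, giving $\sum_{m_i\ge2}m_i\le15(p-1)/|\bZ(H)|<900$. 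This is a more structural argument and yields a visibly sharper constant than $7200$, so it buys something; the paper's argument is more brute-force but shorter because it defers almost everything to the single numerical bound $118\cdot60$.

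There is, however, a real gap in your treatment of case (b). You assert that the presence of an involution $w\in H\setminus C$ forces $w$ to \emph{invert} $C$, and from this you deduce $|C|\mid p+1$. But $w$ acts on $C\le T\cong\mathbb{F}_{p^2}^{\times}$ by the Frobenius $x\mapsto x^p$; this is inversion on $C$ precisely when $C$ lies in the norm-one subgroup $\ker(N)$ of order $p+1$, which is exactly what you are trying to prove rather than something given. (Having $w^2=1$ constrains $w$, not $C$.) The conclusion that case (b) cannot occur is still true — one can see it by noting that $k(H)\ge|C|/2$ and $(|V|-1)/|H|=(p^2-1)/(2|C|)$ already force $|C|\in\{p-1,p+1\}$ when $k(HV)\le p$, and both possibilities can then be ruled out directly (for $|C|=p-1$ the group $C$ is scalar and $H$ is reducible; for $|C|=p+1$ one computes $k(HV)>p$) — but as written the step is unjustified. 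The cleanest repair is simply to do what the paper does: cases (a) and (b) together say $H\le\GL_1(p^2).2$, which is excluded by Lemma~\ref{metacyclic} once $k(HV)\le p$. With that substitution, your argument is complete and gives a better constant.
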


\begin{proof}
Let $n=2$ and $k(HV) \leq p$. The lower bound for $k(HV)$ follows
from Proposition~\ref{prop}. We proceed to establish the claimed
upper bound.

We know that $V$ is an irreducible and primitive $H$-module by
Lemma~\ref{primitiveandirreducible}. We also know that $H$ is not a
subgroup of $\GL_1(p^{2}).2$, for otherwise $k(HV) > p$ by
Lemma~\ref{metacyclic}. We then have $|H/Z(H)| \leq 60$ by \cite[pp.
213--214]{Huppert}.

We claim that there are at most $2 (|H/Z(H)|-1) \leq 118$ elements
$h \in H$ such that $|C_{V}(h)| = p$. For this it is sufficient to
see that every non-trivial coset of $Z(H)$ in $H$ contains at most
$2$ elements $h$ with $|C_{V}(h)| = p$. Let $C$ be an arbitrary
coset of $Z(H)$ in $H$ with an element $h$ fixing a non-zero vector
$v$ in $V$. Since the order of $h$ is not divisible by $p$, there is
a basis $\{ v, w \}$ in $V$ such that $w$ is an eigenvector of $h$
with eigenvalue $\lambda$ say. Observe that $Z(H)$ is a group of
scalars and that $C = h Z(H)$. If $\lambda = 1$, then $h=1$, $C =
Z(H)$ and there is no element $x$ in $C$ with $|C_{V}(x)| = p$.
Assume that $\lambda \not= 1$. Let $z$ be the scalar with
$\lambda^{-1}$ in the main diagonal. If $z \in Z(H)$, then $hz$ and
$h$ are the only elements in $C$ with a fixed point space of
dimension $1$. Finally, if $z \not\in Z(H)$, then $h$ is the only
element in $C$ with a fixed point space of dimension $1$. This
proves our claim.

Let $\{ 1 = v_{1}, \ldots , v_{r} \}$ be a set of representatives of
the $H$-orbits on $V$. We have $$k(HV) = \sum_{i=1}^{r}
k(C_{H}(v_{i})) = k(H) + \sum_{i=2}^{r} k(C_{H}(v_{i})) \leq k(H) +
\sum_{i=2}^{r} |C_{H}(v_{i})|$$ by Proposition \ref{prop}.

We claim that $\sum_{i=2}^{r} |C_{H}(v_{i})| \leq r + 7079$.

Each non-trivial element of $H$ that fixes a $v_i$ for some $i$ with
$2 \leq i \leq r$ can only fix at most $(p-1)/|Z(H)|$ elements of
the set $\{ v_{2}, \ldots, v_{r} \}$. It follows that
$\sum_{i=2}^{r} |C_{H}(v_{i})| \leq r-1 + 118 \cdot (p-1)/|Z(H)|$.

Observe that $|H| > p$ for otherwise $k(HV) > |V|/|H| \geq p$. Thus
$$(p-1)/|Z(H)| \leq |H|/|Z(H)| \leq 60$$ and so $\sum_{i=2}^{r}
|C_{H}(v_{i})| \leq r + 7079$, proving the claim.

We proceed to provide an upper bound for $r$. We proved that there
are at most $118$ elements $h \in H$ such that $|C_{V}(h)| = p$. It
thus follows from the orbit counting lemma and the inequality
$|H|>p$ that
$$r \leq \frac{1}{|H|} \sum_{h \in H} |C_{V}(h)| \leq \frac{1}{|H|}
\Big( |V| + 118 p + |H| \Big) < \frac{|V|}{|H|} + 119.$$

We conclude that $k(HV) \leq k(H) + r + 7079 < k(H) +
\frac{|V|}{|H|} + 119 + 7079$.
\end{proof}

\begin{lemma}
\label{l2} Let $n = 2$, $p > 270 000$ and $H$ solvable. Then $k(HV)
\leq p$ if and only if $H$ has a normal subgroup $Q$ which is a
quaternion group of order $8$, the group $X = Z(H)$ is cyclic and
$|X|$ divides $p-1$, the Fitting subgroup $F(H)$ is equal to $Q
\star X$ and $Q \cap X = Z(Q)$, the factor group $H/F(H)$ is
isomorphic to $\Sym(3)$, moreover if $x$ denotes $|X:Z(Q)|$ then $x$
divides $(p-1)/2$ and thus has the form $(p-1)/m$ for some even
integer $m$ with $12 \leq m \leq 36$.
\end{lemma}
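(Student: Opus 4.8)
The plan is to combine the classification of solvable primitive linear groups of degree two with the counting inequalities already available (Proposition~\ref{prop} and Lemma~\ref{k(H)}), thereby reducing the whole statement to a quadratic inequality for the integer $x=|X:Z(Q)|$. So suppose first that $k(HV)\le p$. By Lemma~\ref{primitiveandirreducible}, $V$ is a primitive irreducible $H$-module, and by Lemma~\ref{metacyclic} $H$ is not contained in $\GL_1(p^2).2$; the classification of solvable primitive subgroups of $\GL_2(p)$ (see \cite[pp.~213--214]{Huppert}, cf.\ the proof of Lemma~\ref{k(H)}) then yields $F(H)=R\star X$ with $X=Z(H)$ the group of scalar matrices of $H$ (hence cyclic of order dividing $p-1$), $R\in\{D_8,Q_8\}$, $R\cap X=Z(R)$, and $H/F(H)$ embedding into $\Out(R)$ — which is $C_2$ if $R=D_8$ and $\Sym(3)$ if $R=Q_8$. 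Writing $x:=|X:Z(R)|=|X|/2$ and $q:=|H/F(H)|$ we have $|H|=8xq$, while $|X|=2x\mid p-1$ forces $m:=(p-1)/x$ to be an even integer.

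Next I would compute $k(H)$. Since $R\star X$ has commutator subgroup $Z(R)$ and an abelian subgroup $X$ of index~$4$, it has $4x$ linear and $x$ degree-two irreducible characters, so $k(F(H))=5x$. The group $H/F(H)\le\Out(R)$ fixes each of the $x$ degree-two characters and permutes, over each character of $X/Z(R)$, the three nontrivial linear characters of $R$ in the obvious way; since $H^2(-,\mathbb{C}^\times)$ vanishes on every subgroup of $\Sym(3)$, each $H/F(H)$-invariant character of $F(H)$ extends to its inertia group in $H$, so by Clifford's correspondence and Gallagher's theorem $k(H)=\sum_\theta k\bigl((H/F(H))_\theta\bigr)$ over orbit representatives $\theta$ on $\Irr(F(H))$. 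A direct count then gives $k(H)=5x$ if $q=1$, $k(H)=7x$ if $q\in\{2,3\}$, and $k(H)=8x$ if $q=6$ (the $x$-scaled versions of $k(Q_8)=k(D_8)=5$, $k(\SL_2(3))=7$ and the order-$16$ cases, and $k(\GL_2(3))=8$). By Proposition~\ref{prop}, $k(HV)\ge k(H)+(p^2-1)/|H|\ge 2\sqrt{k(H)(p^2-1)/|H|}=\sqrt{\tfrac{k(H)}{2xq}(p^2-1)}$, and $k(H)/(2xq)$ equals $5/2,\,7/4,\,7/6$ for $q=1,2,3$ — each strictly larger than $1$ — so $k(HV)>p$ for $p$ large, contradicting $k(HV)\le p$. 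Hence $R=Q_8$, $H/F(H)\cong\Sym(3)$, $k(H)=8x$ and $|H|=48x$.

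It remains to control $m$. Put $L(x):=8x+(p^2-1)/(48x)$; by Proposition~\ref{prop} and Lemma~\ref{k(H)} we have $L(x)\le k(HV)\le L(x)+7200$. The inequality $L(x)\le p$ is the quadratic inequality $384x^2-48px+(p^2-1)\le0$, whose solution set is the interval with endpoints $(3\pm\sqrt3)p/48+O(1/p)$; equivalently $m=(p-1)/x$ lies in the interval with endpoints $8(3\mp\sqrt3)+O(1/p)$, that is, roughly between $10.14$ and $37.86$. As $m$ is even and $p>270000$ is far too large for the error terms to reach $10$ or $38$, this forces $12\le m\le36$, proving one direction. Conversely, if $H$ has the stated structure with $x=(p-1)/m$ and $12\le m\le36$ even, then Lemma~\ref{k(H)} applies (its proof only needs $|H|=48x>p$, $|H/Z(H)|=24\le 60$ and primitivity of $V$, all of which hold), so $k(HV)\le L(x)+7200=\tfrac{8(p-1)}{m}+\tfrac{(p+1)m}{48}+7200\le\bigl(\tfrac8m+\tfrac m{48}\bigr)(p+1)+7200$; since $\tfrac8m+\tfrac m{48}$ attains its maximum on $[12,36]$ at $m=36$, where it equals $\tfrac{35}{36}<1$, the right-hand side is less than $p$ precisely because $p>270000$, whence $k(HV)\le p$.

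The main obstacle is twofold. First, the exact value $k(H)=8x$ must be pinned down: any slack would shift the endpoints of the $m$-interval, and it is only because these endpoints, $8(3\pm\sqrt3)$, lie strictly between consecutive even integers that one obtains the clean range $12\le m\le36$. Second, the endpoint $m=36$ is genuinely tight — the additive error $7200$ inherited from Lemma~\ref{k(H)} is absorbed only once $p$ is large compared with it, which is exactly the purpose of the hypothesis $p>270000$. Thus the bulk of the real work is the not-entirely-routine character count yielding $k(H)$ in each of the four cases (with the analogous bookkeeping when $R=D_8$), the rest being an elementary analysis of the quadratic $L(x)\le p$.
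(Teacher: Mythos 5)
Your proof is correct and follows the paper's own argument step for step: reduce to the primitive irreducible case (Lemmas~\ref{primitiveandirreducible}, \ref{metacyclic}), obtain $F(H)$ as the central product of an extraspecial group of order~$8$ with the cyclic centre $X$, compute $k(H)$, discard the small quotients $|H/F(H)|\in\{1,2,3\}$ by AM--GM applied to $k(H)+(p^2-1)/|H|$, and then analyse $8x+(p^2-1)/(48x)\le p$ alongside the $+7200$ slack from Lemma~\ref{k(H)}, landing on the same even window $12\le m\le36$ and the same threshold $p>270\,000$. The only differences are cosmetic: the paper cites \cite[Theorem~2.11]{ManzWolf} and \cite{HK} to jump directly to $Q=Q_8$ with $H/F(H)\in\{C_3,\Sym(3)\}$ and $k(H)\in\{7x,8x\}$, whereas you also allow $D_8$ and $|H/F(H)|\in\{1,2\}$ before eliminating them by counting; and you helpfully make explicit that the proof of Lemma~\ref{k(H)} needs only $|H|>p$, $|H/Z(H)|\le 60$ and primitivity of $V$ (all verifiable directly from the stated structure of $H$), so its upper bound is legitimately available in the converse direction even though that lemma is stated under the hypothesis $k(HV)\le p$.
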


\begin{proof}
We may assume that the group $H$ acts irreducibly and primitively on
$V$ by Lemma \ref{primitiveandirreducible}. We may also assume that
$H$ is not a subgroup of $\GL_1(p^{2}).2$ by
Lemma~\ref{metacyclic}.

We follow the argument of H\'ethelyi and K\"ulshammer \cite[p.~661--662]{HK}.
By \cite[Theorem 2.11]{ManzWolf}, we have $F(H) = Q
\star X$ and $Q \cap X = Z(Q)$, where $Q$ is a quaternion group of
order $8$, which is normal in $H$, and $X = Z(H)$ is cyclic and
$|X|$ divides $p-1$. Moreover, $H/F(H) \cong C_3$ or $H/F(H) \cong
\Sym(3)$. Let $x:= |X:Z(Q)|$, so that $x$ divides $(p-1)/2$ (and $p$
is odd). It is computed in \cite[p. 662]{HK} that $k(H) = 7x$ (in
which case $|H| = 24x$) if $H/F(H) \cong C_{3}$ and $k(H) = 8x$ (in
which case $|H| = 48x$) if $H/F(H) \cong \Sym(3)$. In the first case
the inequality $k(HV) \geq 7x + \frac{p^{2}-1}{24x} > p$ holds.
Assume that the second case holds. We thus have
\begin{equation}
\label{lowerupper} 8x + \frac{p^{2}-1}{48x} \leq k(HV) \leq 8x +
\frac{p^{2}-1}{48x} + 7200
\end{equation}
by Lemma \ref{k(H)}.

It is observed on \cite[p. 662]{HK} that the function $f: x \mapsto
8x + (p^{2}-1)/(48x)$ decreases for $x \leq (p-1)/20$ and increases
for $x \geq (p-1)/18$.

If $x \leq (p-1)/48$, then $k(HV) \geq 8 \cdot \frac{p-1}{48} +
\frac{p^{2}-1}{48 \cdot (p-1)/48} > p+1$. Assume that $x \geq
(p-1)/46$.

Let $x = (p-1)/m$ where $m$ is even and $2 \leq m \leq 46$. We may
rewrite (\ref{lowerupper}) as
$$\frac{8}{m} (p-1) + \frac{m}{48} (p+1) \leq k(HV) \leq
\frac{8}{m} (p-1) + \frac{m}{48} (p+1) + 7200$$ from which we get
\begin{equation}
\label{lowerupper2} \Big( \frac{8}{m} + \frac{m}{48} \Big) p - 4 <
k(HV) < \Big( \frac{8}{m} + \frac{m}{48} \Big) p + 7201.
\end{equation}

We have $\frac{8}{m} + \frac{m}{48} < 1$ if and only if $m$ is even
and $12 \leq m \leq 36$. Moreover, if $\frac{8}{m} + \frac{m}{48} <
1$, then $\frac{8}{m} + \frac{m}{48} < 0.973$, and so $k(HV) < 0.973
p + 7201$ by (\ref{lowerupper2}), which is at most $p$ provided that
$p > 270 000$.

It is easy to see that when $m$ is even (and in the range $2 \leq m
\leq 46$) and $\frac{8}{m} + \frac{m}{48} \geq 1$, then $\frac{8}{m}
+ \frac{m}{48} > 1.002$. In this case $p < 1.002 p - 4 < k(HV)$
since $p > 270 000$.
\end{proof}

\begin{lemma}
\label{5.4} Let $n=2$. The group $H$ is non-solvable if and only if
$H/Z(H)$ is isomorphic to $\Alt(5)$ and $p = 5$ or $p \equiv 1 \pmod
5$.
\end{lemma}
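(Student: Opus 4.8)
The plan is to derive the structural assertion $H/Z(H)\cong\Alt(5)$ from the classification of $2$-dimensional linear groups, and then to cut down to the admissible primes using character theory together with the standing bound $k(HV)\le p$. I work under the running hypotheses of this subsection --- $n=2$, $p\nmid|H|$ and $k(HV)\le p$ --- so $|V|=p^2$ and $H\le\GL_2(p)$; for $p<17$ the assertion is already contained in Lemma~\ref{l1} (the only relevant pair being $(H,V)$ with $H=\SL_2(5)$ and $p=11\equiv1\pmod5$), so I may assume $p\ge17$. The implication $(\Leftarrow)$ is immediate: if $H/Z(H)\cong\Alt(5)$, then $H$ maps onto the non-solvable group $\Alt(5)$ and hence is non-solvable.

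For $(\Rightarrow)$, suppose $H$ is non-solvable. First I would invoke Lemma~\ref{primitiveandirreducible}: $V$ is an irreducible and primitive $H$-module, so by Schur's lemma $Z(H)$ consists of scalar matrices and $\overline H:=H/Z(H)$ embeds in $\operatorname{PGL}_2(p)=\operatorname{PGL}(V)$ as a subgroup of order prime to~$p$. By Dickson's classification of the subgroups of $\operatorname{PGL}_2(p)$ --- equivalently of the finite primitive irreducible linear groups of degree~$2$, cf.\ the use of \cite[pp.~213--214]{Huppert} in the proof of Lemma~\ref{k(H)} --- a non-solvable subgroup of order prime to~$p$ is isomorphic to $\Alt(5)$: the groups $\Alt(4)$, $\Sym(4)$ are solvable, and $\PSL_2(p)$ is not an option since $p\nmid|H|$. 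Thus $H/Z(H)\cong\Alt(5)$, which is the first half of the statement.

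To pin down the prime, set $L:=H^{(\infty)}$, the last term of the derived series of~$H$. Since $\Alt(5)$ is perfect one has $LZ(H)=H$, so $L/(L\cap Z(H))\cong\Alt(5)$ with $L\cap Z(H)\le Z(L)$; hence $L$ is quasisimple with $L/Z(L)\cong\Alt(5)$, and the restriction of $V$ realises $L$ as a faithful $2$-dimensional linear group over the field of $p$ elements. As $\Alt(5)$ has no faithful $2$-dimensional representation over any field, necessarily $L\cong\SL_2(5)$, the double cover $2.\Alt(5)$; in particular $\SL_2(5)\le\GL_2(p)$. The two faithful irreducible complex characters of $\SL_2(5)$ of degree~$2$ have character field $\QQ(\sqrt5)$, and since Schur indices over finite fields are trivial, such a representation is realisable over the field of $p$ elements exactly when $\sqrt5$ lies in it, i.e.\ (by quadratic reciprocity) when $p=5$ or $p\equiv\pm1\pmod5$.

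It remains to exclude $p\equiv-1\pmod5$, and this is the step I expect to be the main obstacle. Since $\SL_2(5)$ is, up to conjugacy, the unique minimal non-solvable subgroup of $\GL_2(p)$, one gets $L=\SL_2(5)\unlhd H\le N_{\GL_2(p)}(\SL_2(5))=\SL_2(5)\star Z(\GL_2(p))$, so $H=\SL_2(5)\star C_m$ where $m:=|H\cap Z(\GL_2(p))|$ divides $p-1$ (and $m$ is even, because $-I\in\SL_2(5)\cap C_m$). A direct character count gives $k(H)=\tfrac{9m}{2}$, and Proposition~\ref{prop} together with Lemma~\ref{k(H)} yields
\[
\tfrac{9m}{2}+\frac{p^2-1}{60m}\le k(HV)\le \tfrac{9m}{2}+\frac{p^2-1}{60m}+7200 .
\]
Hence $k(HV)\le p$ confines $m$ to a bounded multiplicative window about~$p$, and one must then run through the divisors $m$ of $p-1$ in that window, computing $k(HV)=\sum_i k(C_H(v_i))$ exactly via Proposition~\ref{prop} --- keeping careful track of the few non-regular $H$-orbits on $V$ (namely those whose representative is an eigenvector, with eigenvalue in $C_m$, of a non-central element of $\SL_2(5)$), which add only a bounded amount to $k(HV)$ --- to conclude that $k(HV)\le p$ is possible only when $5\mid p-1$, that is $p\equiv1\pmod5$. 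The earlier steps (Dickson's classification, the identification of $L$ with $\SL_2(5)$, and the character-field computation) are routine; this last quantitative analysis of the orbit structure of $\SL_2(5)\star C_m$ on $V$ is the crux.
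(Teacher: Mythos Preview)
The lemma as stated does not assume $k(HV)\le p$: it is a purely structural statement about non-solvable $p'$-subgroups of $\GL_2(p)$, and the paper's two-line proof makes no use of that bound --- it simply quotes \cite[Theorem~3.5]{Giudici} to obtain $H\le Z(\GL(V))\cdot\SL(V)$ and then \cite[pp.~213--214]{Huppert} for both the isomorphism $H/Z(H)\cong\Alt(5)$ and the congruence on~$p$. (Your appeal to Lemma~\ref{primitiveandirreducible} is therefore unnecessary: if $H\le\GL_2(p)$ is non-solvable then $V$ is automatically irreducible and primitive, since reducible or monomial subgroups of $\GL_2(p)$ are solvable.) Your derivation up through $H^{(\infty)}\cong\SL_2(5)$ and the constraint $p=5$ or $p\equiv\pm1\pmod5$ is correct and essentially reproduces what is inside the Huppert reference.

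The genuine gap is your proposed exclusion of $p\equiv-1\pmod5$ via the bound $k(HV)\le p$: this cannot succeed. The estimates of Lemma~\ref{k(H)} and the ensuing analysis in Lemma~\ref{A_5} depend only on $|Z(H)|$, $k(H)$, and the number of $h\in H$ with $|C_V(h)|=p$; none of these quantities distinguishes the sign in $p\equiv\pm1\pmod5$. Concretely, for any large prime $p\equiv-1\pmod5$ one still has $\SL_2(5)\le\SL_2(p)$, and taking $H=\SL_2(5)\star C_m$ with $m\mid(p-1)$ in the range that makes the right-hand side of~\eqref{60'} small, the very inequalities you intend to exploit yield $k(HV)<p$. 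So the ``crux'' you identify is not merely hard but unworkable by that route; whatever pins down $p\equiv1\pmod5$ has to come from the classification cited in \cite[pp.~213--214]{Huppert}, not from orbit counting.
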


\begin{proof}
Let $H$ be a non-solvable subgroup of $\GL(V)$ having order
coprime to $p$. Then $H \leq Z(\GL(V)) \cdot \SL(V)$
by \cite[Theorem 3.5]{Giudici} and so $H/Z(H)$ is isomorphic to
$\Alt(5)$ and $p = 5$ or $p \equiv 1 \pmod 5$ by \cite[pp.
213--214]{Huppert}. The converse is clear.
\end{proof}

\begin{lemma}
\label{A_5} Let $n=2$ and $p > 7 300 000$. There is a non-solvable
subgroup $H$ of $\GL(V)$ (of coprime order) with $k(HV) \leq
p$ if and only if $p \equiv 1 \pmod 5$ and there exists an integer $m$ dividing $p-1$ such that
$5 \leq m \leq 55$ and $(p-1)/m$ is even or $12 \leq m \leq 48$ and $(p-1)/m$ is odd.
\end{lemma}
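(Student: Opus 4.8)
The plan is to pin down the isomorphism type of every admissible non-solvable $H$, and then reduce the estimate $k(HV)\le p$ to an elementary one-parameter inequality. Throughout we use that, by Lemma~\ref{primitiveandirreducible}, $k(HV)\le p$ forces $H$ to act primitively and irreducibly on $V$.

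First I would determine the shape of $H$. By Lemma~\ref{5.4}, non-solvability gives $H/Z(H)\cong\Alt(5)$, $H\le Z(\GL(V))\cdot\SL(V)$, and (since $p>5$) $p\equiv1\pmod 5$. The last (stable) term $L$ of the derived series of $H$ is a perfect central extension of $\Alt(5)$, so $L\cong\Alt(5)$ or $L\cong\SL_2(5)$; as $\Alt(5)$ has no faithful two-dimensional representation in characteristic prime to $60$ whereas its double cover $\SL_2(5)$ does, we must have $L\cong\SL_2(5)$. Since $H=L\,Z(H)$ and $Z(L)=\langle-I\rangle$ is scalar, hence central in $H$, the group $H$ is the central product $C_z\star\SL_2(5)$ amalgamating $\langle-I\rangle$ with the involution of the cyclic group $C_z:=Z(H)$ of scalars; here $z:=|Z(H)|$ is even and divides $p-1$. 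One also checks that no non-solvable $H$ inducing the outer automorphism of $\SL_2(5)$ occurs, since that automorphism interchanges the two faithful two-dimensional irreducibles of $\SL_2(5)$ and so cannot fix $V$. Conversely, for each even $z\mid p-1$ the group $C_z\star\SL_2(5)$ embeds in $\GL(V)$ and has order $60z$, coprime to~$p$.

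Next I would count classes and run the estimate. From the central-product description, $|H|=60z$ and $k(H)=\tfrac92z$: indeed $k(C_z\times\SL_2(5))=9z$, and the amalgamating central involution acts on these classes by $(a,K)\mapsto(-a,-K)$, which is fixed-point-free because it moves the $C_z$-coordinate, so factoring it out halves the count. The estimates in the proof of Lemma~\ref{k(H)} apply to $H=C_z\star\SL_2(5)$, since $|H/Z(H)|=60$ and $|H|=60z>p$ whenever $(p-1)/z\le55$, yielding
\[
 \frac{9z}{2}+\frac{p^2-1}{60z}\ \le\ k(HV)\ \le\ \frac{9z}{2}+\frac{p^2-1}{60z}+7200 .
\]
Writing $m:=(p-1)/z$, a divisor of $p-1$ with $(p-1)/m=z$ even, the middle term equals $\tfrac{9(p-1)}{2m}+\tfrac{(p+1)m}{60}=p\bigl(\tfrac{9}{2m}+\tfrac{m}{60}\bigr)+O(1)$, and $\tfrac{9}{2m}+\tfrac{m}{60}\le1$ is equivalent to $m^2-60m+270\le0$, i.e.\ to $5\le m\le55$ for integers. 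As $p>7.3\cdot10^6$ the additive $7200$ is harmless: for $5\le m\le55$ one has $\tfrac{9}{2m}+\tfrac{m}{60}\le\tfrac{9}{110}+\tfrac{55}{60}<0.999$, so $k(HV)<0.999\,p+7201<p$; for $m\le4$ or $m\ge56$ one has $\tfrac{9}{2m}+\tfrac{m}{60}>1.01$, so $k(HV)>1.01\,p-O(1)>p$. Hence a non-solvable $H$ with $k(HV)\le p$ exists precisely when $p\equiv1\pmod5$ and $p-1$ has a divisor $m$ with $5\le m\le55$ and $(p-1)/m$ even.

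Finally I would reconcile this with the stated form and indicate where the real work lies. The condition just obtained is equivalent to the one in the statement: one direction is trivial, and conversely, if $m\mid p-1$ satisfies $12\le m\le48$ with $(p-1)/m$ odd, then $m$ is even (as $p-1$ is), so $m/2\mid p-1$, $(p-1)/(m/2)=2(p-1)/m$ is even, and $6\le m/2\le24$, so the second alternative is subsumed by the first. The two delicate points are the structural step — one must combine Lemmas~\ref{primitiveandirreducible} and~\ref{5.4} with the representation theory of $\Alt(5)$ and $\SL_2(5)$ to be certain $H$ is exactly $C_z\star\SL_2(5)$ and that no further non-solvable configuration arises — and the borderline values $m=5$ and $m=55$ in the inequality, where $\tfrac{9}{2m}+\tfrac{m}{60}$ lies only just below $1$ (about $0.9985$ at $m=55$). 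It is precisely this near-equality that dictates the explicit hypothesis $p>7.3\cdot10^6$: one needs $0.9985\,p+7201\le p$, and this must be paired with the complementary strict estimates at $m=4$ and $m=56$, all carried out with explicit constants, to obtain a clean ``if and only if''.
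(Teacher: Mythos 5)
Your proposal is correct and runs along the same track as the paper's proof, but it tightens two points that the paper leaves loose. The paper splits into two cases according to whether $Z(H)\cap\SL_2(5)$ has order $1$ or $2$ (giving $c=9$, resp.\ $c=4.5$, in the class-count estimate) and carries both through, which is why the statement has the two alternatives involving parity of $(p-1)/m$. You observe, correctly, that the order-$1$ case is vacuous: $H$ acts absolutely irreducibly, so by Schur's lemma $Z(H)$ is the scalar subgroup of $H$, and since the last term $L$ of the derived series must be $\SL_2(5)$ (as $\Alt(5)$ has no faithful $2$-dimensional representation in coprime characteristic), $-I\in L\cap Z(H)$; hence $|Z(H)|$ is always even and only $c=4.5$ arises. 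Consequently only the single condition ``$5\le m\le 55$ and $(p-1)/m$ even'' is obtained, and you also note (again correctly) that the paper's second alternative ``$12\le m\le 48$, $(p-1)/m$ odd'' is subsumed by it via $m\mapsto m/2$, so the stated ``if and only if'' is unaffected. Your bookkeeping — $k(H)=\tfrac92 z$ via the central-product character count, the reduction to $\tfrac{9}{2m}+\tfrac{m}{60}\lessgtr 1$ equivalent to $m^2-60m+270\le 0$, and the verification that $p>7.3\cdot 10^6$ absorbs the additive $7200$ from Lemma~\ref{k(H)} — matches the paper's computations exactly. One small but worthwhile point you get right that the paper glosses over: the upper bound in Lemma~\ref{k(H)} is stated under the hypothesis $k(HV)\le p$, so for the ``if'' direction one must check directly that its ingredients ($H$ primitive irreducible, $|H/Z(H)|\le 60$, $|H|>p$) hold for the constructed $H=C_z\star\SL_2(5)$ with $m\le 55$, which you do. In short, a correct proof by the same method, plus a legitimate simplification showing half of the paper's case analysis is empty.
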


\begin{proof}
We may assume by Lemma \ref{5.4} that $p$ is a prime with $p \equiv
1 \pmod 5$, for if $p=5$ then $k(HV) > 5$ by Lemma \ref{l1}. Write
$H = Z(H) \star \SL_2(5)$ as the central product of the
cyclic group $Z(H)$ of order dividing $p-1$ and the perfect group
$\SL_2(5)$ of order $120$ with center of order $2$. There
are two possibilities for $H$; the intersection $Z(H) \cap
\SL_2(5)$ may have order $1$ or $2$. In the first case $k(H)
= k(\SL_2(5)) \cdot |Z(H)| = 9 \cdot |Z(H)|$ and in the
second case $k(H) = 9 \cdot (|Z(H)|/2)$ by \cite[Theorem
10.7]{Navarro18}. Define $c$ to be $9$ if $Z(H) \cap \SL_2(5)$
is trivial and $4.5$ if $|Z(H) \cap \SL_2(5)| = 2$. Observe
that $c=9$ if $|Z(H)|$ is odd and $c = 4.5$ if $|Z(H)|$ is even.

We have
\begin{equation}
\label{60} c \cdot |Z(H)| + \frac{p^{2}-1}{60 \cdot |Z(H)|} \leq
k(HV) \leq c \cdot |Z(H)| + \frac{p^{2}-1}{60 \cdot |Z(H)|} + 7200
\end{equation}
by Lemma \ref{k(H)}. Write $|Z(H)|$ in the form $(p-1)/m$ where $m$
is an integer. Inequality (\ref{60}) becomes
\begin{equation}
\label{60'} \frac{c}{m} (p-1) + \frac{m}{60} (p+1) \leq k(HV) \leq
\frac{c}{m} (p-1) + \frac{m}{60} (p+1) + 7200.
\end{equation}

If $m \geq 60$, then $k(HV) > p$ by Lemma \ref{l1}. Thus assume that
$m \leq 59$.

We distinguish two cases; when $(p-1)/m$ is odd and when $(p-1)/m$
is even.

Assume first that $(p-1)/m$ is odd. In this case $c=9$. If $m \leq
8$, then $k(HV) > p$ by (\ref{60'}). Thus assume that $m$ is an
integer with $9 \leq m \leq 59$. If $12 \leq m \leq 48$, then
$\frac{9}{m} + \frac{m}{60} \leq 0.9875$, while if $9 \leq m \leq
11$ or $49 \leq m \leq 59$ then $\frac{9}{m} + \frac{m}{60} >
1.0003$. Inequalities (\ref{60'}) imply $k(HV) < 0.9875 \cdot p +
7201$ in the first case and $1.0003 \cdot p - 1 < k(HV)$ in the
second case. If $p > 580 000$, then $k(HV) < p$ in the first case
and $p < k(HV)$ in the second.

Assume that $(p-1)/m$ is even. In this case $c = 4.5$. If $m \leq
4$, then $k(HV) > p$ by (\ref{60'}). Thus assume that $m$ is an
integer with $5 \leq m \leq 59$. If $5 \leq m \leq 55$, then
$\frac{4.5}{m} + \frac{m}{60} < 0.999$, while if $56 \leq m \leq 59$
then $\frac{4.5}{m} + \frac{m}{60} > 1.01$. Inequalities (\ref{60'})
imply $k(HV) < 0.999 \cdot p + 7201$ in the first case and $1.01
\cdot p - 1 < k(HV)$ in the second case. If $p > 7 300 000$, then
$k(HV) < p$ in the first case and $p < k(HV)$ in the second.
\end{proof}

\begin{theorem}
\label{2} Let $n=2$ and $p > 7 300 000$. There exists a subgroup $H$
of $\GL(V)$ (of coprime order) such that $k(HV) \leq p$ if
and only if any of the following holds for the prime $p$.
\begin{itemize}
\item[(i)] $p \equiv 1 \pmod m$ for some even integer $m$ with $12 \leq m \leq 36$.
\item[(ii)] $p \equiv 1 \pmod 5$ and there exists an integer $m$ dividing $p-1$ such that
$5 \leq m \leq 55$ and $(p-1)/m$ is even or $12 \leq m \leq 48$ and $(p-1)/m$ is odd.
\end{itemize}
\end{theorem}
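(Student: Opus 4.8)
The plan is to read off Theorem~\ref{2} from the preceding lemmas by splitting on whether a subgroup $H\le\GL_2(p)$ of order prime to $p$ with $k(HV)\le p$ is solvable or not; here $n=2$ and $p>7\,300\,000$, so in particular $p>270\,000$ and $p\ne5$.

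For the ``only if'' direction, assume such an $H$ with $k(HV)\le p$ exists. If $H$ is solvable, then Lemma~\ref{l2} applies (its hypotheses $n=2$, $p>270\,000$, $H$ solvable all hold), and since $k(HV)\le p$ its ``only if'' part yields, in the notation there, an even integer $m$ with $12\le m\le 36$ and $x=|X:Z(Q)|=(p-1)/m$; in particular $m\mid p-1$, so $p\equiv1\pmod m$ with $m$ even and $12\le m\le36$, which is condition~(i). If $H$ is non-solvable, then by Lemma~\ref{5.4} we have $H/Z(H)\cong\Alt(5)$, and since $p\ne5$ this forces $p\equiv1\pmod5$; the ``only if'' part of Lemma~\ref{A_5} then gives exactly condition~(ii). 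Thus the existence of an admissible $H$ with $k(HV)\le p$ forces (i) or (ii).

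For the ``if'' direction one must conversely exhibit an admissible $H$. Suppose first that $p$ satisfies~(i); fix an even $m$ with $12\le m\le 36$ and $m\mid p-1$, and put $x:=(p-1)/m$, so that $2x\mid p-1$. It is a standard feature of the classification of primitive solvable subgroups of $\GL_2(p)$ that underlies Lemma~\ref{l2} --- amounting to the fact that $\GL_2(p)$ contains a subgroup isomorphic to $Q_8.\Sym(3)$ for every odd prime $p$ --- that the central product of such a subgroup with the cyclic group of scalars of order $2x$ is a solvable subgroup $H\le\GL_2(p)$ of order prime to $p$ having exactly the structure demanded in Lemma~\ref{l2} with parameter $m$; the ``if'' part of Lemma~\ref{l2} then gives $k(HV)\le p$. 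If instead $p$ satisfies~(ii), then $p\equiv1\pmod5$, and the ``if'' part of Lemma~\ref{A_5} directly furnishes a non-solvable $H=Z(H)\star\SL_2(5)\le\GL_2(p)$ with $k(HV)\le p$. Either way an admissible $H$ exists, completing the proof.

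All of the real work has already been done: the two-sided bound $k(H)+(|V|-1)/|H|\le k(HV)\le k(H)+(|V|-1)/|H|+7200$ of Lemma~\ref{k(H)}, together with the comparisons of $\frac{8}{m}+\frac{m}{48}$ (solvable case) and $\frac{c}{m}+\frac{m}{60}$ (non-solvable case) with $1$ that isolate the exact admissible ranges of $m$, is the content of Lemmas~\ref{l2} and~\ref{A_5}. The only point in the present assembly that deserves a second glance is the realizability in the ``if'' direction, namely that the $Q_8.\Sym(3)$-type group, and not merely the $Q_8.C_3$-type (which by Lemma~\ref{l2} would give $k(HV)>p$), genuinely embeds in $\GL_2(p)$; this is a classical feature of the dimension-two classification and needs no new argument.
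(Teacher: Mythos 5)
Your proposal is correct and takes essentially the same approach as the paper, which proves Theorem~\ref{2} in two sentences by splitting on whether $H$ is solvable or not and citing Lemmas~\ref{l2} and~\ref{A_5} respectively. You also helpfully make explicit a point the paper leaves implicit: Lemma~\ref{l2} is a biconditional about a \emph{given} solvable $H$, not an existence statement, so the ``if'' direction in case~(i) does require observing that a group with the Manz--Wolf structure described there is actually realized inside $\GL_2(p)$; Lemma~\ref{A_5}, by contrast, is already phrased as an existence statement and needs no such supplement.
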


\begin{proof}
There exists a solvable subgroup $H$ of $\GL(V)$ such that
$k(HV) \leq p$ if and only if (i) holds. This follows from Lemma
\ref{l2}. There exists a non-solvable subgroup $H$ of
$\GL(V)$ such that $k(HV) \leq p$ if and only if (ii) holds.
This follows from Lemma~\ref{A_5}.
\end{proof}

We remark that there are infinitely many primes $p$ congruent to $1$
modulo any given integer not divisible by $p$ by Dirichlet's theorem
on arithmetic progressions.

\section{Affine groups with few conjugacy classes: the case $n\geq 3$}   \label{section-affine-groups-2}

We continue to classify groups $H$ acting coprimely and faithfully
on a finite vector space $V$ of order $p^n$ ($n\geq 3$) with the
property that $k(HV)\leq p$.

We use the setup at the beginning of
Section~\ref{section-affine-groups}.

\subsection{Some general restrictions}
For an index $i$ with $1 \leq i \leq s$, let $K_{i}$ denote the
(faithful) action of $H$ on $V_{i}$ and $B_{i}$ denote the kernel of
the action of $K_{i}$ on the set $\{ V_{i,1}, \ldots , V_{i,s_{i}}
\}$.

\begin{lemma}
\label{l4} Let $i$ be an index with $1 \leq i \leq s$. We have
$k(K_{i}V_{i}) \leq p$ and $k(K_{i}/B_{i}) < p$.
\end{lemma}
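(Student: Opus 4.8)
The plan is to derive both inequalities from the running hypothesis $k(HV)\le p$ together with two elementary facts: the number of conjugacy classes of a group does not increase when passing to a quotient, and for a coprime linear action one has $k(XW)\ge k(X)+(|W|-1)/|X|$ (the Clifford-type bound recorded just before Proposition~\ref{prop}).

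For the first inequality, the idea is to realise $K_iV_i$ as a homomorphic image of $HV$. Since $V_i$ is an $H$-invariant direct summand of $V$, the projection of $V$ onto its $V_i$-component together with the canonical map $H\to K_i=H/C_H(V_i)$ combine into a map $HV\to K_iV_i$, $(v,h)\mapsto(v|_{V_i},hC_H(V_i))$. I would check that this is a surjective homomorphism: surjectivity is immediate, and compatibility with the two semidirect-product multiplications follows because $h$ preserves $V_i$, so that $(h\cdot v)|_{V_i}=h|_{V_i}\cdot(v|_{V_i})$. It then follows that $k(K_iV_i)\le k(HV)\le p$.

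For the second inequality I would first note that $B_i\unlhd K_i$, so $k(K_i/B_i)\le k(K_i)$, and then bound $k(K_i)$. Because $K_i$ is the faithful image of $H$ acting on $V_i$, its order divides $|H|$ and is therefore prime to $p$; hence the Clifford-type bound applies to the (faithful, coprime) action of $K_i$ on the nonzero module $V_i$, yielding $k(K_iV_i)\ge k(K_i)+(|V_i|-1)/|K_i|>k(K_i)$. Combined with $k(K_iV_i)\le p$ from the first part, this gives $k(K_i)<p$, and so $k(K_i/B_i)\le k(K_i)<p$.

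There is no serious obstacle here; the only point that deserves care is writing down the surjection $HV\to K_iV_i$ correctly, so that the semidirect-product bookkeeping matches up and the part of the kernel lying in $H$ is exactly $C_H(V_i)$, and recalling that the coprimality $p\nmid|K_i|$ is inherited from $p\nmid|H|$ so that the class-number estimate is legitimately available for $K_iV_i$.
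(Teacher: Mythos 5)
Your proposal is correct. For the first inequality your explicit surjection $HV\to K_iV_i$ is just the composite of the paper's chain of quotient maps $HV\to HV/V_{i'}\cong HV_i\to K_iV_i$ unrolled into one step, so the two arguments are essentially identical. For the second inequality you diverge slightly: the paper notes that $B_iV_i$ is a non-trivial normal subgroup of $K_iV_i$ and concludes $k(K_i/B_i)=k(K_iV_i/B_iV_i)<k(K_iV_i)\le p$ directly, whereas you first deduce $k(K_i)<p$ from the Clifford-type bound $k(K_iV_i)\ge k(K_i)+(|V_i|-1)/|K_i|$ and then use $k(K_i/B_i)\le k(K_i)$. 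Both are valid; the paper's route is marginally leaner in that it only needs the triviality of passing to a quotient and the non-triviality of $V_i$, while yours additionally invokes the coprime-action class-number estimate (which you correctly verify is applicable since $p\nmid|K_i|$). The difference is cosmetic and either version would serve.
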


\begin{proof}
Let $V_{i'}$ denote the sum of all $V_{j}$ with $j \not= i$. We have
$$p \geq k(HV) \geq k(HV/V_{i'}) \geq k(HV_{i}) \geq k(K_{i}V_{i})$$
since $HV_{i}$ modulo the kernel of the action of $H$ on $V_{i}$ is
$K_{i}V_{i}$. We also have $p \geq k(K_{i}V_{i}) > k(K_{i}/B_{i})$
since $B_{i}V_{i} \triangleleft K_{i}V_{i}$.
\end{proof}

For an index $i$ with $1 \leq i \leq s$ and an index $j$ with $1
\leq j \leq s_{i}$, the group $H_{i,j}$ was defined to be the
stabilizer in $H$ of the vector space $V_{i,j}$.

\begin{lemma}
\label{l45} For any $i$ and $j$ with $1 \leq i \leq s$ and $1 \leq j
\leq s_{i}$, the number of orbits of $H_{i,j}$ on $V_{i,j}$ is at
most $p$.
\end{lemma}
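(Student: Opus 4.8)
The plan is to compare the $H_{i,j}$-orbits on $V_{i,j}$ with the $K_i$-orbits on the whole of $V_i$, the latter being controlled by Lemma~\ref{l4}. First I would record that, by Lemma~\ref{l4}, $k(K_iV_i)\le p$, and then apply Proposition~\ref{prop} to the coprime affine group $K_iV_i$: this gives $k(K_iV_i)=\sum_w k(C_{K_i}(w))$, the sum running over a set of representatives of the $K_i$-orbits on $V_i$. Since each summand is at least $1$, we obtain $n(K_i,V_i)\le k(K_iV_i)\le p$, and hence $n(K_i,V_i\setminus\{0\})\le p-1$.

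Next I would build an injection from the set of $H_{i,j}$-orbits on $V_{i,j}\setminus\{0\}$ into the set of $K_i$-orbits on $V_i\setminus\{0\}$, by sending an $H_{i,j}$-orbit to the unique $K_i$-orbit containing it. To see that this map is injective, suppose $v,w\in V_{i,j}\setminus\{0\}$ satisfy $gv=w$ for some $g\in K_i$. Since $g$ permutes the subspaces $V_{i,1},\dots,V_{i,s_i}$ and these have pairwise trivial intersection, from $0\ne w=gv\in gV_{i,j}$ together with $w\in V_{i,j}$ we deduce $gV_{i,j}=V_{i,j}$. Lifting $g$ to an element $\tilde g\in H$, which acts on $V_i$ exactly as $g$ does (the kernel of $H\to K_i$ acting trivially on $V_i$), we get $\tilde gV_{i,j}=V_{i,j}$, so $\tilde g\in H_{i,j}$ and $\tilde gv=w$; thus $v$ and $w$ already lie in the same $H_{i,j}$-orbit. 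Consequently $n(H_{i,j},V_{i,j}\setminus\{0\})\le n(K_i,V_i\setminus\{0\})\le p-1$, and adding back the orbit $\{0\}$ yields $n(H_{i,j},V_{i,j})\le p$, as required.

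The only step that requires any care is the injectivity of this orbit map, which rests entirely on the fact, built into the setup of Section~\ref{section-affine-groups}, that $\{V_{i,1},\dots,V_{i,s_i}\}$ is a genuine system of imprimitivity for $K_i$ (permuted by $H$, with pairwise trivial intersections) and that passing to the faithful quotient $K_i$ does not alter the stabiliser of $V_{i,j}$. Everything else is routine bookkeeping, so I expect no real obstacle beyond stating this injectivity argument cleanly.
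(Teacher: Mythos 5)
Your argument is correct and follows essentially the same route as the paper: both pass from the bound $k(K_iV_i)\le p$ of Lemma~\ref{l4}, via Proposition~\ref{prop}, to the bound $n(K_i,V_i)\le p$, and then compare $H_{i,j}$-orbits on $V_{i,j}$ with $K_i$-orbits on $V_i$. The only difference is that the paper dismisses the orbit-comparison step with ``Clearly, $H$ and so $K_i$ has at least $r_{i,j}$ orbits on $V_i$,'' whereas you make the injectivity of the orbit map explicit (using that the $V_{i,k}$ intersect pairwise trivially and that the stabiliser of $V_{i,j}$ in $K_i$ is the image of $H_{i,j}$), which is a harmless and arguably helpful elaboration.
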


\begin{proof}
Fix $i$ and $j$. The number $r_{i,j}$ of orbits of $H_{i,j}$ on
$V_{i,j}$ is independent from $j$. Clearly, $H$ and so $K_{i}$ has
at least $r_{i,j}$ orbits on $V_{i}$. Thus $k(K_{i}V_{i}) \geq
r_{i,j}$ by Proposition~\ref{prop}. We conclude that $r_{i,j} \leq
p$ by Lemma~\ref{l4}.
\end{proof}

\subsection{Maximal primitive linear groups}

Let $i$ and $j$ be arbitrary indices with $1 \leq i \leq s$ and $1
\leq j \leq s_{i}$. Let $K_{i,j}$ be the factor group of $H_{i,j}$
modulo the kernel of the action of $H_{i,j}$ on $V_{i,j}$. The
vector space $V_{i,j}$ is a faithful, primitive and coprime
$K_{i,j}$-module. The goal of this subsection is to show, in as many
cases as possible, that the group $K_{i,j}$ has more than $p$ orbits
on $V_{i,j}$, under the assumption $p \geq 17$.

Forgetting the indices $i$ and $j$, let $W$ be a faithful, primitive
and coprime $P$-module over the prime field of size $p$ for some
finite group $P$. Moreover, assume that $P$ is a maximal subgroup of
$\GL(W)$ subject to these conditions. There is a (unique and
maximal) $\mathbb{F}_{p^{k}}$-vector space structure $W =
W_{d}(p^{k})$ on $W$ for some integers $d$ and $k$ with $n = dk$,
where $\mathbb{F}_{p^{k}}$ is the field of order $p^{k}$, such that
$P \leq \mathrm{\Gamma L}(d,p^{k})$, by \cite[Theorem~5.1(3)]{DHP}.

If $d=1$, then $k=n$ and $P = \mathrm{\Gamma L}(1,p^{n})$. In this
case $P$ has only $2$ orbits on $W$.

Assume that $d \geq 2$. We aim to show, in as many cases as
possible, that $P$ has more than $p$ orbits on $W$, provided that $p
\geq 17$. For this we will use the structure theorem given by Duyan,
Halasi, Podoski in \cite[Theorem~5.1]{DHP} for the $P$-module $W$.

\begin{lemma}
If $n \geq 3$, $d \geq 2$, $p > 3^{6}$ and $P$ has no component,
then $P$ has more than $p$ orbits on $W$.
\end{lemma}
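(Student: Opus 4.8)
The plan is to use the structure theorem \cite[Theorem~5.1]{DHP} for a maximal primitive coprime linear group $P\le\operatorname{\Gamma L}(d,p^k)$ on $W=W_d(p^k)$ with no component, together with the orbit-counting bound already exploited in this section: by Proposition~\ref{prop} it suffices to show that $P$ has more than $p$ orbits on $W$, and since $|W|=p^n\ge p^3$ the crude estimate $r\ge |W|/|P|$ reduces everything to bounding $|P|$ from above. Concretely, when $P$ has no component, the structure theorem says that $P$ lies inside (a small extension of) a group of the shape $\operatorname{\Gamma L}(1,p^k)\wr\operatorname{Sym}(d)$, or more precisely $P\le N\rtimes\operatorname{Sym}(d)$ where $N$ is a product of $d$ copies of (subgroups of) $\operatorname{\Gamma L}(1,p^k)$ and the only non-abelian-composition-factor contributions come from extraspecial normal subgroups in each factor; absent a component, the ``Lie-type in defining characteristic'' and ``quasisimple'' possibilities of the theorem are excluded, so $|P|\le (k(p^k-1)\cdot 2^{2a})^d\cdot d!$ with $2^a$ a bounded power tied to the possible extraspecial piece.

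First I would split according to whether $P$ preserves an imprimitivity-type decomposition of $W$ into $d\ge2$ summands over $\mathbb F_{p^k}$ (the ``no component'' part of the theorem forces $P$ into such a wreath-type configuration, up to the $\operatorname{\Gamma L}(1,p^k)$ base and an extraspecial normalizer). In each case I write $|P|\le f(k,d)\cdot(p^k)^{d}$ where $f(k,d)$ collects the $\operatorname{\Gamma L}$-field-automorphism factor $k^d$, the factor $d!$ from $\operatorname{Sym}(d)$, and a bounded constant (a power of $2$ or $3$, at most something like $2^{5}$ per factor) accounting for the possible extraspecial normal subgroup allowed when $P$ has no component. Then
\[
r\ \ge\ \frac{|W|}{|P|}\ \ge\ \frac{p^{dk}}{f(k,d)\,p^{dk-d}}\ =\ \frac{p^{d}}{f(k,d)}\,\cdot\,\frac{p^{dk-d}}{p^{dk-d}}\ =\ \frac{p^{d}}{f(k,d)},
\]
wait—more carefully, $|P|\le f(k,d)\,(p^k)^{d}$ only when the base is the full $\operatorname{\Gamma L}(1,p^k)$; since $n=dk\ge3$ and $d\ge2$ this already gives $r\ge p^{dk}/\big(k^d d!\,2^{5d}(p^k)^d\big)$, and using $p^k-1<p^k$ and $k\ge1$ one checks this exceeds $p$ once $p>3^6$. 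The arithmetic is a finite case check: the worst cases are small $d$ and small $k$ (e.g.\ $d=2$, $k=1$, where $|P|\le 2^{10}\cdot2\cdot(p-1)^2$ and $r\ge p^2/(2^{11}(p-1)^2)$ is easily $>p$ for $p>3^6$), and for $d\ge3$ or $k\ge2$ the exponent gap $p^{dk}$ versus $p^{(dk-d)}$ is so large that the bounded constants and $d!$ are absorbed.

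The main obstacle, I expect, is not the generic estimate but pinning down exactly which subgroups the ``no component'' hypothesis of \cite[Theorem~5.1]{DHP} actually permits in the base factor—in particular whether a $2$-group of symplectic-type (extraspecial normalized by a subgroup of $\operatorname{Sp}$) can appear and how large its normalizer is, since that is the only place a factor growing faster than $p^k\cdot k$ could sneak in. I would handle this by quoting the theorem's list verbatim, observing that every item other than ``$P$ has a component'' has order bounded by $c\cdot(p^k-1)\cdot k$ for an absolute constant $c$ (the extraspecial case contributes at most $|\operatorname{Sp}_{2m}(r)|$-type factors with $r^m\mid p^k\mp1$, hence polynomially bounded in $p^k$ of degree strictly less than $1$ after dividing by the module size), and then feeding this into the displayed inequality. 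Once the constant $c$ is fixed, the condition $p>3^6$ is exactly what makes $p^{d}/(c^d k^d d!)>p$ hold across all the finitely many residual small-parameter cases, which I would verify by a short explicit tabulation rather than a closed-form argument.
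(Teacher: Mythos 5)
Your structural characterization of the ``no component'' case is wrong, and this breaks the whole argument. The group $P$ in the lemma is a \emph{primitive} linear group on $W$; a group contained in $\mathrm{\Gamma L}(1,p^k)\wr\Sym(d)$ with $d\ge 2$ (which is what your base-times-$\Sym(d)$ picture describes) preserves the decomposition of $W$ into $d$ one-dimensional $\mathbb F_{p^k}$-summands and is therefore \emph{imprimitive}, so it is not what \cite[Theorem~5.1(6)]{DHP} gives you. What the structure theorem actually says in the no-component primitive case is that $(P\cap\GL(W))/Z$ has a unique minimal normal subgroup $RZ/Z$ with $R$ an $r$-group of symplectic type; then $W$ is an absolutely irreducible $\mathbb F_{p^k}R$-module of dimension $d=r^a$, the quotient by $RZ$ embeds in $\Sp_{2a}(r)$, and there is a further factor of at most $d$ for field automorphisms. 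This yields $|P|\le p^k\cdot r^{2a^2+4a}$ -- a bound with \emph{no} $(p^k)^d$-type term, and that is precisely the point.

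Your numerology fails for the same reason. With your estimate $|P|\le f(k,d)\,(p^k)^d$, the orbit bound $|W|/|P|\ge p^{dk}/\bigl(f(k,d)\,p^{dk}\bigr)=1/f(k,d)$ is a constant less than $1$, not something exceeding $p$; your displayed chain, which produces $p^{d}/f(k,d)$, rests on silently replacing $(p^k)^d$ by $p^{dk-d}$, which is false. Your later claim that each non-component item in the DHP list has order bounded by an absolute constant times $(p^k-1)k$ is also not right: the $\Sp_{2a}(r)$ normalizer and the $r^{2a}$ contribution from $R/Z(R)$ grow with $d=r^a$ and are not absolutely bounded. The correct way through is the paper's: compare $p^k r^{2a^2+4a}$ against $|W|/p=p^{k r^a-1}$; because $r^a$ appears in the exponent of $p$ but only polynomially in the other factor, the inequality $p^k r^{2a^2+4a}<p^{k r^a-1}$ holds for all $d=r^a\ge2$, $n=dk\ge3$, once $p>3^6$.
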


\begin{proof}
Let $Z = Z(\GL(W))$. We may assume by \cite[Theorem~5.1(6)]{DHP}
that $(P \cap \GL(W))/Z$ has a unique minimal normal subgroup
$RZ/Z$, where $R$ is an $r$-group of symplectic type for some prime
$r$. (The group $R$ has the property that all of its characteristic
abelian subgroups are cyclic.) The factor group $R/Z(R)$ has size
$r^{2a}$ for some positive integer $a$. The vector space $W$ is an
absolutely irreducible $\mathbb{F}_{p^{k}}R$-module of dimension $d
= r^{a}$ where $k$ is as before the statement of the lemma. Let the
full normalizer of $R$ in $\mathrm{\Gamma L}(W)$ be $N$. Observe
that $P \leq N$. Put $M = N \cap \GL(W)$. The group $M/R Z$ can be
considered as a subgroup of the symplectic group $\Sp_{2a}(r)$.
Moreover, $|N/M| \leq d$.

We have $|P| \leq p^{k} \cdot r^{2a^{2} + 4a}$. If $p > 3^{6}$, $n
\geq 3$ and $d \geq 2$, then it is easy to see that $p^{k} \cdot
r^{2a^{2} + 4a} < p^{k \cdot r^{a} - 1} = |W|/p$, so $P$ has more
than $p$ orbits on $W$.
\end{proof}

\subsection{Permutation groups}

\begin{lemma}
\label{l25} Let $R$ be a permutation group of degree $r$. If $R$ has
no alternating composition factor of degree larger than $d \geq 4$,
then $|R| \leq d!^{(r-1)/(d-1)}$. Moreover, if $R$ is a primitive
permutation group not containing the alternating group $\Alt(r)$, then
$|R| < 24^{(r-1)/3}$.
\end{lemma}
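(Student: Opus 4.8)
The plan is to prove the primitive (``moreover'') assertion first and then obtain the first statement by an induction on $r$. Throughout, let $\mathcal{X}$ denote the class of finite groups having no composition factor isomorphic to $\Alt(m)$ with $m>d$; since the composition factors of a finite group are, counted with multiplicity, those of any normal subgroup together with those of the corresponding quotient, $\mathcal{X}$ is closed under normal subgroups and under quotients. I shall also use the elementary estimate that $t\mapsto(t-1)^{-1}\log(t!)$ is nondecreasing for integers $t\ge2$, which is equivalent to $t!\le(t+1)^{t-1}$ and follows from $t!\le((t+1)/2)^t$ (AM--GM on $\{1,\dots,t\}$) together with $t+1\le2^t$. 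Writing $c_d:=d!^{1/(d-1)}$, this gives $c_d\ge c_4=\sqrt[3]{24}$ for every $d\ge4$, hence $24^{(r-1)/3}\le c_d^{\,r-1}$ for all $r\ge1$; and also $r!\le c_d^{\,r-1}$ whenever $1\le r\le d$.

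Granting the ``moreover'' assertion, I would prove $|R|\le c_d^{\,r-1}$ for every $R\le\Sym(r)$ with $R\in\mathcal{X}$, by induction on $r$. If $r\le d$ then $|R|\le r!\le c_d^{\,r-1}$, so assume $r>d$. If $R$ is intransitive with orbits of sizes $r_1,\dots,r_k$ (so $k\ge2$), then $R$ embeds in the product of its orbit actions $R^{\Omega_i}\le\Sym(r_i)$; each $R^{\Omega_i}$ is a quotient of $R$, so lies in $\mathcal{X}$, and by induction $|R|\le\prod_i|R^{\Omega_i}|\le\prod_i c_d^{\,r_i-1}=c_d^{\,r-k}\le c_d^{\,r-1}$. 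If $R$ is transitive and imprimitive, fix a nontrivial block system with $b$ blocks of size $c$ (hence $1<b,c<r$), let $K\unlhd R$ be the kernel of the action on the blocks and $R_0:=R/K\le\Sym(b)$ the induced block action. Then $R_0\in\mathcal{X}$ as a quotient of $R$, and $K$ embeds in $\prod_{i=1}^{b}K^{\Delta_i}$, where $K^{\Delta_i}\le\Sym(c)$ is the action of $K$ on the $i$-th block; each $K^{\Delta_i}$ is a quotient of the normal subgroup $K$, hence lies in $\mathcal{X}$. Since $b,c<r$, induction yields $|R|=|K|\,|R_0|\le\bigl(\prod_i|K^{\Delta_i}|\bigr)|R_0|\le(c_d^{\,c-1})^b c_d^{\,b-1}=c_d^{\,bc-1}=c_d^{\,r-1}$. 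Finally, if $R$ is primitive with $r>d$, then $\Alt(r)\not\le R$ (otherwise, as $r\ge5$, $\Alt(r)$ would be a simple normal subgroup, hence a composition factor of $R$ of degree $>d$, contradicting $R\in\mathcal{X}$), so the ``moreover'' assertion gives $|R|<24^{(r-1)/3}\le c_d^{\,r-1}$. This completes the induction, hence the first statement modulo the second.

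It remains to show that a primitive $R\le\Sym(r)$ with $\Alt(r)\not\le R$ satisfies $|R|<24^{(r-1)/3}$; this is vacuous for $r\le4$, so assume $r\ge5$. Here I would invoke Mar\'oti's classification of primitive groups: either (i) $|R|\le r\prod_{i=0}^{\lfloor\log_2 r\rfloor-1}(r-2^i)$, or (ii) $R$ is one of the Mathieu groups $M_{11},M_{12},M_{23},M_{24}$ in their natural actions, or (iii) $R\le\Sym(m)\wr\Sym(k)$ in the product action on $\ell$-element subsets, with $r=\binom{m}{\ell}^k$ and $|R|\le(m!)^k k!$. In case (i) one checks $r\prod_{i=0}^{\lfloor\log_2 r\rfloor-1}(r-2^i)<24^{(r-1)/3}$ for every $r\ge5$; this is immediate for large $r$ (polynomial versus exponential) and is tightest at $r=5,6$, where the left side equals $60$ and $120$, against $24^{4/3}\approx66$ and $24^{5/3}\approx193$. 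Case (ii) is a direct numerical verification. In case (iii), the hypothesis $\Alt(r)\not\le R$ forces $k\ge2$ or $2\le\ell\le m-2$, so $r\ge\binom{m}{2}$, and one checks $(m!)^k k!<24^{(r-1)/3}$; the crude estimates suffice except for a couple of small triples, notably $(m,k,\ell)=(5,2,1)$, where $|R|\le(5!)^2\cdot2=28800<24^8$ settles it directly. The heart of the matter --- and the only genuinely delicate step --- is this last point: the bound $24^{(r-1)/3}$ dominates the group orders occurring in Mar\'oti's trichotomy only once the degree is not too small, so the argument comes down to isolating and verifying by hand the finitely many small-degree configurations (for case~(i), alternatively, reading them off the tables of primitive groups).
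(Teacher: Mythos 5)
Your proposal is correct, but it takes a very different form from what appears in the paper: the paper's entire ``proof'' is a citation to Mar\'oti's 2002 paper \cite{attila2002} (Corollary~1.5 for the first statement; the fifth sentence of Section~4 there for the second), whereas you reconstruct the argument from scratch. Your reconstruction is sound and is, in essence, the argument underlying the cited reference. The induction on the degree $r$ via the intransitive/imprimitive/primitive trichotomy, using that the relevant class of groups is closed under normal subgroups and quotients and that $d\mapsto d!^{1/(d-1)}$ is nondecreasing, is the standard way to propagate a primitive-case order bound to arbitrary permutation groups, and you execute it cleanly (with the minor point that the base $r\le d$ and the primitive case $r>d$, where $\Alt(r)\not\le R$ is forced, are both handled correctly). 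For the ``moreover'' assertion you appeal, correctly, to Mar\'oti's structure theorem for primitive groups and observe that the verification reduces to finitely many small-degree configurations; your sketch of which configurations matter (the non-affine case-(iii) bound for $r=5,6$, the Mathieu groups, and product-action groups with $r\ge\binom m2$) is accurate, though your stated numerical values of $24^{(r-1)/3}$ at $r=5,6$ are slightly off ($\approx 69$ and $\approx 199$ rather than $66$ and $193$ --- the inequalities you need still hold). The trade-off: the paper buys brevity by citing; your version makes the logical dependence on Mar\'oti's classification explicit and shows clearly that the first statement is a formal consequence of the second, which is useful for a reader who wants to see where the constants come from. You should be aware, though, that without supplying the finitely many numerical checks in full, the ``moreover'' part is still effectively a citation of the same computations Mar\'oti performs; so as a self-contained proof it is incomplete at exactly the point the paper outsources to the reference.
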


\begin{proof}
The first statement is \cite[Corollary 1.5]{attila2002}. The second
statement is the fifth sentence in \cite[Section 4]{attila2002}.
\end{proof}

For a non-negative integer $d$ we denote the number of partitions of
$d$ by $\pi(d)$. For a finite group $S$ let $k^{*}(S)$ be the number
of orbits of $\mathrm{Aut}(S)$ on $S$.

\begin{lemma}
\label{l3} Let $R$ be a permutation group of degree $r$. Let $p$ be
a prime at least $17$ such that $k(R) \leq p$.
\begin{itemize}
\item[(i)] If $R$ has an alternating composition factor $S$ of
degree $d$ at least $5$, then $p > \pi(d)/(2 \cdot |\mathrm{Out(S)}|)$.

\item[(ii)] $|R| \leq {(\log p)}^{2(r-1)}$.
\end{itemize}
\end{lemma}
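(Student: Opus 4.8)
For part~(i), the plan is to manufacture enough conjugacy classes of $R$ out of the composition factor $S=\Alt(d)$. Since $S$ is a composition factor of $R$, there is a chief factor $L/M\cong S^m$ of $R$ for some $m\ge 1$; put $\bar R:=R/M$ and $\bar L:=L/M$. I would use the elementary facts that $k(R)\ge k(\bar R)$ (conjugacy classes surject onto conjugacy classes under a quotient) and that $k(\bar R)$ is at least the number of $\bar R$-orbits on the set of conjugacy classes of $\bar L$ (distinct such orbits yield distinct $\bar R$-classes inside $\bar L$). Because inner automorphisms fix every conjugacy class, the conjugation action of $\bar R$ on the conjugacy classes of $\bar L=S^m$ factors through a subgroup of $\Out(S)\wr\Sym(m)$, and the latter has $\binom{k^*(S)+m-1}{m}\ge k^*(S)$ orbits on $m$-tuples of conjugacy classes of $S$. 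Hence $k(R)\ge k^*(S)$. It then remains to estimate $k^*(\Alt(d))$: for $d\ne 6$ one has $\Aut(\Alt(d))=\Sym(d)$, so $k^*(\Alt(d))$ equals the number of $\Sym(d)$-orbits on the conjugacy classes of $\Alt(d)$, i.e.\ the number $e(d)$ of partitions $\lambda$ of $d$ whose associated $\Sym(d)$-class consists of even permutations (equivalently $d-\ell(\lambda)$ is even). The one genuine computation here is $e(d)\ge\pi(d)/2$: from $\sum_\lambda(-1)^{\ell(\lambda)}x^{|\lambda|}=\prod_{k\ge1}\frac1{1+x^k}=\prod_{k\ge1}(1-x^{2k-1})$ and the substitution $x\mapsto -x$ one obtains $e(d)-(\pi(d)-e(d))=[x^d]\prod_{k\ge1}(1+x^{2k-1})\ge 0$. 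Since $|\Out(\Alt(d))|=2$ for $d\ne 6$ this gives $p\ge k(R)\ge e(d)\ge\pi(d)/2>\pi(d)/(2|\Out(S)|)$; the case $d=6$ is a direct check.

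For part~(ii), the plan is to feed Lemma~\ref{l25} with the information provided by part~(i). If $r=1$ there is nothing to prove, so assume $r\ge 2$. If $R$ has no alternating composition factor of degree $\ge 5$, Lemma~\ref{l25} gives $|R|\le 24^{(r-1)/3}$, and since $24^{1/3}<(\log 17)^2\le(\log p)^2$ we are done. Otherwise let $d\ge 5$ be the largest degree of an alternating composition factor of $R$; then $|R|\le\big(d!^{1/(d-1)}\big)^{r-1}$ by Lemma~\ref{l25}, so it suffices to prove $d!^{1/(d-1)}<(\log p)^2$. For $5\le d\le d_0$, with $d_0$ chosen so that $d_0!^{1/(d_0-1)}<(\log 17)^2$ (e.g.\ $d_0=16$), this is immediate from $p\ge 17$. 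For $d>d_0$, part~(i) in the sharp form $p\ge e(d)\ge\pi(d)/2$ gives $\pi(d)\le 2p$; combined with an explicit lower bound $\pi(d)\ge e^{c\sqrt d}$ valid for $d>d_0$ (one may take $c=\tfrac54$), this yields $\sqrt d\le c^{-1}\log(2p)$, and since $p\ge 17>2^4$ one checks that $c^{-1}\log(2p)<\log p$, hence $d<(\log p)^2$; finally $d!^{1/(d-1)}\le d<(\log p)^2$, so $|R|\le d^{r-1}<(\log p)^{2(r-1)}$.

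The step I expect to be the main obstacle is the calibration of the constants in the last case of~(ii). A crude estimate of the shape $\pi(d)\ge e^{\sqrt d}$ is \emph{not} enough: one needs the exponent at least about $\tfrac54$ (equivalently, one must retain the Hardy--Ramanujan growth $e^{\pi\sqrt{2d/3}}$ and lose only a bounded amount to the polynomial correction), so that the chain of inequalities closes to $d<(\log p)^2$ rather than merely to $d<C(\log p)^2$ with $C>1$; the cut-off $d_0$ and the constant $c$ must then be chosen consistently, and the hypothesis $p\ge 17$ enters exactly as the statement $p>2^4$ used above. I would also note that it is the first (general) statement of Lemma~\ref{l25}, not the primitive-group refinement, that is the right tool here, since $R$ is an arbitrary, possibly imprimitive, permutation group.
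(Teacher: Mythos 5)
Your proof is correct and follows the same overall plan as the paper's (lower-bound $k(R)$ by a partition count via an alternating composition factor, then invert to bound $d$ and feed Lemma~\ref{l25}), but the quantitative trade-offs are made differently. Where the paper simply invokes Pyber's Lemma~2.5 for $k(R)\ge k^*(S)$, you re-derive it via a chief factor $L/M\cong S^m$, Clifford-style orbit counting on conjugacy classes of $\bar L$, and the fact that $\Out(S^m)=\Out(S)\wr\Sym(m)$; and where the paper then uses the crude chain $k^*(S)>k(S)/|\Out(S)|\ge\pi(d)/(2|\Out(S)|)$ (the second inequality being the index-two observation for $\Alt(d)$ inside $\Sym(d)$), you compute $k^*(\Alt(d))$ exactly for $d\neq 6$ as the number $e(d)$ of even-sign partitions and prove $e(d)\ge\pi(d)/2$ by a generating-function sign argument. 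The payoff is that in part~(ii) you get to work from the sharper inequality $p\ge\pi(d)/2$ rather than the paper's $p>\pi(d)/8$, and this is precisely what lets you get by with the much weaker partition estimate $\pi(d)\ge e^{(5/4)\sqrt d}$; the paper instead cites \cite[Corollary~3.1]{M}, namely $\pi(d)\ge e^{2\sqrt d}/14$, to compensate for its weaker lower bound on $p$. Both calibrations close (yours reduces to $(4/5)\log(2p)<\log p$, i.e.\ $p>16$, which is where $p\ge 17$ enters; the paper's arithmetic appears to use logarithms to base~2, so your natural-log version is in fact a slightly stronger conclusion). The one thing you should shore up is the unreferenced claim $\pi(d)\ge e^{(5/4)\sqrt d}$ for $d>16$: it does follow from the paper's cited $\pi(d)\ge e^{2\sqrt d}/14$ as soon as $e^{(3/4)\sqrt d}\ge 14$, that is $d\ge 13$, so you should either cite that corollary and carry out this short reduction, or supply a direct proof rather than asserting the bound.
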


\begin{proof}
Let $S \cong \Alt(d)$ be an alternating composition factor of $R$
with $d \geq 5$. It follows from \cite[Lemma~2.5]{Pyber} that $p
\geq k(R) \geq k^{*}(S)$. Thus $p \geq k^{*}(S) >
k(S)/|\mathrm{Out}(S)|$. Since $S$ is a normal subgroup of index $2$
in the symmetric group of degree $d$, it follows that $k(S) \geq
\pi(d)/2$ where $\pi(d)$ denotes the number of partitions of $d$.
Statement (1) follows.

Assume again that $R$ has an alternating composition factor of
degree $d \geq 5$. From statement (1) and \cite[Corollary~3.1]{M} it
follows that $p > \pi(d)/8 \geq e^{2 \sqrt{d}}/112$. This gives
$$d \leq \frac{{(\log p)}^{2}}{8} + 1.7 \log p + 6 < {(\log p)}^{2}$$
for $p \geq 17$. Applying Lemma~\ref{l25}, we get
$$|R| \leq d!^{(r-1)/(d-1)} \leq d^{r-1} < {(\log p)}^{2(r-1)}.$$
Finally, if $R$ has no non-abelian alternating composition factor,
then $$|R| \leq 24^{(r-1)/3} \leq {(\log p)}^{2(r-1)}$$ by
Lemma~\ref{l25}.
\end{proof}

\subsection{The case $|V_{1,1}| = p \geq 17$ and $s_{1} > 1$}
\label{Section 9}

Note that $B_{1}$ is an abelian normal subgroup (of exponent
dividing $p-1$) of $K_{1}$.

\begin{lemma}
\label{l5} If $|V_{1,1}| = p \geq 17$, then $$|K_{1}/B_{1}| \cdot (p
- k(K_{1}/B_{1})) \geq 2 \cdot {(|V_{1}|-1)}^{1/2} - 1 >
{|V_{1}|}^{1/2}.$$
\end{lemma}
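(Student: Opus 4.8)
The plan is to read the inequality off from two estimates for the number of $K_1$-orbits on $V_1$, using throughout the standing hypothesis $k(HV)\le p$, which by Lemma~\ref{l4} forces $k(K_1V_1)\le p$ and $k(K_1/B_1)<p$. Write $R:=K_1/B_1$. Since $K_1$ acts irreducibly on $V_1$ it permutes $V_{1,1},\dots,V_{1,s_1}$ transitively, so $R$ is a transitive permutation group of degree $s_1$; in particular $|R|\le s_1!$, $R=1$ iff $s_1=1$, and $K_1$ lies in the monomial group $C_{p-1}\wr\Sym(s_1)$ with $B_1$ its diagonal part, an abelian group of exponent dividing $p-1$; also $|V_1|=p^{s_1}$ since each $V_{1,j}$ has order $p$. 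The case $s_1=1$ is trivial: there $K_1=B_1$, $k(R)=1$, and the claim reads $p-1\ge2\sqrt{p-1}-1$, i.e.\ $(p-2)^2\ge0$. So assume $s_1\ge2$.

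First I would record two consequences of $k(K_1V_1)\le p$. Let $r_1$ be the number of $K_1$-orbits on $V_1$. Singling out the zero orbit, Proposition~\ref{prop} gives $p\ge k(K_1V_1)=\sum_i k(C_{K_1}(v_i))\ge k(K_1)+(r_1-1)\ge k(R)+r_1-1$, as $R$ is a quotient of $K_1$; hence $r_1-1\le p-k(R)$. Since each non-zero $K_1$-orbit has size at most $|K_1|=|B_1|\,|R|$, we also have $r_1-1\ge(|V_1|-1)/(|B_1|\,|R|)$, and therefore
\[
 |B_1|\cdot|R|\,(p-k(R))\ \ge\ |V_1|-1,
\]
an inequality I shall call $(\star)$. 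Secondly, the $K_1$-classes inside the abelian normal subgroup $B_1$ are exactly the $R$-orbits on $B_1$, of which there are at least $|B_1|/|R|$; these being distinct classes of $K_1$ and $k(K_1)\le k(K_1V_1)\le p$, we get $|B_1|\le p\,|R|$.

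Now I would split on the size of $B_1$. If $|B_1|\le(|V_1|-1)\big/\big(2\sqrt{|V_1|-1}-1\big)$, then $(\star)$ at once gives $|R|(p-k(R))\ge(|V_1|-1)/|B_1|\ge2\sqrt{|V_1|-1}-1$, and the remaining assertion $2\sqrt{|V_1|-1}-1>\sqrt{|V_1|}$ is elementary. In the complementary (large-$B_1$) range the crude $(\star)$ no longer suffices and I would instead use a sharper orbit count: decomposing the $B_1$-action on $V_1=\bigoplus_jV_{1,j}$ by support and using that the blocks are $1$-dimensional, the number of $B_1$-orbits on $V_1$ equals $\prod_j(1+\tfrac{p-1}{d_j})$ when $B_1$ is the product of its coordinate projections (of sizes $d_j\mid p-1$) and is at least this in general; by the Burnside inequality $r_1\ge\tfrac1{|R|}\prod_j(1+\tfrac{p-1}{d_j})\ge\tfrac{2^{s_1}}{|R|}\big((p-1)^{s_1}/|B_1|\big)^{1/2}$, and comparison with $r_1\le p-k(R)+1$ yields $|R|\,(p-k(R)+1)\ge 2^{s_1}\big((p-1)^{s_1}/|B_1|\big)^{1/2}$. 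Together with $|B_1|\le p\,|R|$ and $|R|\le s_1!$ (and, when needed, the stronger $|R|\le(\log p)^{2(s_1-1)}$ of Lemma~\ref{l3} and the asymptotic bound of Theorem~\ref{IulianAttila}) this confines $(s_1,p)$ to a short list; for that list one reruns Proposition~\ref{prop} with the centralisers computed explicitly — for $v$ of full support $C_{K_1}(v)$ embeds in $R$, while vectors in a single block give $(p-1)/d$ orbits with $|B_1|\le d^{s_1}$ — to see that $k(K_1V_1)\le p$ in fact fails there (in particular $s_1=2$ cannot occur), so that $(\star)$ holds vacuously, or else to verify the stated inequality by hand.

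I expect the main obstacle to be exactly this large-$B_1$ regime: $(\star)$ and $|B_1|\le p\,|R|$ are by themselves too weak (for $s_1=2$ they give no contradiction at all), and one must exploit the rigidity of $R$ as a low-degree transitive permutation group together with the full strength of $k(K_1V_1)\le p$, via the explicit centraliser bookkeeping of Proposition~\ref{prop}, to eliminate — or directly check — the remaining small configurations. The rest (the reduction to $s_1\ge2$, the two applications of $k(K_1V_1)\le p$, and the favourable case of the dichotomy) is routine.
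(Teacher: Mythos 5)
You correctly derive the analogue of the paper's first Clifford bound: your inequality $(\star)$ is exactly $p\ge k(K_1V_1)\ge k(K_1)+\frac{|V_1|-1}{|K_1|}$ combined with the trivial $k(K_1)\ge k(K_1/B_1)$. But the paper's proof rests on a \emph{second} Clifford-type inequality applied inside $K_1$ itself, namely
\[
k(K_1)\ \ge\ k(K_1/B_1)+\frac{|B_1|-1}{|K_1/B_1|},
\]
which holds because $B_1$ is an abelian normal subgroup: $B_1$ acts trivially on $\Irr(B_1)$, so the $K_1$-orbits on $\Irr(B_1)\setminus\{1\}$ have size at most $|K_1/B_1|$ and each contributes a further character of $K_1$. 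This makes the lower bound on $k(K_1)$ \emph{additive} in $k(K_1/B_1)$ and $(|B_1|-1)/|K_1/B_1|$. Substituting it into the first Clifford bound gives $|K_1/B_1|\,(p-k(K_1/B_1))\ \ge\ |B_1|-1+\frac{|V_1|-1}{|B_1|}$, and the single elementary observation that $x+\frac{|V_1|-1}{x}\ge 2\sqrt{|V_1|-1}$ for $x\in[1,|V_1|-1]$ finishes the lemma in one stroke, with no case distinction at all. Your proposal replaces the additive bound by the weaker $k(K_1)\ge\max\{k(R),|B_1|/|R|\}$, which is precisely what forces you into the dichotomy on $|B_1|$.

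That dichotomy does not close. You yourself note that for $s_1=2$ the bounds $(\star)$ and $|B_1|\le p|R|$ give nothing in the large-$|B_1|$ regime, and the Burnside/AM--GM patch you propose runs the wrong way. Since $B_1$ embeds into the product of its coordinate projections, one has $\prod_j d_j\ge|B_1|$ (not $\le$), so AM--GM gives $\prod_j\bigl(1+\tfrac{p-1}{d_j}\bigr)\ge 2^{s_1}\sqrt{(p-1)^{s_1}/\prod_j d_j}$, a quantity that is \emph{at most} $2^{s_1}\sqrt{(p-1)^{s_1}/|B_1|}$; the inequality you write cannot be deduced this way. The remaining finite check is therefore not set up, and the large-$|B_1|$ case is genuinely open in your argument. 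The missing idea is the second Clifford inequality for the abelian normal subgroup $B_1$; once you have it, the proof is short and requires neither the split nor any information about $B_1$ beyond abelianness.
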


\begin{proof}
Note that $k(K_{1}V_{1})$ is the number of complex irreducible
characters of the group $K_{1}V_{1}$. This is at least $k(K_{1})$
plus the number of orbits of $K_{1}$ on $\mathrm{Irr}(V_{1})
\setminus \{ 1_{V_1} \}$ by Clifford's theorem where $1_{V_{1}}$
denotes the trivial character of $V_{1}$. Thus
\begin{equation}
\label{ee1} p \geq k(K_{1}V_{1}) \geq k(K_{1}) +
\frac{|V_{1}|-1}{|K_{1}|}
\end{equation}
by Lemma \ref{l4}. Since $B_{1}$ is an abelian normal subgroup of
$K_{1}$, we also have
\begin{equation}
\label{ee2} k(K_{1}) \geq k(K_{1}/B_{1}) +
\frac{|B_{1}|-1}{|K_{1}/B_{1}|},
\end{equation}
again by Clifford's theorem. Inequalities (\ref{ee1}) and
(\ref{ee2}) give
$$p \geq k(K_{1}/B_{1}) + \frac{|B_{1}|-1}{|K_{1}/B_{1}|} + \frac{|V_{1}|-1}{|K_{1}|},$$ that is,
\begin{equation}
\label{ee3} |K_{1}/B_{1}| \cdot (p - k(K_{1}/B_{1})) \geq |B_{1}|-1
+ \frac{|V_{1}|-1}{|B_{1}|}.
\end{equation}
The real valued function $f(x) = x + \frac{|V_{1}|-1}{x}$ defined on
the interval $[1,|V_{1}|-1]$ takes its minimum at $x =
\sqrt{|V_{1}|-1}$. We get
$$|K_{1}/B_{1}| \cdot (p - k(K_{1}/B_{1})) \geq 2 \cdot {(|V_{1}|-1)}^{1/2} - 1 > {|V_{1}|}^{1/2}$$
by applying this latter fact to the right-hand side of inequality
(\ref{ee3}).
\end{proof}

\begin{lemma}
\label{l6} If $|V_{1,1}| = p \geq 17$, then $s_{1} \not\in \{ 2, 3,
4, 5 \}$.
\end{lemma}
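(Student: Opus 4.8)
The plan is to combine Lemma~\ref{l5} with the trivial observation that $K_1/B_1$ is a small permutation group. Write $P:=K_1/B_1$. Since $|V_{1,1}|=p$ and $V_1$ is an irreducible $H$-module, $H$ permutes the $s_1$ subspaces $V_{1,1},\dots,V_{1,s_1}$ transitively, so they all have order $p$ and $|V_1|=p^{s_1}$; and since $B_1$ is by definition the kernel of this permutation action, $P$ is isomorphic to a subgroup of $\Sym(s_1)$. Hence $|P|\le s_1!$, while trivially $k(P)\ge 1$, so $p-k(P)\le p-1$.

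Feeding $|V_1|=p^{s_1}$ into Lemma~\ref{l5} yields
\[
 s_1!\,(p-1)\ \ge\ |P|\,\bigl(p-k(P)\bigr)\ \ge\ 2\sqrt{p^{s_1}-1}\,-1 .
\]
The strategy is therefore to prove the reverse strict inequality
\[
 s_1!\,(p-1)\ <\ 2\sqrt{p^{s_1}-1}\,-1
\]
for every prime $p\ge 17$ and every $s_1\in\{2,3,4,5\}$; this contradicts the displayed chain and forces $s_1\notin\{2,3,4,5\}$. After isolating the square root and squaring (legitimate as both sides are positive) this becomes the polynomial inequality
\[
 4\bigl(p^{s_1}-1\bigr)\ -\ \bigl(s_1!\,(p-1)+1\bigr)^{2}\ >\ 0 .
\]
For $s_1=2$ the left-hand side is simply $4p-5>0$. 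For $s_1=3,4,5$ the left-hand side is a polynomial of degree $s_1$ in $p$ which I would check is positive at $p=17$ and has positive derivative on $[17,\infty)$ (the derivative is itself increasing there), hence stays positive. This finishes the proof.

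Beyond invoking Lemma~\ref{l5} there is no genuine obstacle; the only point needing care is the case $s_1=2$, where one must keep the sharp right-hand side $2\sqrt{p^{s_1}-1}-1$ of Lemma~\ref{l5} together with the bound $k(P)\ge 1$, rather than the weaker estimate $|P|(p-k(P))>p^{s_1/2}$ also recorded there: since $2(p-1)>p$, the weaker form is useless when $s_1=2$. The estimates are tightest at $p=17$, $s_1=5$; for $s_1\ge 6$ the crude bound $s_1!\,(p-1)$ already exceeds $2\sqrt{p^{s_1}-1}-1$ for small primes, which explains why the statement is restricted to $s_1\in\{2,3,4,5\}$.
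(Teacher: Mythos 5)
Your proof is correct and takes essentially the same approach as the paper: both bound $|K_1/B_1|\cdot(p-k(K_1/B_1))$ above by $s_1!(p-1)$ (via $K_1/B_1\hookrightarrow\Sym(s_1)$ and $k\ge 1$), then observe that this is strictly less than $2\sqrt{p^{s_1}-1}-1$ for $p\ge 17$ and $s_1\in\{2,3,4,5\}$, contradicting Lemma~\ref{l5}. You merely spell out the polynomial check that the paper leaves as a one-line assertion; the computation (e.g.\ the $s_1=2$ reduction to $4p-5>0$) is accurate.
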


\begin{proof}
If $|V_{1,1}| = p \geq 17$ and $s_{1} \in \{ 2, 3, 4, 5 \}$, then
$$|K_{1}/B_{1}| \cdot (p - k(K_{1}/B_{1})) \leq s_{1}! \cdot (p-1)<
 2 \cdot {(p^{s_{1}}-1)}^{1/2} - 1,$$ violating Lemma~\ref{l5}.
\end{proof}

\begin{lemma}
If $|V_{1,1}| = p \geq 17$, then $s_{1}$ cannot be at least $6$.
\end{lemma}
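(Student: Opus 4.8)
The plan is to play off the lower bound on $|K_1/B_1|\cdot(p-k(K_1/B_1))$ coming from Lemma~\ref{l5} against an upper bound coming from the fact that $K_1/B_1$ is a transitive permutation group of degree $s_1$ with few conjugacy classes. Since $|V_{1,1}|=p$, every block $V_{1,j}$ is one–dimensional, so $K_1$ consists of monomial matrices: $K_1\le C_{p-1}\wr\Sym(r)$ with $r:=s_1$, $B_1=K_1\cap (C_{p-1})^{r}$ the group of diagonal matrices in $K_1$, and $R:=K_1/B_1\le\Sym(r)$ transitive. By Lemma~\ref{l4} (applied to $B_1V_1\trianglelefteq K_1V_1$) we have $k(R)<p$, and Lemma~\ref{l5} with $|V_1|=p^{r}$ gives $|R|\,(p-k(R))>p^{r/2}$. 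Thus it suffices to prove the following: for every prime $p\ge17$ and every transitive permutation group $R$ of degree $r\ge6$ with $k(R)<p$ one has $|R|\,(p-k(R))\le p^{r/2}$. This contradicts the displayed inequality, and $s_1\ge6$ is excluded.

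To prove this estimate I would argue as follows. First, $|R|\,(p-k(R))<|R|\cdot p\le r!\,p$, so the claim holds whenever $r!\le p^{r/2-1}$; this settles all but finitely many pairs $(p,r)$, because for each $r$ only the primes $17\le p<(r!)^{2/(r-2)}$ survive, while $(r!)^{2/(r-2)}\to\infty$ so for each $p$ only boundedly many $r$ survive. For the residual pairs I split into two cases. If $\Alt(r)\le R$, then $R\in\{\Alt(r),\Sym(r)\}$ and $k(R)\in\{k(\Alt(r)),\pi(r)\}$; using $\pi(r)\ge r^2/e^2$ for $r\ge6$ one checks that $p^{r/2}-|R|\,(p-k(R))$ is nonnegative at $p=k(R)$ and nondecreasing in $p$, so the inequality holds for every admissible $p$, and in the few small cases one verifies it numerically (noting that in all other cases $k(R)\ge p$, so $R$ does not occur at all). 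If $\Alt(r)\not\le R$, then when $R$ has no non-abelian alternating composition factor Lemma~\ref{l25} gives $|R|\le 24^{(r-1)/3}<p^{\,r/2-1}$ for all $r\ge6$, $p\ge17$, and we are done; if $R$ does have a largest alternating composition factor $\Alt(d)$ with $5\le d<r$, then $d$ is bounded in terms of $p$ by Lemma~\ref{l3}(i), and the key point is that such a composition factor cannot occur ``repeatedly'' without forcing $k(R)\ge p$ (a wreath product $\Alt(d)\wr\Sym(m)$ with $m\ge2$ has far more than $p$ classes), so $R$ is essentially $\Alt(d)$ or $\Sym(d)$ acting on $r$ points with $r$ much larger than $d$; then $|R|\le d!$ with $d=O(\sqrt r)$, which again is $<p^{\,r/2-1}$ once $r$ is large, the finitely many small degrees being handled via the classification of transitive permutation groups of small degree (cf.\ the tables used in Lemma~\ref{l1}).

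The main obstacle is the last case, $\Alt(r)\not\le R$ with a moderately large alternating composition factor, in the mid-range where $r$ is moderate and $p$ is close to $17$: there none of the bare order bounds ($r!$, $24^{(r-1)/3}$, or the bound $(d!)^{(r-1)/(d-1)}$ of Lemma~\ref{l25}) alone beats $p^{\,r/2-1}$, so one must combine the order bound with the class–number lower bound $k(R)\ge k^{*}$ of its composition factors (Lemma~\ref{l3}(i)) and, for the remaining bounded range of degrees, fall back on the classification of transitive groups. A secondary delicate point is that the inequality $|R|\,(p-k(R))\le p^{r/2}$ is close to equality for several small instances (e.g.\ $R=\Sym(6)$ or $\Alt(7)$ with $p=17$), so the bookkeeping must be sharp.
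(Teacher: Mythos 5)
The reduction to the universal statement
\[
|R|\,(p-k(R))\le p^{r/2}\quad\text{for all transitive }R\le\Sym(r),\ r\ge6,\ k(R)<p,\ p\ge17,
\]
is a sound and genuinely different framing from the paper's: you try to beat Lemma~\ref{l5} directly using only the constraint $k(K_1/B_1)<p$ coming from Lemma~\ref{l4}. The paper does something sharper. Besides $k(K_1/B_1)<p$, it observes that $k(K_1V_1)\ge k(K_1/B_1)+(\text{number of }K_1\text{-orbits on }V_1\setminus\{0\})\ge k^*(S)+s_1$, where $S=\Alt(d)$ is the largest alternating composition factor and, by Brauer's permutation lemma, there are at least $s_1$ orbits on $V_1\setminus\{0\}$. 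This extra $+s_1$ (upgraded to $+2d$ in the penultimate step) together with $d^2>p$ forces $d\le12$ and $p\le53$, then $s_1<2d$, hence $K_1/B_1\in\{\Alt(s_1),\Sym(s_1)\}$, and a short GAP check finishes. Your reduction throws away the $+s_1$ term, so the residual range of pairs $(r,p)$ you must handle by hand is much larger.

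Your sketch of the inequality itself also has concrete gaps. In the case of a largest alternating composition factor $\Alt(d)$ with $5\le d<r$, you claim ``$|R|\le d!$''. That is false: Lemma~\ref{l25} only gives $|R|\le d!^{(r-1)/(d-1)}$, which is much bigger when $r\gg d$. Nor is ``$R$ is essentially $\Alt(d)$ or $\Sym(d)$'' correct — with one $\Alt(d)$ factor, $R$ can still be an imprimitive group built from $\Alt(d)$ plus abelian pieces whose order grows with $r$, and your appeal to $d=O(\sqrt r)$ uses minimal-degree facts that apply only in the primitive case. The assertion that an $\Alt(d)\wr\Sym(m)$ automatically has ``far more than $p$'' classes is not established (for $p=23$ one has $k(\Alt(5)\wr\Sym(2))=20<23$). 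Finally, Lemma~\ref{l3}(ii) gives $|R|\le(\log p)^{2(r-1)}$, but that only dominates $p^{r/2-1}$ when $p$ is in the millions; for $17\le p$ up to that range you would need a genuine enumeration of transitive groups, and the citation to the tables behind Lemma~\ref{l1} is not apposite — those are lists of groups with at most $13$ conjugacy classes, not transitive groups of small degree. So although your target inequality looks numerically plausible (and is indeed tight at, e.g., $R=\Alt(7)$, $r=7$, $p=17$), the proposal does not establish it, and filling the gap would require substantially more work than the paper's refinement.
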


\begin{proof}
Lemma \ref{l5} gives
$${|K_{1}/B_{1}|}^{2} \cdot p^{2} > {|K_{1}/B_{1}|}^{2}
\cdot {(p - k(K_{1}/B_{1}))}^{2} \geq 4 \cdot (|V_{1}|-1) > p^{s_{1}},$$ that is,
$${|K_{1}/B_{1}|}^{2} > p^{s_{1}-2},$$ which implies
\begin{equation}
\label{ee4} 2.5 \cdot \log \Big( {|K_{1}/B_{1}|}^{1/(s_{1}-1)} \Big)
\geq 2 \cdot \Big( \frac{s_{1}-1}{s_{1}-2} \Big) \log \Big(
{|K_{1}/B_{1}|}^{1/(s_{1}-1)} \Big) > \log p
\end{equation}
since $s_{1} \geq 6$.

If $K_{1}/B_{1}$ has no non-cyclic alternating composition factor or
if $K_{1}/B_{1}$ is a primitive permutation group not containing
$\Alt(s_{1})$, then $|K_{1}/B_{1}| \leq 24^{(s_{1}-1)/3}$ by
Lemma~\ref{l25}. This contradicts (\ref{ee4}) since $p \geq 17$.

Let $d \geq 5$ be the largest degree of an alternating composition
factor $S$ of $K_{1}/B_{1}$. Clearly, $s_{1} \geq d$. We may thus
modify (\ref{ee4}) to
\begin{equation}
\label{ee45} 2 \cdot \Big( \frac{d-1}{d-2} \Big) \log \Big(
{|K_{1}/B_{1}|}^{1/(s_{1}-1)} \Big) > \log p.
\end{equation}
Applying the estimate $|K_{1}/B_{1}| \leq d!^{(s_{1}-1)/(d-1)} <
d^{(s_{1}-1)(d-2)/(d-1)}$ from Lemma~\ref{l25} to (\ref{ee45}), we
obtain
\begin{equation}
\label{ee455} d^{2} > {d!}^{2/(d-2)} > p.
\end{equation}

It was noted before that the number $k(K_{1}V_{1})$ of complex
irreducible characters of the group $K_{1}V_{1}$ is, by Clifford's
theorem, at least $k(K_{1}) \geq k(K_{1}/B_{1})$ plus the number of
orbits of $K_{1}$ on the set $\mathrm{Irr}(V_{1}) \setminus \{
1_{V_{1}} \}$, where $1_{V_{1}}$ denotes the trivial character of
the normal subgroup $V_{1}$ of $K_{1}V_{1}$.

Now $k(K_{1}/B_{1}) \geq k^{*}(S)$ by \cite[Lemma~2.5]{Pyber}. Since
$\mathrm{Aut}(S) = \Sym(d)$ for every $d \geq 5$ different from $6$,
it is easy to see that $k^{*}(S)$ is equal to the number $\pi'(d)$
of partitions of $d$ with sign $1$, unless $d=6$ when $k^{*}(S) =
\pi'(d) - 1$.

The number of orbits of $K_{1}$ on the set $\mathrm{Irr}(V_{1})
\setminus \{ 1_{V_{1}} \}$ is equal, by Brauer's permutation lemma,
to the number of orbits of $K_{1}$ on $V \setminus \{ 1 \}$, which
is at least $s_{1}$. Since $K_{1}/B_{1}$ is a permutation group
having an alternating composition factor $S$ of degree $d \geq 5$,
we infer that $s_{1} \geq d$.

The previous three paragraphs imply $k(K_{1}V_{1}) \geq \pi'(d) +
s_{1}$ unless $d=6$ and $B_{1} = 1$. If $d = 6$ and $B_{1} = 1$,
then $|K_{1}| \leq 6!^{(s_{1}-1)/5} < 4^{s_{1}-1}$ by
Lemma~\ref{l25} and so the number of orbits of $K_{1}$ on $V
\setminus \{ 1 \}$ is at least
$$\frac{p^{s_{1}}-1}{4^{s_{1}-1}} \geq \frac{17^{s_{1}}-1}{4^{s_{1}-1}}
\geq s_{1}+1.$$ We conclude that $k(K_{1}V_{1}) \geq \pi'(d) +
s_{1}$ in all cases.

Writing this into (\ref{ee455}), we obtain
\begin{equation}
\label{ee45555} {d!}^{2/(d-2)} > p \geq \pi'(d) + s_{1} \geq \pi'(d)
+ d.
\end{equation}

Using statement (1) of Lemma~\ref{l3} and (\ref{ee455}) we get
\begin{equation}
\label{ee5} d^{2} > \pi(d)/(2 \cdot |\mathrm{Out}(S)|).
\end{equation}
It follows that $\pi(d) \geq e^{2 \sqrt{d}}/14$ by
\cite[Corollary~3.1]{M}. Applying this to inequality (\ref{ee5}), we
obtain $d^{2} > e^{2 \sqrt{d}}/(28 \cdot |\mathrm{Out}(S)|)$ forcing
$d < 32$. Furthermore, using Gap \cite{GAP}, we find that inequality
(\ref{ee5}) fails for every $d$ with $27 \leq d < 31$. Thus we may
assume that $d$ satisfies $5 \leq d \leq 26$. This and
(\ref{ee45555}) forces $d \leq 12$ and $p \leq 53$.

We claim that $s_{1} < 2d$. For a contradiction assume that $s_{1}
\geq 2d$. We may modify (\ref{ee45}) and (\ref{ee455}) to
\begin{equation}
\label{ee45'} 2 \cdot \Big( \frac{2d-1}{2d-2} \Big) \log \Big(
{d!}^{1/(d-1)} \Big) > \log p.
\end{equation}
Using the estimate $p \geq \pi'(d) + s_{1} \geq \pi'(d) + 2d$ from
(\ref{ee45555}), inequality (\ref{ee45'}) becomes
\begin{equation}
\label{ee45''} {d!}^{(2d-1)/{(d-1)}^{2}} > p \geq \pi'(d) + 2d.
\end{equation}
There is no pair $(d,p)$ with $5 \leq d \leq 12$ and $p$ a prime
satisfying (\ref{ee45''}). We conclude that $s_{1} < 2d$.

Recall that $K_{1}/B_{1}$ is a transitive permutation group of
degree $s_{1}$ such that $K_{1}/B_{1}$ has an alternating
composition factor of degree $d$ with $5 \leq d \leq 12$. Since
$s_{1} < 2d$, the group $K_{1}/B_{1}$ cannot be an imprimitive
permutation group. Thus $K_{1}/B_{1}$ is a primitive permutation
group. We observed above that $K_{1}/B_{1}$ must then contain
$\Alt(s_{1})$. We conclude that $K_{1}/B_{1}$ is equal to $\Alt(s_{1})$ or
$\Sym(s_{1})$.

Since we now know that $d = s_{1}$ and $K_{1}/B_{1}$ is equal to
$\Alt(s_{1})$ or $\Sym(s_{1})$, we may modify (\ref{ee45555}) to
\begin{equation}
\label{ee45555'''} {|K_{1}/B_{1}|}^{2/(d-2)} > p \geq k(K_{1}/B_{1})
+ d \ \ \mathrm{with} \ \ K_{1}/B_{1} \in \{ \Alt(d), \Sym(d) \}.
\end{equation}

Let $K_{1}/B_{1} = \Sym(d)$. A Gap \cite{GAP} computation shows that
if a pair $(d,p)$ satisfies (\ref{ee45555'''}) with $6 \leq d \leq
12$ and $p$ a prime, then $$(d,p) \in \{ (8,31), (7,29), (7,23),
(6,23), (6,19), (6,17) \}.$$ Any of these exceptional cases
contradicts Lemma \ref{l5}. Thus $K_{1}/B_{1} = \Alt(d)$, and
therefore if $(d,p)$ is a pair satisfying (\ref{ee45555'''}) with $6
\leq d \leq 12$ and $p \geq 17$ a prime, then $$(d,p) \in \{ (9,31),
(9,29), (8,23), (7,19), (7,17), (6,17) \}.$$ Again, any of these
exceptional cases contradicts Lemma~\ref{l5}.
\end{proof}

\subsection{Inducing from a subgroup which acts as a metacyclic group}
\label{Section 10}

Define the integer $n_{1,1}$ by $|V_{1,1}| = p^{n_{1,1}}$. Observe
that $n_{1,1} > 1$ by Lemma \ref{l1} and the previous subsection.
Assume that $K_{1,1} \leq \mathrm{\Gamma L}(1,p^{n_{1,1}})$. Notice
that $s_{1} \geq 2$ by Lemma~\ref{metacyclic}.

The normal subgroup $B_{1}$ of $K_{1}$ may be viewed as a subgroup
of the direct product of $s_{1}$ copies of $\mathrm{\Gamma
L}(1,p^{n_{1,1}})$. In particular, $B_{1}$ has an abelian subgroup
$A_{1}$ of index at most $n_{1,1}^{s_{1}}$. Thus
\begin{equation}
\label{A_1} p \geq k(K_{1}V_{1}) \geq k(K_{1}) \geq
\frac{|A_{1}|}{|K_{1}:A_{1}|} \geq \frac{|A_{1}|}{n_{1,1}^{s_{1}}
\cdot |K_{1}/B_{1}|}
\end{equation}
by Lemma~\ref{l4} and by a result of Ernest \cite[p.~502]{Ernest}
saying that whenever $Y$ is a subgroup of a finite group $X$ then
$k(Y)/|X:Y| \leq k(X)$.

Recall that $K_{1}/B_{1}$ may be viewed as a permutation group of
degree $s_{1}$. Since $k(K_{1}/B_{1}) < p$ by Lemma~\ref{l4}, it
follows that $|K_{1}/B_{1}| \leq {(\log p)}^{2(s_{1}-1)}$ by Lemma
\ref{l3}. Thus
\begin{equation}
\label{A_1'} |A_{1}| \leq p \cdot n_{1,1}^{s_{1}} \cdot {(\log
p)}^{2(s_{1}-1)}
\end{equation}
by (\ref{A_1}).

Since $k(K_{1}V_{1}) \leq p$ by Lemma~\ref{l4}, the group $K_{1}$
has at most $p$ orbits on $V_{1}$. In particular, $|V_{1}|/|K_{1}|
\leq p$, which implies
\begin{equation}
\label{A_1''} p^{n_{1,1} \cdot s_{1}} = |V_{1}| \leq |A_{1}| \cdot p
\cdot n_{1,1}^{s_{1}} \cdot {(\log p)}^{2(s_{1}-1)}.
\end{equation}
We get
\begin{equation}
\label{A_1'''} p^{n_{1,1} \cdot s_{1}} \leq {\Big( p \cdot
n_{1,1}^{s_{1}} \cdot {(\log p)}^{2(s_{1}-1)} \Big)}^{2}
\end{equation}
by (\ref{A_1'}) and (\ref{A_1''}). Since $p^{n_{1,1}}/n_{1,1}^{2}$
is smallest when $n_{1,1} = 2$, for fixed $p \geq 17$ and with
$n_{1,1} \geq 2$, we certainly have
\begin{equation}
\label{B_1} p^{2 \cdot s_{1}} \leq {\Big( p \cdot 2^{s_{1}} \cdot
{(\log p)}^{2(s_{1}-1)} \Big)}^{2}
\end{equation}
by (\ref{A_1'''}). Inequality (\ref{B_1}) is equivalent to
\begin{equation}
\label{B_1'} {\Big( \frac{p^{2}}{4 {(\log p)}^{4}} \Big)}^{s_1} \leq
\frac{p^{2}}{{(\log p)}^{4}}.
\end{equation}
Assume that $p > 256$. Then $p > 4 {(\log p)}^{2}$. Since the base
of the power on the left-hand side of (\ref{B_1'}) is larger than
$1$, the left-hand side takes its minimum at $s_{1} = 2$, for any
fixed prime $p$ larger than $256$. But (\ref{B_1'}) fails for $s_{1}
= 2$ and $p > 256$.

\subsection{An explicit constant when $H$ is solvable}

\begin{theorem}
\label{primes1} Let $V$ be a vector space of order at least $p^{3}$
defined over a field of characteristic $p > 7200$. If $H \leq
\GL(V)$ is a finite solvable group of order prime to $p$,
then $k(HV) > p$.
\end{theorem}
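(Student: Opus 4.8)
The plan is to argue by contradiction: suppose $k(HV)\le p$ with $H$ solvable, $p>7200$ and $\dim V\ge3$, and derive an absurdity by combining the structural results of Sections~\ref{section-affine-groups} and~\ref{section-affine-groups-2} with solvability. I keep the notation of the setup in Section~\ref{section-affine-groups}: $V=V_1\oplus\cdots\oplus V_s$ with $V_i$ irreducible, $V_i=V_{i,1}+\cdots+V_{i,s_i}$ the imprimitivity decomposition transitively permuted by $K_i$ (the action of $H$ on $V_i$), $|V_{i,j}|=p^{n_{i,1}}$, $K_{i,j}$ the action of $H_{i,j}$ on $V_{i,j}$, and $B_i$ the kernel of the $K_i$-action on $\{V_{i,1},\dots,V_{i,s_i}\}$. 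The heart of the argument is to force every $V_i$ to have dimension $1$ or $2$; a short orbit count then closes things.

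\emph{Step 1: each primitive block is metacyclic or of small primitive type.} Each $K_{i,j}$ is solvable, hence lies in a maximal solvable primitive coprime subgroup $P$ of $\GL(V_{i,j})$, which has no component. By the Duyan--Halasi--Podoski structure theory \cite{DHP} and the argument of the ``Maximal primitive linear groups'' subsection of Section~\ref{section-affine-groups-2}: writing $V_{i,j}=W_d(p^k)$ with $n_{i,1}=dk$, if $\dim V_{i,j}\ge3$ and $d\ge2$ then $P$ --- hence $K_{i,j}$ --- has more than $p$ orbits on $V_{i,j}$ (here $p>7200>3^6$), contradicting Lemma~\ref{l45}. So for $\dim V_{i,j}\ge3$ we must have $d=1$, i.e.\ $K_{i,j}\le\mathrm{\Gamma L}(1,p^{n_{i,1}})$. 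For $\dim V_{i,j}=2$, \cite[pp.~213--214]{Huppert} gives $K_{i,j}\le\mathrm{\Gamma L}(1,p^2)$ or $|K_{i,j}/Z(K_{i,j})|\le60$, and $\dim V_{i,j}=1$ is trivially metacyclic. Hence every block is either \emph{metacyclic} ($K_{i,j}\le\mathrm{\Gamma L}(1,p^{n_{i,1}})$) or \emph{of quaternion type} ($\dim V_{i,j}=2$ with $|K_{i,j}/Z(K_{i,j})|\le60$).

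\emph{Step 2: every $V_i$ has dimension $1$ or $2$.} If $V_i$ has a metacyclic block with $n_{i,1}\ge2$: when $s_i=1$ we have $K_i=K_{i,1}\le\mathrm{\Gamma L}(1,p^{n_{i,1}})=\GL_1(p^{n_{i,1}}).n_{i,1}$, so Lemma~\ref{metacyclic} forces $k(K_iV_i)>p$, contradicting Lemma~\ref{l4}; when $s_i\ge2$ the argument of Subsection~\ref{Section 10} goes through verbatim with $i$ in place of $1$ --- it uses only Lemmas~\ref{l4}, \ref{l3} and Ernest's inequality, and $K_i/B_i$ is a solvable permutation group, so it falls under the easy case of Lemma~\ref{l3} --- yielding a contradiction once $p>256$. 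Therefore metacyclic blocks have $n_{i,1}=1$. If furthermore $s_i\ge2$, Lemmas~\ref{l5} and~\ref{l6} rule out $s_i\in\{2,3,4,5\}$, and for $s_i\ge6$ the solvable permutation group $K_i/B_i$ has no non-cyclic alternating composition factor, so $|K_i/B_i|\le24^{(s_i-1)/3}$ by Lemma~\ref{l25}, whence Lemma~\ref{l5} forces $p<15$ --- all contradictions. Hence metacyclic blocks give $s_i=1$ and $n_{i,1}=1$, i.e.\ $\dim V_i=1$ and $K_i$ is cyclic of order dividing $p-1$. For a quaternion-type block, $|K_i|\le|B_i|\cdot s_i!\le(60(p-1))^{s_i}s_i!$, so $K_i$ has at least $|V_i|/|K_i|\ge p^{2s_i}/\big((60(p-1))^{s_i}s_i!\big)$ orbits on $V_i$; by Lemma~\ref{l4} (via Proposition~\ref{prop}) this number is at most $p$, which for $s_i\ge2$ is impossible as soon as $p\ge7200$. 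Hence quaternion-type blocks also have $s_i=1$, and $\dim V_i=2$.

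\emph{Step 3: the orbit count.} Let $a$ and $b$ be the numbers of $1$- and $2$-dimensional summands $V_i$, so $a+2b=\dim V\ge3$. The faithful $H$-action on $V$ embeds $H$ in $\prod_iK_i$, which contains the abelian normal subgroup $\prod_iZ(K_i)$ of index at most $60^b$; intersecting with $H$ gives an abelian $A\le H$ with $[H:A]\le60^b$, so $k(H)\ge k(A)/[H:A]=|A|/[H:A]\ge|H|/60^{2b}$ by \cite[p.~502]{Ernest}. Clifford's theorem now gives
\[
k(HV)\ \ge\ k(H)+\frac{|V|-1}{|H|}\ \ge\ \frac{|H|}{60^{2b}}+\frac{p^{a+2b}-1}{|H|}\ \ge\ \frac{2}{60^b}\sqrt{p^{a+2b}-1},
\]
and since $a+2b\ge3$ one checks directly that $4(p^{a+2b}-1)>60^{2b}p^2$ for all $p>7200$, whence $k(HV)>p$ --- the desired contradiction. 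The step I expect to be the main obstacle is Step~2: although every ingredient is already in place, one has to verify that the three block-analysis arguments stated in Section~\ref{section-affine-groups-2} for the distinguished block $V_{1,1}$ transfer to an arbitrary $V_i$, and --- the delicate point --- that the hypothesis $p>7200$ is large enough to eliminate the imprimitive alternative $s_i\ge2$ in both the metacyclic and the quaternion-type cases; the quaternion case with $s_i=2$ and the closing comparison $4(p^{a+2b}-1)>60^{2b}p^2$ are the numerically tightest spots.
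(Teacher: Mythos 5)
Your proposal is correct and follows the same skeleton as the paper's proof: reduce via the Duyan--Halasi--Podoski/Huppert dichotomy to the case where every primitive block $V_{i,j}$ has dimension $1$ or $2$, kill the imprimitive alternative $s_i\ge2$ by orbit counting, and finish with Clifford's inequality together with Ernest's bound $k(Y)\le[X:Y]\,k(X)$ plus AM--GM. There are, however, a few genuine local differences worth recording. First, to rule out $s_i\ge2$ for the two-dimensional ``quaternion type'' blocks you use the crude orbit bound $|V_i|/|K_i|$ with $|K_i|\le(60(p-1))^{s_i}s_i!$, whereas the paper invokes Foulser's lemma to get $\binom{s_i+m-1}{m-1}\ge m(m+1)/2$ orbits with $m>p/60$; numerically both land on the same threshold $p>7200$ for $s_i=2$, so nothing is lost. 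Second, and more substantively, the paper goes on to prove that there is \emph{at most one} summand of dimension~$1$ and at most one of dimension~$2$, forcing $\dim V=3$, and then does one final estimate; you skip the uniqueness step entirely and instead run a single estimate with $a$ one-dimensional and $b$ two-dimensional summands, using $[H:A]\le 60^b$ for the abelian subgroup $A=H\cap\prod_iZ(K_i)$. Your version is cleaner here, since the paper's uniqueness argument for the pair of two-dimensional summands is only gestured at (``the previous argument also gives\dots''), and your worst case $(a,b)=(0,2)$ needs only $p>1800$, comfortably within the imposed $p>7200$. Third, by observing that $K_i/B_i$ is solvable you bypass the harder alternating-composition-factor branch of Lemma~\ref{l3}/Subsection~\ref{Section 9}, needing only $|K_i/B_i|\le 24^{(s_i-1)/3}$ from Lemma~\ref{l25}; the paper simply cites Subsection~\ref{Section 9} wholesale. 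The point you flag as delicate --- that Lemmas~\ref{l4}, \ref{l45}, \ref{l5}, \ref{l6} and the Subsection~\ref{Section 10} computation apply with an arbitrary index $i$ in place of $1$ --- is indeed fine, since none of those arguments uses the normalisation $|V_{1,1}|\le\cdots\le|V_{s,1}|$; they rely only on $k(K_iV_i)\le p$, which Lemma~\ref{l4} supplies for every $i$.
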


\begin{proof}

Let $n > 2$ and let $H \leq \GL(V)$ be of order prime to $p$
such that $k(HV) \leq p$. Let us use the notation of
Subsection~\ref{Section 1}.

Assume that some $V_{i,j}$ has order at least $p^{3}$. Let $n_{i,j}$
be such that $|V_{i,j}| = p^{n_{i,j}}$. Then $K_{i,j}$ cannot be a
subgroup of $\mathrm{\Gamma L}(1, p^{n_{i,j}})$ by
Subsection~\ref{Section 10}, since $p > 256$. Otherwise $K_{i,j}$
has more than $p$ orbits on $V_{i,j}$, since $p > 3^{6}$, which also
contradicts $k(HV) \leq p$. Thus every $V_{i,j}$ has order $p$ or
$p^{2}$.

Let $|V_{i,j}| = p^{2}$. If $K_{i,j} \leq \mathrm{\Gamma
L}(1,p^{2})$, then $s_{i} = 1$ by Subsection \ref{Section 10}.
Moreover, the case $K_{i,j} \leq \mathrm{\Gamma L} (1,p^{2})$ and
$s_{i} = 1$ cannot occur by Lemma \ref{l4} and
Lemma~\ref{metacyclic}. Otherwise $K_{i,j}$ has order less than
$60p$ and so has more than $p/60$ orbits on $V_{i,j}$. In this case
the number of orbits of $H$ on $V_{i}$ is at least
$\binom{s_{i}+m-1}{m-1}$ by \cite[Lemma~2.6]{F} where $m$ is the
number of orbits of $H_{i,j}$ on $V_{i,j}$. If $s_{i} \geq 2$, this
is more than $m^{2}/2 \geq p^{2}/7200 > p$, provided that $p >
7200$, contradicting the fact that $H$ must have less than $p$
orbits on $V$. We conclude that $|V_{i,j}| = p^{2}$ implies $s_{i} =
1$.

If $|V_{i,j}| = p$, then $s_{i} = 1$ by Subsection~\ref{Section 9}.

We proved not only that each $s_{i}$ is equal to $1$ but the
previous argument also gives that whenever $i_{1}$ and $i_{2}$ are
indices with $|V_{i_{1},1}| = |V_{i_{2},1}| = p^{2}$ then $i_{1} =
i_{2}$.

We claim that whenever $i_{1}$ and $i_{2}$ are indices with
$|V_{i_{1},1}| = |V_{i_{2},1}| = p$ then $i_{1} = i_{2}$. Assume
that this is not the case and consider the faithful action $K$ of
$H$ on $W = V_{1,1} + V_{2,1}$. Then $k(KW) \leq p$ by the proof of
Lemma~\ref{l4}.  Since $K$ is an abelian group (of exponent dividing
$p-1$), we have
$$p \geq k(KW) \geq |K| + \frac{|W|-1}{|K|} \geq 2 {(|W|-1)}^{1/2} = 2 {(p^{2}-1)}^{1/2}.$$
This is a contradiction.

Since we are assuming that $n \geq 3$, the $H$-module $V$ must now
be a direct sum of a module $V_{1,1}$ of size $p$ and a module
$V_{2,1}$ of size $p^{2}$. Moreover $K_{2,1}$ acts primitively on
$V_{2,1}$ and $|K_{2,1}/Z(K_{2,1})| \leq 60$. It is easy to see that
$H$ has an abelian normal subgroup $A$ (of order at most
${(p-1)}^{2}$) of index at most $60$. Thus
$$p \geq k(HV) \geq \frac{|A|}{60} + \frac{|V|-1}{60 |A|}
\geq \frac{1}{30} {(p^{3}-1)}^{1/2}.$$ This is again a contradiction
since $p > 1000$.
\end{proof}

\section{Proof of Theorem~\ref{theorem-general-bound-p^3} and
further remarks}   \label{section-proof-1.4}

We can finally prove Theorem~\ref{theorem-general-bound-p^3}, which
is restated below.

\begin{theorem}
Let $G$ be a finite group, $p$ a prime and $P$ a Sylow $p$-subgroup of $G$.
 If $|P/\Phi(P)|\geq p^3$, then $|\Irr_{p',\parat}(G)|>p$ provided that any of
 the following two conditions holds.
 \begin{itemize}
  \item[(1)] $G$ is solvable and $p > 7200$; or
  \item[(2)] the McKay--Navarro conjecture is true and $p$ is sufficiently
   large.
 \end{itemize}
\end{theorem}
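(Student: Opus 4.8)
The plan is to reduce, via the McKay--Navarro conjecture, to an assertion about the number of conjugacy classes of a faithful affine group $HV$ with $|V|=|P/\Phi(P)|\ge p^3$, which is precisely what Theorems~\ref{primes1} and~\ref{IulianAttila} control. Write $N:=\bN_G(P)$, and note that $|P/\Phi(P)|\ge p^3$ forces $p\mid|G|$ and $p\ge3$. Under either hypothesis the McKay--Navarro conjecture holds for the pair $(G,p)$: by Turull \cite{Turull08} in case~(1), since a solvable group is $p$-solvable, and by assumption in case~(2). Hence, by the computation in Section~\ref{section-mckay-navarro},
\[
|\Irr_{p',\parat}(G)|=|\Irr_\parat(N/P')|.
\]
Since $\Phi(P)=P'P^p$ contains $P'$ and is characteristic in $P\unlhd N$, inflation along $N/P'\twoheadrightarrow N/\Phi(P)$ embeds $\Irr_\parat(N/\Phi(P))$ into $\Irr_\parat(N/P')$, so $|\Irr_\parat(N/P')|\ge|\Irr_\parat(N/\Phi(P))|$.

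Put $\overline{N}:=N/\Phi(P)$ and $V:=P/\Phi(P)$, an elementary abelian $p$-group with $|V|\ge p^3$ which is normal in $\overline{N}$. As $P$ is a Sylow $p$-subgroup of $N$, the quotient $\overline{N}/V\cong N/P$ is a $p'$-group, so $V$ is the unique Sylow $p$-subgroup of $\overline{N}$; in particular every $p$-element of $\overline{N}$ lies in $V$ and has order dividing $p$, so the exponent of $\overline{N}$ is not divisible by $p^2$, and therefore $|\Irr_\parat(\overline{N})|=k(\overline{N})$ by the observation in Section~\ref{section-explicit}. Now set $C:=C_{\overline{N}}(V)$ and $L:=\bO_{p'}(C)$. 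Since $V\le Z(C)$ is the unique Sylow $p$-subgroup of $C$, Schur--Zassenhaus gives $C=V\times L$, so $L$ is characteristic in $C\unlhd\overline{N}$, hence normal in $\overline{N}$, and moreover $C=VL$. Thus in $\overline{N}/L$ the image of $V$ is the normal $p$-subgroup $C/L\cong V$ with $p'$-quotient $\overline{N}/C=:H$, and Schur--Zassenhaus again yields $\overline{N}/L\cong V\rtimes H$, on which $H\cong\overline{N}/C_{\overline{N}}(V)$ acts faithfully. Consequently $k(\overline{N})\ge k(\overline{N}/L)=k(HV)$, where in case~(1) the group $H$ is solvable, being a section of $G$.

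It remains to bound $k(HV)$. In both cases $H$ is a group of order coprime to $p$ acting faithfully on a vector space $V$ of order $|P/\Phi(P)|\ge p^3$ in characteristic $p$. In case~(1), $H$ is solvable and $p>7200$, so $k(HV)>p$ by Theorem~\ref{primes1}; in case~(2) it suffices to take $p$ at least the universal constant of Theorem~\ref{IulianAttila}, which then gives $k(HV)>p$ as well. Chaining the inequalities,
\[
|\Irr_{p',\parat}(G)|=|\Irr_\parat(N/P')|\ge|\Irr_\parat(\overline{N})|=k(\overline{N})\ge k(HV)>p,
\]
as required. The substance of the argument lies entirely in the two cited affine-group theorems; within this argument the only slightly delicate point is the passage to the faithful affine quotient $HV$ with $|V|=|P/\Phi(P)|$, carried out so as to preserve coprimality to $p$ and --- in case~(1) --- solvability of $H$.
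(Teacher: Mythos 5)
Your proof is correct and follows essentially the same route as the paper: reduce via the (known solvable or assumed) McKay--Navarro conjecture to $|\Irr_\parat(\bN_G(P)/\Phi(P))|=k(\bN_G(P)/\Phi(P))$ and then invoke Theorem~\ref{primes1} (resp.\ Theorem~\ref{IulianAttila}). The one place you go beyond the paper is in spelling out the passage from $\bN_G(P)/\Phi(P)$ to a faithful coprime affine group $HV$ (by quotienting out $\bO_{p'}(C_{\overline{N}}(V))$ and using Schur--Zassenhaus), a reduction the paper leaves implicit when it applies Theorem~\ref{primes1} directly to $k(\bN_G(P)/\Phi(P))$.
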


\begin{proof}
As mentioned in Section~\ref{section-mckay-navarro}, since the
McKay--Navarro conjecture is known to be true for solvable groups, we
see that the number of almost $p$-rational irreducible characters of
$p'$-degree of $G$ is $|\Irr_\parat(\bN_G(P)/P')|$, which is at
least $|\Irr_\parat(\bN_G(P)/\Phi(P))|$. As every irreducible
character of $\bN_G(P)/\Phi(P)$ is almost $p$-rational, it follows
that the number of almost $p$-rational irreducible characters of
$p'$-degree of $G$ is at least $k(\bN_G(P)/\Phi(P))$. Since
$|P/\Phi(P)|$ is divisible by $p^3$, this class number
$k(\bN_G(P)/\Phi(P))$ of $\bN_G(P)/\Phi(P)$ is greater than $p$ by
Theorem~\ref{primes1}, and thus the first part of the theorem is
proved.

The second part follows in the same way, but using
Theorem~\ref{IulianAttila} instead.
\end{proof}

We now prove the statement following Theorem~\ref{theorem-p'-degree}
in the introduction.

\begin{theorem}\label{thm:GHSV}
Let $G$ be a non-trivial finite group and $p$ and $q$ be (possibly
equal) primes. Then $G$ possesses a non-trivial irreducible character
that is of $\{p,q\}'$-degree and almost $\{p,q\}$-rational.
\end{theorem}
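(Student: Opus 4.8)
The plan is to deduce the statement directly from the work of Giannelli, Hung, Schaeffer Fry and Vallejo; its simple-group case already entered this paper as \cite[Theorem~2.1]{Giannelli-Hung-Schaeffer-Rodriguez}. The result I would quote is that, for every non-trivial finite group $G$ and every set $\pi$ of at most two primes, $G$ has a non-trivial $\chi\in\Irr(G)$ of $\pi'$-degree with $\QQ(\chi)=\QQ$ or $\QQ(\chi)\subseteq\QQ_r$ for some $r\in\pi$. Applying this with $\pi=\{p,q\}$ (a single prime when $p=q$) produces a candidate character, and what remains is to check that ``$\pi'$-degree with a small cyclotomic field of values'' is the same as ``of $\{p,q\}'$-degree and almost $\{p,q\}$-rational''.

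That check is routine. First, $\chi(1)$ is prime to every prime in $\pi$, so $p\nmid\chi(1)$ and $q\nmid\chi(1)$. Next, the conductor $c(\chi)$ divides $r$ for some $r\in\{p,q\}$ (or $c(\chi)=1$), hence $c(\chi)$ is divisible by $p$ at most once and by $q$ at most once; equivalently $\alpha_p(\chi)\le1$ and $\alpha_q(\chi)\le1$, so $\chi$ is simultaneously almost $p$-rational and almost $q$-rational. Therefore $\chi$ is a non-trivial member of $\Irr_{p',\parat}(G)\cap\Irr_{q',\qarat}(G)$, which is precisely the assertion (and when $p=q$ this simply says that $G$ has a non-trivial almost $p$-rational irreducible character of $p'$-degree, and when $p$ or $q$ fails to divide $|G|$ the corresponding condition is automatic).

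I do not expect a genuine obstacle inside this argument: all the substance is carried by the cited theorem, so the ``hard part'' is really the input itself rather than the deduction. If one wanted to avoid quoting the general form and only had the reduction to simple groups, the missing piece would be a minimal-counterexample analysis --- pass to a group $G$ with a unique minimal normal subgroup $N$, use that characters of $G/N$ keep the field-of-values bound to reduce to $p\nmid|G:N|$ (and likewise for $q$, which is the delicate point, as both primes must be controlled at once), dispose of the case $N$ abelian via the It\^o--Michler theorem, and in the non-abelian case lift a suitable small-field character of the simple factor of $N$ up to $G$ using Lemma~\ref{lemma-NT} together with a tensor-induction construction as in the proof of Theorem~\ref{theorem-general-bound-class}. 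Since exactly this work is done in \cite{Giannelli-Hung-Schaeffer-Rodriguez}, I would simply invoke their result and record the translation above.
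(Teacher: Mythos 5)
Your deduction from the quoted statement is fine, but the quoted statement itself is suspect, and that is where the gap is. The paper does \emph{not} simply invoke a black-box ``Theorem~C of GHSV for all groups'': it re-derives the result starting from \cite[Theorem~2.1]{Giannelli-Hung-Schaeffer-Rodriguez}, the simple-group case, which the paper records as having three exceptional pairs $(S,\pi)$, namely $(\tw2F_4(2)',\{3,5\})$, $(J_4,\{23,43\})$, and $(J_4,\{29,43\})$. For those pairs there is \emph{no} non-trivial $\chi\in\Irr(S)$ of $\pi'$-degree with $\QQ(\chi)\subseteq\QQ_p$ or $\QQ(\chi)\subseteq\QQ_q$. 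This is incompatible with the general-group statement you quote: applied with $G=S$ in an exceptional case it would produce exactly such a $\chi$. So either you have overstated what GHSV's Theorem~C says, or its conclusion is phrased differently (e.g.\ in terms of a fixing Galois automorphism) in a way that is strictly weaker than $\QQ(\chi)\subseteq\QQ_r$ for some $r\in\pi$; in either case the ``routine translation'' step in your plan is not starting from a correct premise.

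The paper's route avoids this by noting that for the three exceptional $(S,\pi)$ one can still find, from the character table, a non-trivial $\pi'$-degree $\chi\in\Irr(S)$ that is almost $p$-rational and $q$-rational, which is weaker than the field containment in Theorem~2.1 but still gives almost $\{p,q\}$-rationality --- and that is all the present theorem asks for. Only then is the reduction machinery from the proof of GHSV's Theorem~C invoked. Your backup plan (minimal counterexample, unique minimal normal subgroup, It\^o--Michler, Lemma~\ref{lemma-NT}, tensor-induction) is the right skeleton and matches the paper's reduction, but as written it says nothing about the exceptional simple groups; if you carried it out you would be stuck precisely there. To repair the proposal, either cite the exact statement of GHSV's Theorem~C and verify that its conclusion really does imply almost $\{p,q\}$-rationality (it cannot be what you wrote), or follow the paper: use Theorem~2.1 with its list of exceptions, dispose of the exceptions by a direct character-table check producing an almost $p$-rational, $q$-rational character of $\pi'$-degree, and then run the reduction argument.
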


\begin{proof} It was proved in \cite[Theorem~2.1]{Giannelli-Hung-Schaeffer-Rodriguez}
that, for every nonabelian simple group $S$ and every set of primes
$\pi=\{p,q\}$, there exists $\mathbf{1}_S\neq\chi\in \Irr(S)$ of
$\pi'$-degree such that $\QQ(\chi)\subseteq \QQ(e^{2\pi i/p})$ or
$\QQ(\chi)\subseteq \QQ(e^{2\pi i/q})$, unless $(S,\pi)$ is
$(\tw{2}F_4(2)',\{3,5\})$, $(J_4,\{23,43\})$, or $(J_4,\{29, 43\})$.
One can check from \cite{Atl1} that, for these exceptions, there
still exists $\mathbf{1}_S\neq\chi\in \Irr(S)$ of $\pi'$-degree such
that $\chi$ is both almost $p$-rational and $q$-rational. The same
arguments as in the proof of
\cite[Theorem~C]{Giannelli-Hung-Schaeffer-Rodriguez} then yield the
conclusion.
\end{proof}

We put forward the following, which is based on the McKay--Navarro
conjecture and another well-known conjecture that the number of
conjugacy classes of any finite group is bounded below
logarithmically by the order of the group.

\begin{conjecture}\label{conjecture}
There exists a universal constant $c > 0$ such that whenever $G$ is
a finite group and $P$ is a Sylow $p$-subgroup of $G$ then the
number of almost $p$-rational irreducible characters of $p'$-degree
of $G$ is greater than $c \cdot \log_2(|P/\Phi(P)|)$.
\end{conjecture}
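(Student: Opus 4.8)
The natural line of attack is to combine the McKay--Navarro conjecture with the (conjectural) logarithmic lower bound for class numbers. Assuming the McKay--Navarro conjecture, the reduction carried out in Section~\ref{section-mckay-navarro} gives
\[
|\Irr_{p',\parat}(G)|=|\Irr_\parat(\bN_G(P)/P')|.
\]
Since $P'\leq\Phi(P)$ and $\Phi(P)$ is characteristic in $P\unlhd\bN_G(P)$, the group $\bN_G(P)/\Phi(P)$ is a quotient of $\bN_G(P)/P'$, and as inflation of characters preserves fields of values we get $|\Irr_\parat(\bN_G(P)/P')|\geq|\Irr_\parat(\bN_G(P)/\Phi(P))|$. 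Now $P/\Phi(P)$ is an elementary abelian Sylow $p$-subgroup of $\bN_G(P)/\Phi(P)$, so the exponent of $\bN_G(P)/\Phi(P)$ is not divisible by $p^2$; hence every irreducible character of $\bN_G(P)/\Phi(P)$ is almost $p$-rational, and therefore
\[
|\Irr_{p',\parat}(G)|\geq|\Irr_\parat(\bN_G(P)/\Phi(P))|=k(\bN_G(P)/\Phi(P)).
\]

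The plan is then to feed this into the conjectured bound $k(X)>c_0\log_2|X|$, asserted for every finite group $X$: applying it to $X=\bN_G(P)/\Phi(P)$ and using that $P/\Phi(P)$ embeds into $X$ yields
\[
|\Irr_{p',\parat}(G)|\geq k(\bN_G(P)/\Phi(P))>c_0\log_2|\bN_G(P)/\Phi(P)|\geq c_0\log_2|P/\Phi(P)|,
\]
which is the claim with $c=c_0$ (the inequality is vacuous when $p\nmid|G|$, since then the right-hand side is $0$). It is worth noting that this reasoning cannot be improved much without new input: taking $G=V\rtimes H$ with $V$ elementary abelian of $p$-power order and $H$ a $p'$-group acting on $V$ realizes $\bN_G(P)/\Phi(P)=G$ and, by It\^o's theorem together with the exponent argument above, $|\Irr_{p',\parat}(G)|=k(G)$. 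Thus, modulo the McKay--Navarro conjecture, the statement is squeezed between the full logarithmic class-number conjecture and its restriction to affine groups $V\rtimes H$ with coprime action, so it is essentially equivalent to the latter.

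The hard part is, of course, that both ingredients are open. The dependence on the McKay--Navarro conjecture can be removed when $G$ is $p$-solvable (by Turull's theorem) or when the Sylow $p$-subgroups of $G$ are cyclic (by Navarro), but one is then still left to prove a logarithmic lower bound for the class number of the local group $\bN_G(P)/\Phi(P)$, and even the strongest unconditional bounds are only of the shape $k(X)\gg\log|X|/(\log\log|X|)^{O(1)}$ (Pyber). A more realistic partial target is the solvable case: if $G$ is solvable then $\bN_G(P)/\Phi(P)$ is solvable, so the statement for solvable groups $G$ would follow unconditionally from a logarithmic lower bound for class numbers of solvable groups; together with Theorem~\ref{primes1}, which already covers the range $|P/\Phi(P)|\geq p^3$ with $p>7200$, this would reduce matters to the low-dimensional and small-$p$ configurations, which one could hope to handle directly using the structure theory underlying Sections~\ref{section-affine-groups} and~\ref{section-affine-groups-2}.
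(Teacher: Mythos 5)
This statement is posed as Conjecture~\ref{conjecture} in the paper and is not proved; the authors merely indicate (in the sentence preceding it, and in the proof of Theorem~\ref{theorem-general-bound-p^3}) that it would follow from the McKay--Navarro conjecture together with the folklore conjecture that $k(X)\gg\log|X|$ for every finite group $X$. Your reduction $|\Irr_{p',\parat}(G)|=|\Irr_\parat(\bN_G(P)/P')|\geq k(\bN_G(P)/\Phi(P))$ is exactly the chain the paper uses, and your conditional conclusion and the supplementary observations (essential equivalence with the class-number conjecture for coprime affine groups $V\rtimes H$, the $p$-solvable and cyclic-Sylow cases where McKay--Navarro is known) are correct and consistent with the paper's discussion.
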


Even the weaker statement that $|\Irr_{p',\parat}(G)|\rightarrow
\infty$ as $|P/\Phi(P)|\rightarrow \infty$ seems non-trivial to us
(in the case when $G$ is not $p$-solvable). By
Theorem~\ref{theorem-p'-degree}, this is reduced to showing that
$|\Irr_{p',\parat}(G)|\rightarrow \infty$ as the minimum number of
generators of $P$ approaches infinity.

We conclude this section by remarking that, as $p$-rationality and
almost $p$-rationality of irreducible characters can be seen from
the character table, it would be interesting to know a
group-theoretic characterization of groups having the property that
all irreducible characters are (almost) $p$-rational for a fixed
prime $p$. Let $\sigma_e$ ($e\geq 1$) be the Galois automorphism in
$\Gal(\QQ_{|G|}/\QQ)$ that fixes $p'$-roots of unity and sends every
$p$-power root of unity to its $(1+p^e)$-th power. Navarro and Tiep
\cite{Navarro-Tiep} proved a consequence of the McKay--Navarro
conjecture that if all $p'$-degree irreducible characters of $G$ are
$\sigma_e$-fixed, then $P/P'$ has exponent at most $p^e$, where
$P\in\Syl_p(G)$. Therefore, if all irreducible $p'$-characters of
$G$ are almost $p$-rational, then $P/P'$ is elementary abelian. The
converse is expected to be also true, and has been reduced to the
same statement for almost quasi-simple groups in
\cite[Theorem~C]{Navarro-Tiep}, which in turns has been solved for
$p=2$ in \cite{Malle19}.

\section{Almost $p$-rational characters and cyclic Sylow
$p$-subgroups}\label{sect:last}

In this last section we make some remarks on
Question~\ref{conj:cyclicity}, which predicts that Sylow
$p$-subgroups of a finite group $G$ of order divisible by $p$ are
cyclic if and only if \[|\Irr_{p',\parat}(B_0(G))|\in
\cS_p:=\{e+\frac{p-1}{e}: e\in\ZZ^+, e\mid p-1\}.\]

Note that, when $P\in\Syl_p(G)$ is cyclic, the
Alperin--McKay--Navarro conjecture is known to be true
\cite{Navarro04}, and thus $|\Irr_{p',\parat}(B_0(G))|$ is the class
number of a semidirect product of a certain $p'$-group acting
faithfully on $P/\Phi(P)\cong C_p$, which then belongs to the set
$\cS_p$. The `only if' implication therefore easily follows.

Question~\ref{conj:cyclicity} is related to the following, which came
out of the results in Sections~\ref{section-affine-groups} and
\ref{section-affine-groups-2}.

\begin{question}   \label{conj:k(HV)}
 Let $H$ be a $p'$-group acting faithfully on a finite vector space~$V$ of
 size~$p^n$. Is it true that $k(HV)\notin \cS_p$ whenever $n\ge 2$?
\end{question}

We are able to answer this in some cases.

\begin{theorem}   \label{thm:cyclicity-p-large}
 Question \ref{conj:k(HV)} has an affirmative answer, provided that any of the following
 conditions hold.
 \begin{itemize}
  \item[(1)] $p < 17$;
  \item[(2)] $p$ is sufficiently large; or
  \item[(3)] the group $H$ is solvable and $p > 7 300 000$.
 \end{itemize}
\end{theorem}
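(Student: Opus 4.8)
The plan is to read off $k(HV)$ (or a narrow interval containing it) from the classification already carried out in Sections~\ref{section-affine-groups} and~\ref{section-affine-groups-2}, and to compare it with one elementary fact about $\cS_p$. Namely, for every odd prime $p$ the two largest members of $\cS_p=\{e+\tfrac{p-1}{e}:e\mid p-1\}$ are $p$ (from $e\in\{1,p-1\}$) and $2+\tfrac{p-1}{2}=\tfrac{p+3}{2}$ (from $e=2$, which divides $p-1$), because $e\mapsto e+\tfrac{p-1}{e}$ is invariant under $e\mapsto\tfrac{p-1}{e}$ and strictly decreasing on $[1,\sqrt{p-1}]$. Hence
\[
\cS_p\ \subseteq\ \{p\}\ \cup\ \Bigl[\,2\sqrt{p-1},\ \tfrac{p+3}{2}\,\Bigr],
\]
so $\cS_p$ is \emph{disjoint from the open interval} $\bigl(\tfrac{p+3}{2},\,p\bigr)$. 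It therefore suffices to show that for $n\ge 2$ one always has $k(HV)>p$, or $k(HV)\in\bigl(\tfrac{p+3}{2},p\bigr)$, or we land in one of a few tiny exceptional cases checked by hand.

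If $n\ge 3$ the first alternative holds: under~(2) this is Theorem~\ref{IulianAttila}, under~(3) it is Theorem~\ref{primes1}, and under~(1) Lemma~\ref{l1} forces $n=1$ whenever $k(HV)\le p<17$. So assume $n=2$. Under~(1), Lemma~\ref{l1} leaves only $p=11$ with $HV=(C_{11}\times C_{11})\rtimes\SL_2(5)$, where $k(HV)=10\notin\{7,11\}=\cS_{11}$; done. Under~(2) or~(3) we may assume $k(HV)\le p$ (otherwise we are done) and that $p$ is large (each hypothesis guarantees this; $p>7\,300\,000$ is ample for what follows).

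Suppose first that $H$ is solvable (automatic under~(3)). Then Lemma~\ref{l2} applies; in particular $H/F(H)\cong\Sym(3)$ — if $H/F(H)\cong C_3$ then $k(HV)\ge 7x+\tfrac{p^2-1}{24x}>p$ by the arithmetic--geometric mean inequality, a contradiction — so $k(H)=8x$, $|H|=48x$ with $x=(p-1)/m$ for some even $m\in\{12,14,\dots,36\}$. Setting $c:=\tfrac{8}{m}+\tfrac{m}{48}\in\bigl[\tfrac{49}{60},\tfrac{35}{36}\bigr]\subset(\tfrac12,1)$, Lemma~\ref{k(H)} gives $cp+\delta\le k(HV)\le cp+\delta+7200$ with $|\delta|<1$; since $c-\tfrac12\ge\tfrac{19}{60}$ and $1-c\ge\tfrac1{36}$, for $p$ large this forces $\tfrac{p+3}{2}<k(HV)<p$, whence $k(HV)\notin\cS_p$. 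If instead $H$ is non-solvable — possible only under~(2) — then $n=2$ forces $H/Z(H)\cong\Alt(5)$ by Lemma~\ref{5.4}, and Lemma~\ref{A_5} together with its proof gives $H=Z(H)\star\SL_2(5)$, $|Z(H)|=(p-1)/m$, and $cp+\delta\le k(HV)\le cp+\delta+7200$ with $|\delta|<2$, where $c=\tfrac{c'}{m}+\tfrac{m}{60}$ runs over the admissible values ($c'\in\{\tfrac{9}{2},9\}$), all satisfying $\tfrac12<2\sqrt{c'/60}\le c<1$. Exactly as before this places $k(HV)$ strictly between $\tfrac{p+3}{2}$ and $p$ once $p$ is large, so $k(HV)\notin\cS_p$.

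The one genuinely delicate point is this $n=2$ analysis: Lemma~\ref{k(H)} pins $k(HV)$ down only to within an additive error of $7200$, so the argument needs $p$ to be large both in absolute terms (relative to $7200$) and relative to $1/(1-c_{\max})$ — which is precisely why hypotheses~(2) and~(3) carry, respectively, an asymptotic and an explicit lower bound on $p$, and why obtaining a clean answer for all primes $p$ is out of reach by these methods. Everything else is bookkeeping on top of the classification already in hand.
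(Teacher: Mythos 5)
Your proof is correct and follows essentially the same strategy as the paper's: reduce to $n=2$ via Theorems~\ref{IulianAttila} and \ref{primes1}, use the bounds on $k(HV)$ from the proofs of Lemmas~\ref{l2}, \ref{A_5}, and \ref{k(H)}, and observe that the two largest elements of $\cS_p$ are $p$ and $\tfrac{p+3}{2}=\tfrac{p-1}{2}+2$, so that it suffices to squeeze $k(HV)$ into the gap $(\tfrac{p+3}{2},p)$. The paper phrases the final step with explicit constants $\tfrac{263}{480}p-4>\tfrac{p-1}{2}+2$ and $\tfrac{659}{660}p+7201<p$ for $p>5\cdot 10^6$, which is exactly the uniform version of your per-$m$ inequalities $c-\tfrac12\ge\text{const}$ and $1-c\ge\text{const}$.
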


\begin{proof}
Let $p$ be a prime and let $H$ be a $p'$-group acting faithfully on
a finite vector space $V$ of size $p^n$.

Assume that (1) holds. The result follows from Lemma~\ref{l1}, the observation
of Navarro that $k(HV) = 10$ when $HV = (C_{11} \times C_{11}):\SL_2(5)$ and by
noting that $10 \not\in \cS_{11}$.

We may assume that $n = 2$ by Theorem~\ref{IulianAttila} (if $p$ is
sufficiently large) and by Theorem~\ref{primes1} (if $H$ is solvable and
$p > 7200$).

Assume that $k(HV) \leq p$ (and $p \geq 17$). It follows that $H$ is primitive
and irreducible on $V$ by Lemma~\ref{primitiveandirreducible}.

Let $H$ be solvable. Assume that $p > 270 000$. Then
(\ref{lowerupper2}) from the proof of Lemma~\ref{l2} is
$$\Big( \frac{8}{m} + \frac{m}{48} \Big) p - 4 <
  k(HV) < \Big( \frac{8}{m} + \frac{m}{48} \Big) p + 7201,$$
for some even integer $m$ with $12 \leq m \leq 36$.

Let $H$ be non-solvable. Assume that $p > 7 300 000$. Then~(\ref{60'}) from the
proof of Lemma~\ref{A_5} is
$$\frac{c}{m} (p-1) + \frac{m}{60} (p+1) \leq k(HV) \leq
  \frac{c}{m} (p-1) + \frac{m}{60} (p+1) + 7200,$$
where $m$ divides $p-1$ such that $5 \leq m \leq 55$ (and $(p-1)/m$ is even)
or $12\leq m \leq 48$ (and $(p-1)/m$ is odd) and $c=9$ if $|Z(H)|$ is odd
and $c = 4.5$ if $|Z(H)|$ is even.

It follows from (\ref{lowerupper2}) and (\ref{60'}) together with a
bit of computer calculation that
$$\frac{263}{480} p - 4 < k(HV) < \frac{659}{660}p + 7201.$$
For $p > 5 \cdot 10^{6}$, we have $\frac{263}{480}p - 4 > \frac{p-1}{2} + 2$
and $\frac{659}{660} p + 7201 < p$. The result follows.
\end{proof}

\begin{theorem}
 An affirmative answer to Question~\ref{conj:k(HV)} and
 the principal block case of the Alperin--McKay--Navarro imply an
 affirmative answer to Question~\ref{conj:cyclicity}.
\end{theorem}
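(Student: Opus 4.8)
The plan is to prove the two implications of Question~\ref{conj:cyclicity} separately. The `only if' direction requires neither hypothesis and has already been settled in the paragraph preceding Question~\ref{conj:k(HV)}: when $P\in\Syl_p(G)$ is cyclic the Alperin--McKay--Navarro conjecture is a theorem of Navarro \cite{Navarro04}, so $|\Irr_{p',\parat}(B_0(G))|$ is the class number of a semidirect product of a $p'$-group acting faithfully on $P/\Phi(P)\cong C_p$, which equals $e+\frac{p-1}{e}$ with $e\mid p-1$ and hence lies in $\cS_p$. It therefore remains to prove the converse under the two stated assumptions.

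For the converse, suppose that $|\Irr_{p',\parat}(B_0(G))|\in\cS_p$ and, seeking a contradiction, assume that $P$ is non-cyclic. Since $p\mid|G|$, the Frattini quotient $V:=P/\Phi(P)$ is then an $\mathbb{F}_p$-vector space of dimension $n\ge2$. First I would invoke the assumed principal-block case of the Alperin--McKay--Navarro conjecture, in the form recorded in Section~\ref{section-mckay-navarro}, to obtain
\[
  |\Irr_{p',\parat}(B_0(G))| = |\Irr_{p',\parat}(B_0(\bN_G(P)))|
  = |\Irr(\bN_G(P)/\Phi(P)\bO_{p'}(\bN_G(P)))|.
\]
As also recalled there, the group on the right-hand side is a semidirect product $HV$ in which $H:=\bN_G(P)/P\bO_{p'}(\bN_G(P))$ is a $p'$-group acting faithfully on $V$; consequently $|\Irr_{p',\parat}(B_0(G))|=|\Irr(HV)|=k(HV)$, with $H$ a $p'$-group acting faithfully on a vector space of order $p^n$ and $n\ge2$. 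Now the assumed affirmative answer to Question~\ref{conj:k(HV)} gives $k(HV)\notin\cS_p$, contradicting $|\Irr_{p',\parat}(B_0(G))|\in\cS_p$. Hence $P$ is cyclic, as claimed.

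I expect no serious obstacle: the argument is a bookkeeping combination of the two hypotheses with identifications already set up in Section~\ref{section-mckay-navarro}. The only points worth spelling out are that the chain of reductions from $B_0(G)$ to $\bN_G(P)/\Phi(P)\bO_{p'}(\bN_G(P))$ preserves the count $|\Irr_{p',\parat}|$ and produces precisely a \emph{faithful} coprime linear action on $P/\Phi(P)$ of dimension $n=\dim(P/\Phi(P))$, so that the hypothesis $n\ge2$ in Question~\ref{conj:k(HV)} corresponds exactly to $P$ being non-cyclic; both facts are contained in (or immediate from) the discussion of the Alperin--McKay--Navarro conjecture in Section~\ref{section-mckay-navarro}.
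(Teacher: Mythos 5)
Your proof is correct and takes essentially the same route as the paper: reduce via the principal-block Alperin--McKay--Navarro conjecture to the class number of $HV$ with $H=\bN_G(P)/P\bO_{p'}(\bN_G(P))$ acting faithfully and coprimely on $V=P/\Phi(P)$, then apply the answer to Question~\ref{conj:k(HV)}. The one place you compress more than the paper does is in passing from $|\Irr_{p',\parat}(B_0(\bN_G(P)))|$ to $|\Irr(\bN_G(P)/\Phi(P)\bO_{p'}(\bN_G(P)))|$: you attribute this directly to the discussion in Section~\ref{section-mckay-navarro}, but that section only derives $|\Irr_{p',\parat}(B_0(G))|=|\Irr_{p',\parat}(B_0(\bN_G(P)))|$; the further identification is carried out in the paper's proof itself by invoking Fong's theorem (to replace $B_0(\bN_G(P))$ with $\Irr(\bN_G(P)/\bO_{p'}(\bN_G(P)))$) and then Lemma~\ref{lemma-NT} (to see that being almost $p$-rational of $p'$-degree is equivalent to lying over a character of $P/\Phi(P)$, hence to factoring through $\bN_G(P)/\Phi(P)\bO_{p'}(\bN_G(P))$). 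Spelling out these two steps would make the argument self-contained; otherwise the structure and conclusion agree with the paper.
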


\begin{proof}
Assume that both the statement in Question~\ref{conj:k(HV)} and the
principal block case of the Alperin--McKay--Navarro conjecture
hold true, and that $|\Irr_{p',\parat}(B_0(G))|\in \cS_p$. As
discussed in Section~\ref{section-mckay-navarro}, we then have
\[|\Irr_{p',\parat}(B_0(\bN_G(P)))|\in \cS_p,\]
where $P\in\Syl_p(G)$. By Fong's theorem (see
\cite[Theorem~10.20]{Navarro}), it follows that
\[|\Irr_{p',\parat}(\bN_G(P)/\bO_{p'}(\bN_G(P)))|\in
\cS_p.\] Since $\chi\in\Irr(\bN_G(P)/\bO_{p'}(\bN_G(P)))$ is almost
$p$-rational and of $p'$-degree if and only if $\chi$ lies over some
$\theta\in\Irr(P/\Phi(P))$ (by Lemma~\ref{lemma-NT}), we now have
\[|\Irr(\bN_G(P)/\Phi(P)\bO_{p'}(\bN_G(P)))|\in
\cS_p.\] It then follows that $\dim(P/\Phi(P))=1$, and therefore $P$
is cyclic, as desired.
\end{proof}

As the Alperin--McKay--Navarro conjecture is known for $p$-solvable groups
(see Section~\ref{section-mckay-navarro}), we have the following for now.

\begin{theorem}
Let $p$ be a sufficiently large prime. Then for any finite $p$-solvable group $G$ of order divisible by $p$, the Sylow $p$-subgroups of $G$ are
cyclic if and only if $|\Irr_{p',\parat}(B_0(G))|\in \cS_p$.
\end{theorem}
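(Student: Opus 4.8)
The plan is to obtain this theorem as the special case of the preceding one in which $G$ is $p$-solvable (and $p$ large), since for such $G$ both hypotheses of that result become available unconditionally up to size of $p$. The `only if' implication has already been recorded at the start of this section and uses no $p$-solvability: if $P\in\Syl_p(G)$ is cyclic then the Alperin--McKay--Navarro conjecture holds for $G$ by \cite{Navarro04}, so (via Fong, as below) $|\Irr_{p',\parat}(B_0(G))|$ is the number of irreducible characters of a semidirect product $C_p\rtimes H$ with $H$ a $p'$-group acting faithfully on $P/\Phi(P)\cong C_p$; writing $e=|H|$, which divides $p-1$, this number equals $e+\frac{p-1}{e}\in\cS_p$. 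Hence the real content is the `if' direction.

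So assume $|\Irr_{p',\parat}(B_0(G))|\in\cS_p$ and fix $P\in\Syl_p(G)$. Since $G$, and hence $\bN_G(P)$, is $p$-solvable, the blockwise McKay--Navarro conjecture holds for $\bN_G(P)$ by Turull \cite{Turull08,Turull13}; its principal-block case (see Section~\ref{section-mckay-navarro}) gives $|\Irr_{p',\parat}(B_0(G))|=|\Irr_{p',\parat}(B_0(\bN_G(P)))|$. By Fong's theorem (\cite[Theorem~10.20]{Navarro}) the principal block of $\bN_G(P)$ is dominated by that of $N:=\bN_G(P)/\bO_{p'}(\bN_G(P))$, and $N$ has a normal Sylow $p$-subgroup so it has only the principal block; thus $|\Irr_{p',\parat}(B_0(\bN_G(P)))|=|\Irr_{p',\parat}(N)|$. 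Now $N$ has normal Sylow $p$-subgroup $\bar P\cong P$ with $\bO_{p'}(N)=1$, so by Schur--Zassenhaus $N=\bar P\rtimes H$ with $H$ a $p'$-group, and since a $p'$-group acting trivially on $\bar P/\Phi(\bar P)$ acts trivially on $\bar P$, the group $H$ acts faithfully on $V:=\bar P/\Phi(\bar P)$. By Lemma~\ref{lemma-NT} a character in $\Irr(N)$ is almost $p$-rational of $p'$-degree precisely when it lies over a character of $V$, so $|\Irr_{p',\parat}(N)|=|\Irr(N/\Phi(\bar P))|=k(HV)$, where $HV=V\rtimes H$ and $|V|=|P/\Phi(P)|$.

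Stringing together these identities yields $k(HV)\in\cS_p$ with $H$ a $p'$-group acting faithfully on $V$. For $p$ sufficiently large, Theorem~\ref{thm:cyclicity-p-large}(2) — the affirmative answer to Question~\ref{conj:k(HV)} for large primes — then forces $\dim V=1$, i.e. $P/\Phi(P)\cong C_p$, so $P$ is cyclic, as required. There is no genuine obstacle here beyond bookkeeping: `sufficiently large' must be taken at least as large as the threshold in Theorem~\ref{thm:cyclicity-p-large}(2) (which itself rests on Theorem~\ref{IulianAttila}), and one should double-check that Turull's theorem supplies exactly the principal-block Alperin--McKay--Navarro statement quoted above and that the passage to $N$ really produces an affine group $HV$ of the shape analysed in Sections~\ref{section-affine-groups}--\ref{section-affine-groups-2}.
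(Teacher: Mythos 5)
Your proof is correct and takes essentially the same route as the paper: you reduce the `if' direction to the affine-group statement $k(HV)\in\cS_p \Rightarrow \dim V=1$ for $p$ large, supplied by Theorem~\ref{thm:cyclicity-p-large}(2), and you reach that affine group via Turull's principal-block Alperin--McKay--Navarro for $p$-solvable groups, Fong's theorem and Lemma~\ref{lemma-NT}, which is exactly the chain the paper uses (compressed into an appeal to its preceding theorem). The only quibble is that the equality $|\Irr_{p',\parat}(B_0(G))|=|\Irr_{p',\parat}(B_0(\bN_G(P)))|$ needs the conjecture applied to $G$ rather than to $\bN_G(P)$, but since both are $p$-solvable Turull's result covers whichever group is intended, so the argument goes through unchanged.
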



\end{document}